\numberwithin{equation}{section}
\theoremstyle{plain}
 \newtheorem{thm}{Theorem}[section]
 \newtheorem{lem}[thm]{Lemma}
 \newtheorem{cor}[thm]{Corollary}
 \newtheorem{prop}[thm]{Proposition}
\theoremstyle{definition}
 \newtheorem{defn}[thm]{Definition}
\theoremstyle{remark}
 \newtheorem{rem}[thm]{Remark}
 \newtheorem{ex}[thm]{Example}
\begin{document}
\begin{flushleft}
{\Large {\bf Transformations of infinitely divisible
distributions via improper stochastic integrals}}

\bigskip
{\bf Ken-iti Sato}

\bigskip
{\small Hachiman-yama 1101-5-103, Tenpaku-ku, Nagoya, 468-0074 Japan\\
{\it E-mail address}: ken-iti.sato@nifty.ne.jp\\
{\it URL}: http://ksato.jp/}
\end{flushleft}

\bigskip
\noindent {\bf Abstract.} 
Let $X^{(\mu)}(ds)$ be an $\mathbb{R}^d$-valued homogeneous 
independently scattered
random measure over $\mathbb{R}$ having $\mu$ as the distribution of 
$X^{(\mu)}((t,t+1])$.  Let $f(s)$ be a nonrandom measurable function on 
an open interval $(a,b)$ where $-\infty\leqslant a<b\leqslant\infty$.  
The improper stochastic integral $\int_{a+}^{b-} f(s)X^{(\mu)}(ds)$ is
studied.  Its distribution $\Phi_f(\mu)$ defines a mapping from $\mu$ to an
infinitely divisible distribution on $\mathbb{R}^d$.  
Three modifications (compensated, essential, and symmetrized) and 
absolute definability are considered.
After their domains are characterized, necessary and sufficient
 conditions for the domains
to be very large (or very small) in various senses
 are given.  
The concept of the dual in the class of purely non-Gaussian infinitely 
divisible distributions on $\mathbb{R}^d$ is introduced and employed in
studying some examples.
The $\tau$-measure $\tau$
of function $f$ is introduced and whether $\tau$ determines $\Phi_f$
is discussed. Related transformations of L\'evy measures are also studied.

\medskip
\noindent {\small 2000 {\it Mathematics Subject Classification}. 60G51,
60H05, 60E07.\\
{\it Key words and phrases}. improper stochastic integral, independently
scattered random measure, infinitely
divisible distribution, L\'evy measure.}

\bigskip
\section{Introduction}

In Sato (2006a,b,c) the improper stochastic integral
$\int_0^{\infty-} f(s)dX_s^{(\mu)}$, or\linebreak
$\int_0^{\infty-} f(s)X^{(\mu)}(ds)$, is studied.  Here $X_s^{(\mu)}$
is a L\'evy process on $\mathbb{R}^d$ with distribution $\mu$ at time $1$
and $X^{(\mu)}(ds)$ is the homogeneous independently scattered
random measure associated with $X_s^{(\mu)}$, $f(s)$ is a nonrandom
function locally $X^{(\mu)}$-integrable on the closed half line
$[0,\infty)$, and $\int_0^{\infty-}$ 
is the limit in probability of $\int_0^t$ as $t\to\infty$.
Let $\mathcal L(Y)$ denote the distribution of a random element $Y$.
Let $ID(\mathbb{R}^d)$ denote the class of infinitely divisible distributions
on $\mathbb{R}^d$.
Given a function $f$, we define a mapping $\Phi_f$ from
a subclass of $ID(\mathbb{R}^d)$ into $ID(\mathbb{R}^d)$ by
$\Phi_f(\mu)=\mathcal L\left(\int_0^{\infty-} f(s)X^{(\mu)}(ds)\right)$.
Such a mapping appears in many papers. With $f(s)=e^{-s}$, it appears
in the representation of selfdecomposable distributions (see 
Rocha-Arteaga and Sato (2003)
for references); with $f(s)=\log(1/s)$ for $s\in(0,1)$,
it appears
in the representation of the Goldie--Steutel--Bondesson class on $\mathbb{R}^d$
(see Barndorff-Nielsen and Thorbj\o rnsen (2002, 2006b) and Barndorff-Nielsen,
Maejima, and Sato (2006)). In Sato (2006b)
a family of functions $f_{\alpha}(s)$ with $\alpha\geqslant0$ such that, as 
$s\downarrow0$,
$f_{\alpha}(s)\sim \log(1/s)$ for all $\alpha$ and, as $s\to\infty$, 
$f_{\alpha}(s)\sim ce^{-s}$ with a constant $c>0$ for $\alpha=0$ and
$f_{\alpha}(s)\sim (\alpha s)^{1/\alpha}$ for $\alpha>0$, is studied.
In Aoyama and Maejima (2007) the function $f(s)$ for $0<s<1$ defined by
$s=\int_{f(s)}^{\infty} (2\pi)^{-1/2} e^{-v^2/2} dv$ (hence 
$f(s)\to\infty$ as $s\downarrow0$ and $f(s)\to-\infty$ as $s\uparrow1$) is
utilized in the representation of the class of type $G$ distributions
on $\mathbb{R}^d$.  In the case of the last example and in the case
$f(s)=\log(1/s)$ for $s\in(0,1)$, we define $f(s)$ to be $0$ for
$s\in\{0\}\cup [1,\infty)$.  Then the functions in all these examples
are locally $X^{(\mu)}$-integrable on the closed half line
$[0,\infty)$ for all $\mu\in ID(\mathbb{R}^d)$, so that they are in the
framework of Sato (2006a,b,c).
However, it would be more natural to consider an open interval $(a,b)$
with $-\infty\leqslant a<b\leqslant\infty$ and a function $f(s)$ 
locally $X^{(\mu)}$-integrable on $(a,b)$ and study
improper stochastic integrals $\int_{a+}^{b-} f(s)X^{(\mu)}(ds)$,
the limit
in probability of $\int_p^q f(s)X^{(\mu)}(ds)$ as $p\downarrow a$ and
$q\uparrow b$.  In this paper we carry out the study of
improper stochastic integrals of this type. 
Examples such as $\int_{0+}^{\pi-} (\sin s)^{-1}X^{(\mu)}(ds)$
and $\int_{0+}^{\pi-} (\cot s)X^{(\mu)}(ds)$ are in our mind;
for some $\mu$ these integrands extended to $[0,\infty)$ 
with the value 0 outside of $(0,\pi)$ are not
locally $X^{(\mu)}$-integrable on $[0,\infty)$.
The improper integrals $\int_{-\infty}^t e^{s-t} X^{(\mu)}(ds)$, 
$t\in\mathbb{R}$, for stationary Ornstein--Uhlenbeck type processes
in Maejima and Sato (2003) and $\int_{-\infty}^{\infty}({(t-s)_+}^{\alpha}-
{(-s)_+}^{\alpha})X^{(\mu)}(ds)$ ($0<\alpha<1/2$, $u_+=u\lor0$), 
$t\in\mathbb{R}$, for fractional L\'evy processes in Marquardt (2006)
are also included in our framework.

In Sections 2--4 we study improper integrals 
$\int_{a+}^{b-} f(s)X^{(\mu)}(ds)$ in general. 
It is largely parallel to the
treatment of $\int_0^{\infty-} f(s)X^{(\mu)}(ds)$.
Three modifications, that is, compensated, essential, and
symmetrized improper integrals, are defined in addition to the
usual improper integrals.  They induce transformations of 
infinitely divisible distributions defined by
{\allowdisplaybreaks
\begin{align*}
\Phi_f(\mu)&=\mathcal L\left(\int_{a+}^{b-} f(s)X^{(\mu)}(ds)\right),\\
\Phi_{f,\,\mathrm{c}}(\mu)&=\text{the set of distributions of the 
compensated limits of $\int_p^q f(s)X^{(\mu)}(ds)$}\\
&\quad\;\text{as $p\downarrow a$ and $q\uparrow b$},\\
\Phi_{f,\,\mathrm{es}}(\mu)&=\text{the set of distributions of the 
essential limits of $\int_p^q f(s)X^{(\mu)}(ds)$}\\
&\quad\;\text{as $p\downarrow a$ and $q\uparrow b$},\\
\Phi_{f,\,\mathrm{sym}}(\mu)&=\mathcal L\left(\text{symmetrized limit of 
$\int_p^q f(s)X^{(\mu)}(ds)$ as $p\downarrow a$ and $q\uparrow b$}\right).
\end{align*}
We describe the L\'evy--Khintchine triplets of these images and the domains}
$\mathfrak D (\Phi_f)$, $\mathfrak D (\Phi_{f,\,\mathrm{c}})$, 
$\mathfrak D (\Phi_{f,\,\mathrm{es}})$, and $\mathfrak D 
(\Phi_{f,\,\mathrm{sym}})$.
Some distributions $\mu$ in $\mathfrak D (\Phi_f)$ satisfy a 
condition called absolute definability of $\int_{a+}^{b-} f(s)X^{(\mu)}(ds)$.
Let
\[
\mathfrak D^0 (\Phi_f)=\left\{\mu\colon\text{$\int_{a+}^{b-} f(s)X^{(\mu)}(ds)$ 
is absolutely definable}\right\}.
\]
In general
\[
\mathfrak D^0 (\Phi_f)\subset \mathfrak D (\Phi_f)\subset 
\mathfrak D (\Phi_{f,\,\mathrm{c}})\subset \mathfrak D (\Phi_{f,\,\mathrm{es}})
=\mathfrak D (\Phi_{f,\,\mathrm{sym}}).
\]
Among these, $\mathfrak D^0 (\Phi_f)$ and $\mathfrak D (\Phi_{f,\,\mathrm{es}})$
 play especially important roles in further study.
The relation of these definitions to the previous ones in
Sato (2006a,b,c) will be given in Remarks \ref{r4.1} and
\ref{r5.2}.

Sections 5--9 deal with special problems.
In Section 5 we introduce the concept of the dual in the class of
purely non-Gaussian infinitely divisible distributions on $\mathbb{R}^d$,
and study some improper stochastic integrals on a finite interval
by transforming them to those on an infinite interval with respect
to the L\'evy process with the dual distribution at time $1$.  
We call $\mu'\in ID(\mathbb{R}^d)$ with
L\'evy--Khintchine triplet $(0,\nu',\gamma')$ the dual of 
$\mu\in ID(\mathbb{R}^d)$ with $(0,\nu,\gamma)$ if
\[
\nu'(B)=\int_{\mathbb{R}^d\setminus\{0\}} 1_B(\iota(x))|x|^2\nu(dx)
\]
and $\gamma'=-\gamma$, where $\iota(x)=|x|^{-2}x$, the inversion of $x$.

In Section 6 we seek conditions in order that the domains are very
large. Specifically, the condition for the domains being the whole
class $ID(\mathbb{R}^d)$ is given. This amplifies some results
in Barndorff-Nielsen and P\'erez-Abreu (2005).
Other conditions such as for
$ID_{\mathrm{AB}}(\mathbb{R}^d)\subset\mathfrak D (\Phi_{f,\mathrm{es}})$ and for
$ID_{\mathrm{AB}}(\mathbb{R}^d)\subset\mathfrak D^0 (\Phi_f)$ are given 
and the relations of those conditions are discussed (see below
for the definition of $ID_{\mathrm{AB}}(\mathbb{R}^d)$). 
Conditions in order that the domains are very small are also
considered.

For a real-valued measurable function $f$ on $(a,b)$ we introduce
the measure $\tau$ given by
\[
\tau(B)=\int_a^b 1_{\{f(s)\in B\}}ds,
\]
and call it the $\tau$-measure of $f$.  In Section 7 we study whether
$\tau$ determines $\mathfrak D (\Phi_f)$ and its variants and $\Phi_f(\mu)$
and its variants.
Roughly speaking, the answer is yes for 
$\mathfrak D^0 (\Phi_f)$ and $\mathfrak D (\Phi_{f,\,\mathrm{es}})$, but no for
$\mathfrak D (\Phi_f)$.
Further, under some conditions including decrease of $f$, we address
the problem whether $\tau$ determines $f$.
The $\tau$-measure is a development of ideas in Aoyama and Maejima
(2007) and Barndorff-Nielsen and Thorbj\o rnsen (2006a,b).

In one dimension the class of infinitely divisible distributions
concentrated on $[0,\infty)$ is important in theory and applications.
Its multivariate analogue is given with $[0,\infty)$ replaced by 
a proper cone $K$ in $\mathbb{R}^d$.  Some results in Section 6 
paraphrased for such distributions are given in Section 8.

In the transformation from $\mu\in ID(\mathbb{R}^d)$ to $\widetilde\mu=
\Phi_f(\mu)\in ID(\mathbb{R}^d)$ their L\'evy measures $\nu$ and 
$\widetilde\nu$ are
related as
\begin{equation}\label{0.1}
\widetilde\nu(B)=\int_a^b ds\int_{\mathbb{R}^d} 1_B(f(s)x)\,\nu(dx)
=\int_{\mathbb{R}\setminus\{0\}} \nu\left(\frac{1}{u} B\right)\tau(du)
\end{equation}
for Borel sets $B$ in $\mathbb{R}^d\setminus\{0\}$, where $\tau$ is the
$\tau$-measure of $f$.  Defining $\Psi_f(\nu)$ by $\Psi_f(\nu)=\widetilde\nu$,
we study in Section 9 the relation between $\Psi_f$ and 
$\Phi_{f,\,\mathrm{es}}$, and give counterparts of some results in 
Sections 6 and 8.
The transformation of the form of the right extreme of 
\eqref{0.1} is introduced by Maejima and
Rosi\'nski (2002) for the standard Gaussian distribution
$\tau$ and by Barndorff-Nielsen and P\'erez-Abreu (2005, 2007)
and Barndorff-Nielsen and Thorbj\o rnsen (2006a,b)
for measures $\tau$ on $(0,\infty)$.

Let us prepare some general concepts used in this paper.
Let $\widehat\mu(z)$, $z\in\mathbb{R}^d$, be the characteristic function of $\mu$.  
Let $C_{\mu}(z)$ be the cumulant function of $\mu\in ID(\mathbb{R}^d)$. 
That is, $C_{\mu}(z)$ is
the unique
complex-valued continuous function on $\mathbb{R}^d$ satisfying $C_{\mu}(0)=0$ 
and $\widehat\mu(z)=e^{C_{\mu}(z)}$.
Sometimes we write $C_X (z)=C_{\mu}(z)$, using an $\mathbb{R}^d$-valued random 
variable $X$ with $\mathcal L (X)=\mu$. 
We use the L\'evy--Khintchine triplet $(A,\nu,\gamma)$
of $\mu$ in the form
\begin{equation}\label{1.1}
C_{\mu}(z)= -\frac12 \langle z,Az\rangle +\int_{\mathbb{R}^d}\left(
e^{i\langle z,x\rangle}-1-\frac{i\langle z,x\rangle}{1+|x|^2}\right)\nu(dx)
+i\langle\gamma,z\rangle,
\end{equation}
where $A$ is a $d\times d$ symmetric nonnegative-definite matrix, 
$\nu$ is a
measure on $\mathbb{R}^d$ satisfying $\nu(\{0\})=0$ and $\int_{\mathbb{R}^d}
(|x|^2\land1)\nu(dx)
<\infty$, and $\gamma$ is an element 
of $\mathbb{R}^d$. We have one-to-one correspondence between $\mu$ and 
$(A,\nu,\gamma)$.
Let $\mu=\mu_{(A,\nu,\gamma)}$ denote the distribution corresponding to 
$(A,\nu,\gamma)$. The measure $\nu$ is called the L\'evy measure of $\mu$.
The distribution $\mu\in ID(\mathbb{R}^d)$ with $A=0$ is called purely non-Gaussian.
Let $ID_0(\mathbb{R}^d)$ denote the class of purely non-Gaussian infinitely 
divisible distributions on $\mathbb{R}^d$. 
Following Sato (1999), we call $\mu=\mu_{(A,\nu,\gamma)}$
{\allowdisplaybreaks
\begin{align*}
&\text{of type A if $A=0$ and $\nu(\mathbb{R}^d)<\infty$,}\\
&\text{of type B if $A=0$, $\nu(\mathbb{R}^d)=\infty$, and $\int_{|x|\leqslant1}|x|
\nu(dx)<\infty$,}\\
&\text{of type C if $A\not=0$ or $\int_{|x|\leqslant1}|x|\nu(dx)=\infty$.}
\end{align*}
The class of} $\mu\in ID(\mathbb{R}^d)$ of type A or B is denoted by 
$ID_{\mathrm{AB}}(\mathbb{R}^d)$.
Sometimes we omit $\mathbb{R}^d$ in the notation $ID(\mathbb{R}^d)$, 
$ID_0(\mathbb{R}^d)$, and
$ID_{\mathrm{AB}}(\mathbb{R}^d)$.

If $\mu=\mu_{(A,\mu,\gamma)}$ in $ID(\mathbb{R}^d)$ satisfies $\int_{|x|\leqslant1}
|x|\nu(dx)<\infty$, then there is a unique $\gamma^0\in \mathbb{R}^d$ such that
\begin{equation}\label{7.1}
C_{\mu}(z)=-\frac12 \langle z,Az\rangle +\int_{\mathbb{R}^d}(e^{i\langle z,
x\rangle}-1)\,\nu
(dx)+i\langle\gamma^0,a\rangle,\qquad z\in\mathbb{R}^d.
\end{equation}
We express this fact by saying that $\mu$ has triplet $(A,\nu,\gamma^0)_0$ 
(see Remark 8.4 of Sato (1999)). This $\gamma^0$ is called the drift of $\mu$.

Let $\mathrm{Lvm}(ID(\mathbb{R}^d))$ denote the class of L\'evy measures of 
distributions in $ID(\mathbb{R}^d)$, that is, $\mathrm{Lvm}(ID(\mathbb{R}^d))$ is
the class of measures $\nu$ on $\mathbb{R}^d$
satisfying $\nu(\{0\})=0$ and $\int_{\mathbb{R}^d}(|x|^2\land1)\nu(dx)
<\infty$. Similarly, let
$\mathrm{Lvm}(ID_{\mathrm{AB}}(\mathbb{R}^d))$ denote the class of L\'evy measures of 
distributions in $ID_{\mathrm{AB}}(\mathbb{R}^d)$, that is, 
$\mathrm{Lvm}(ID_{\mathrm{AB}}(\mathbb{R}^d))$ is
the class of measures $\nu$ on $\mathbb{R}^d$
satisfying $\nu(\{0\})=0$ and $\int_{\mathbb{R}^d}(|x|\land1)\nu(dx)
<\infty$.

Equalities among random variables are always understood to be
almost surely.  
We use the words {\em decrease} and {\em increase} in the wide sense
allowing flatness.
When we say that a function is {\em real-valued} or {\em $\mathbb{R}$-valued}, the
values $\infty$ and $-\infty$ are not allowed.
The class of Borel sets in $\mathbb{R}^d$ is denoted by $\mathcal B(\mathbb{R}^d)$.
The class of bounded Borel sets in $\mathbb{R}$ is denoted by 
$\mathcal B^0_{\mathbb{R}}$.
The class of Borel sets $B$ in $(a,b)$ such that
$\displaystyle\inf_{x\in B}x>a$ and $\displaystyle\sup_{x\in B}x<b$ is denoted by
$\mathcal B^0_{(a,b)}$. We simply write (2006a), (2006b), and (2006c), 
indicating Sato's respective papers.

\medskip
Inspired by some results in Barndorff-Nielsen and P\'erez-Abreu (2005), 
the author sent three memos to a small circle in January 2005. 
Some theorems in Section 6 are 
developments of those memos.  This paper has grown up from that part.
The author thanks Makoto Maejima, V\'ictor P\'erez-Abreu, and Ole
Barndorff-Nielsen for having constant interest in this study 
and for informing him of their related results.
He also thanks Jan Rosi\'nski for giving him valuable remarks to the
manuscript.

\section{Stochastic integrals of nonrandom functions}

First we give definition and existence of homogeneous independently 
scattered random measures.

\begin{defn}\label{d2.1}
A class $X=\{ X(B)\colon B\in \mathcal B^0_{\mathbb{R}}\}$ of 
$\mathbb{R}^d$-valued random
variables is called an {\em independently scattered random measure} 
on $\mathbb{R}$ if
{\rm(1)} for any sequence $B_1, B_2, \ldots$ of disjoint sets in 
$\mathcal B_{\mathbb{R}}^0$
with $\bigcup_{n=1}^{\infty} B_n\in  \mathcal B_{\mathbb{R}}^0$, 
$\sum_{n=1}^{\infty}  X(B_n)$ converges a.\,s.\ and equals 
$ X(\bigcup_{n=1}^{\infty} B_n)$,
{\rm(2)} for any finite sequence $B_1, \ldots, B_n$ of disjoint sets in 
$\mathcal B_{\mathbb{R}}^0$, $X(B_1),\ldots,  X(B_n)$ are independent,
{\rm(3)}  $X(\{ a\})=0$ for every $a\in\mathbb{R}$.
It is called {\em homogeneous} if, in addition, $\mathcal L(X(B))=\mathcal L(X(B+t))$
 for all $B\in\mathcal B_{\mathbb{R}}^0$
and $t\in\mathbb{R}$. (It follows from (1) that $X(\emptyset)=0$.)
\end{defn}

If $X=\{ X(B)\colon B\in \mathcal B^0_{\mathbb{R}}\}$ is an $\mathbb{R}^d$-valued 
independently 
scattered random measure, then $\mathcal L(X(B))\in ID(\mathbb{R}^d)$ for all 
$B\in \mathcal B^0_{\mathbb{R}}$.

\begin{prop}\label{p2.0} 
For any $\mu\in ID(\mathbb{R}^d)$ there exists a unique (in law) $\mathbb{R}^d$-valued
homogeneous independently 
scattered random measure $X=\{ X(B)\colon B\in \mathcal B^0_{\mathbb{R}}\}$
on $\mathbb{R}$ such that $\mathcal L(X((t,t+1]))=\mu$ for all $t\in\mathbb{R}$.
\end{prop}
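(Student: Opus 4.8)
The plan is to realise $X$ as the increment measure of a two-sided L\'evy process and then obtain uniqueness in law by pinning down all finite-dimensional distributions in terms of $\mu$. \emph{Existence.} Since $\mu\in ID(\mathbb{R}^d)$, the L\'evy--Khintchine representation gives a L\'evy process $\{Z_t\colon t\geqslant 0\}$ on $\mathbb{R}^d$ with $\mathcal L(Z_1)=\mu$; gluing two independent copies of it yields a two-sided c\`adl\`ag process $\{X_t\colon t\in\mathbb{R}\}$ with stationary independent increments and $\mathcal L(X_{t+1}-X_t)=\mu$. Set $X((s,t])=X_t-X_s$ for $s<t$ and extend $X$ by finite additivity to the ring $\mathcal R$ of finite disjoint unions of left-open right-closed intervals. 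Computing cumulant functions, independence of increments over disjoint intervals gives $\mathcal L(X(B))=\mu^{*\mathrm{Leb}(B)}$ for $B\in\mathcal R$, i.e.\ the infinitely divisible law with cumulant function $\mathrm{Leb}(B)\,C_\mu$; in particular $\mathcal L(X(B))$ depends on $B\in\mathcal R$ only through $\mathrm{Leb}(B)$ and tends to $\delta_0$ as $\mathrm{Leb}(B)\to 0$.

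\emph{Extension to $\mathcal B^0_{\mathbb{R}}$.} Given $B\in\mathcal B^0_{\mathbb{R}}$, pick $B_n\in\mathcal R$ with $\mathrm{Leb}(B_n\triangle B)\to 0$. Then $X(B_n)-X(B_m)=X(B_n\setminus B_m)-X(B_m\setminus B_n)$ is the difference of two independent variables whose laws are $\mu^{*\mathrm{Leb}(\cdot)}$ with total masses tending to $0$, so $(X(B_n))$ is Cauchy in probability; define $X(B)$ as its limit and verify it does not depend on the approximating sequence. Properties (2) and homogeneity pass to the limit from $\mathcal R$; property (3) is immediate because $\mathrm{Leb}(\{a\})=0$ forces $\mathcal L(X(\{a\}))=\delta_0$; and $\mathcal L(X((t,t+1]))=\mu^{*1}=\mu$. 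For property (1), finite additivity makes the partial sums of $\sum_n X(B_n)$ equal to $X\bigl(\bigcup_{k\leqslant n}B_k\bigr)$, which converges in probability to $X(B)$ since $\mathrm{Leb}\bigl(B\setminus\bigcup_{k\leqslant n}B_k\bigr)\to 0$; because the summands are independent, the It\^o--Nisio theorem upgrades this to almost sure convergence, with sum $X(B)$.

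\emph{Uniqueness in law.} Let $X$ and $X'$ both satisfy the conclusion. For fixed $z$ the map $g(t)=C_{\mathcal L(X((0,t]))}(z)$ is well defined (each $\mathcal L(X((0,t]))$ lies in $ID(\mathbb{R}^d)$, being an $n$-fold convolution of copies of $\mathcal L(X((0,t/n]))$) and is additive, $g(s+t)=g(s)+g(t)$, by independence (2) and homogeneity; applying (1) to $(0,1]=\bigsqcup_{k\geqslant 1}(\tfrac1{k+1},\tfrac1k]$ gives $X((0,\tfrac1n])\to 0$, so $g$ is right-continuous at $0$ and therefore linear, $g(t)=t\,g(1)=t\,C_\mu(z)$. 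Hence $\mathcal L(X((s,t]))=\mu^{*(t-s)}$ is determined by $\mu$, and by (2) the joint law of $(X(I_1),\dots,X(I_m))$ for disjoint intervals $I_j$ is the product of the $\mu^{*\mathrm{Leb}(I_j)}$. For arbitrary $B_1,\dots,B_n\in\mathcal B^0_{\mathbb{R}}$, approximating all of them simultaneously by unions of intervals from a common refining partition exhibits $(X(B_1),\dots,X(B_n))$ as a limit in probability of random vectors whose joint laws are determined by $\mu$; the identical computation applies to $X'$, so the two limiting joint laws coincide. Thus $X$ and $X'$ have the same law.

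\emph{Main obstacle.} The delicate step is the passage from the ring $\mathcal R$ to all of $\mathcal B^0_{\mathbb{R}}$ while preserving the \emph{almost sure} countable additivity required by (1): this is precisely where one needs the equivalence, for series of independent summands, between convergence in probability and almost sure convergence. A lesser technical point is arranging enough regularity of $t\mapsto\mathcal L(X((0,t]))$ (namely $X((0,t])\to 0$ as $t\downarrow 0$) to pass from Cauchy's functional equation to linearity in the uniqueness part.
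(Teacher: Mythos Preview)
Your argument is essentially correct and complete; the paper itself does not prove this proposition but simply refers the reader to Rajput and Rosinski (1989), Sato (2004), and Maejima and Sato (2003).  What you have written is a standard self-contained proof along the lines of those references: build a two-sided L\'evy process, define $X$ on the ring $\mathcal R$ of finite unions of half-open intervals, extend to $\mathcal B^0_{\mathbb{R}}$ by approximation in Lebesgue measure, and use It\^o--Nisio for almost sure countable additivity.  That is exactly the construction underlying the cited papers, so in substance your approach matches theirs.

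One small point in your uniqueness argument deserves tightening.  Having established $\mathcal L(X((s,t]))=\mu^{*(t-s)}$ on intervals, you assert that for arbitrary $B\in\mathcal B^0_{\mathbb{R}}$ one can approximate by $B_n\in\mathcal R$ and obtain $X(B_n)\to X(B)$ in probability.  But this last convergence is not automatic from the axioms: it needs $\mathcal L(X(E))\to\delta_0$ whenever $\mathrm{Leb}(E)\to 0$ for \emph{general} bounded Borel $E$, not just $E\in\mathcal R$.  The clean fix is a monotone-class argument: for fixed $z$, the set function $E\mapsto C_{X(E)}(z)$ is finitely additive by independence and countably additive by property~(1) (using that $C_{\mu_n}\to C_\mu$ when infinitely divisible $\mu_n\to\mu$), and it agrees with $C_\mu(z)\cdot\mathrm{Leb}$ on intervals; hence $C_{X(B)}(z)=\mathrm{Leb}(B)\,C_\mu(z)$ for every $B\in\mathcal B^0_{\mathbb{R}}$.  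Once this is in hand, your approximation step and the rest of the uniqueness proof go through as written.
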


See Rajput and Rosinski (1989), Sato (2004),
 or Maejima and Sato (2003) for the proof.

\medskip
Fix $\mu=\mu_{(A,\nu,\gamma)}\in ID(\mathbb{R}^d)$ and let 
$X^{(\mu)}=\{X^{(\mu)}(B)\colon B\in\mathcal B^0_{\mathbb{R}}\}$ be an 
$\mathbb{R}^d$-valued
homogeneous independently 
scattered random measure such that $\mathcal L(X((t,t+1]))=\mu$.
Let $(a,b)$ be an open interval with $-\infty\leqslant a<b\leqslant\infty$.
The following definition of integrals with respect to $X^{(\mu)}$
is similar to that in Urbanik and Woyczy\'nski (1967), 
Rajput and Rosinski (1989), Kwapie\'n and Woyczy\'nski (1992), 
and Sato (2004, 2006a).

\begin{defn}\label{d3.1}
Call $f(s)$ a {\it simple function on $(a,b)$}, if $f(s)=\sum_{j=1}^n r_j
1_{B_j}(s)$ for some $n$, where $B_1,\ldots, B_n$ are disjoint Borel sets
in $(a,b)$ and $r_1,\ldots,r_n\in\mathbb{R}$. For a simple function $f(s)$ of this
form, define
\[
\int_B f(s)X^{(\mu)}(ds)=\sum_{j=1}^n r_j X^{(\mu)}(B\cap B_j)
\]
for $B\in\mathcal B^0_{(a,b)}$. 
An $\mathbb{R}$-valued measurable function $f(s)$ on $(a,b)$ is called
{\it locally $X^{(\mu)}$-integrable on $(a,b)$}, if there is a sequence of simple
functions $f_n$, $n=1,2,\ldots$, on $(a,b)$ such that $f_n(s)\to f(s)$ 
Lebesgue almost everywhere on $(a,b)$ as $n\to\infty$ and that, for
every $B\in\mathcal B^0_{(a,b)}$, the sequence $\int_B f_n(s)X^{(\mu)}(ds)$
converges in probability as $n\to\infty$. The limit is denoted by
$\int_B f(s)X^{(\mu)}(ds)$.
\end{defn}

Using the Nikod\'ym theorem, we
 can prove that if $f(s)$ is locally $X^{(\mu)}$-integrable on $(a,b)$, then, 
for every $B\in\mathcal B^0_{(a,b)}$, $\int_B f(s)X^{(\mu)}(ds)$ does not depend 
on the 
choice of the sequence of simple functions satisfying the conditions above.
If $f(s)$ is locally $X^{(\mu)}$-integrable on $(a,b)$, then, for
$p$ and $q$ satisfying $a<p<q<b$, $\int_{(p,q]}f(s)X^{(\mu)}(ds)$, 
$\int_{[p,q)}f(s)X^{(\mu)}(ds)$, 
$\int_{(p,q)}f(s)X^{(\mu)}(ds)$, and $\int_{[p,q]}f(s)X^{(\mu)}(ds)$ are identical
 almost surely;
they are denoted by $\int_p^qf(s)X^{(\mu)}(ds)$. 

\begin{defn}\label{d3.2}
Let $\mathbf L_{(a,b)}(X^{(\mu)})$ denote
 the class of locally $X^{(\mu)}$-integrable functions 
on $(a,b)$.
\end{defn}

\begin{prop}\label{p3.1}
If $f\in\mathbf L_{(a,b)}(X^{(\mu)})$, then
\begin{equation}\label{3.-1}
\int_q^p |C_{\mu}(f(s)z)|ds<\infty,\quad z\in\mathbb{R}^d
\qquad\text{for all $p,q$ with $a<p<q<b$}
\end{equation}
and
\begin{equation}\label{3.0}
C_{\int_B f(s)X^{(\mu)}(ds)} (z)=\int_B C_{\mu}(f(s)z)ds,
\quad z\in\mathbb{R}^d\qquad\text{for
$B\in\mathcal B_{(a,b)}^0$}.
\end{equation}
\end{prop}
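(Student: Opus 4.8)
The plan is to reduce everything to simple functions, where both \eqref{3.-1} and \eqref{3.0} are immediate, and then pass to the limit using the approximating sequence from Definition \ref{d3.1}. First I would fix $p,q$ with $a<p<q<b$ and set $B_0=(p,q]\in\mathcal B^0_{(a,b)}$; it suffices to prove the two assertions with $B_0$ in place of the generic bounded interval, and then note that an arbitrary $B\in\mathcal B^0_{(a,b)}$ is contained in some such $(p,q]$, so \eqref{3.0} for $B$ follows by the same argument applied to $1_B f$ (which lies in $\mathbf L_{(a,b)}(X^{(\mu)})$ because $f$ does). For a simple function $f=\sum_{j} r_j 1_{B_j}$ and $B\in\mathcal B^0_{(a,b)}$ the sets $B\cap B_j$ are disjoint, so by independence and homogeneity $C_{\int_B f\,X^{(\mu)}(ds)}(z)=\sum_j C_{X^{(\mu)}(B\cap B_j)}(z)=\sum_j \mathrm{Leb}(B\cap B_j)\,C_\mu(r_j z)=\int_B C_\mu(f(s)z)\,ds$, using that $C_{X^{(\mu)}(E)}(z)=\mathrm{Leb}(E)\,C_\mu(z)$, which is standard for homogeneous independently scattered random measures (it follows from infinite divisibility, additivity, and homogeneity in the usual way). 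In particular \eqref{3.-1} is trivially finite for simple functions.

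Next I would take the sequence $f_n$ of simple functions with $f_n\to f$ Lebesgue-a.e.\ on $(a,b)$ and $\int_{B_0} f_n\,X^{(\mu)}(ds)\to\int_{B_0} f\,X^{(\mu)}(ds)$ in probability (Definition \ref{d3.1}); convergence in probability gives convergence of characteristic functions, hence $C_{\int_{B_0} f_n\,X^{(\mu)}(ds)}(z)\to C_{\int_{B_0} f\,X^{(\mu)}(ds)}(z)$ for each $z$ (cumulant functions converge because the limit is again infinitely divisible and the characteristic functions are nonvanishing, so the continuous logarithms converge). By the simple-function case the left-hand side equals $\int_{B_0} C_\mu(f_n(s)z)\,ds$. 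Since $C_\mu$ is continuous and $f_n(s)\to f(s)$ a.e., the integrand converges a.e.\ to $C_\mu(f(s)z)$; to conclude $\int_{B_0} C_\mu(f_n(s)z)\,ds\to\int_{B_0} C_\mu(f(s)z)\,ds$ and simultaneously that the limit is finite (which is \eqref{3.-1}), I would invoke a uniform-integrability / Vitali-type argument rather than dominated convergence, since there is no obvious dominating function.

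The main obstacle is exactly this last point: controlling $\int_{B_0}|C_\mu(f_n(s)z)|\,ds$ uniformly in $n$. Here I would use the standard estimate $|C_\mu(w)|\leqslant c_\mu\,(1+|w|^2)$ valid on any bounded set of $w$ (or, more carefully, $|C_\mu(w)|\leqslant \kappa(|w|)$ for a continuous increasing $\kappa$), combined with the fact that convergence in probability of $\int_{B_0\cap E} f_n\,X^{(\mu)}(ds)$ holds for every sub-Borel-set $E$ (one applies the hypothesis to $1_{B_0\cap E} f_n$, or uses the Nikodým/Vitali-Hahn-Saks theorem already cited in the text after Definition \ref{d3.1} to get equi-absolute-continuity of the set functions $E\mapsto \int_E C_\mu(f_n(s)z)\,ds$ in the Lebesgue measure of $E$). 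This equicontinuity of the measures, together with a.e.\ convergence of integrands, yields both $L^1$-convergence on $B_0$ and the finiteness of the limiting integral, establishing \eqref{3.-1}; feeding this back gives \eqref{3.0} for $B_0$, and hence for all $B\in\mathcal B^0_{(a,b)}$ by the restriction argument above.
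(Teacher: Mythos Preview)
Your approach is sound and is essentially the standard argument for this type of result: verify \eqref{3.0} directly for simple functions via independence and homogeneity, then pass to the limit using the approximating sequence from Definition~\ref{d3.1}, with the Nikod\'ym/Vitali--Hahn--Saks theorem supplying the uniform absolute continuity of $E\mapsto\int_E C_\mu(f_n(s)z)\,ds$ that makes the limiting step go through. The paper itself does not give a proof here but simply cites Proposition~2.17 of Sato (2004); the argument in that reference follows precisely the strategy you outline, so your proposal matches the intended proof.

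One minor remark: the detour through $1_B f$ is unnecessary. The convergence in Definition~\ref{d3.1} is already postulated for every $B\in\mathcal B^0_{(a,b)}$, so you can run the entire limiting argument directly on an arbitrary such $B$ rather than first restricting to intervals $(p,q]$ and then extending. This avoids having to check that $1_B f\in\mathbf L_{(a,b)}(X^{(\mu)})$ (which, while true, requires producing \emph{simple} approximants of $1_B f$, and $1_B f_n$ need not be simple for general Borel $B$).
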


See Proposition 2.17 of Sato (2004).

\begin{thm}\label{t3.1}
Let $f(s)$ be an $\mathbb{R}$-valued measurable function on $(a,b)$. 

{\rm(i)}  Suppose that $A\ne0$. Then $f\in\mathbf L_{(a,b)}(X^{(\mu)})$ if 
and only if
\begin{equation}
\int_p^q f(s)^2 ds<\infty\qquad\text{for all $p,q$ with $a<p<q<b$}.\label{3.1}
\end{equation}

{\rm(ii)}  Suppose that $A=0$. Then $f\in\mathbf L_{(a,b)}(X^{(\mu)})$ if and 
only if
{\allowdisplaybreaks
\begin{gather}
\int_p^q ds \int_{\mathbb{R}^d}(|f(s)x|^2\land 1)\,\nu(dx)<\infty\quad
\text{for all $p,q$ with $a<p<q<b$},\label{3.2}\\
\begin{split}
&\int_p^q \left|f(s)\gamma+\int_{\mathbb{R}^d} f(s)x\left(\frac{1}{1+|f(s)x|^2}
-\frac{1}{1+|x|^2}\right)\nu(dx)\right| ds<\infty\\
&\qquad\text{for all $p,q$ with $a<p<q<b$}.
\end{split}\label{3.3}
\end{gather}
}
\end{thm}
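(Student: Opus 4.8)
The plan is to characterize local $X^{(\mu)}$-integrability by reducing to the scalar test functions already available through Proposition \ref{p3.1}, and then to invoke the known one-sided result (Proposition 2.17 / Theorem in Sato (2004, 2006a)) applied on each relatively compact subinterval. The key observation is that Definition \ref{d3.1} is purely local: $f\in\mathbf L_{(a,b)}(X^{(\mu)})$ if and only if, for each pair $p,q$ with $a<p<q<b$, the restriction $f1_{(p,q)}$ is $X^{(\mu)}$-integrable over $(p,q)$ in the sense of the cited works on a fixed bounded interval. So I would first record that equivalence (it follows from the definition together with a diagonal-sequence argument for the simple-function approximants, and the independence-of-approximating-sequence fact noted after Definition \ref{d3.1}), and then quote the bounded-interval criterion, which is exactly the content of conditions \eqref{3.1} in the Gaussian case and \eqref{3.2}--\eqref{3.3} in the purely non-Gaussian case, localized to $(p,q)$.

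For part (i), when $A\ne0$: by Proposition \ref{p3.1}, integrability over every bounded subinterval forces $\int_p^q|C_\mu(f(s)z)|\,ds<\infty$ for all $z$. Taking $z$ along a coordinate direction in which $Az\cdot z>0$ and using that the Gaussian part contributes a term comparable to $f(s)^2|z|^2$ while the remaining (Lévy) part of $C_\mu$ grows at most linearly in the argument's modulus plus a bounded piece, one sees that finiteness of $\int_p^q|C_\mu(f(s)z)|\,ds$ for all small $z$ is equivalent to $\int_p^q f(s)^2\,ds<\infty$. Conversely, if \eqref{3.1} holds then $f\in L^2$ on each compact subinterval, hence $f$ is approximable by simple functions in $L^2$, and the $L^2$-isometry-type estimate for $\int_B f_n\,X^{(\mu)}(ds)$ gives convergence in probability; so $f\in\mathbf L_{(a,b)}(X^{(\mu)})$. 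This is essentially Theorem in Sato (2004) transplanted to the two-sided open interval.

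For part (ii), when $A=0$: again reduce to each $(p,q)$, and apply the purely non-Gaussian criterion. Condition \eqref{3.2} is the statement that the truncated-variance functional $s\mapsto\int(|f(s)x|^2\wedge1)\nu(dx)$ is locally integrable, which controls convergence in probability of the small-jump and large-jump parts, and \eqref{3.3} is the local integrability of the resulting drift functional (the standard correction term arising because the centering function $x/(1+|x|^2)$ is not linear). Both conditions appear in the one-sided theory; I would simply verify that the derivation there uses only the interval's boundedness, not its being a half-line, and that the centering discrepancy is handled by the displayed integrand in \eqref{3.3}. The necessity direction again goes through \eqref{3.-1}--\eqref{3.0}: $\int_B C_\mu(f(s)z)\,ds$ must be finite and define the cumulant of a genuine element of $ID(\mathbb{R}^d)$, and splitting $C_\mu$ into its quadratic-in-$x$ part near $0$, its bounded part, and its linear drift part yields exactly \eqref{3.2} and \eqref{3.3}.

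The main obstacle I anticipate is purely bookkeeping rather than conceptual: making the localization rigorous, i.e. showing that local $X^{(\mu)}$-integrability on $(a,b)$ (a single global approximating sequence $f_n\to f$ a.e. with convergence in probability over every $B\in\mathcal B^0_{(a,b)}$) is equivalent to integrability on each $(p,q)$ (where one is allowed a different sequence for each subinterval). This needs a diagonalization over an exhausting sequence $(p_k,q_k)\uparrow(a,b)$ together with the fact — already granted in the excerpt — that the integral does not depend on the approximating sequence, so the locally constructed integrals are consistent and patch together; once that is in place, everything reduces cleanly to the cited bounded-interval results and the computations are routine.
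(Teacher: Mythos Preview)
Your outline is broadly workable but takes a different route from the paper, and there is one genuine slip. The paper does not go through Proposition~\ref{p3.1} and the cumulant $C_\mu$ at all; instead it writes down the Rajput--Rosinski-type functional
\[
\varphi(s,u)=u^2\,\mathrm{tr}\,A+\int_{\mathbb{R}^d}(|ux|^2\wedge1)\,\nu(dx)+\Bigl|u\gamma+u\!\int_{\mathbb{R}^d}x\Bigl(\tfrac{1}{1+|ux|^2}-\tfrac{1}{1+|x|^2}\Bigr)\nu(dx)\Bigr|
\]
and cites (2006a) for the equivalence $f\in\mathbf L_{(a,b)}(X^{(\mu)})\Leftrightarrow \int_p^q\varphi(s,f(s))\,ds<\infty$ for all $a<p<q<b$. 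From here part~(ii) is immediate, and part~(i) reduces to showing that \eqref{3.1} already implies \eqref{3.2} and \eqref{3.3}. The paper does this by two short estimates: $|ux|^2\wedge1\le(u^2+1)(|x|^2\wedge1)$ for \eqref{3.2}, and, for \eqref{3.3}, the pointwise bound $r/(1+r^2)\le 1/2$ applied to $|f(s)x|$ and $|x|$ to dominate the centering correction by $\tfrac12(1+f(s)^2)\int|x|^2(1+|x|^2)^{-1}\nu(dx)$. This bypasses your localization worry entirely, since the $\varphi$-criterion is already phrased interval-by-interval, and it also avoids any appeal to an ``$L^2$-isometry-type estimate'', which does not exist for general $\mu$ without second moments.

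The slip in your argument is the claim, in the necessity direction of (i), that ``the remaining (L\'evy) part of $C_\mu$ grows at most linearly in the argument's modulus plus a bounded piece''. This is false: the integral $\int(1-\cos\langle w,x\rangle)\nu(dx)$ is typically of order $|w|^2$, so you cannot separate the Gaussian and L\'evy contributions by growth rate. The fix is immediate, however: since $-\mathrm{Re}\,C_\mu(w)=\tfrac12\langle w,Aw\rangle+\int(1-\cos\langle w,x\rangle)\nu(dx)\ge\tfrac12\langle w,Aw\rangle$, choosing $z$ with $\langle z,Az\rangle>0$ gives $|C_\mu(f(s)z)|\ge\tfrac12 f(s)^2\langle z,Az\rangle$, and \eqref{3.1} follows from \eqref{3.-1}. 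With that correction your scheme goes through, but the paper's $\varphi$-based argument is shorter and has the side benefit of yielding Remark~\ref{r3.2} (local square-integrability always suffices) for free.
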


\begin{proof}
Let
\begin{equation}\label{3.4}
\begin{split}
\varphi(s,u)&=u^2\mathrm{tr}\, A+\int_{\mathbb{R}^d}(|ux|^2\land1)\nu(dx)\\
&\quad+\left|u\gamma+u\int_{\mathbb{R}^d}x\left(\frac{1}{1+|ux|^2}-\frac{1}{1+|x|^2}
\right)\nu(dx)\right|.
\end{split}
\end{equation}
Then $f\in\mathbf L_{(a,b)}(X^{(\mu)})$ if and only if
\begin{equation}\label{3.5}
\int_p^q \varphi(s,f(s))ds<\infty\qquad\text{for all $p,q$ with $a<p<q<b$}.
\end{equation}
See (2006a) for the proof. 
Property \eqref{3.5} is equivalent to saying that 
\begin{equation}\label{3.6}
\int_p^q f(s)^2\,\mathrm{tr}\, A\,ds<\infty\qquad\text{for all $p,q$ with $a<p<q<b$}
\end{equation}
together with \eqref{3.2} and \eqref{3.3}.

(i) Suppose that $A\ne0$. Since $\mathrm{tr}\, A>0$, \eqref{3.6} is equivalent to 
\eqref{3.1}. Since
\[
\int_p^q ds \int_{\mathbb{R}^d}(|f(s)x|^2\land 1)\,\nu(dx)\leqslant
\int_p^q (f(s)^2+ 1)ds \int_{\mathbb{R}^d}(|x|^2\land 1)\,\nu(dx),
\]
\eqref{3.2} follows from \eqref{3.1}. Further,
\[
\int_p^q |f(s)|ds\leqslant \left( (q-p)\int_p^q |f(s)|^2 ds\right)^{1/2}<\infty.
\]
Thus, \eqref{3.3} also follows from \eqref{3.1}, since we have
{\allowdisplaybreaks
\begin{align*}
&\int_p^q ds\left|\int_{\mathbb{R}^d} f(s)x\left(\frac{1}{1+|f(s)x|^2}
-\frac{1}{1+|x|^2}\right)\nu(dx)\right|\\
&\qquad\leqslant\int_p^q ds\left|\int_{\mathbb{R}^d} \frac{f(s)x(|x|^2+|f(s)x|^2)
\nu(dx)}
{(1+|f(s)x|^2)(1+|x|^2)} \right|
\leqslant\int_p^q\frac{1+f(s)^2}{2} ds \int_{\mathbb{R}^d} 
\frac{|x|^2\nu(dx)}{1+|x|^2}\\
&\qquad<\infty\quad\text{for all $p,q$ with $a<p<q<b$},
\end{align*}
using $r/(1+r^2)\leqslant 1/2$ for all $r\geqslant0$.} 

(ii) Suppose that $A=0$. If $f\in\mathbf L_{(a,b)}(X^{(\mu)})$, then
\eqref{3.2} and \eqref{3.3} hold, as we have seen above. 
Conversely, assume that \eqref{3.2} and \eqref{3.3} hold. Then
\eqref{3.6} is satisfied since $\mathrm{tr}\, A=0$. Hence $f\in\mathbf L_{(a,b)}
(X^{(\mu)})$.
\end{proof}

\begin{rem}\label{r3.1}
Suppose that $A=0$. Then $f\in\mathbf L_{(a,b)}(X^{(\mu)})$ 
if and only if \eqref{3.2} holds
and
\begin{equation}\label{3.7}
\begin{split}
&\int_p^q |f(s)|\left|\gamma-\int_{\mathbb{R}^d} \frac{x|f(s)x|^2\nu(dx)}
{(1+|f(s)x|^2)(1+|x|^2)} \right|ds<\infty\qquad\text{for all $p,q$}\\
&\qquad\text{with $a<p<q<b$}.
\end{split}
\end{equation}
Indeed,  since
\[
\int_p^q ds\left|\int_{\mathbb{R}^d} \frac{f(s)x\,|x|^2}{(1+|f(s)x|^2)(1+|x|^2)}
\nu(dx)\right|
\leqslant\frac12 \int_p^q ds\int_{\mathbb{R}^d} \frac{|x|^2}{1+|x|^2}\nu(dx)<\infty,
\]
\eqref{3.3} is rewritten into \eqref{3.7}.
\end{rem}

\begin{rem}\label{r3.2}
In all cases (irrespective of whether $A=0$ or not) condition \eqref{3.1},
that is, local square-integrability on $(a,b)$,
is sufficient for $f\in\mathbf L_{(a,b)}(X^{(\mu)})$.
See the proof of (i) of Theorem \ref{t3.1}.
\end{rem}

\begin{prop}\label{p3.2}
Suppose that $f\in\mathbf L_{(a,b)}(X^{(\mu)})$. 
Let $(A_p^q,\nu_p^q, \gamma_p^q)$ be the
triplet of $\int_p^q f(s)X^{(\mu)}(ds)$ for $a<p<q<b$. Then
{\allowdisplaybreaks
\begin{gather}
A_p^q=\int_p^q f(s)^2 Ads,\label{3.8}\\
\nu_p^q(B)=\int_p^q ds \int_{\mathbb{R}^d} 1_B(f(s)x)\nu(dx)\quad
\text{for $B\in\mathcal B
(\mathbb{R}^d)$ with $0\not\in B$}\label{3.9}\\
\gamma_p^q=\int_p^q ds\left(f(s)\gamma+\int_{\mathbb{R}^d} f(s)x\left(
\frac{1}{1+|f(s)x|^2}
-\frac{1}{1+|x|^2}\right)\nu(dx)\right).\label{3.10}
\end{gather}
}
\end{prop}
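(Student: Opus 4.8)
The plan is to compute the cumulant function of $\int_p^q f(s)X^{(\mu)}(ds)$ directly from Proposition \ref{p3.1} and then read off the triplet using the one-to-one correspondence between an infinitely divisible distribution and its L\'evy--Khintchine triplet. Fix $z\in\mathbb{R}^d$ and $p,q$ with $a<p<q<b$. Since $(p,q]\in\mathcal B^0_{(a,b)}$, formula \eqref{3.0} gives $C_{\int_p^q f(s)X^{(\mu)}(ds)}(z)=\int_p^q C_{\mu}(f(s)z)\,ds$, and the whole task is to rewrite $\int_p^q C_{\mu}(f(s)z)\,ds$ in the shape of the right side of \eqref{1.1}.

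First I would expand $C_{\mu}(f(s)z)$ via \eqref{1.1} with argument $f(s)z$ and substitute $y=f(s)x$ in the L\'evy integral. The one subtlety is the centering term: from the identity $\tfrac{\langle f(s)z,x\rangle}{1+|x|^2}=\tfrac{\langle z,f(s)x\rangle}{1+|f(s)x|^2}-\langle z,f(s)x(\tfrac{1}{1+|f(s)x|^2}-\tfrac{1}{1+|x|^2})\rangle$, the L\'evy integral in \eqref{1.1} becomes, for each fixed $s$, $\int_{\mathbb{R}^d}(e^{i\langle z,f(s)x\rangle}-1-\tfrac{i\langle z,f(s)x\rangle}{1+|f(s)x|^2})\,\nu(dx)$ plus $i\langle z,\int_{\mathbb{R}^d}f(s)x(\tfrac{1}{1+|f(s)x|^2}-\tfrac{1}{1+|x|^2})\,\nu(dx)\rangle$. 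Adding $-\tfrac12\langle f(s)z,Af(s)z\rangle=-\tfrac12\langle z,f(s)^2Az\rangle$ and $i\langle\gamma,f(s)z\rangle=i\langle f(s)\gamma,z\rangle$, this displays $C_{\mu}(f(s)z)$ as $-\tfrac12\langle z,f(s)^2Az\rangle$, plus a L\'evy integral with the correct centering function, plus $i\langle z,g(s)\rangle$ where $g(s)$ is precisely the integrand of \eqref{3.10}.

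Next I would integrate over $(p,q]$ and interchange the order of integration in each of the three pieces. Here I use Theorem \ref{t3.1}: because $f\in\mathbf L_{(a,b)}(X^{(\mu)})$, conditions \eqref{3.2}, \eqref{3.3} and \eqref{3.6} hold on $(p,q]$, so every piece is absolutely convergent and Fubini--Tonelli applies. The Gaussian term yields $-\tfrac12\langle z,A_p^q z\rangle$ with $A_p^q=(\int_p^q f(s)^2\,ds)A$, which is \eqref{3.8} (the zero matrix when $A=0$). The drift term yields $i\langle z,\gamma_p^q\rangle$ with $\gamma_p^q=\int_p^q g(s)\,ds$, which is \eqref{3.10}, finiteness being \eqref{3.3}. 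For the L\'evy term, define $\nu_p^q$ on $\mathbb{R}^d\setminus\{0\}$ by $\nu_p^q(B)=\int_p^q ds\int_{\mathbb{R}^d}1_B(f(s)x)\,\nu(dx)$ (the contribution of the $s$ with $f(s)=0$ is null since $0\notin B$); by Tonelli, $\int_p^q ds\int_{\mathbb{R}^d}h(f(s)x)\,\nu(dx)=\int_{\mathbb{R}^d}h(y)\,\nu_p^q(dy)$ first for nonnegative Borel $h$ with $h(0)=0$, and then, using the bound $|e^{i\langle z,y\rangle}-1-\tfrac{i\langle z,y\rangle}{1+|y|^2}|\leqslant C_z(|y|^2\land1)$ together with the fact that \eqref{3.2} says exactly $\int_{\mathbb{R}^d}(|y|^2\land1)\,\nu_p^q(dy)<\infty$, for $h(y)=e^{i\langle z,y\rangle}-1-\tfrac{i\langle z,y\rangle}{1+|y|^2}$. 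This turns the L\'evy term into $\int_{\mathbb{R}^d}(e^{i\langle z,y\rangle}-1-\tfrac{i\langle z,y\rangle}{1+|y|^2})\,\nu_p^q(dy)$ with $\nu_p^q$ a genuine L\'evy measure, giving \eqref{3.9}.

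Combining the three pieces, $C_{\int_p^q f(s)X^{(\mu)}(ds)}(z)$ equals the right side of \eqref{1.1} with $(A,\nu,\gamma)$ replaced by $(A_p^q,\nu_p^q,\gamma_p^q)$ for every $z\in\mathbb{R}^d$; by uniqueness of the triplet, $(A_p^q,\nu_p^q,\gamma_p^q)$ is the triplet of $\int_p^q f(s)X^{(\mu)}(ds)$, as claimed. I expect the only real work to lie in the bookkeeping around the change of centering function and the Fubini--Tonelli justification, both of which are routine once Theorem \ref{t3.1} is invoked; the set $\{f(s)=0\}$ causes no trouble because the relevant integrands vanish at the origin.
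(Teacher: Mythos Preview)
Your proof is correct and is essentially the natural direct computation: expand $C_{\mu}(f(s)z)$ via \eqref{1.1}, adjust the centering from $x/(1+|x|^2)$ to $f(s)x/(1+|f(s)x|^2)$, and integrate in $s$ using Fubini--Tonelli, the integrability being guaranteed by \eqref{3.2}, \eqref{3.3}, and \eqref{3.6} from the characterization of $\mathbf L_{(a,b)}(X^{(\mu)})$. The paper itself does not give a proof at all but merely refers to Corollary~2.19 of Sato~(2006a); your argument is presumably what that reference carries out, so there is nothing to compare beyond noting that you have supplied the omitted details.
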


See Corollary 2.19 of (2006a).

Let us consider the case of $ID_{\mathrm{AB}}$.

\begin{thm}\label{p7.2}
Suppose that $\mu\in ID_{\mathrm{AB}}(\mathbb{R}^d)$.
Let $f(s)$ be an $\mathbb{R}$-valued measurable function on $(a,b)$. Then the
following statements are equivalent.

{\rm(a)} $f\in\mathbf L_{(a,b)}(X^{(\mu)})$ and
\begin{equation}\label{7.1a}
\mathcal L\left(\int_p^q f(s)X^{(\mu)}(ds)\right) \in ID_{\mathrm{AB}}
(\mathbb{R}^d)\quad
\text{for all $p,q$ with $a<p<q<b$}.
\end{equation}

{\rm(b)} The L\'evy measure $\nu$ and the drift $\gamma^0$ of $\mu$ satisfy
{\allowdisplaybreaks
\begin{gather}
\int_p^q ds\int_{\mathbb{R}^d} (|f(s)x|\land1)\,\nu(dx)<\infty
\quad\text{for all $p,q$ with $a<p<q<b$},\label{7.1b}\\
\int_p^q |f(s)\gamma^0|ds<\infty\quad\text{for all $p,q$ with $a<p<q<b$}
.\label{7.1c}
\end{gather}}
\end{thm}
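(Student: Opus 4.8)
The plan is to show the equivalence (a)$\Leftrightarrow$(b) by working from the characterizations already available: Theorem \ref{t3.1}(ii) (valid since $\mu\in ID_{\mathrm{AB}}$ has $A=0$) together with the triplet formulas of Proposition \ref{p3.2}, and the fact that membership in $ID_{\mathrm{AB}}(\mathbb{R}^d)$ is equivalent to the L\'evy measure satisfying $\int_{|x|\leqslant1}|x|\nu(dx)<\infty$. First I would record that, by \eqref{3.9}, the L\'evy measure $\nu_p^q$ of $\int_p^q f(s)X^{(\mu)}(ds)$ satisfies
\[
\int_{|y|\leqslant1}|y|\,\nu_p^q(dy)=\int_p^q ds\int_{\mathbb{R}^d}|f(s)x|\,1_{\{|f(s)x|\leqslant1\}}\,\nu(dx),
\]
so condition \eqref{7.1a} (for all $p,q$) is equivalent to the finiteness of this double integral for all $p,q$, and a routine comparison ($|f(s)x|1_{\{|f(s)x|\leqslant1\}}\asymp |f(s)x|\land1$ up to the tail $\nu(|x|>\text{const})<\infty$ and the factor $f(s)^2$ absorbed by local square integrability, which is free here) shows this is in turn equivalent to \eqref{7.1b}. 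This handles the L\'evy-measure part of both implications simultaneously.

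Next I would connect the two remaining scalar integrability conditions, \eqref{3.3} and \eqref{7.1c}. The key algebraic point is that when $\mu\in ID_{\mathrm{AB}}$ one may pass from the ``truncated'' centering $\gamma$ in \eqref{1.1} to the drift $\gamma^0$ of \eqref{7.1}, namely $\gamma=\gamma^0+\int_{\mathbb{R}^d}\frac{x}{1+|x|^2}\nu(dx)$, and similarly for the image law using \eqref{7.1} applied to $\nu_p^q$ — this is legitimate precisely because \eqref{7.1b} (equivalently \eqref{7.1a}) guarantees $\int_{|x|\leqslant1}|x|\nu_p^q(dx)<\infty$. Substituting into \eqref{3.10} and simplifying, the drift of $\int_p^q f(s)X^{(\mu)}(ds)$ becomes $\int_p^q f(s)\gamma^0\,ds$ plus a correction term of the form $\int_p^q ds\int f(s)x\big(1_{\{\cdot\}}\text{-type differences}\big)\nu(dx)$ that is dominated, using $r/(1+r^2)\leqslant1/2$ and \eqref{7.1b}, by a finite quantity. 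Hence the finiteness of the centering integral \eqref{3.3} reduces, modulo \eqref{7.1b}, exactly to \eqref{7.1c}.

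Putting these together: for (a)$\Rightarrow$(b), $f\in\mathbf L_{(a,b)}(X^{(\mu)})$ gives \eqref{3.2} and \eqref{3.3} by Theorem \ref{t3.1}(ii), and \eqref{7.1a} upgrades \eqref{3.2} to \eqref{7.1b} via the L\'evy-measure computation above, after which \eqref{3.3} and \eqref{7.1b} together force \eqref{7.1c}. For (b)$\Rightarrow$(a), \eqref{7.1b} clearly implies \eqref{3.2}, and \eqref{7.1b} together with \eqref{7.1c} yields \eqref{3.3} by the same drift identity, so Theorem \ref{t3.1}(ii) gives $f\in\mathbf L_{(a,b)}(X^{(\mu)})$; then the L\'evy measure $\nu_p^q$ inherits $\int_{|y|\leqslant1}|y|\nu_p^q(dy)<\infty$ from \eqref{7.1b}, which is exactly \eqref{7.1a}. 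The main obstacle is bookkeeping: one must verify carefully that the correction term obtained when rewriting \eqref{3.10} in terms of $\gamma^0$ is finite \emph{locally} under \eqref{7.1b} alone (with no appeal to \eqref{7.1c}), so that \eqref{3.3} and \eqref{7.1c} are genuinely interchangeable given \eqref{7.1b}; this is where the estimate $|f(s)x||x|^2/((1+|f(s)x|^2)(1+|x|^2))\leqslant\frac12(|f(s)x|\land1)\wedge(\text{something }\nu\text{-integrable})$ and splitting the $x$-integral at $|x|=1$ do the work, and it should be carried out explicitly rather than waved through.
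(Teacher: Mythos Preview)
Your proposal is correct and matches the paper closely for (b)$\Rightarrow$(a): both rewrite the integrand of \eqref{3.3} via $\gamma=\gamma^0+\int x(1+|x|^2)^{-1}\nu(dx)$ to obtain
\[
f(s)\gamma+\int_{\mathbb{R}^d}f(s)x\left(\frac{1}{1+|f(s)x|^2}-\frac{1}{1+|x|^2}\right)\nu(dx)
= f(s)\gamma^0+\int_{\mathbb{R}^d}\frac{f(s)x}{1+|f(s)x|^2}\,\nu(dx),
\]
and then bound the second term by $2\int(|f(s)x|\land1)\,\nu(dx)$ using $r/(1+r^2)\leqslant 2r/(1+r)\leqslant 2(r\land1)$.

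For (a)$\Rightarrow$(b) you take a slightly different route: you reuse the same identity to pass from \eqref{3.3} back to \eqref{7.1c} once \eqref{7.1b} is in hand, whereas the paper instead invokes Proposition~\ref{p3.1} (finiteness of $\int_p^q|C_\mu(f(s)z)|\,ds$), writes $C_\mu$ in its drift form $\int(e^{i\langle f(s)z,x\rangle}-1)\nu(dx)+i\langle f(s)\gamma^0,z\rangle$, bounds the exponential part by \eqref{7.1b}, and reads off $\int_p^q|\langle f(s)\gamma^0,z\rangle|\,ds<\infty$ for each coordinate $z$. Your version is a bit more symmetric and avoids the cumulant function; the paper's version avoids re-checking that the correction term is finite under \eqref{7.1b} alone (it only needs that check once, in (b)$\Rightarrow$(a)).

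One point to tighten: after the substitution the correction term is exactly $\int f(s)x(1+|f(s)x|^2)^{-1}\nu(dx)$, not an indicator-difference expression, and the estimate you actually need is $|f(s)x|/(1+|f(s)x|^2)\leqslant 2(|f(s)x|\land1)$, not the $|f(s)x||x|^2/((1+|f(s)x|^2)(1+|x|^2))$ bound you quote at the end (that expression comes from Remark~\ref{r3.1} and is not what arises here). Once you write the correct term, no splitting at $|x|=1$ is required.
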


\begin{proof}
Assume (a).  We use Propositions \ref{p3.1} and \ref{p3.2}.
The triplet $(A_p^q,\nu_p^q, \gamma_p^q)$ of $\int_p^q f(s)X^{(\mu)}(ds)$
satisfies $A_p^q=0$ and $\int_{\mathbb{R}^d}(|x|\land1)\nu_p^q(dx)<\infty$.
We have \eqref{7.1b} since
\[
\int_p^q ds\int_{\mathbb{R}^d} (|f(s)x|\land1)\,\nu(dx)
=\int_{\mathbb{R}^d}(|x|\land1)\,\nu_p^q(dx)
\]
from \eqref{3.9}. We have
{\allowdisplaybreaks
\begin{equation*}
\infty>\int_p^q |C_{\mu}(f(s)z)|ds
=\int_p^q ds\left|\int_{\mathbb{R}^d}(e^{i\langle f(s)z,x\rangle}-1)\,\nu(dx)
+\langle f(s)z,\gamma^0\rangle\right|
\end{equation*}
and
\[
\int_p^q ds\int_{\mathbb{R}^d}|e^{i\langle f(s)z,x\rangle}-1|\,\nu(dx)\leqslant 
I_1+I_2,
\]
where
\begin{align*}
I_1&=\int_p^q ds\int_{\mathbb{R}^d}|\langle f(s)x,z\rangle| 1_{\{|f(s)x|\leqslant1\}}
\nu(dx)\\
&\leqslant |z|\int_p^q ds\int_{\mathbb{R}^d}|f(s)x|1_{\{|f(s)x|\leqslant1\}}\nu(dx)
\leqslant |z|\int_p^q ds\int_{\mathbb{R}^d}(|f(s)x|\land1)\,\nu(dx)<\infty
\end{align*}
and
\[
I_2=2\int_p^q ds\int_{\mathbb{R}^d}1_{\{|f(s)x|>1\}}\nu(dx)
\leqslant2\int_p^q ds\int_{\mathbb{R}^d}(|f(s)x|\land1)\,\nu(dx)<\infty.
\]
Therefore
\[
\int_p^q |\langle f(s)\gamma^0,z\rangle|ds<\infty\quad\text{for all }z.
\]
Choosing} $z=(\delta_{jk})_{1\leqslant k\leqslant d}$, we see that
$\gamma^0=(\gamma_j^0)_{1\leqslant j\leqslant d}$ satisfies $\int_p^q
|f(s)\gamma^0_j|ds
<\infty$. Hence \eqref{7.1c} is satisfied. Thus (b) is obtained.
Note that $\int_p^q f(s)X^{(\mu)}(ds)$ has drift 
\begin{equation}\label{7.1d}
(\gamma^0)_p^q=\int_p^q f(s)\gamma^0 ds.
\end{equation}

Conversely assume (b). We have
\begin{equation*}
\int_p^q ds\int_{\mathbb{R}^d} (|f(s)x|^2\land1)\,\nu(dx)\leqslant
\int_p^q ds\int_{\mathbb{R}^d} (|f(s)x|\land1)\,\nu(dx)<\infty.
\end{equation*}
Since $\gamma^0=\gamma-\int_{\mathbb{R}^d}x(1+|x|^2)^{-1}\nu(dx)$, we have
{\allowdisplaybreaks
\begin{align*}
&\int_p^q ds\left|f(s)\gamma+\int_{\mathbb{R}^d} f(s)x\left(\frac{1}{1+|f(s)x|^2}
-\frac{1}{1+|x|^2}\right)\nu(dx)\right|\\
&\qquad=\int_p^q ds\left|f(s)\gamma^0+\int_{\mathbb{R}^d} \frac{f(s)x}{1+|f(s)x|^2}
\nu(dx)\right|,
\end{align*}
which is finite. Indeed, we have \eqref{7.1c} and,
using $(1+r^2)^{-1}\leqslant 2(1+r)^{-1}$ for 
$r\geqslant0$, we have
\begin{align*}
&\int_p^q ds\int_{\mathbb{R}^d} \frac{|f(s)x|}{1+|f(s)x|^2}
\nu(dx)\leqslant
2\int_p^q ds\int_{\mathbb{R}^d} \frac{|f(s)x|}{1+|f(s)x|}
\nu(dx)\\
&\qquad\leqslant
2\int_p^q ds\int_{\mathbb{R}^d} (|f(s)x|\land1)\nu(dx)<\infty.
\end{align*}
It follows from Theorem \ref{t3.1} that} $f\in\mathbf L_{(a,b)}(X^{(\mu)})$.
The triplet $(A_p^q,\nu_p^q, \gamma_p^q)$ of\linebreak
$\int_p^q f(s)X^{(\mu)}(ds)$
satisfies $A_p^q=0$ and \eqref{3.9}. Thus
\[
\int_{\mathbb{R}^d}(|x|\land1)\,\nu_p^q(dx)=
\int_p^q ds\int_{\mathbb{R}^d} (|f(s)x|\land1)\,\nu(dx)
<\infty.
\]
Hence we obtain \eqref{7.1a}.
\end{proof}

\section{Improper stochastic integrals on $(a,b)$}

Fix $-\infty\leqslant a<b\leqslant\infty$. 
Let us define improper stochastic integrals on $(a,b)$ with nonrandom integrands
and their modifications. For $(a,b)=(0,\infty)$ 
Cherny and Shiryaev (2005) study stochastic integrals
up to infinity (with random integrands in general) in semi-martingale
approach, but we are treating a simpler situation without using semi-martingales.
 Let $\mu=\mu_{(A,\nu,\gamma)}
\in ID(\mathbb{R}^d)$ and let $X^{(\mu)}$ be as in 
Section 2. 
{\em In this section throughout, we assume that $f\in\mathbf L_{(a,b)}(X^{(\mu)})$.}

\begin{defn}\label{d4.1}
We say that {\it the improper stochastic integral on $(a,b)$ 
of $f$ with respect to $X^{(\mu)}$ is
definable} if $\int_p^q f(s)X^{(\mu)}(ds)$ is convergent in probability in 
$\mathbb{R}^d$
as $p\downarrow a$
and $q\uparrow b$. The limit is written as $\int_{a+}^{b-} f(s)X^{(\mu)}(ds)$ and its
distribution is written as $\Phi_f(\mu)$.
\end{defn}

\begin{defn}\label{d4.2}
We say that {\it the essential improper integral on $(a,b)$ of $f$ with respect to 
$X^{(\mu)}$ is definable} if  there is 
a nonrandom $\mathbb{R}^d$-valued
function $g(p,q)$, $a<p<q<b$ such that $\int_p^q f(s)X^{(\mu)}(ds)-g(p,q)$ is
convergent in probability in $\mathbb{R}^d$ as $p\downarrow a$
and $q\uparrow b$. Notice that there is a freedom of choice of $g(p,q)$. Let
$\Phi_{f,\,\mathrm{es}}(\mu)$ denote the class of the
distributions of all such limits.
\end{defn}

\begin{defn}\label{d4.3}
We say that {\it the compensated improper integral on $(a,b)$ of $f$ with respect to
$X^{(\mu)}$ is definable} if  there is $\theta\in\mathbb{R}^d$
 such that  $\int_{a+}^{b-} f(s)X^{(\mu*\delta_{-\theta})}(ds)$ is definable.
Here $\delta_{-\theta}$ is the distribution concentrated at $-\theta$. 
As there may be a freedom of choice of $\theta$, let $\Phi_{f,\,\mathrm{c}}(\mu)$
denote the class of the distributions of all such limits.
\end{defn}

\begin{defn}\label{d4.4}
Let $X^{(\mu) \sharp}$ be an independent copy of $X^{(\mu)}$. 
We say that {\it the symmetrized improper integral on $(a,b)$ of $f$ with respect to
$X^{(\mu)}$ is definable} if\linebreak
$\int_{a+}^{b-} f(s)(X^{(\mu)}(ds)-X^{(\mu) \sharp}(ds))$
is definable. Let $\Phi_{f,\,\mathrm{sym}}(\mu)$ denote the distribution of the limit.
\end{defn}

Note that $\Phi_f(\mu)$ and $\Phi_{f,\,\mathrm{sym}}(\mu)$ are elements of
 $ID(\mathbb{R}^d)$,
while $\Phi_{f,\,\mathrm{es}}(\mu)$ and $\Phi_{f,\,\mathrm{c}}(\mu)$ are subsets of
$ID(\mathbb{R}^d)$.
{\it Thus we consider $\Phi_f$ and $\Phi_{f,\,\mathrm{sym}}$ as transformations 
of $\mu\in ID(\mathbb{R}^d)$ into $ID(\mathbb{R}^d)$, and $\Phi_{f,\,\mathrm{es}}$ and 
$\Phi_{f,\,\mathrm{c}}$ as transformations 
of $\mu\in ID(\mathbb{R}^d)$ with values being subsets of $ID(\mathbb{R}^d)$.}  

Sometimes we say that $\Phi_f(\mu)$ is definable if $\int_{a+}^{b-} f(s)X^{(\mu)}(ds)$
is definable. We say that $\Phi_{f,\,\mathrm{es}}(\mu)$ [resp.\ $\Phi_{f,\,\mathrm{c}}
(\mu)$,
$\Phi_{f,\,\mathrm{sym}}(\mu)$] is definable if the essential [resp.\ compensated, 
symmetrized] improper integral on $(a,b)$ of $f$ with respect to 
$X^{(\mu)}$ is definable.

\begin{thm}\label{t4.1}
The following three statements are equivalent.

{\rm(a)} $\Phi_f(\mu)$ is definable.

{\rm(b)} For each $z\in\mathbb{R}^d$, $\int_p^q C_{\mu}(f(s)z)ds$ is convergent
in $\mathbb{C}$ as $p\downarrow a$ and $q\uparrow b$.

{\rm(c)} The triplet $(A,\nu,\gamma)$ satisfies the following:
{\allowdisplaybreaks
\begin{gather}
\int_a^b f(s)^2 \mathrm{tr}\, A\,ds<\infty,\label{4.1}\\
\int_a^b ds\int_{\mathbb{R}^d} (|f(s)x|^2\land1)\,\nu(dx)<\infty,\label{4.2}\\
\text{$\gamma_p^q$ is convergent in $\mathbb{R}^d$ as $p\downarrow a$ and $q\uparrow b$.}
\label{4.3}
\end{gather}
}
\end{thm}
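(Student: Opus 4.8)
The plan is to establish the cycle of implications (a)$\Rightarrow$(b)$\Rightarrow$(c)$\Rightarrow$(a), relying on Propositions \ref{p3.1} and \ref{p3.2}, which already give us the cumulant function and the triplet of the proper integrals $\int_p^q f(s)X^{(\mu)}(ds)$. Throughout, recall that by hypothesis $f\in\mathbf L_{(a,b)}(X^{(\mu)})$, so all the finite-interval objects $C_{\mu}(f(\cdot)z)$, $A_p^q$, $\nu_p^q$, $\gamma_p^q$ are well defined and satisfy \eqref{3.-1}, \eqref{3.0}, \eqref{3.8}--\eqref{3.10}.

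For (a)$\Rightarrow$(b): if $\int_p^q f(s)X^{(\mu)}(ds)\to\int_{a+}^{b-}f(s)X^{(\mu)}(ds)$ in probability, then the characteristic functions converge pointwise, hence by \eqref{3.0} the quantity $\exp\bigl(\int_p^q C_{\mu}(f(s)z)ds\bigr)$ converges, for each fixed $z$, to $\widehat{\Phi_f(\mu)}(z)$, which is continuous in $z$ and equals $1$ at $z=0$. The standard argument (the limit characteristic function of a convergent-in-law sequence of infinitely divisible laws is itself never zero, by a Lévy-continuity / distinguished-logarithm argument) lets one take the distinguished continuous logarithm and conclude that $\int_p^q C_{\mu}(f(s)z)ds$ itself converges in $\mathbb{C}$; one must be a little careful here that the double limit $p\downarrow a$, $q\uparrow b$ is meant, but this is handled exactly as in the one-sided case in (2006a).

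For (b)$\Rightarrow$(c): write $C_{\mu}(f(s)z)$ using the Lévy--Khintchine form \eqref{1.1} and integrate in $s$ over $(p,q]$. Splitting into the Gaussian part, the jump part, and the drift part, and using the explicit formulas \eqref{3.8}--\eqref{3.10}, one gets $\int_p^q C_{\mu}(f(s)z)ds = -\tfrac12\langle z,A_p^q z\rangle + \int_{\mathbb{R}^d}(e^{i\langle z,x\rangle}-1-i\langle z,x\rangle(1+|x|^2)^{-1})\nu_p^q(dx) + i\langle\gamma_p^q,z\rangle$. Convergence of this expression for all $z$ forces convergence of each piece by the usual uniqueness of the Lévy--Khintchine representation: integrating against a Gaussian kernel in $z$ (or using a few well-chosen values of $z$) separates the three components, giving that $A_p^q$ has a limit — equivalently \eqref{4.1} since $A_p^q=\int_p^q f(s)^2A\,ds$ is monotone — that $\nu_p^q$ converges to a Lévy measure, yielding \eqref{4.2} via the $|x|^2\wedge1$ bound and monotone convergence, and finally that $\gamma_p^q$ converges, which is \eqref{4.3}. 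The direction (c)$\Rightarrow$(a) reverses this: \eqref{4.1}, \eqref{4.2}, \eqref{4.3} guarantee that the triplets $(A_p^q,\nu_p^q,\gamma_p^q)$ converge to some legitimate triplet $(\widetilde A,\widetilde\nu,\widetilde\gamma)$ (for the Lévy-measure convergence one checks $\int(|x|^2\wedge1)\nu_p^q(dx)$ stays bounded and passes to the limit set-wise on sets bounded away from $0$), hence $\mathcal{L}(\int_p^q f(s)X^{(\mu)}(ds))$ converges weakly to $\mu_{(\widetilde A,\widetilde\nu,\widetilde\gamma)}$; since these random variables form a net that is "nested" (the increments $\int_p^{p'}+\int_{q'}^q$ are independent of $\int_{p'}^{q'}$), weak convergence upgrades to convergence in probability — this is the Itô--Nisio / Lévy type argument for sums of independent increments — giving definability of the improper integral, i.e.\ (a).

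The main obstacle I expect is not any single estimate but the bookkeeping of the \emph{double} limit $p\downarrow a$ and $q\uparrow b$ taken jointly (rather than iterated), together with the promotion of weak convergence to convergence in probability in step (c)$\Rightarrow$(a): one needs the independent-increments structure of $X^{(\mu)}$ to run the Itô--Nisio argument, and one must phrase "convergence as $p\downarrow a,\,q\uparrow b$" correctly as convergence along the directed set of intervals $[p,q]\subset(a,b)$ ordered by inclusion. Once that framework is fixed, separating the three components of the Lévy--Khintchine triplet from convergence of $\int_p^q C_{\mu}(f(s)z)ds$ is routine, using that $s\mapsto f(s)^2$ and $s\mapsto\int(|f(s)x|^2\wedge1)\nu(dx)$ are nonnegative so the corresponding integrals are monotone in the interval and convergence is equivalent to finiteness over all of $(a,b)$.
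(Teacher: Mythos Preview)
Your proposal is correct and follows essentially the same strategy as the paper, which simply refers the reader to Propositions 5.5 of (2006a) and 2.2, 2.6 of (2006b) together with an analogue of Lemma 5.4 of (2006a); you have written out in detail the argument those references contain, namely the passage from convergence in probability to convergence of cumulants via distinguished logarithms, the separation of the three triplet components (using monotonicity of the Gaussian and L\'evy-measure integrals in the interval $[p,q]$), and the upgrade from weak convergence to convergence in probability via the independent-increments structure. The only point worth flagging is that in (b)$\Rightarrow$(c) you should make explicit that pointwise convergence of $\int_p^q C_\mu(f(s)z)\,ds$ together with the monotonicity of $\mathrm{tr}\,A_p^q$ and $\int(|x|^2\wedge1)\,\nu_p^q(dx)$ forces these to stay bounded (hence the net of laws is tight and the limit is a genuine cumulant), which is precisely the content of the cited Lemma~5.4 analogue.
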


\begin{proof}
Similar to Proposition 5.5 of (2006a) and Propositions 2.2 and 2.6 
of (2006b). Here the equivalence of statement (c) 
is based on an analogue of Lemma 5.4 of (2006a).
\end{proof}

\begin{thm}\label{t4.2}
$\Phi_{f,\,\mathrm{es}}(\mu)$ is definable if and only if \eqref{4.1} and 
\eqref{4.2} hold.
\end{thm}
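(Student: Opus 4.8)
The plan is to prove the two implications separately, leveraging Theorem~\ref{t4.1} for the ``hard'' direction and a direct centering argument for the ``easy'' one. For the ``only if'' direction, suppose $\Phi_{f,\,\mathrm{es}}(\mu)$ is definable, so that $\int_p^q f(s)X^{(\mu)}(ds)-g(p,q)$ converges in probability for some nonrandom $g(p,q)$. The key observation is that convergence in probability to a (necessarily infinitely divisible) limit forces convergence of the characteristic functions, hence of the cumulant functions. Since $\int_p^q f(s)X^{(\mu)}(ds)$ has cumulant function $\int_p^q C_\mu(f(s)z)\,ds$ by \eqref{3.0}, the quantity $\int_p^q C_\mu(f(s)z)\,ds - i\langle g(p,q),z\rangle$ converges for each $z$. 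Writing out the L\'evy--Khintchine form \eqref{1.1} and splitting real and imaginary parts, the real part $\int_p^q \operatorname{Re} C_\mu(f(s)z)\,ds$ does not involve $g(p,q)$ at all, so it converges on its own. The real part equals $-\frac12 f(s)^2\langle z,Az\rangle + \int_{\mathbb{R}^d}(\cos\langle z,f(s)x\rangle - 1)\,\nu(dx)$ integrated in $s$; since the integrand is nonpositive, convergence of the integral as $p\downarrow a$, $q\uparrow b$ is equivalent to finiteness of $\int_a^b(-\operatorname{Re}C_\mu(f(s)z))\,ds<\infty$ for each $z$. A standard estimate (as in the analogue of Lemma 5.4 of (2006a) cited in the proof of Theorem~\ref{t4.1}) then shows this finiteness, holding for all $z$ in a neighborhood of the origin, is equivalent to the conjunction of \eqref{4.1} and \eqref{4.2}.

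For the ``if'' direction, assume \eqref{4.1} and \eqref{4.2}. The natural choice of centering is $g(p,q)=\gamma_p^q - \gamma_{p_0}^{q_0}$ for some fixed $p_0,q_0$ with $a<p_0<q_0<b$, or more simply, we aim to show that $\int_p^q f(s)X^{(\mu)}(ds) - \gamma_p^q$ converges in probability, where $\gamma_p^q$ is the shift given by \eqref{3.10}. The centered integral has triplet $(A_p^q,\nu_p^q,0)$ with $A_p^q=\int_p^q f(s)^2 A\,ds$ and $\nu_p^q$ given by \eqref{3.9}. By \eqref{4.1}, $A_p^q$ converges (it is monotone in the pair $(p,q)$), and by \eqref{4.2}, $\int_p^q ds\int_{\mathbb{R}^d}(|f(s)x|^2\wedge 1)\,\nu(dx)$ converges; this last is precisely the condition for $\nu_p^q$ to converge to a L\'evy measure in the appropriate sense (weakly on sets bounded away from $0$, with the truncated second moments converging). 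By the standard convergence criterion for infinitely divisible distributions (e.g. Sato (1999), Theorem 8.7), the centered integrals converge in distribution, hence — being a net indexed by $(p,q)$ whose increments $\int_{p'}^p f\,dX^{(\mu)}+\int_q^{q'}f\,dX^{(\mu)}$ minus the corresponding shift are independent of the earlier terms — in probability by a Cauchy argument in probability. Thus the essential improper integral is definable with this choice of $g$.

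The main obstacle is the ``if'' direction's passage from convergence in distribution of the \emph{centered} integrals to convergence \emph{in probability}. One must argue that, because of the independent-increments structure of $X^{(\mu)}$ over disjoint subintervals, the family $\{\int_p^q f\,dX^{(\mu)} - \gamma_p^q\}$ forms a Cauchy net in probability: for $p'<p$ and $q<q'$, the difference is the sum of two independent pieces $\int_{p'}^p f\,dX^{(\mu)}-(\gamma_{p'}^{p})$-type terms whose distributions shrink to $\delta_0$ by the tail control in \eqref{4.1}--\eqref{4.2}. This is the technical heart, and it parallels Proposition~5.5 of (2006a) and Propositions 2.2, 2.6 of (2006b); I would cite those and indicate the modifications needed for the two-sided limit $p\downarrow a$, $q\uparrow b$ rather than the one-sided $q\uparrow\infty$. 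A secondary point to handle carefully is the equivalence, in the ``only if'' direction, between finiteness of $\int_a^b(-\operatorname{Re}C_\mu(f(s)z))\,ds$ for all small $z$ and the pair \eqref{4.1}, \eqref{4.2}: the Gaussian part contributes a term comparable to $f(s)^2|z|^2$ and the jump part a term comparable to $\int(|f(s)x|^2\wedge 1)\,\nu(dx)$ (up to constants depending on $z$), so one separates the two by choosing $z$ in suitable coordinate directions and using that $A\ne 0$ or $A=0$ as in the proof of Theorem~\ref{t3.1}.
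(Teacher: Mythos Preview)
Your proposal is correct and follows precisely the approach that the paper defers to: the paper's own proof consists entirely of the sentence ``Similar to Proposition 5.6 of (2006a),'' and what you have written is a faithful unpacking of that argument---centering by $\gamma_p^q$ so that the recentered integrals have triplet $(A_p^q,\nu_p^q,0)$, using \eqref{4.1}--\eqref{4.2} to control the triplet and hence obtain convergence in distribution, and then upgrading to convergence in probability via the independent-increments Cauchy argument (the difference over $(p',p)$ and $(q,q')$ has triplet tending to zero, hence tends to $\delta_0$). Your ``only if'' direction via the real part of the cumulant and the analogue of Lemma~5.4 of (2006a) is likewise the intended route.
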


\begin{proof}
Similar to Proposition 5.6 of (2006a).
\end{proof}

\begin{thm}\label{t4.3}
$\Phi_{f,\,\mathrm{c}}(\mu)$ is definable if and only if \eqref{4.1}, \eqref{4.2},
and
\begin{equation}\label{4.4}
\begin{split}
&\int_p^q f(s)\left(\gamma^{\sharp}+\int_{\mathbb{R}^d} x\left(\frac{1}{1+|f(s)x|^2}
-\frac{1}{1+|x|^2}\right)\nu(dx)\right)ds\\
&\text{is convergent in $\mathbb{R}^d$ with some $\gamma^{\sharp}\in\mathbb{R}^d$ 
as $p\downarrow a$ and $q\uparrow b$.}
\end{split}
\end{equation}
\end{thm}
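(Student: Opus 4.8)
The plan is to reduce everything to Theorem \ref{t4.1} via the observation that shifting $\mu$ by $-\theta$ changes only the drift term of the L\'evy--Khintchine triplet. Indeed, $C_{\mu*\delta_{-\theta}}(z)=C_\mu(z)-i\langle\theta,z\rangle$, and since \eqref{1.1} is affine in $\gamma$, the distribution $\mu*\delta_{-\theta}$ has triplet $(A,\nu,\gamma-\theta)$. By Definition \ref{d4.3}, $\Phi_{f,\mathrm c}(\mu)$ is definable if and only if $f\in\mathbf L_{(a,b)}(X^{(\mu*\delta_{-\theta})})$ and $\Phi_f(\mu*\delta_{-\theta})$ is definable for some $\theta\in\mathbb R^d$. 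Writing $(A_p^q,\nu_p^q,\gamma_p^q)$ for the triplet of $\int_p^q f(s)X^{(\mu*\delta_{-\theta})}(ds)$ and applying \eqref{3.10} with $\gamma$ replaced by $\gamma-\theta$,
\[
\gamma_p^q=\int_p^q f(s)\left((\gamma-\theta)+\int_{\mathbb{R}^d}x\left(\frac{1}{1+|f(s)x|^2}-\frac{1}{1+|x|^2}\right)\nu(dx)\right)ds,
\]
which is exactly the integral appearing in \eqref{4.4} with $\gamma^\sharp=\gamma-\theta$. Since $\theta\mapsto\gamma-\theta$ is a bijection of $\mathbb R^d$, the quantifier ``for some $\theta$'' in Definition \ref{d4.3} matches ``with some $\gamma^\sharp$'' in \eqref{4.4}, so it remains only to match the conditions of Theorem \ref{t4.1}(c) for $\mu*\delta_{-\theta}$ against \eqref{4.1}, \eqref{4.2}, \eqref{4.4}, noting that the Gaussian part $A$ and the L\'evy measure $\nu$ of $\mu*\delta_{-\theta}$ coincide with those of $\mu$, so that \eqref{4.1} and \eqref{4.2} are unaffected by the shift.

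For the ``if'' part I would assume \eqref{4.1}, \eqref{4.2}, \eqref{4.4}, fix $\gamma^\sharp$ as in \eqref{4.4}, and set $\theta=\gamma-\gamma^\sharp$. First I would check $f\in\mathbf L_{(a,b)}(X^{(\mu*\delta_{-\theta})})$ so that Theorem \ref{t4.1} applies to $\mu*\delta_{-\theta}$: convergence of the improper integral in \eqref{4.4} entails, in particular, that $\int_p^q$ of the absolute value of its integrand is finite for every $a<p<q<b$, and this is precisely \eqref{3.3} for the triplet $(0,\nu,\gamma^\sharp)$; together with \eqref{4.2} (which is \eqref{3.2}) this gives $f\in\mathbf L_{(a,b)}(X^{(\mu*\delta_{-\theta})})$ by Theorem \ref{t3.1}(ii) when $A=0$, while when $A\ne0$ condition \eqref{4.1} yields \eqref{3.1} and Theorem \ref{t3.1}(i) applies. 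Then Theorem \ref{t4.1} applied to $\mu*\delta_{-\theta}$, whose triplet is $(A,\nu,\gamma^\sharp)$, requires \eqref{4.1}, \eqref{4.2}, and convergence of $\gamma_p^q$; by the displayed identity the last is exactly the convergence asserted in \eqref{4.4}. Hence $\Phi_f(\mu*\delta_{-\theta})$ is definable, so $\Phi_{f,\mathrm c}(\mu)$ is definable.

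For the ``only if'' part I would take $\theta\in\mathbb R^d$ witnessing definability of $\Phi_{f,\mathrm c}(\mu)$, so $f\in\mathbf L_{(a,b)}(X^{(\mu*\delta_{-\theta})})$ and $\Phi_f(\mu*\delta_{-\theta})$ is definable. The implication (a)$\Rightarrow$(c) of Theorem \ref{t4.1} applied to $\mu*\delta_{-\theta}$ gives \eqref{4.1} and \eqref{4.2} (unchanged by the shift) and convergence of $\gamma_p^q$ for $\mu*\delta_{-\theta}$, which via the displayed identity is \eqref{4.4} with $\gamma^\sharp=\gamma-\theta$. This closes the equivalence.

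I do not anticipate a real obstacle; the content is carried by Theorems \ref{t3.1} and \ref{t4.1} and the triplet formula \eqref{3.10}. The only point needing care is the bookkeeping around local integrability: the standing hypothesis of this section supplies $f\in\mathbf L_{(a,b)}(X^{(\mu)})$ but not $f\in\mathbf L_{(a,b)}(X^{(\mu*\delta_{-\theta})})$, so in the ``if'' direction one must extract the latter from \eqref{4.1}, \eqref{4.2}, \eqref{4.4} directly, as indicated above, and one must be careful that the ``freedom of choice'' of $\theta$ in Definition \ref{d4.3} is correctly matched with that of $\gamma^\sharp$ in \eqref{4.4}.
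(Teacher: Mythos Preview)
Your proposal is correct and follows exactly the paper's approach: the paper's proof reads in full ``This theorem follows from Definition \ref{d4.3} and Theorem \ref{t4.1}.'' You have simply unpacked that one-line argument, including the verification that the hypotheses yield $f\in\mathbf L_{(a,b)}(X^{(\mu*\delta_{-\theta})})$ via Theorem \ref{t3.1}, which the paper leaves implicit.
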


\begin{proof} This theorem follows from Definition \ref{d4.3} and 
Theorem \ref{t4.1}.
\end{proof}

\begin{cor}\label{c4.1}
Suppose that $\int_p^q f(s)ds$ is convergent as $p\downarrow a$ and $q\uparrow b$.
Then, $\Phi_{f,\,\mathrm{c}}(\mu)$ is definable if and only if $\Phi_f(\mu)$
is definable.
\end{cor}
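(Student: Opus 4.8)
The plan is to read everything off the triplet-level characterisations already established. One direction is immediate: if $\Phi_f(\mu)$ is definable, then taking $\theta=0$ in Definition~\ref{d4.3} shows at once that $\Phi_{f,\,\mathrm{c}}(\mu)$ is definable, and this uses nothing about $\int_p^q f(s)\,ds$. So the substance of the corollary is the converse, and that is where the hypothesis on $\int_p^q f(s)\,ds$ enters.

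For the converse I would argue as follows. Suppose $\Phi_{f,\,\mathrm{c}}(\mu)$ is definable. By Theorem~\ref{t4.3}, conditions \eqref{4.1} and \eqref{4.2} hold and \eqref{4.4} holds with some $\gamma^{\sharp}\in\mathbb{R}^d$. Since $f\in\mathbf L_{(a,b)}(X^{(\mu)})$ throughout this section, both the vector $\gamma_p^q$ of \eqref{3.10} and the integrand in \eqref{4.4} are well defined for each $a<p<q<b$, and expanding $f(s)\bigl(\gamma^{\sharp}+\int_{\mathbb{R}^d}x(\cdots)\nu(dx)\bigr)=f(s)\gamma^{\sharp}+\int_{\mathbb{R}^d}f(s)x(\cdots)\nu(dx)$ and comparing with \eqref{3.10} gives
\[
\int_p^q f(s)\Bigl(\gamma^{\sharp}+\int_{\mathbb{R}^d}x\Bigl(\frac{1}{1+|f(s)x|^2}-\frac{1}{1+|x|^2}\Bigr)\nu(dx)\Bigr)ds=\gamma_p^q+(\gamma^{\sharp}-\gamma)\int_p^q f(s)\,ds
\]
for all $a<p<q<b$. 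The left-hand side converges as $p\downarrow a$, $q\uparrow b$ by \eqref{4.4}, and $\int_p^q f(s)\,ds$ converges by hypothesis, so $\gamma_p^q$ converges; that is, \eqref{4.3} holds. Together with \eqref{4.1} and \eqref{4.2}, Theorem~\ref{t4.1} then yields that $\Phi_f(\mu)$ is definable.

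I do not expect a genuine obstacle here. The only point deserving a word of care is the legitimacy of the split $f(s)\bigl(\gamma^{\sharp}+\int_{\mathbb{R}^d}x(\cdots)\nu(dx)\bigr)=f(s)\gamma^{\sharp}+\int_{\mathbb{R}^d}f(s)x(\cdots)\nu(dx)$, i.e.\ absolute convergence of the inner $\nu$-integral for Lebesgue-a.e.\ $s$; but this is already contained in the estimates used to prove Theorem~\ref{t3.1}, where $r/(1+r^2)\leqslant1/2$ yields $\int_{\mathbb{R}^d}|x|^2(1+|x|^2)^{-1}\nu(dx)<\infty$, so nothing new is required. The whole argument can equivalently be summarised as: under \eqref{4.1}, \eqref{4.2} and the convergence of $\int_p^q f(s)\,ds$, conditions \eqref{4.3} and \eqref{4.4} are equivalent, because \eqref{4.4} with $\gamma^{\sharp}=\gamma$ is precisely \eqref{4.3} and the displayed identity passes freely between different values of $\gamma^{\sharp}$.
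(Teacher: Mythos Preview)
Your proof is correct and is essentially the paper's own argument made explicit: the paper proves the corollary in one line by saying it ``follows from Theorems~\ref{t4.1} and~\ref{t4.3}'', and what you have written is precisely the computation that bridges conditions \eqref{4.3} and \eqref{4.4} via the identity $\gamma_p^q+(\gamma^{\sharp}-\gamma)\int_p^q f(s)\,ds$ for the quantity in \eqref{4.4}. Your remark that the hypothesis on $\int_p^q f(s)\,ds$ implicitly gives local integrability of $f$, which justifies splitting off $\int_p^q f(s)\gamma\,ds$ from $\gamma_p^q$, is the one point of care and you handle it correctly.
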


This follows from Theorems \ref{t4.1} and \ref{t4.3}.

\begin{thm}\label{t4.4}
$\Phi_{f,\,\mathrm{sym}}(\mu)$ is definable if and only if \eqref{4.1} and 
\eqref{4.2} hold.
\end{thm}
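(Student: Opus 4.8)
The plan is to reduce the symmetrized integral to an \emph{ordinary} improper integral for a symmetrized distribution and then quote Theorem~\ref{t4.1}. Write $\mu^-$ for the reflection of $\mu$, so that $\mu^-(B)=\mu(-B)$ has triplet $(A,\nu^-,-\gamma)$ with $\nu^-(B)=\nu(-B)$, and set $\mu^{\mathrm{s}}=\mu*\mu^-$. Then $\mu^{\mathrm{s}}$ is symmetric, $\widehat{\mu^{\mathrm{s}}}(z)=|\widehat\mu(z)|^2$, hence $C_{\mu^{\mathrm{s}}}(z)=C_\mu(z)+C_\mu(-z)=2\,\mathrm{Re}\,C_\mu(z)$, and $\mu^{\mathrm{s}}$ has triplet $(2A,\nu^{\mathrm{s}},0)$ with $\nu^{\mathrm{s}}=\nu+\nu^-$. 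From Definition~\ref{d2.1} one checks that $B\mapsto X^{(\mu)}(B)-X^{(\mu)\sharp}(B)$ is an $\mathbb{R}^d$-valued homogeneous independently scattered random measure with $\mathcal L\bigl((X^{(\mu)}-X^{(\mu)\sharp})((t,t+1])\bigr)=\mu^{\mathrm{s}}$, so by the uniqueness in Proposition~\ref{p2.0} it may serve as $X^{(\mu^{\mathrm{s}})}$. First I would verify $f\in\mathbf L_{(a,b)}(X^{(\mu^{\mathrm{s}})})$ by Theorem~\ref{t3.1}: when $A\ne0$ this is local square-integrability \eqref{3.1}, which holds by the standing hypothesis $f\in\mathbf L_{(a,b)}(X^{(\mu)})$; when $A=0$, condition \eqref{3.2} for $\nu^{\mathrm{s}}$ follows from that for $\nu$ since $\int_{\mathbb{R}^d}(|f(s)x|^2\land1)\,\nu^{\mathrm{s}}(dx)=2\int_{\mathbb{R}^d}(|f(s)x|^2\land1)\,\nu(dx)$, and condition \eqref{3.3} for the pair $(\nu^{\mathrm{s}},0)$ holds because the map $x\mapsto f(s)x\bigl((1+|f(s)x|^2)^{-1}-(1+|x|^2)^{-1}\bigr)$ is odd in $x$ while $\nu^{\mathrm{s}}$ is symmetric and, by the estimate used in the proof of Theorem~\ref{t3.1}(i), this map is absolutely $\nu^{\mathrm{s}}$-integrable (being dominated by $\frac{1}{2}(1+f(s)^2)\,|x|^2(1+|x|^2)^{-1}$, which has finite $\nu^{\mathrm{s}}$-integral), so the $\nu^{\mathrm{s}}$-integral occurring in \eqref{3.3} is in fact identically $0$. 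Consequently $\int_p^q f(s)\bigl(X^{(\mu)}(ds)-X^{(\mu)\sharp}(ds)\bigr)=\int_p^q f(s)X^{(\mu^{\mathrm{s}})}(ds)$ (for simple $f$ by linearity, in general by passage to the limit, and at the level of cumulants this is just $\int_p^q C_\mu(f(s)z)ds+\int_p^q C_\mu(-f(s)z)ds=\int_p^q C_{\mu^{\mathrm{s}}}(f(s)z)ds$ via Proposition~\ref{p3.1}). Hence, by Definition~\ref{d4.4}, ``$\Phi_{f,\,\mathrm{sym}}(\mu)$ is definable'' means exactly ``$\Phi_f(\mu^{\mathrm{s}})$ is definable''.

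Next I would apply the equivalence of (a) and (c) of Theorem~\ref{t4.1} with $\mu$ replaced by $\mu^{\mathrm{s}}$: $\Phi_f(\mu^{\mathrm{s}})$ is definable if and only if $\int_a^b f(s)^2\,\mathrm{tr}(2A)\,ds<\infty$, $\int_a^b ds\int_{\mathbb{R}^d}(|f(s)x|^2\land1)\,\nu^{\mathrm{s}}(dx)<\infty$, and the $\gamma$-component $(\gamma^{\mathrm{s}})_p^q$ of the triplet of $\int_p^q f(s)X^{(\mu^{\mathrm{s}})}(ds)$ converges in $\mathbb{R}^d$ as $p\downarrow a$ and $q\uparrow b$. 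By \eqref{3.10} applied to $\mu^{\mathrm{s}}$ (whose $\gamma$ is $0$) together with the vanishing of the odd $\nu^{\mathrm{s}}$-integral noted above, $(\gamma^{\mathrm{s}})_p^q=0$ for all $p,q$, so the third condition is automatically satisfied. Since $\mathrm{tr}(2A)>0$ precisely when $\mathrm{tr}\,A>0$, the first condition is exactly \eqref{4.1}, and the doubling identity for $\nu^{\mathrm{s}}$ makes the second condition exactly \eqref{4.2}. Therefore $\Phi_f(\mu^{\mathrm{s}})$ --- equivalently $\Phi_{f,\,\mathrm{sym}}(\mu)$ --- is definable if and only if \eqref{4.1} and \eqref{4.2} hold, which is the assertion. (Combined with Theorem~\ref{t4.2} this also yields $\mathfrak D(\Phi_{f,\,\mathrm{es}})=\mathfrak D(\Phi_{f,\,\mathrm{sym}})$.)

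I do not expect a genuine obstacle: the theorem is in effect a corollary of Theorem~\ref{t4.1}, the content being that symmetrization annihilates precisely the delicate conditional-convergence requirement \eqref{4.3} (equivalently \eqref{4.4}) that separates $\Phi_f$ and $\Phi_{f,\,\mathrm{c}}$ from $\Phi_{f,\,\mathrm{es}}$, so $\Phi_{f,\,\mathrm{sym}}$ ends up with the same domain as $\Phi_{f,\,\mathrm{es}}$. The only points requiring a line of verification are the identification of $X^{(\mu)}-X^{(\mu)\sharp}$, and of the associated integral, with $X^{(\mu^{\mathrm{s}})}$ --- routine from Definitions~\ref{d2.1} and \ref{d3.1} and Proposition~\ref{p2.0} --- and the vanishing of the centering vector of the symmetric L\'evy measure $\nu^{\mathrm{s}}$, for which one simply reuses the absolute-integrability bound already established inside the proof of Theorem~\ref{t3.1}(i).
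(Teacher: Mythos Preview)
Your proof is correct and follows essentially the same idea as the paper's. The paper's argument is terser: it simply observes that $\int_p^q f(s)\bigl(X^{(\mu)}(ds)-X^{(\mu)\sharp}(ds)\bigr)$ has triplet $(2A_p^q,(\nu_p^q)_{\mathrm{sym}},0)$ and then invokes the convergence-of-triplets criterion underlying Theorem~\ref{t4.1}, whereas you reformulate the symmetrized integral as an ordinary improper integral for $\mu^{\mathrm s}=\mu*\mu^-$ and apply Theorem~\ref{t4.1} to $\mu^{\mathrm s}$ directly; in both cases the crux is that the location component of the partial-integral triplet is identically~$0$ by symmetry, so condition~\eqref{4.3} is vacuous and only \eqref{4.1}--\eqref{4.2} remain.
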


\begin{proof} The law of $\int_p^q f(s)(X^{(\mu)}(ds)-X^{(\mu) \sharp}(ds))$
has triplet $(2A_p^q,(\nu_p^q)_{\mathrm{sym}},0)$, where
\begin{equation}\label{4.4a}
(\nu_p^q)_{\mathrm{sym}}\,(B)=\nu_p^q(B)+\nu_p^q(-B).
\end{equation}
Hence the condition for definability of $\Phi_{f,\,\mathrm{sym}}(\mu)$ is the same
as \eqref{4.1} and 
\eqref{4.2}.
\end{proof}

\begin{thm}\label{t4.5}
If\/ $\Phi_f(\mu)$ is definable, then $\Phi_f(\mu)$ has triplet 
$(A_{a}^{b},\nu_{a}^{b},\gamma_{a+}^{b-})$ given by
{\allowdisplaybreaks
\begin{gather}
A_{a}^{b}=\int_a^b f(s)^2 A ds,\label{4.5}\\
\nu_{a}^{b}(B)=\int_a^b ds \int_{\mathbb{R}^d} 1_B(f(s)x)\nu(dx)
\quad\text{for $B\in\mathcal B
(\mathbb{R}^d)$ with $0\not\in B$}\label{4.6}\\
\gamma_{a+}^{b-}=\lim_{p\downarrow a,\,q\uparrow b}
\int_p^q ds\left(f(s)\gamma+\int_{\mathbb{R}^d} f(s)x\left(\frac{1}{1+|f(s)x|^2}
-\frac{1}{1+|x|^2}\right)\nu(dx)\right).\label{4.7}
\end{gather}
}
\end{thm}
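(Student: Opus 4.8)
The plan is to pass to the limit, term by term, in the L\'evy--Khintchine representation of the truncated integrals, whose triplets are already available from Proposition~\ref{p3.2}. Write $\mu_p^q$ for the law of $\int_p^q f(s)X^{(\mu)}(ds)$, $a<p<q<b$. By Propositions~\ref{p3.1} and~\ref{p3.2}, its cumulant function is $\int_p^q C_{\mu}(f(s)z)\,ds$, which in turn is the L\'evy--Khintchine expression \eqref{1.1} with triplet $(A_p^q,\nu_p^q,\gamma_p^q)$ given by \eqref{3.8}--\eqref{3.10}. Since $\Phi_f(\mu)$ is definable, Theorem~\ref{t4.1} provides \eqref{4.1}, \eqref{4.2}, and \eqref{4.3}, and moreover $\mu_p^q\to\Phi_f(\mu)$ in probability, hence in law, so $\widehat{\mu_p^q}(z)\to\widehat{\Phi_f(\mu)}(z)$ for each $z$. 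The target is to show that as $p\downarrow a$ and $q\uparrow b$ the three pieces of this triplet converge to $A_a^b$, $\nu_a^b$, $\gamma_{a+}^{b-}$ from \eqref{4.5}--\eqref{4.7}, and that this pins down the triplet of $\Phi_f(\mu)$.

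I would treat the three convergences in turn, as $(p,q)$ increases to $(a,b)$. For the Gaussian part: each entry of the nonnegative-definite matrix $f(s)^2A$ is dominated in modulus by $f(s)^2\,\mathrm{tr}\,A$, which is integrable over $(a,b)$ by \eqref{4.1}, so $A_p^q\to A_a^b$ entrywise. For the L\'evy part: since $1_B(f(s)x)\geqslant0$, monotone convergence gives $\nu_p^q(B)\uparrow\nu_a^b(B)$ for every Borel $B$ with $0\not\in B$, and the same argument applied to $|f(s)x|^2\land1$ gives $\int_{\mathbb{R}^d}(|x|^2\land1)\,\nu_p^q(dx)\uparrow\int_{\mathbb{R}^d}(|x|^2\land1)\,\nu_a^b(dx)$, which is finite by \eqref{4.2}; in particular $\nu_a^b$ is a genuine L\'evy measure. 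For the drift part there is nothing to prove: \eqref{4.3} is precisely the convergence of $\gamma_p^q$, and \eqref{4.7} merely names its limit $\gamma_{a+}^{b-}$.

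The one point calling for care --- the main obstacle --- is the convergence of the L\'evy integral $\int_{\mathbb{R}^d}(e^{i\langle z,x\rangle}-1-i\langle z,x\rangle(1+|x|^2)^{-1})\,\nu_p^q(dx)$, because the measure $\nu_p^q$ itself varies with $(p,q)$, so dominated convergence against a fixed measure is unavailable. The device is that $\nu_a^b-\nu_p^q$ is a \emph{nonnegative} measure: with the standard estimate $|e^{i\langle z,x\rangle}-1-i\langle z,x\rangle(1+|x|^2)^{-1}|\leqslant c_z(|x|^2\land1)$, the difference between the L\'evy integrals against $\nu_a^b$ and against $\nu_p^q$ is bounded by $c_z\int_{\mathbb{R}^d}(|x|^2\land1)\,(\nu_a^b-\nu_p^q)(dx)$, which tends to $0$ by the monotone convergence established above.

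Assembling the pieces, $\int_p^q C_{\mu}(f(s)z)\,ds$ converges to the L\'evy--Khintchine expression $C_0(z)$ built from $(A_a^b,\nu_a^b,\gamma_{a+}^{b-})$, which is admissible by the preceding step and hence is the cumulant function of $\mu_{(A_a^b,\nu_a^b,\gamma_{a+}^{b-})}$. On the other hand $\widehat{\mu_p^q}(z)=e^{\int_p^q C_{\mu}(f(s)z)\,ds}\to\widehat{\Phi_f(\mu)}(z)$, so $e^{C_0(z)}=\widehat{\Phi_f(\mu)}(z)$ for all $z$; since $C_0$ and $C_{\Phi_f(\mu)}$ are both continuous, vanish at the origin, and have the same exponential, they coincide, and the one-to-one correspondence between infinitely divisible distributions and triplets yields $\Phi_f(\mu)=\mu_{(A_a^b,\nu_a^b,\gamma_{a+}^{b-})}$, which is \eqref{4.5}--\eqref{4.7}.
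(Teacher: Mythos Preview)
Your proof is correct and essentially the same approach the paper intends: the paper's own proof merely cites Proposition~5.5 of (2006a) and Proposition~2.6 of (2006b), where the analogous one-sided result is obtained by exactly this method---passing to the limit term by term in the L\'evy--Khintchine representation of the truncated integrals, using \eqref{4.1}--\eqref{4.3} to control the Gaussian, L\'evy, and drift pieces respectively. Your explicit treatment of the L\'evy integral via the nonnegativity of $\nu_a^b-\nu_p^q$ and the bound $|g(z,x)|\leqslant c_z(|x|^2\land1)$ is the standard device and fills in the details the paper leaves to the references.
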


\begin{proof}
Similar to Proposition 5.5 of (2006a) and Proposition 2.6 
of (2006b).
\end{proof}

\begin{thm}\label{t4.6}
If\/ $\Phi_{f,\,\mathrm{es}}(\mu)$ is definable, then $\Phi_{f,\,\mathrm{es}}(\mu)$
is the class of all infinitely divisible distributions $\mu_{(\widetilde A,
\widetilde\nu,\widetilde\gamma)}$
on $\mathbb{R}^d$ such that $\widetilde A$ is $A_a^b$ of \eqref{4.5}, 
$\widetilde\nu$ is $\nu_a^b$ of 
\eqref{4.6}, and $\widetilde\gamma\in\mathbb{R}^d$.
\end{thm}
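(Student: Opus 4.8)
The plan is to prove the two inclusions separately, keeping in mind that, by Theorem~\ref{t4.2}, definability of $\Phi_{f,\,\mathrm{es}}(\mu)$ is equivalent to \eqref{4.1} and \eqref{4.2}, which I therefore assume throughout, and using Proposition~\ref{p3.2} for the triplet of $\int_p^q f(s)X^{(\mu)}(ds)$. First I would record that \eqref{4.1} and \eqref{4.2} make $A_a^b$ of \eqref{4.5} a well-defined nonnegative-definite matrix and $\nu_a^b$ of \eqref{4.6} a well-defined L\'evy measure (indeed $\int_{\mathbb{R}^d}(|y|^2\land1)\nu_a^b(dy)$ equals the left-hand side of \eqref{4.2}), and that, since the intervals $(p,q)$ increase to $(a,b)$ as $p\downarrow a$ and $q\uparrow b$, monotone convergence gives $A_p^q\uparrow A_a^b$ and $\nu_p^q(B)\uparrow\nu_a^b(B)$ for every Borel set $B$ bounded away from the origin.

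For one of the two inclusions, let $\rho\in\Phi_{f,\,\mathrm{es}}(\mu)$, so that $\rho$ is the law of the limit in probability of $\int_p^q f(s)X^{(\mu)}(ds)-g(p,q)$ for some nonrandom function $g(p,q)$. Fixing arbitrary sequences $p_n\downarrow a$ and $q_n\uparrow b$, the variable $\int_{p_n}^{q_n} f(s)X^{(\mu)}(ds)-g(p_n,q_n)$ converges in probability, hence in law to $\rho$, and each such variable has triplet $(A_{p_n}^{q_n},\nu_{p_n}^{q_n},\gamma_{p_n}^{q_n}-g(p_n,q_n))$, since the nonrandom translation affects only the third entry. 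I would then invoke the standard criterion for weak convergence of infinitely divisible distributions (cf.\ Sato (1999)): it forces the L\'evy measure of $\rho$ to be the weak limit, on sets bounded away from $0$, of $\nu_{p_n}^{q_n}$, which by the monotone convergence recorded above equals $\nu_a^b$; and it forces the Gaussian covariance $\widetilde A$ of $\rho$ to satisfy $\langle z,\widetilde A z\rangle=\lim_{\varepsilon\downarrow0}\lim_{n\to\infty}(\langle z,A_{p_n}^{q_n}z\rangle+\int_{|x|\leqslant\varepsilon}\langle z,x\rangle^2\,\nu_{p_n}^{q_n}(dx))=\langle z,A_a^b z\rangle$ for every $z$, the small-jump term vanishing because $\nu_{p_n}^{q_n}\leqslant\nu_a^b$ and $\int_{|x|\leqslant\varepsilon}|x|^2\,\nu_a^b(dx)\to0$. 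Hence $\rho=\mu_{(A_a^b,\,\nu_a^b,\,\widetilde\gamma)}$ for some $\widetilde\gamma\in\mathbb{R}^d$.

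For the reverse inclusion, observe that $\Phi_{f,\,\mathrm{es}}(\mu)$ is nonempty, because it is definable; so by the previous paragraph there are a nonrandom $g_0(p,q)$ and a $\gamma_0\in\mathbb{R}^d$ such that $\int_p^q f(s)X^{(\mu)}(ds)-g_0(p,q)$ converges in probability to some $Z_0$ with $\mathcal L(Z_0)=\mu_{(A_a^b,\,\nu_a^b,\,\gamma_0)}$. Given an arbitrary $\widetilde\gamma\in\mathbb{R}^d$, set $g(p,q)=g_0(p,q)+\gamma_0-\widetilde\gamma$; then $\int_p^q f(s)X^{(\mu)}(ds)-g(p,q)$ converges in probability to $Z_0+(\widetilde\gamma-\gamma_0)$, whose law is $\mu_{(A_a^b,\,\nu_a^b,\,\widetilde\gamma)}$ by the rule that convolution with $\delta_c$ shifts the third entry of the triplet by $c$. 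Therefore $\mu_{(A_a^b,\,\nu_a^b,\,\widetilde\gamma)}\in\Phi_{f,\,\mathrm{es}}(\mu)$, and the two inclusions together give the assertion.

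The step I expect to be the main obstacle is showing that every $\rho\in\Phi_{f,\,\mathrm{es}}(\mu)$ has Gaussian part $A_a^b$ and L\'evy measure $\nu_a^b$: one must rule out that an arbitrary, possibly badly behaved, centering function $g(p,q)$ distorts these two components. This is exactly what the convergence criterion for infinitely divisible laws delivers once combined with the monotonicity $A_p^q\uparrow A_a^b$ and $\nu_p^q\uparrow\nu_a^b$; the only other point needing care is the routine reduction from the net indexed by $(p,q)$ to sequences, which is legitimate since convergence in probability of the net makes $\rho$ independent of the chosen sequences.
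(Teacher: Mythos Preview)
Your proof is correct and follows essentially the same route as the paper: the paper's one-line proof (``Obvious from Definition~\ref{d4.2} and Theorem~\ref{t4.5}'') defers to the infinitely-divisible convergence machinery underlying Theorem~\ref{t4.5} (the analogue of Lemma~5.4 of (2006a)), which is exactly what you invoke explicitly to identify the Gaussian and L\'evy components of the limit, and the shift-of-$g$ argument for the reverse inclusion is the same in both. A slightly shorter variant of your first inclusion is to note that any two essential limits differ by a nonrandom constant (since the difference of the centering functions $g_0(p,q)-g(p,q)$ must converge to a constant), so one only needs to identify the triplet of a single element; but this still requires the convergence criterion and changes nothing of substance.
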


\begin{proof}
Obvious from Definition \ref{d4.2} and Theorem \ref{t4.5}.
\end{proof}

We write the limit of $\int_p^q f(s)ds$ as $p\downarrow a$ and $q\uparrow b$ as
$\int_{a+}^{b-} f(s)ds$.

\begin{thm}\label{t4.7}
Suppose that $\Phi_{f,\,\mathrm{c}}(\mu)$ is definable and that $f(s)$ is locally
integrable on $(a,b)$.

{\rm(i)}  If $\int_p^q f(s)ds$ converges to a nonzero real number
 as $p\downarrow a$ and $q\uparrow b$, then $\Phi_{f,\,\mathrm{c}}(\mu)$ is not 
a singleton,
 and $\Phi_{f,\,\mathrm{c}}(\mu)=\Phi_{f,\,\mathrm{es}}(\mu)$. 

{\rm(ii)} Assume that one of the following two conditions is satisfied:\\
\indent{\rm(a)} $\int_p^q f(s)ds$ converges to zero as $p\downarrow a$ and 
$q\uparrow b$,\\
\indent{\rm(b)} $\int_p^q f(s)ds$ is not convergent as $p\downarrow a$ and 
$q\uparrow b$.\\
Then $\Phi_{f,\,\mathrm{c}}(\mu)$ consists of a single distribution 
$\mu_{(\widetilde A,\widetilde\nu,\widetilde\gamma)}\in ID(\mathbb{R}^d)$, where 
$\widetilde A$ is $A_a^b$ of \eqref{4.5} and $\widetilde\nu$ is $\nu_a^b$ of \eqref{4.6}.
\end{thm}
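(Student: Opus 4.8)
The plan is to analyze the compensated improper integral through its characteristic-function description in Theorem \ref{t4.3}, reducing everything to the behavior of the real-valued deterministic integral $\int_p^q f(s)\,ds$. Write $g(s)=\gamma^{\sharp}+\int_{\mathbb{R}^d} x\big((1+|f(s)x|^2)^{-1}-(1+|x|^2)^{-1}\big)\nu(dx)$ for the vector appearing in \eqref{4.4}, and let $h(s)=\int_{\mathbb{R}^d} x\big((1+|f(s)x|^2)^{-1}-(1+|x|^2)^{-1}\big)\nu(dx)$, so that $g(s)=\gamma^{\sharp}+h(s)$ and the integrand in \eqref{4.4} is $f(s)\gamma^{\sharp}+f(s)h(s)$. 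First I would observe, as in the argument at the end of the proof of Theorem \ref{p7.2} (using $r/(1+r^2)\leqslant 1/2$), that $|h(s)|\leqslant \tfrac12\big(1+f(s)^2\big)\int_{\mathbb{R}^d}|x|^2(1+|x|^2)^{-1}\nu(dx)$; combined with \eqref{4.1} and \eqref{4.2} — which hold because $\Phi_{f,\,\mathrm{c}}(\mu)$ is definable — this gives that $\int_a^b |f(s)h(s)|\,ds$ behaves well enough that $\int_p^q f(s)h(s)\,ds$ is convergent as $p\downarrow a$, $q\uparrow b$. (Actually one gets absolute convergence from \eqref{4.2} via the sharper bound $|h(s)|\leqslant \int(|f(s)x|^2\wedge 1 + |x|^2\wedge 1)\,\nu(dx)\cdot C$ after splitting at $|x|\le 1$; this is the routine part.) Consequently, the convergence required in \eqref{4.4} is equivalent to the convergence of $\gamma^{\sharp}\int_p^q f(s)\,ds$, and the choice of $\gamma^{\sharp}$ is where the freedom in $\Phi_{f,\,\mathrm{c}}(\mu)$ lives.

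For part (i), suppose $L:=\int_{a+}^{b-} f(s)\,ds$ exists and $L\neq 0$. Then for \emph{every} $\gamma^{\sharp}\in\mathbb{R}^d$ the integral $\gamma^{\sharp}\int_p^q f(s)\,ds \to L\gamma^{\sharp}$, so \eqref{4.4} holds for all $\gamma^{\sharp}$, hence $\Phi_{f,\,\mathrm{c}}(\mu)$ is definable and, by Theorem \ref{t4.3}, the definability conditions \eqref{4.1} and \eqref{4.2} alone characterize it. But those are exactly the conditions for definability of $\Phi_{f,\,\mathrm{es}}(\mu)$ (Theorem \ref{t4.2}), and by Theorem \ref{t4.6} the set $\Phi_{f,\,\mathrm{es}}(\mu)$ is the full family $\{\mu_{(A_a^b,\nu_a^b,\widetilde\gamma)}:\widetilde\gamma\in\mathbb{R}^d\}$, which is never a singleton (as $\widetilde\gamma$ ranges over all of $\mathbb{R}^d$). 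On the other hand, by Definition \ref{d4.3} every element of $\Phi_{f,\,\mathrm{c}}(\mu)$ has triplet of the form $(A_a^b,\nu_a^b,\cdot)$ — the shift by $-\theta$ affects only the location parameter — so $\Phi_{f,\,\mathrm{c}}(\mu)\subset\Phi_{f,\,\mathrm{es}}(\mu)$ always; and here we get the reverse inclusion because varying $\gamma^{\sharp}$ (equivalently $\theta$) over $\mathbb{R}^d$ and using $L\neq 0$ produces limits whose location parameter sweeps all of $\mathbb{R}^d$. Hence $\Phi_{f,\,\mathrm{c}}(\mu)=\Phi_{f,\,\mathrm{es}}(\mu)$ and it is not a singleton.

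For part (ii), in case (a) we have $\int_p^q f(s)\,ds\to 0$, so $\gamma^{\sharp}\int_p^q f(s)\,ds\to 0$ for every $\gamma^{\sharp}$; combined with the convergence of $\int_p^q f(s)h(s)\,ds$ established above, \eqref{4.4} holds for all $\gamma^{\sharp}$ but the limiting value of $\int_p^q f(s)g(s)\,ds$ is the \emph{same} for all $\gamma^{\sharp}$, namely $\lim_{p\downarrow a,\,q\uparrow b}\int_p^q f(s)h(s)\,ds$. In case (b), since $\Phi_{f,\,\mathrm{c}}(\mu)$ is definable there is at least one $\gamma^{\sharp}$ making \eqref{4.4} converge; if $\gamma^{\sharp}_1$ and $\gamma^{\sharp}_2$ both worked, subtracting would force $(\gamma^{\sharp}_1-\gamma^{\sharp}_2)\int_p^q f(s)\,ds$ to converge, and — because $f$ is $\mathbb{R}$-valued (scalar-valued) and locally integrable — this forces $\gamma^{\sharp}_1=\gamma^{\sharp}_2$ componentwise, as otherwise $\int_p^q f(s)\,ds$ itself would converge, contradicting (b). So in both cases the admissible location parameter is uniquely determined, and $\Phi_{f,\,\mathrm{c}}(\mu)$ is the single distribution $\mu_{(A_a^b,\nu_a^b,\widetilde\gamma)}$ whose $\widetilde A=A_a^b$ and $\widetilde\nu=\nu_a^b$ come from Theorem \ref{t4.5} and whose $\widetilde\gamma$ equals the common limit of $\gamma_p^q$ after the appropriate shift (computed via \eqref{4.7} and the compensation). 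The main obstacle, and the only place requiring real care, is the uniqueness assertion in case (b): one must rule out two distinct valid compensators, which rests on the elementary but essential observation that convergence of $c\int_p^q f(s)\,ds$ for a nonzero scalar multiple $c$ forces convergence of $\int_p^q f(s)\,ds$ itself — trivial for scalars, and the reason the hypothesis "$f$ is $\mathbb{R}$-valued" is invoked here.
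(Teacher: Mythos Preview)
Your overall architecture matches the paper's, but there is a genuine error in the step you flag as ``routine.'' You assert that \eqref{4.1} and \eqref{4.2} alone force $\int_p^q f(s)h(s)\,ds$ to converge (even absolutely) as $p\downarrow a$, $q\uparrow b$, where $h(s)=\int_{\mathbb{R}^d} x\big((1+|f(s)x|^2)^{-1}-(1+|x|^2)^{-1}\big)\nu(dx)$. This is false. Take $d=1$, $A=0$, $\nu=\delta_1$, and $f(s)=s^{-1/\alpha}$ on $(1,\infty)$ with $1<\alpha<2$. Then \eqref{4.1} is vacuous and \eqref{4.2} holds since $\int_1^\infty f(s)^2\,ds<\infty$, but
\[
f(s)h(s)=\frac{f(s)(1-f(s)^2)}{2(1+f(s)^2)}\sim \tfrac12 f(s)=\tfrac12 s^{-1/\alpha}\quad(s\to\infty),
\]
which is positive and not integrable, so $\int_1^q f(s)h(s)\,ds\to\infty$. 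Neither of your proposed bounds on $|h(s)|$ rescues this: once you multiply by $|f(s)|$ you are left with a term comparable to $|f(s)|$, and $\int_a^b|f(s)|\,ds$ is not controlled by \eqref{4.1}--\eqref{4.2}.

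The fix is exactly what the paper does: do \emph{not} try to prove $\int f(s)h(s)\,ds$ converges a priori. Instead use \eqref{4.4} as given --- it hands you convergence of $\int_p^q f(s)(\gamma^{\sharp}+h(s))\,ds$ for \emph{some} $\gamma^{\sharp}$ --- and then, for any other $\theta$, write
\[
\int_p^q f(s)\bigl(\gamma-\theta+h(s)\bigr)\,ds
=\int_p^q f(s)\bigl(\gamma^{\sharp}+h(s)\bigr)\,ds
+(\gamma-\theta-\gamma^{\sharp})\int_p^q f(s)\,ds.
\]
In case (i) both terms converge and the second sweeps $\mathbb{R}^d$ as $\theta$ varies; in case (ii)(a) the second term tends to $0$ for every $\theta$, so subtracting yields convergence of $\int_p^q f(s)h(s)\,ds$ \emph{a posteriori} and all $\theta$ give the same limit; your argument for case (ii)(b) is already correct. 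So the gap is local and repairable, but as written your reduction ``\eqref{4.4} $\Leftrightarrow$ convergence of $\gamma^{\sharp}\int_p^q f(s)\,ds$'' rests on a false lemma.
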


\begin{proof} (i) Suppose that $\int_p^q f(s)ds$ is convergent 
 as $p\downarrow a$ and $q\uparrow b$ and $\int_{a+}^{b-} f(s)ds\neq0$. Since
$\Phi_{f,\,\mathrm{c}}(\mu)$ is definable, it follows from Theorem \ref{t4.3}
that \eqref{4.1}, \eqref{4.2}, and \eqref{4.4} hold. For any 
$\theta\in\mathbb{R}^d$,
\begin{equation*}
\int_p^q f(s)\left(\gamma-\theta+\int_{\mathbb{R}^d} x\left(\frac{1}{1+|f(s)x|^2}
-\frac{1}{1+|x|^2}\right)\nu(dx)\right)ds
\end{equation*}
tends to 
\begin{equation*}
\int_{a+}^{b-} f(s)\left(\gamma^{\sharp}+\int_{\mathbb{R}^d} x\left(\frac{1}{1+|f(s)x|^2}
-\frac{1}{1+|x|^2}\right)\nu(dx)\right)ds
+\int_{a+}^{b-} f(s)ds(\gamma-\theta-\gamma^{\sharp}),
\end{equation*}
where $\gamma^{\sharp}$ is that of \eqref{4.4}. Hence 
$\Phi_{f,\,\mathrm{c}}(\mu)=\Phi_{f,\,\mathrm{es}}(\mu)$. 

(ii) If (a) is satisfied, then it follows from \eqref{4.4} that 
\[
\int_p^q f(s)\left(\int_{\mathbb{R}^d} x\left(\frac{1}{1+|f(s)x|^2}
-\frac{1}{1+|x|^2}\right)\nu(dx)\right)ds
\]
is convergent as $p\downarrow a$ and $q\uparrow b$ and, for any $\theta\in\mathbb{R}^d$,
$\int_{a+}^{b-} f(s)X^{(\mu*\delta_{-\theta})} (ds)$ is definable and does not
depend on $\theta$. If condition (b) is satisfied, there is only one 
$\theta\in\mathbb{R}^d$ such that 
$\int_{a+}^{b-} f(s)X^{(\mu*\delta_{-\theta})} (ds)$ is definable; indeed, if
it is definable for $\theta$ and also for some $\theta'\neq\theta$ in place of
$\theta$, then
{\allowdisplaybreaks
\begin{align*}
&\int_p^q f(s)ds(\theta'-\theta)\\
&\qquad=\int_p^q f(s)\left(\gamma-\theta+\int_{\mathbb{R}^d} x\left(
\frac{1}{1+|f(s)x|^2}
-\frac{1}{1+|x|^2}\right)\nu(dx)\right)ds\\
&\qquad\quad -\int_p^q f(s)\left(\gamma-\theta'+\int_{\mathbb{R}^d} x\left(
\frac{1}{1+|f(s)x|^2}
-\frac{1}{1+|x|^2}\right)\nu(dx)\right)ds,
\end{align*}
which is convergent as} $p\downarrow a$ and $q\uparrow b$, a contradiction.
\end{proof}

\begin{thm}\label{t4.7b}
Suppose that $\Phi_{f,\,\mathrm{c}}(\mu)$ is definable and that 
$f(s)$ is locally integrable on $(a,b)$. Suppose, further, that
$\Phi_{f,\,\mathrm{c}}(\mu)$ is a singleton $\{\widetilde\mu\}$.
If $\int_{\mathbb{R}^d}|x|\widetilde\mu(dx)<\infty$, then $\int_{\mathbb{R}^d}
x\widetilde\mu(dx)=0$.
\end{thm}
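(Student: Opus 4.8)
The plan is to evaluate $\int_{\mathbb R^d}x\,\widetilde\mu(dx)$ directly from the L\'evy--Khintchine triplet of $\widetilde\mu$ and to show it must be a scalar multiple of $\int_{a+}^{b-}f(s)\,ds$, which is then forced to vanish. First I would dispose of the trivial case $f=0$ Lebesgue-a.e.\ on $(a,b)$: then $\int_p^q f(s)X^{(\mu)}(ds)=0$ a.s.\ for all $p,q$, so $\widetilde\mu=\delta_0$ and the claim is immediate. Henceforth assume $f\neq0$ on a set of positive Lebesgue measure, and fix $\theta\in\mathbb R^d$ for which $\int_{a+}^{b-}f(s)X^{(\mu*\delta_{-\theta})}(ds)$ is definable; since $\Phi_{f,\mathrm c}(\mu)=\{\widetilde\mu\}$, this limit has distribution $\widetilde\mu$, so $\widetilde\mu=\Phi_f(\mu*\delta_{-\theta})$.

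The measure $\mu*\delta_{-\theta}$ has triplet $(A,\nu,\gamma-\theta)$, so by Theorem \ref{t4.5} the triplet of $\widetilde\mu$ is $(\widetilde A,\widetilde\nu,\widetilde\gamma)$ with $\widetilde A=\int_a^b f(s)^2A\,ds$, with $\widetilde\nu$ the measure $\nu_a^b$ of \eqref{4.6}, and with $\widetilde\gamma=\lim_{p\downarrow a,\,q\uparrow b}\gamma_p^q$, where $\gamma_p^q=\int_p^q ds\big(f(s)(\gamma-\theta)+\int_{\mathbb R^d}f(s)x(\frac1{1+|f(s)x|^2}-\frac1{1+|x|^2})\nu(dx)\big)$. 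The hypothesis $\int_{\mathbb R^d}|x|\widetilde\mu(dx)<\infty$ is equivalent to $\int_{|y|>1}|y|\widetilde\nu(dy)<\infty$, i.e., by \eqref{4.6}, to $\int_a^b ds\int_{\mathbb R^d}|f(s)x|\,1_{\{|f(s)x|>1\}}\nu(dx)<\infty$; choosing an $s$ with $f(s)\neq0$ from the positive-measure set on which the inner integral is finite gives $\int_{|x|>1}|x|\nu(dx)<\infty$, and splitting at $|x|=1$ then gives $\int_{\mathbb R^d}|x|^3(1+|x|^2)^{-1}\nu(dx)<\infty$ and $\int_{\mathbb R^d}|y|^3(1+|y|^2)^{-1}\widetilde\nu(dy)<\infty$. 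In particular $c:=\gamma-\theta+\int_{\mathbb R^d}x|x|^2(1+|x|^2)^{-1}\nu(dx)$ is a well-defined vector in $\mathbb R^d$.

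Next I would use the standard identity $\int_{\mathbb R^d}x\,\widetilde\mu(dx)=\widetilde\gamma+\int_{\mathbb R^d}x|x|^2(1+|x|^2)^{-1}\widetilde\nu(dx)$, valid for infinitely divisible $\widetilde\mu$ with finite first moment (Sato (1999)). By \eqref{4.6}, the last integral equals $\int_a^b ds\int_{\mathbb R^d}f(s)x|f(s)x|^2(1+|f(s)x|^2)^{-1}\nu(dx)$, which is absolutely convergent by the bound just obtained and hence is the limit as $(p,q)\uparrow(a,b)$ of the same integral over $(p,q)$, by dominated convergence. Adding this to the limit defining $\widetilde\gamma$ and using $\frac1{1+r^2}+\frac{r^2}{1+r^2}=1$ with $r=|f(s)x|$, the two inner $\nu$-integrals collapse and the integrand becomes $f(s)c$; this yields
\[
\int_{\mathbb R^d}x\,\widetilde\mu(dx)=\lim_{p\downarrow a,\,q\uparrow b}\Big(\int_p^q f(s)\,ds\Big)c .
\]
Here the rearrangement of finitely many absolutely convergent integrals is legitimate because $\gamma_p^q$ is finite with $\int_p^q|\,\cdot\,|\,ds<\infty$ (Proposition \ref{p3.2} and \eqref{3.3}) and $f$ is locally integrable.

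Finally, the conclusion. If $c=0$, then $\int_{\mathbb R^d}x\,\widetilde\mu(dx)=0$. If $c\neq0$, choose $z_0\in\mathbb R^d$ with $\langle z_0,c\rangle=1$; taking the inner product of the displayed limit with $z_0$ shows that $\int_p^q f(s)\,ds$ converges, to $L:=\langle z_0,\int_{\mathbb R^d}x\,\widetilde\mu(dx)\rangle$. If $L\neq0$, then $\int_p^q f(s)\,ds$ converges to a nonzero real number, so by Theorem \ref{t4.7}(i) $\Phi_{f,\mathrm c}(\mu)$ is not a singleton, contradicting the hypothesis; hence $L=0$ and $\int_{\mathbb R^d}x\,\widetilde\mu(dx)=Lc=0$. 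The step I expect to require the most care is the passage to the limit in the mean formula: one must extract, from the single hypothesis that $\widetilde\mu$ has finite first moment, the absolute convergence of the relevant L\'evy integrals (over $(a,b)$ as well as over $(p,q)$), since this is what legitimizes both the dominated-convergence argument and the recombination of $\widetilde\gamma$ with the $\widetilde\nu$-term into one limit involving only $\int_p^q f(s)\,ds$.
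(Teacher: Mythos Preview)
Your proof is correct and follows essentially the same route as the paper's: both reduce the mean of $\widetilde\mu$ to $\lim_{p,q}\big(\int_p^q f(s)\,ds\big)\,c$, where $c$ is the mean of $\mu*\delta_{-\theta}$, and then invoke Theorem~\ref{t4.7} to conclude that the limit vanishes under the singleton hypothesis. The only difference is presentational: the paper carries out the computation at the level of cumulant functions, writing $\int_p^q C_{\mu*\delta_{-\theta}}(f(s)z)\,ds$ as a centered piece plus $i\big(\int_p^q f(s)\,ds\big)\langle c,z\rangle$, whereas you work directly with the triplet and the mean formula $\widetilde\gamma+\int x|x|^2(1+|x|^2)^{-1}\widetilde\nu(dx)$; the algebra collapsing the two $\nu$-integrals into $f(s)c$ is the same in both.
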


\begin{proof}
We may assume that $f(s)$ is not identically zero.
Suppose $\int_{\mathbb{R}^d}|x|\widetilde\mu(dx)<\infty$. Then $\int_{|x|>1}|x|
\nu_a^b(dx)
<\infty$, and hence $\int_{|x|>1}|x|\nu_p^q(dx)<\infty$. 
We also have $\int_{|x|>1}|x|\nu(dx)<\infty$, since, for any $a>0$,
{\allowdisplaybreaks
\begin{align*}
&\int_{|x|>1}|x|\nu_p^q(dx)=\int_p^q ds\int_{|f(s)x|>1} |f(s)x|\nu(dx)\\
&\qquad\geqslant \int_{|f(s)|>a} |f(s)|ds \int_{|x|>1/a}|x|\nu(dx).
\end{align*}
Using $\theta$
such that $\int_{a+}^{b-} f(s)X^{(\mu*\delta_{-\theta})} (ds)$ is definable,
we have
\begin{align*}
&\int_p^q C_{\mu*\delta_{-\theta}}(f(s)z)ds\\
&\qquad=\int_p^q \left[-\frac12 \langle z,f(s)^2 Az\rangle
+\int_{\mathbb{R}^d}(e^{i\langle z,f(s)x\rangle}-1-i\langle z,f(s)x\rangle)\nu(dx)
\right]ds\\
&\qquad\quad+i\int_p^q f(s)ds\left[\int_{\mathbb{R}^d}\langle z,x\rangle 
(1-(1+|x|^2)^{-1})\nu(dx)+
\langle \gamma-\theta,z\rangle\right].
\end{align*}
As} $p\downarrow a$ and $q\uparrow b$, the left-hand side  and the first 
term of the right-hand side are convergent.
Hence the second term of the right-hand side is also convergent, 
but the limit must be zero, as condition (a) or (b) of Theorem \ref{t4.7} is 
satisfied.  Therefore
\[
C_{\widetilde\mu}(z)=-\frac12 \langle z,A_a^b z\rangle +\int_{\mathbb{R}^d}
(e^{i\langle z,x\rangle}-1
-i\langle z,x\rangle)\nu_a^b(dx),
\]
which shows that $\int_{\mathbb{R}^d}x\widetilde\mu(dx)=0$.
\end{proof}

Even if we assume that $f(s)$ is locally integrable on $(a,b)$, that
$\Phi_{f,\,\mathrm{c}}(\mu)$ is a singleton
 $\{\widetilde\mu\}$, and that $\int_{\mathbb{R}^d}|x|\mu(dx)$ is finite,
these assumptions do not imply finiteness of  $\int_{\mathbb{R}^d}
|x|\widetilde\mu(dx)$.
Examples for this fact are given in pp.\,36--37 of (2006c).

\begin{thm}\label{t4.7a}
If $\Phi_{f,\,\mathrm{sym}}(\mu)$ is definable, then $\Phi_{f,\,\mathrm{sym}}(\mu)$ has
triplet $(2A_{a+}^{b-},\linebreak (\nu_{a+}^{b-})_{\mathrm{sym}}, 0)$, where 
$A_{a+}^{b-}$
is given by \eqref{4.5} and
\begin{equation}\label{4.11a}
(\nu_{a+}^{b-})_{\mathrm{sym}}(B)=\nu_{a+}^{b-}(B)+\nu_{a+}^{b-}(-B)
\end{equation}
with $\nu_{a+}^{b-}$ given by \eqref{4.6}.
\end{thm}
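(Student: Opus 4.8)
The plan is to reduce the claim about the improper symmetrized integral to the finite-interval computation already recorded in the proof of Theorem \ref{t4.4}, and then pass to the limit as $p\downarrow a$ and $q\uparrow b$. First I would recall from that proof that, for fixed $p,q$ with $a<p<q<b$, the random variable $\int_p^q f(s)(X^{(\mu)}(ds)-X^{(\mu)\sharp}(ds))$ has triplet $(2A_p^q,(\nu_p^q)_{\mathrm{sym}},0)$, with $(\nu_p^q)_{\mathrm{sym}}(B)=\nu_p^q(B)+\nu_p^q(-B)$; equivalently, its cumulant function is $2\,\mathrm{Re}\,C_{\int_p^q f(s)X^{(\mu)}(ds)}(z)=\int_p^q 2\,\mathrm{Re}\,C_{\mu}(f(s)z)\,ds$. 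The drift-type (third) component of the triplet is identically $0$ for every $p,q$ because the symmetrization kills the linear part. Since $\Phi_{f,\,\mathrm{sym}}(\mu)$ is assumed definable, by Definition \ref{d4.4} this family of random variables converges in probability as $p\downarrow a$ and $q\uparrow b$, hence in distribution, so the corresponding cumulant functions converge pointwise to $C_{\Phi_{f,\,\mathrm{sym}}(\mu)}(z)$.

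Next I would identify the limiting triplet. By Theorem \ref{t4.4} (and its proof), definability of $\Phi_{f,\,\mathrm{sym}}(\mu)$ is equivalent to \eqref{4.1} and \eqref{4.2}, which are exactly the conditions that make $A_{a}^{b}$ of \eqref{4.5} finite and $\nu_{a}^{b}$ of \eqref{4.6} a genuine Lévy measure. For the Gaussian part, $2A_p^q=2\int_p^q f(s)^2 A\,ds\uparrow 2\int_a^b f(s)^2 A\,ds=2A_{a+}^{b-}$ by \eqref{3.8} and monotone convergence. For the Lévy-measure part, $(\nu_p^q)_{\mathrm{sym}}(B)=\nu_p^q(B)+\nu_p^q(-B)\uparrow \nu_{a+}^{b-}(B)+\nu_{a+}^{b-}(-B)=(\nu_{a+}^{b-})_{\mathrm{sym}}(B)$ for each Borel $B$ bounded away from $0$, again by \eqref{3.9} and monotone convergence in $p,q$; together with \eqref{4.2} this gives convergence of the truncated second moments, i.e. $\int(|x|^2\wedge 1)(\nu_p^q)_{\mathrm{sym}}(dx)\to\int(|x|^2\wedge1)(\nu_{a+}^{b-})_{\mathrm{sym}}(dx)<\infty$. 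By the standard convergence criterion for infinitely divisible distributions (convergence of the Gaussian covariance plus convergence of the Lévy measures in the appropriate sense plus convergence of the centering, here the centering being $0$ throughout), the limit distribution is exactly $\mu_{(2A_{a+}^{b-},(\nu_{a+}^{b-})_{\mathrm{sym}},0)}$.

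The only genuinely delicate point is making precise that the centering stays $0$ in the limit: one must verify that the pointwise limit of $\int_p^q 2\,\mathrm{Re}\,C_{\mu}(f(s)z)\,ds$ really is the cumulant function with triplet $(2A_{a+}^{b-},(\nu_{a+}^{b-})_{\mathrm{sym}},0)$ rather than one with an extra linear term $i\langle\widetilde\gamma,z\rangle$. This follows because each prelimit cumulant is purely real (it equals $2\,\mathrm{Re}\,C_{\mu}(f(s)z)$ integrated), hence so is its limit, and a real cumulant function of an infinitely divisible distribution forces the linear term to vanish; equivalently, each prelimit distribution is symmetric, symmetry is preserved under weak convergence, and a symmetric infinitely divisible distribution has no drift in the canonical form \eqref{1.1}. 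With this observation the identification of the third component as $0$ is immediate, and I do not expect any of the remaining estimates — all of them monotone-convergence arguments controlled by \eqref{4.1} and \eqref{4.2} — to present difficulty.

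\begin{proof}
We use the notation and computation in the proof of Theorem \ref{t4.4}. For $a<p<q<b$, the law of $\int_p^q f(s)(X^{(\mu)}(ds)-X^{(\mu)\sharp}(ds))$ has triplet $(2A_p^q,(\nu_p^q)_{\mathrm{sym}},0)$ with $(\nu_p^q)_{\mathrm{sym}}(B)=\nu_p^q(B)+\nu_p^q(-B)$; in particular each such law is symmetric, and its cumulant function is $\int_p^q 2\,\mathrm{Re}\,C_{\mu}(f(s)z)\,ds$, which is real-valued. Since $\Phi_{f,\,\mathrm{sym}}(\mu)$ is definable, these random variables converge in probability, hence in distribution, as $p\downarrow a$ and $q\uparrow b$; thus $\Phi_{f,\,\mathrm{sym}}(\mu)$ is a symmetric element of $ID(\mathbb{R}^d)$, so in the representation \eqref{1.1} its drift term vanishes and its triplet has the form $(\widetilde A,\widetilde\nu,0)$.

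By Theorem \ref{t4.4}, definability gives \eqref{4.1} and \eqref{4.2}, so $A_{a+}^{b-}=\int_a^b f(s)^2 A\,ds$ of \eqref{4.5} is finite and $\nu_{a+}^{b-}$ of \eqref{4.6} satisfies $\int_{\mathbb{R}^d}(|x|^2\wedge1)\nu_{a+}^{b-}(dx)<\infty$, hence $(\nu_{a+}^{b-})_{\mathrm{sym}}$ defined by \eqref{4.11a} is a Lévy measure. By \eqref{3.8} and monotone convergence, $2A_p^q\to 2A_{a+}^{b-}$. By \eqref{3.9} and monotone convergence, for every Borel $B$ with $0\notin\overline B$ we have $(\nu_p^q)_{\mathrm{sym}}(B)\to(\nu_{a+}^{b-})_{\mathrm{sym}}(B)$, and moreover $\int_{\mathbb{R}^d}(|x|^2\wedge1)(\nu_p^q)_{\mathrm{sym}}(dx)\to\int_{\mathbb{R}^d}(|x|^2\wedge1)(\nu_{a+}^{b-})_{\mathrm{sym}}(dx)<\infty$ by \eqref{4.2}. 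Since the centerings are all zero, the standard criterion for convergence of infinitely divisible distributions yields that the limit law is $\mu_{(2A_{a+}^{b-},(\nu_{a+}^{b-})_{\mathrm{sym}},0)}$. Therefore $\Phi_{f,\,\mathrm{sym}}(\mu)$ has triplet $(2A_{a+}^{b-},(\nu_{a+}^{b-})_{\mathrm{sym}},0)$ as asserted.
\end{proof}
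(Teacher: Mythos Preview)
Your proof is correct and follows the same approach as the paper. The paper's own proof is the single line ``This follows from Theorem \ref{t4.4} and its proof,'' and what you have written is precisely the expanded version of that reference: you take the finite-interval triplet $(2A_p^q,(\nu_p^q)_{\mathrm{sym}},0)$ established there and pass to the limit via monotone convergence in \eqref{4.1}--\eqref{4.2}, with the symmetry/reality observation pinning the location parameter at $0$.
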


\begin{proof}
This follows from Theorem \ref{t4.4} and its proof.
\end{proof}

The following result will be useful later.

\begin{thm}\label{t4.8}
Suppose that $\mu\in ID_{\mathrm{AB}}(\mathbb{R}^d)$. 
Then the following two statements {\rm(a)} and {\rm(b)} are equivalent.

{\rm(a)} $\Phi_f(\mu)$ is definable  and $\Phi_f(\mu)\in ID_{\mathrm{AB}}(\mathbb{R}^d)$.

{\rm(b)} The triplet $(0,\nu,\gamma^0)_0$ of $\mu$ satisfies
\begin{equation}\label{7.3}
\int_a^b ds\int_{\mathbb{R}^d} (|f(s)x|\land1)\,\nu(dx)<\infty
\end{equation}
and
\begin{equation}\label{7.4}
\int_p^q f(s)\gamma^0 ds\text{ is convergent as $p\downarrow a$ and $q\uparrow b$}.
\end{equation}

If statements {\rm(a)} and {\rm(b)} are true, then $\widetilde\mu=\Phi_f(\mu)$
has triplet $(0,\widetilde\nu,\widetilde\gamma^0)_0$, where $\widetilde\nu$ is 
$\nu_a^b$ of
\eqref{4.6} and
\begin{equation}\label{7.6}
\widetilde\gamma^0=\int_{a+}^{b-} f(s)\gamma^0 ds.
\end{equation}
\end{thm}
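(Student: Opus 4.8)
The plan is to leverage the general theory already developed, specializing to the $ID_{\mathrm{AB}}$ setting where the drift $\gamma^0$ replaces the truncated center $\gamma$. First I would prove the equivalence of (a) and (b). Assume (a): since $\Phi_f(\mu)$ is definable, Theorem \ref{t4.1} gives \eqref{4.1}--\eqref{4.3}; because $\mu\in ID_{\mathrm{AB}}(\mathbb{R}^d)$ we have $A=0$, so \eqref{4.1} is vacuous. Since moreover $\Phi_f(\mu)\in ID_{\mathrm{AB}}(\mathbb{R}^d)$, its L\'evy measure $\nu_a^b$ of \eqref{4.6} satisfies $\int_{\mathbb{R}^d}(|x|\land1)\,\nu_a^b(dx)<\infty$, which by Fubini is exactly \eqref{7.3}; note \eqref{7.3} is strictly stronger than \eqref{4.2} and forces $\mathcal L(\int_p^q f(s)X^{(\mu)}(ds))\in ID_{\mathrm{AB}}(\mathbb{R}^d)$ with drift $\int_p^q f(s)\gamma^0\,ds$ by Theorem \ref{p7.2} and \eqref{7.1d}. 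To extract \eqref{7.4} from \eqref{4.3}, I would rewrite $\gamma_p^q$ using the identity $\gamma^0=\gamma-\int_{\mathbb{R}^d}x(1+|x|^2)^{-1}\nu(dx)$, exactly as in the converse direction of the proof of Theorem \ref{p7.2}, obtaining
\[
\gamma_p^q=\int_p^q f(s)\gamma^0\,ds+\int_p^q ds\int_{\mathbb{R}^d}\frac{f(s)x}{1+|f(s)x|^2}\,\nu(dx),
\]
and the second term on the right converges absolutely as $p\downarrow a$, $q\uparrow b$ because, by \eqref{7.3} and the bound $(1+r^2)^{-1}\leqslant 2(1+r)^{-1}$, its integrand is dominated by $2(|f(s)x|\land1)$ whose integral over $(a,b)\times\mathbb{R}^d$ is finite. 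Hence convergence of $\gamma_p^q$ is equivalent to convergence of $\int_p^q f(s)\gamma^0\,ds$, giving \eqref{7.4}.

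For the converse, assume (b). As just noted, \eqref{7.3} implies \eqref{4.2} and the absolute convergence of $\int_p^q ds\int_{\mathbb{R}^d}f(s)x(1+|f(s)x|^2)^{-1}\,\nu(dx)$; combined with \eqref{7.4} and the displayed identity for $\gamma_p^q$, this yields \eqref{4.3}. With $A=0$, condition \eqref{4.1} holds trivially, so Theorem \ref{t4.1} gives definability of $\Phi_f(\mu)$. Then Theorem \ref{t4.5} shows $\Phi_f(\mu)$ has triplet $(0,\nu_a^b,\gamma_{a+}^{b-})$, and \eqref{7.3} via Fubini gives $\int_{\mathbb{R}^d}(|x|\land1)\,\nu_a^b(dx)<\infty$, so $\Phi_f(\mu)\in ID_{\mathrm{AB}}(\mathbb{R}^d)$, establishing (a).

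Finally, for the concrete description of the triplet of $\widetilde\mu=\Phi_f(\mu)$: that $\widetilde\nu=\nu_a^b$ is immediate from Theorem \ref{t4.5}. To identify the drift $\widetilde\gamma^0$, I would take the limit in $C_{\mu}(f(s)z)$ written in the drift form \eqref{7.1}: since $\mu$ has triplet $(0,\nu,\gamma^0)_0$,
\[
\int_p^q C_{\mu}(f(s)z)\,ds=\int_p^q ds\int_{\mathbb{R}^d}(e^{i\langle z,f(s)x\rangle}-1)\,\nu(dx)+i\Big\langle z,\int_p^q f(s)\gamma^0\,ds\Big\rangle,
\]
where the first term converges to $\int_{\mathbb{R}^d}(e^{i\langle z,x\rangle}-1)\,\nu_a^b(dx)$ (dominated convergence using \eqref{7.3} as in the proof of Theorem \ref{p7.2}) and the second converges to $i\langle z,\int_{a+}^{b-}f(s)\gamma^0\,ds\rangle$ by \eqref{7.4}. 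Matching against the drift representation \eqref{7.1} of $C_{\widetilde\mu}$ forces \eqref{7.6}. I do not anticipate a genuine obstacle here; the only point requiring care is the bookkeeping between the two centering conventions ($\gamma$ versus $\gamma^0$) and making sure that the ``$\land 1$'' versus ``$^2\land 1$'' distinction is used consistently — i.e. that \eqref{7.3} is what upgrades membership from $ID(\mathbb{R}^d)$ to $ID_{\mathrm{AB}}(\mathbb{R}^d)$ at every stage, including for the pre-limit integrals $\int_p^q f(s)X^{(\mu)}(ds)$.
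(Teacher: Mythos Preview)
Your proposal is correct and follows essentially the same route as the paper's proof: both directions hinge on the decomposition $\gamma_p^q=\int_p^q f(s)\gamma^0\,ds+\int_p^q ds\int_{\mathbb{R}^d} f(s)x(1+|f(s)x|^2)^{-1}\nu(dx)$ together with the estimate (via $(1+r^2)^{-1}\leqslant 2(1+r)^{-1}$) that the second term is absolutely integrable over $(a,b)$ under \eqref{7.3}. The only cosmetic difference is that for (a)$\Rightarrow$(b) the paper argues at the level of cumulant functions (the limit \eqref{7.7}) while you argue directly with the convergence of $\gamma_p^q$ from Theorem~\ref{t4.1}; these are equivalent by that theorem, and your identification of the drift $\widetilde\gamma^0$ via the limit of $\int_p^q C_{\mu}(f(s)z)\,ds$ is exactly the paper's computation.
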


\begin{proof}
Assume (b). Let us show (a).  Condition
\eqref{7.4} means that either $\gamma^0=0$ or $\int_p^q f(s)ds$ 
is convergent as $p\downarrow a$ and $q\uparrow b$.
We make an argument similar to that in the proof that (b) implies (a) in
Theorem \ref{p7.2}.
Thus we have \eqref{4.2} and \eqref{4.3}, observing that
\[
\gamma_p^q=\int_p^q ds\left(f(s)\gamma^0+\int_{\mathbb{R}^d} \frac{f(s)x}{1+|f(s)x|^2}
\nu(dx)\right)
\]
with
\[
\int_a^b ds\int_{\mathbb{R}^d} \frac{|f(s)x|}{1+|f(s)x|^2}\nu(dx)<\infty.
\]
It follows
from Theorem \ref{t4.1} that $\Phi_f(\mu)$ is definable.
Let $(\widetilde A,\widetilde\nu,\widetilde\gamma)$ be the triplet of 
$\widetilde\mu=\Phi_f(\mu)$.
Then $\widetilde A=0$ and $\widetilde\nu$ is $\nu_a^b$ of
\eqref{4.6}.
Hence we have $\int_{\mathbb{R}^d} (|x|\land1)\widetilde\nu(dx)<\infty$ from \eqref{7.3},
that is, $\widetilde\mu\in ID_{\mathrm{AB}}$.
 We have
{\allowdisplaybreaks
\begin{equation}\label{7.7}
\begin{split}
C_{\widetilde\mu}(z)&=\lim_{p\downarrow a,\,q\uparrow b}C_{\int_p^q f(s)X^{(\mu)}(ds)}(z)
=\lim_{p\downarrow a,\,q\uparrow b}\int_p^q C_{\mu}(f(s)z)ds\\
&=\lim_{p\downarrow a,\,q\uparrow b}\int_p^q ds\left(\int_{\mathbb{R}^d}(e^{i\langle 
f(s)z,x\rangle}
-1)\,\nu(dx)+i\langle\gamma^0,f(s)z\rangle\right).
\end{split}
\end{equation}
Thus
\[
C_{\widetilde\mu}(z)=\int_a^b ds\int_{\mathbb{R}^d}(e^{i\langle z,f(s)x\rangle}
-1)\,\nu(dx)+i\left\langle \int_{a+}^{b-}f(s)\gamma^0 ds,z\right\rangle,
\]
since condition \eqref{7.3} allows
us to use the dominated convergence theorem.} Hence we obtain \eqref{7.6}.

Assume (a). Let us show (b).
Let $\widetilde\mu=\Phi_f(\mu)$ with triplet $(\widetilde A,\widetilde\nu,
\widetilde\gamma)$.
Since $\widetilde\nu$ is $\nu_a^b$ of \eqref{4.6}, we obtain \eqref{7.3} from
$\int_{\mathbb{R}^d} (|x|\land1)\widetilde\nu(dx)
<\infty$. We have \eqref{7.7} 
and
\[
\lim_{p\downarrow a,\,q\uparrow b}\int_p^q ds\int_{\mathbb{R}^d}(e^{i\langle f(s)z,
x\rangle}
-1)\,\nu(dx)
\]
exists in $\mathbb{R}^d$. Hence
$
\lim_{p\downarrow a,\,q\uparrow b}\int_p^q \langle f(s)\gamma^0,z\rangle ds
$
exists in $\mathbb{R}^d$. It follows that\linebreak
$\lim_{p\downarrow a,\,q\uparrow b}\int_p^q 
f(s)\gamma^0_j ds$ exists in $\mathbb{R}$ for $j=1,\ldots,d$. Hence we get \eqref{7.4}.
The last part of the theorem is obtained in the course of our 
discussion.
\end{proof}

\begin{rem}\label{r4.1}
We are considering $\Phi_f$, $\Phi_{f,\,\mathrm{c}}$, $\Phi_{f,\,\mathrm{es}}$,
and $\Phi_{f,\,\mathrm{sym}}$ as two-sided improper integrals (that is, 
$p\downarrow a$ and $q\uparrow b$).  We can reduce $\Phi_f$, 
$\Phi_{f,\,\mathrm{es}}$, and $\Phi_{f,\,\mathrm{sym}}$ to one-sided improper
integrals, but we cannot always reduce $\Phi_{f,\,\mathrm{c}}$
to one-sided improper integrals.

Fix $c\in(a,b)$. Let $\mu\in ID(\mathbb{R}^d)$ and $f\in\mathbf L_{(a,b)}(X^{(\mu)})$.
Then it is not hard to show the following (i)--(iii).

(i) $\Phi_f(\mu)$ is definable if and only if $\int_{a+}^c f(s)X^{(\mu)}(ds)$
 and $\int_c^{b-} f(s)X^{(\mu)}(ds)$ are definable, that is,
$\int_p^c f(s)X^{(\mu)}(ds)$ and
$\int_c^q f(s)X^{(\mu)}(ds)$ are convergent in probability as 
$p\downarrow a$ and $q\uparrow b$, respectively.

(ii) $\Phi_{f,\,\mathrm{es}}(\mu)$ is definable if and only if there are nonrandom
$\mathbb{R}^d$-valued functions $g(p)$, $p\in(a,c)$, and $h(q)$, $q\in(c,b)$,
such that $\int_p^c f(s)X^{(\mu)}(ds)-g(p)$ and
$\int_c^q f(s)X^{(\mu)}(ds)-h(q)$ are convergent in probability as 
$p\downarrow a$ and $q\uparrow b$, respectively.

(iii)  Let $X^{(\mu)\sharp}(\mu)$ be an independent copy of $X^{(\mu)}$.
$\Phi_{f,\,\mathrm{sym}}(\mu)$ is definable if and only if 
$\int_{a+}^c f(s)(X^{(\mu)}(ds)-X^{(\mu)\sharp}(ds))$ and
$\int_c^{b-} f(s)(X^{(\mu)}(ds)-X^{(\mu)\sharp}(ds))$ are definable.

(iv) Is it true that $\Phi_{f,\,\mathrm{c}}(\mu)$ 
is definable if and only if there are $\theta$
and $\theta'$ in $\mathbb{R}^d$ such that
$\int_{a+}^c f(s)X^{(\mu*\delta_{-\theta})}(ds)$ and
$\int_c^{b-} f(s)X^{(\mu*\delta_{-\theta'})}(ds)$ are definable? 
The answer is affirmative if $\int_p^c f(s)ds$ is convergent as $p\downarrow a$ or if 
$\int_c^q f(s)ds$ is convergent as $q\uparrow b$.
However, the answer is negative in general.  For a counter-example,
let $(a,b)=(0,\infty)$, $f(s)=s^{-1}$, and $\mu=\mu_{(0,\nu,\gamma)}$ such
that $\int_{|x|<\varepsilon}\nu(dx)=\int_{|x|>1/\varepsilon}\nu(dx)=0$ for some 
$\varepsilon\in(0,1)$ and $\int_{\mathbb{R}^d}x\nu(dx)\neq0$; use Example \ref{e5.1}
and Proposition \ref{pd.2} in the later sections; notice that then
$\Phi_{f,\,\mathrm{c}}(\mu)$ is not definable since $\int x(1+|x|^2)^{-1}\nu(dx)
\neq -\int x|x|^2(1+|x|^2)^{-1}\nu(dx)$.
\end{rem}

\section{Domains of $\Phi_f$, $\Phi_{f,\,\mathrm{es}}$,
$\Phi_{f,\,\mathrm{c}}$, and $\Phi_{f,\,\mathrm{sym}}$\\
and domain of absolute definability}

We continue to fix $-\infty\leqslant a<b\leqslant\infty$ and the dimension $d$.  
Let $f$ be an $\mathbb{R}$-valued measurable
function on $(a,b)$. Let 
$\mathfrak D (\Phi_f)$ [resp.\ $\mathfrak D (\Phi_{f,\,\mathrm{es}})$, 
$\mathfrak D (\Phi_{f,\,\mathrm{c}})$, 
$\mathfrak D (\Phi_{f,\,\mathrm{sym}})$]
denote the class of $\mu\in ID(\mathbb{R}^d)$ such that 
$f\in\mathbf L_{(a,b)}(X^{(\mu)})$
and $\Phi_f(\mu)$  [resp.\ $\Phi_{f,\,\mathrm{es}}(\mu)$, $\Phi_{f,\,\mathrm{c}}(\mu)$,
$\Phi_{f,\,\mathrm{sym}}(\mu)$] is definable.
Further we introduce the following notion.

\begin{defn}\label{d5.1}
Let $\mu\in ID(\mathbb{R}^d)$ and $f\in\mathbf L_{(a,b)}(X^{(\mu)})$.
We say that
the improper integral on $(a,b)$ of $f$
with respect to $X^{(\mu)}$, $\int_{a+}^{b-} f(s)X^{(\mu)}(ds)$,  
is {\it absolutely definable} if 
\begin{equation}\label{5.1}
\int_a^b |C_{\mu}(f(s)z)| ds<\infty\quad\text{for all $z\in\mathbb{R}^d$}.
\end{equation}
\end{defn}

If \eqref{5.1} holds, then $\int_{a+}^{b-} f(s)X^{(\mu)}(ds)$ is definable,
which follows from Theorem \ref{t4.1}.  Let $\mathfrak D^0 (\Phi_f)$ 
denote the class
of $\mu\in ID(\mathbb{R}^d)$ for which $f\in\mathbf L_{(a,b)}(X^{(\mu)})$ and
$\int_{a+}^{b-} f(s)X^{(\mu)}(ds)$ is 
absolutely definable.

\begin{thm}\label{t5.1}
Let $\mu=\mu_{(A,\nu,\gamma)}\in ID(\mathbb{R}^d)$. Then $\mu\in
\mathfrak D^0 (\Phi_f)$ if and only if
\eqref{4.1} and  \eqref{4.2} hold and
\begin{equation}\label{5.2}
\int_a^b
 \left|f(s)\left(\gamma+\int_{\mathbb{R}^d}x\left(\frac{1}{1+|f(s)x|^2}-
\frac{1}{1+|x|^2}
\right)\nu(dx) \right)\right| ds<\infty.
\end{equation}
\end{thm}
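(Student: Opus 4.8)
The plan is to reduce everything to the L\'evy--Khintchine form \eqref{1.1} after recentring the compensator. Replacing $z$ by $f(s)z$ in \eqref{1.1} and splitting off the term $i\langle z,f(s)x\rangle\bigl(\frac{1}{1+|f(s)x|^2}-\frac{1}{1+|x|^2}\bigr)$ from the jump integral gives, for every $s$ for which $\int_{\mathbb{R}^d}(|f(s)x|^2\land1)\,\nu(dx)<\infty$,
\begin{equation*}
C_{\mu}(f(s)z)=-\frac12 f(s)^2\langle z,Az\rangle+E(s,z)+i\langle z,\beta(s)\rangle,
\end{equation*}
where $E(s,z)=\int_{\mathbb{R}^d}\bigl(e^{i\langle z,f(s)x\rangle}-1-\frac{i\langle z,f(s)x\rangle}{1+|f(s)x|^2}\bigr)\nu(dx)$ and $\beta(s)=f(s)\gamma+\int_{\mathbb{R}^d}f(s)x\bigl(\frac{1}{1+|f(s)x|^2}-\frac{1}{1+|x|^2}\bigr)\nu(dx)$ is the integrand of \eqref{3.10} and \eqref{4.7}. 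The elementary bound $|f(s)x(\frac{1}{1+|f(s)x|^2}-\frac{1}{1+|x|^2})|\leqslant c_s(|x|^2\land1)$ together with $\int_{\mathbb{R}^d}(|x|^2\land1)\,\nu(dx)<\infty$ shows that $\beta(s)$, the split-off term, and $E(s,z)$ are all well defined for a.e.\ $s$; note that \eqref{5.2} is exactly $\int_a^b|\beta(s)|\,ds<\infty$. The one estimate that does the work is elementary: from $|e^{i\theta}-1-i\theta|\leqslant\theta^2/2$, treating the ranges $|f(s)x|\leqslant1$ and $|f(s)x|>1$ separately, one obtains
\begin{equation*}
|E(s,z)|\leqslant C(z)\int_{\mathbb{R}^d}(|f(s)x|^2\land1)\,\nu(dx)
\end{equation*}
with a constant $C(z)$ depending on $z$ only.

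For the ``if'' direction, suppose \eqref{4.1}, \eqref{4.2}, and \eqref{5.2} hold. Their sum equals $\int_a^b\varphi(s,f(s))\,ds$ for $\varphi$ of \eqref{3.4}, so \eqref{3.5} holds and hence $f\in\mathbf L_{(a,b)}(X^{(\mu)})$ (see the proof of Theorem \ref{t3.1}); moreover \eqref{4.2} makes the displayed identity valid for a.e.\ $s$. Then
\begin{equation*}
|C_{\mu}(f(s)z)|\leqslant\frac12 f(s)^2|z|^2\,\mathrm{tr}\,A+C(z)\int_{\mathbb{R}^d}(|f(s)x|^2\land1)\,\nu(dx)+|z|\,|\beta(s)|,
\end{equation*}
and integrating over $(a,b)$ the three terms are finite by \eqref{4.1}, \eqref{4.2}, and \eqref{5.2}. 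Thus \eqref{5.1} holds, i.e.\ $\mu\in\mathfrak D^0(\Phi_f)$.

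For the ``only if'' direction, let $\mu\in\mathfrak D^0(\Phi_f)$, so $f\in\mathbf L_{(a,b)}(X^{(\mu)})$ and \eqref{5.1} holds. Since \eqref{5.1} makes $\Phi_f(\mu)$ definable, $\mu\in\mathfrak D(\Phi_f)$, and Theorem \ref{t4.1} yields \eqref{4.1} and \eqref{4.2}; in particular the displayed identity holds for a.e.\ $s$. Writing $i\langle z,\beta(s)\rangle=C_{\mu}(f(s)z)+\frac12 f(s)^2\langle z,Az\rangle-E(s,z)$ and using the bound on $E$,
\begin{equation*}
|\langle z,\beta(s)\rangle|\leqslant|C_{\mu}(f(s)z)|+\frac12 f(s)^2|z|^2\,\mathrm{tr}\,A+C(z)\int_{\mathbb{R}^d}(|f(s)x|^2\land1)\,\nu(dx),
\end{equation*}
which integrates to something finite over $(a,b)$ for every $z\in\mathbb{R}^d$, by \eqref{5.1}, \eqref{4.1}, and \eqref{4.2}. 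Taking $z=e_1,\dots,e_d$ (the standard basis) and summing gives $\int_a^b|\beta(s)|\,ds<\infty$, that is, \eqref{5.2}.

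The only points needing a little care are bookkeeping: that $\beta(s)$ and $E(s,z)$ are genuinely defined a.e.\ (elementary domination by $|x|^2\land1$ against the L\'evy measure), and the two-range estimate for $E(s,z)$. The step one might fear --- passing from ``$\int_a^b|C_{\mu}(f(s)z)|\,ds<\infty$ for all $z$'' back to \eqref{4.1} and \eqref{4.2}, which would require a lower bound for $\int_{|z|\leqslant1}(1-\cos\langle z,y\rangle)\,dz$ --- does not arise: in the ``only if'' part it is subsumed in Theorem \ref{t4.1} (the analogue of Lemma 5.4 of (2006a)), and in the ``if'' part only the trivial upper bound $1-\cos\theta\leqslant2(\theta^2\land1)$ is used, already inside the estimate on $E$.
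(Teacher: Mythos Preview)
Your proof is correct and follows the natural approach: decompose $C_\mu(f(s)z)$ into its Gaussian, jump, and location pieces (with the compensator recentred at $|f(s)x|$), bound the jump piece by $C(z)\int(|f(s)x|^2\land1)\,\nu(dx)$, and read off the equivalence. The paper itself does not display a proof but cites (2006c), Proposition 2.3, together with the observation that \eqref{4.1}, \eqref{4.2}, \eqref{5.2} imply $f\in\mathbf L_{(a,b)}(X^{(\mu)})$ --- exactly the point you make via \eqref{3.4}--\eqref{3.5} --- so your argument is essentially the intended one made explicit.
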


\begin{proof}
See (2006c), Proposition 2.3. Recall that \eqref{4.1}, \eqref{4.2}, and
\eqref{5.2} imply $f\in\linebreak\mathbf L_{(a,b)}(X^{(\mu)})$.
\end{proof}

Theorem \ref{t5.1} shows that $\mu\in
\mathfrak D^0 (\Phi_f)$ if and only if $f$ is $X^{(\mu)}$-integrable over 
$(a,b)$ in the sense of Rajput and Rosinski (1989); see Theorem 2.7 of their
paper. The author owes this remark to Jan Rosi\'nski.

\begin{thm}\label{c5.1}
The following relations are true:
\begin{equation}\label{5.3}
\mathfrak D^0 (\Phi_f)\subset \mathfrak D (\Phi_f)\subset 
\mathfrak D (\Phi_{f,\,\mathrm{c}})\subset \mathfrak D (\Phi_{f,\,\mathrm{es}})
=\mathfrak D (\Phi_{f,\,\mathrm{sym}}).
\end{equation}
\end{thm}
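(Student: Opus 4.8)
The plan is to read off \eqref{5.3} from the definability criteria already established: Theorem \ref{t4.1} for $\mathfrak D(\Phi_f)$, Theorem \ref{t4.2} for $\mathfrak D(\Phi_{f,\,\mathrm{es}})$, Theorem \ref{t4.3} for $\mathfrak D(\Phi_{f,\,\mathrm{c}})$, Theorem \ref{t4.4} for $\mathfrak D(\Phi_{f,\,\mathrm{sym}})$, and Definition \ref{d5.1} together with Theorem \ref{t5.1} for $\mathfrak D^0(\Phi_f)$. The organizing remark is that all five classes carry the same standing hypothesis $f\in\mathbf L_{(a,b)}(X^{(\mu)})$, a property of the pair $(f,\mu)$ alone; hence once $\mu$ belongs to the smaller class this hypothesis is available for the larger one, and it suffices to compare the remaining ``definability'' conditions on the triplet $(A,\nu,\gamma)$ of $\mu$. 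Note also that conditions \eqref{4.1} and \eqref{4.2} occur in every one of these criteria, so they propagate automatically along the chain.

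For $\mathfrak D^0(\Phi_f)\subset\mathfrak D(\Phi_f)$ I would argue that membership in $\mathfrak D^0(\Phi_f)$ gives \eqref{5.1}, and absolute convergence of $\int_a^b|C_\mu(f(s)z)|\,ds$ forces $\int_p^q C_\mu(f(s)z)\,ds$ to converge as $p\downarrow a$, $q\uparrow b$; thus statement (b) of Theorem \ref{t4.1} holds, hence (a), i.e.\ $\Phi_f(\mu)$ is definable. This is precisely the observation made just after Definition \ref{d5.1}; equivalently, by Theorem \ref{t5.1}, condition \eqref{5.2} is absolute Lebesgue integrability on $(a,b)$ of the integrand of $\gamma_p^q$ in \eqref{3.10}, which yields \eqref{4.3}.

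For $\mathfrak D(\Phi_f)\subset\mathfrak D(\Phi_{f,\,\mathrm{c}})$ I would take $\theta=0$ in Definition \ref{d4.3}: since $\mu*\delta_0=\mu$, definability of $\int_{a+}^{b-}f(s)X^{(\mu)}(ds)$ itself witnesses definability of the compensated improper integral. In triplet terms, $f(s)\gamma+\int_{\mathbb{R}^d}f(s)x(\frac{1}{1+|f(s)x|^2}-\frac{1}{1+|x|^2})\nu(dx)=f(s)(\gamma+\int_{\mathbb{R}^d}x(\frac{1}{1+|f(s)x|^2}-\frac{1}{1+|x|^2})\nu(dx))$, so condition \eqref{4.3} on $\gamma_p^q$ is exactly condition \eqref{4.4} with $\gamma^\sharp=\gamma$, and Theorem \ref{t4.3} applies. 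Finally, $\mathfrak D(\Phi_{f,\,\mathrm{c}})\subset\mathfrak D(\Phi_{f,\,\mathrm{es}})$ and $\mathfrak D(\Phi_{f,\,\mathrm{es}})=\mathfrak D(\Phi_{f,\,\mathrm{sym}})$ are immediate: by Theorem \ref{t4.3} any $\mu\in\mathfrak D(\Phi_{f,\,\mathrm{c}})$ satisfies \eqref{4.1} and \eqref{4.2}, and by Theorems \ref{t4.2} and \ref{t4.4} those two conditions are exactly the characterizations of $\mathfrak D(\Phi_{f,\,\mathrm{es}})$ and of $\mathfrak D(\Phi_{f,\,\mathrm{sym}})$, so the latter two classes coincide and each contains $\mathfrak D(\Phi_{f,\,\mathrm{c}})$.

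I do not expect any real obstacle here; the whole content lies in Theorems \ref{t4.1}--\ref{t4.4} and \ref{t5.1}. The only thing that needs attention is the bookkeeping described above --- that $f\in\mathbf L_{(a,b)}(X^{(\mu)})$ is common to all five definitions and that \eqref{4.1}, \eqref{4.2} appear in every criterion --- so that nothing has to be re-established when passing from a smaller class to a larger one.
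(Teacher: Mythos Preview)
Your proposal is correct and follows exactly the approach of the paper, which simply states that the chain \eqref{5.3} is a consequence of the descriptions of the domains obtained from Theorems \ref{t4.1}--\ref{t4.4} and \ref{t5.1}. You have merely spelled out the bookkeeping in more detail than the paper does.
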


\begin{proof}
This is a consequence of the descriptions of these domains obtained from
Theorems \ref{t4.1}--\ref{t4.4} and \ref{t5.1}.
\end{proof}

\begin{rem}\label{r5.1}
If we restrict ourselves to symmetric infinitely divisible distributions,
then the five domains in \eqref{5.3} are identical.  That is, if $\mu$ is
symmetric and $\mu\in\mathfrak D (\Phi_{f,\,\mathrm{es}})$, then
$\mu\in\mathfrak D^0 (\Phi_f)$. Indeed, then the location parameter $\gamma$ 
of $\mu$ is zero and the integral in \eqref{5.2} is zero.
\end{rem}

For two functions $f$ and 
$g$, we write $f(s)\asymp g(s)$, 
$s\to\infty$, if there are positive constants $c_1$ and 
$c_2$ such that
$0<c_1 g(s)\leqslant f(s)\leqslant c_2 g(s)$ for all large $s$.

\begin{ex}\label{e5.1}
Let $f(s)$ be a measurable function on $(a,\infty)$ with $a$ finite
such that
$\int_a^t f(s)^2 ds<\infty$ for all $t\in(a,\infty)$ and
\[
f(s)\asymp s^{-1/\alpha}\quad\text{as }s\to\infty
\]
with $0<\alpha<2$.
Since we will have Remark \ref{r5.2}, the 
results of (2006b) say the following. Let $\mu=\mu_{(A,\nu,\gamma)}
\in ID(\mathbb{R}^d)$. Let $\gamma^1$ be mean of $\mu$ if it exists.

(i) Consider the condition
\begin{equation}\label{5.5}
\int_{|x|>1} |x|^{\alpha}\nu(dx)<\infty.
\end{equation}
Then, for $0<\alpha<2$,
\[
\mu\in\mathfrak D(\Phi_{f,\,\mathrm{es}})\quad\Leftrightarrow\quad
\eqref{5.5}.
\]

(ii) If $0<\alpha<1$, then
\begin{equation}\label{5.9a}
\mathfrak D^0(\Phi_f)= \mathfrak D(\Phi_f)= 
\mathfrak D(\Phi_{f,\,\mathrm{c}})= \mathfrak D(\Phi_{f,\,\mathrm{es}}).
\end{equation}

(iii) If $1<\alpha<2$, then
{\allowdisplaybreaks
\begin{gather}
\mathfrak D^0(\Phi_f)= \mathfrak D(\Phi_f)\subsetneqq 
\mathfrak D(\Phi_{f,\,\mathrm{c}})= \mathfrak D(\Phi_{f,\,\mathrm{es}}),\label{5.9b}\\
\mathfrak D(\Phi_f)\cap ID_0\subsetneqq 
\mathfrak D(\Phi_{f,\,\mathrm{c}})\cap ID_0,\label{5.9ba}
\end{gather}
and}
\[
\mu\in\mathfrak D(\Phi_f)\quad\Leftrightarrow\quad
\eqref{5.5}\text{ and }\gamma^1=0.
\]

(iv) Let $\alpha=1$ and suppose, in addition, that, for some 
$s_0>a\lor 0$ and $c>0$, 
\begin{equation*}
\int_{s_0}^{\infty}|f(s)-cs^{-1}|ds<\infty.
\end{equation*}
Consider the conditions
{\allowdisplaybreaks
\begin{gather}
\lim_{t\to\infty}\int_{s_0}^t s^{-1}ds\int_{|x|>s} x\nu(dx)
\text{ exists in $\mathbb{R}^d$,}\label{5.7}\\
\int_{s_0}^{\infty} s^{-1}ds\left|\int_{|x|>s} x\nu(dx)\right|
<\infty.\label{5.8}
\end{gather}
Then
\begin{equation}\label{5.9c}
\mathfrak D^0(\Phi_f)\cap ID_0\subsetneqq \mathfrak D(\Phi_f)\cap ID_0
\subsetneqq 
\mathfrak D(\Phi_{f,\,\mathrm{c}})\cap ID_0\subsetneqq 
\mathfrak D(\Phi_{f,\,\mathrm{es}})\cap ID_0
\end{equation}
and, letting \eqref{5.5} mean \eqref{5.5} with $\alpha=1$,
\begin{gather*}
\mu\in\mathfrak D(\Phi_{f,\,\mathrm{c}})\quad\Leftrightarrow\quad
\text{\eqref{5.5} and \eqref{5.7}},\\
\mu\in\mathfrak D(\Phi_f)\quad\Leftrightarrow\quad
\text{\eqref{5.5}, \eqref{5.7}, and $\gamma^1=0$},\\
\mu\in\mathfrak D^0(\Phi_f)\quad\Leftrightarrow\quad
\text{\eqref{5.5}, \eqref{5.8}, and $\gamma^1=0$}.
\end{gather*}}
\end{ex}

\begin{ex}\label{e5.2}
Let $f(s)$ be a measurable function on $(a,\infty)$ with $a$ finite
such that
$\int_a^t f(s)^2 ds<\infty$ for all $t\in(a,\infty)$ and
there are positive constants $\alpha$, $c_1$, and $c_2$ satisfying
\begin{equation*}
e^{-c_2 s^{\alpha}}\leqslant f(s)\leqslant e^{-c_1 s^{\alpha}}\quad
\text{for all large $s$.}
\end{equation*}
Then
\[
\mu=\mu_{(A,\nu,\gamma)}\in\mathfrak D(\Phi_{f,\,\mathrm{es}})
\quad\Leftrightarrow\quad
\int_{\mathbb{R}^d}(\log^+ |x|)^{1/\alpha}\nu(dx)<\infty,
\]
and \eqref{5.9a} holds.
See Theorem 5.15 of (2006a) and Proposition 4.3 of (2006c). 
Using notation $\Phi=\Phi_f$ for $f(s)=e^{-s}$ on $(0,\infty)$, we know 
that the range of
$\Phi$ is the class $L_0(\mathbb{R}^d)$ of all selfdecomposable distributions on 
$\mathbb{R}^d$
(see Rocha-Arteaga and Sato (2003) for references). Jurek (1983) shows
that, for $m=1,2,\ldots$,
 the subclass $L_m(\mathbb{R}^d)$ of $L_0(\mathbb{R}^d)$ is the
range of $\Phi_f$ for $f(s)=\exp(-((m+1)!\,s)^{1/(m+1)})$ on $(0,\infty)$
(see also Rocha-Arteaga and Sato (2003)).
\end{ex}

\begin{ex}\label{e5.3}
Let $f(s)$ be a measurable function on $(a,\infty)$ with $a$ finite
such that
$\int_a^t f(s)^2 ds<\infty$ for all $t\in(a,\infty)$ and
\[
f(s)\asymp e^{-e^s}\quad\text{as }s\to\infty.
\]
Then
\[
\mu=\mu_{(A,\nu,\gamma)}\in\mathfrak D(\Phi_{f,\,\mathrm{es}})
\quad\Leftrightarrow\quad
\int_{|x|>e}(\log\log |x|)\nu(dx)<\infty,
\]
and \eqref{5.9a} holds.

Proof is as follows. Let $h(s)=e^{-e^s}$.
We have $c_1 h(s)\leqslant f(s)\leqslant c_2 h(s)$ for $s\geqslant s_0$
with $c_1,c_2,\ldots$ positive constants and $s_0>2e$.
Clearly we have \eqref{4.1}. Let $s=h^{-1}(u)=\log\log(1/u)$
 be the inverse function of $u=h(s)$.
Let $I=\int_{s_0}^{\infty}ds\int_{\mathbb{R}^d}(|f(s)x|^2\land1)\,\nu(dx)$.
Then, 
{\allowdisplaybreaks
\begin{align*}
I&=\int_{s_0}^{\infty} ds\int_{|f(s)x|\leqslant1}|f(s)x|^2\nu(dx)
+\int_{s_0}^{\infty} ds\int_{|f(s)x|>1}\nu(dx)=I_1+I_2\quad
\text{(say),}\\ 
I_2&\leqslant \int_{|x|>c_2^{-1}/h(s_0)}\nu(dx)
\int_{s_0}^{h^{-1}(c_2^{-1}|x|^{-1})}ds
\leqslant \int_{|x|>c_3}(\log\log |x|)\nu(dx)+c_4,\\
I_1&\leqslant c_2^2\int_{|x|>c_1^{-1}/h(s_0)}|x|^2\nu(dx)
\int_{h^{-1}(c_1^{-1}|x|^{-1})}^{\infty}h(s)^2ds\\
&\quad+c_2^2\int_{|x|\leqslant c_1^{-1}/h(s_0)}|x|^2\nu(dx)
\int_{s_0}^{\infty}h(s)^2ds<\infty,
\end{align*}
since
\[
\lim_{u\downarrow0}\frac{1}{u^2}\int_{h^{-1}(u)}^{\infty} h(s)^2ds=
\lim_{u\downarrow0}\frac{ue^{-s}}{2u}=0.
\]
The estimate of} $I$ from below is similar.
Hence $I$ is finite if and only if\linebreak
 $\int_{|x|>e}(\log\log |x|)
\nu(dx)\allowbreak<\infty$.

To see \eqref{5.9a}, it is enough to show \eqref{5.2}
with 
$a,b$ replaced by $s_0, \infty$. Let 
\[
J=\int_{s_0}^{\infty} f(s)ds\int_{\mathbb{R}^d}
\frac{|x|^3 \nu(dx)}{(1+f(s)^2|x|^2)(1+|x|^2)}.
\]
Since $\int_{s_0}^{\infty}f(s)ds<\infty$ and $f(s)\to0$,
it is enough to show that $J<\infty$. Now,
\[
J\leqslant c_2c_1^{-1}\int_{\mathbb{R}^d}\frac{|x|^2\nu(dx)}{1+|x|^2}
\int_{s_0}^{\infty}\frac{c_1h(s)|x|ds}{1+c_1^2 h(s)^2 |x|^2}
<\infty,
\]
since, for any $r>0$,
{\allowdisplaybreaks
\begin{align*}
&\int_{s_0}^{\infty}\frac{rh(s)ds}{1+r^2 h(s)^2}=
\int_0^{h(s_0)}\frac{ru(-ds/du)du}{1+r^2u^2}
=\int_0^{h(s_0)}\frac{rdu}{(1+r^2u^2)(\log(1/u))}\\
&\qquad\leqslant c_5\int_0^{h(s_0)}\frac{rdu}{1+r^2u^2}
\leqslant c_5\int_0^{\infty}\frac{dv}{1+v^2}.
\end{align*}}
\end{ex}

\begin{ex}\label{e5.4}
Let $f(s)$ be a measurable function on $(a,\infty)$ with $a$ finite
such that
$\int_a^t f(s)^2 ds<\infty$ for all $t\in(a,\infty)$ and
\[
f(s)\asymp s^{-1}(\log s)^{-\beta}\quad\text{as }s\to\infty
\]
with $\beta\in\mathbb{R}$.

(i) Consider the condition
\begin{equation}\label{5.5a}
\int_{|x|>2} |x|(\log |x|)^{-\beta}\nu(dx)<\infty.
\end{equation}
Then, for $\beta\in\mathbb{R}$,
\[
\mu_{(A,\nu,\gamma)}\in\mathfrak D(\Phi_{f,\,\mathrm{es}})\quad\Leftrightarrow
\quad\eqref{5.5a}.
\]

(ii) If $0<\beta\leqslant1$, then
\begin{equation}\label{5.5b}
\mathfrak D(\Phi_f)\subsetneqq 
\mathfrak D(\Phi_{f,\,\mathrm{c}})\subsetneqq \mathfrak D(\Phi_{f,\,\mathrm{es}}).
\end{equation}

(iii) If $1<\beta\leqslant2$, then
\begin{equation}\label{5.5c}
\mathfrak D(\Phi_f)=
\mathfrak D(\Phi_{f,\,\mathrm{c}})\subsetneqq \mathfrak D(\Phi_{f,\,\mathrm{es}}).
\end{equation}

Proof of (i) is similar to the first half of
the proof in Example \ref{e5.3}.
This time $h(s)=s^{-1}(\log s)^{-\beta}$ and $s_0>a\lor1$ is
chosen so large that $h(s)$ is strictly decreasing for
$s\geqslant s_0$. We define $I$, $I_1$, and $I_2$ in the same way.
Then
{\allowdisplaybreaks
\begin{align*}
I_1&\leqslant c_6\int_{|x|>c_7}|x|(\log h^{-1}(c_1^{-1}|x|^{-1}))^{-\beta}
\nu(dx)+c_8\\
&\leqslant c_9\int_{|x|>c_7}|x|(\log |x|)^{-\beta}\nu(dx)+c_8
\intertext{since $\int_t^{\infty}h(s)^2 ds\sim t^{-1}
(\log t)^{-2\beta}=h(t)(\log t)^{-\beta}$ as $t\to\infty$ and
$\log h^{-1}(u)\sim\log(1/u)$ as $u\downarrow0$, and}
I_2&\leqslant c_{10}\int_{|x|>c_{11}}|x|(\log |x|)^{-\beta}\nu(dx)
\end{align*}
since} $h^{-1}(u)\sim u^{-1}(\log(1/u))^{-\beta}$ as $u\downarrow0$.
We can estimate $I$ from below similarly. Hence $I<\infty$
if and only if \eqref{5.5a} holds.

Prof of (ii).  Let $0<\beta\leqslant1$. 
We have $\int_{s_0}^{\infty}f(s)ds=\infty$ for
some $s_0$.  Hence, to show $\mathfrak D(\Phi_f)\subsetneqq 
\mathfrak D(\Phi_{f,\,\mathrm{c}})$, choose $\mu_{(A,\nu,\gamma)}$ with
$\nu$ symmetric and $\gamma\neq0$.  In order to show
$\mathfrak D(\Phi_{f,\,\mathrm{c}})\subsetneqq \mathfrak D(\Phi_{f,\,\mathrm{es}})$,
consider $\mu_{(A,\nu,\gamma)}$ with
$\nu$ concentrated on the $x_1$-axis.
For simplicity of notation let $d=1$. Assume that
$\int_{(-\infty,2]}\nu(dx)=0$, $\int_{(2,\infty)}x\nu(dx)=
\infty$, and $\int_{(2,\infty)}x(\log x)^{-\beta}\nu(dx)
<\infty$. Then
\[
\int_{(2,\infty)}x\left(\frac{1}{1+(f(s)x)^2}-\frac{1}{1+x^2}
\right)\nu(dx)\to\infty\quad\text{as }s\to\infty,
\]
from which it follows that $\mu\not\in \mathfrak D(\Phi_{f,\,\mathrm{c}})$.

Proof of (iii).  Let $1<\beta\leqslant2$. Then $\int_a^{\infty}f(s)ds$
is finite. Thus $\mathfrak D(\Phi_f)=\mathfrak D(\Phi_{f,\,\mathrm{c}})$ as
Corollary \ref{c4.1} says. To show 
$\mathfrak D(\Phi_{f,\,\mathrm{c}})\subsetneqq \mathfrak D(\Phi_{f,\,\mathrm{es}})$,
let $d=1$, $\int_{(-\infty,2]}\nu(dx)=0$, and $\nu(dx)=x^{-2}dx$
on $(2,\infty)$.  We have
\[
\int_{s_0}^q f(s)ds\int_2^{\infty}\left(\frac{1}{1+(f(s)x)^2}
-\frac{1}{1+x^2}\right) x^{-1}dx\to\infty\quad\text{as }
q\to\infty,
\]
because, using $\int_r^{\infty}x^{-1}(1+x^2)^{-1}dx\sim\log(1/r)$
as $r\downarrow0$, we have
{\allowdisplaybreaks
\begin{align*}
&\int_{s_0}^q f(s)ds\int_2^{\infty}\frac{x^{-1}dx}{1+(f(s)x)^2}
=\int_{s_0}^q f(s)ds\int_{2f(s)}^{\infty}\frac{x^{-1}dx}{1+x^2}\\
&\qquad\geqslant c_{11}\int_{s_0}^q h(s)\log\frac{1}{2c_2h(s)}ds\\
&\qquad=c_{11}\int_{s_0}^q s^{-1}(\log s)^{-\beta}\log
(2^{-1}c_2^{-1}s(\log s)^{\beta})ds\geqslant c_{12}\int_{s_0}^q s^{-1}
(\log s)^{1-\beta}
ds\to\infty.
\end{align*}
Hence} $\mu_{(A,\nu,\gamma)}\not\in\mathfrak D(\Phi_{f,\,\mathrm{c}})$.
\end{ex}

\begin{rem}\label{r5.3}
Let $f$ be a locally square-integrable function on $(a,b)$ with
$-\infty\leqslant a<b\leqslant\infty$.

(i) If $\mu_1$ and $\mu_2$ are in $\mathfrak D(\Phi_f)$, then 
$\mu_1*\mu_2\in \mathfrak D(\Phi_f)$.  That is, $\mathfrak D(\Phi_f)$ is closed
under convolution. Also, $\mathfrak D^0(\Phi_f)$, 
$\mathfrak D(\Phi_{f,\,\mathrm{c}})$, and $\mathfrak D(\Phi_{f,\,\mathrm{es}})$ are
closed under convolution.
Indeed, $f$ is in $\mathbf L_{(a,b)}(X)$ for all $X$, and the conditions
in Section 3 work, since convolution gives addition of triplets.

(ii) If $\mu_1$ and $\mu_2$ are in $ID(\mathbb{R}^d)$ and 
$\mu_1*\mu_2\in\mathfrak D(\Phi_{f,\,\mathrm{es}})$, then $\mu_1$ and $\mu_2$ are 
in $\mathfrak D(\Phi_{f,\,\mathrm{es}})$. Use Theorem \ref{t4.2}.

(iii) With some choice of $f$, there are $\mu_1$ and $\mu_2$ in $ID(\mathbb{R}^d)$
such that $\mu_1*\mu_2\in\mathfrak D(\Phi_f)$, $\mu_1\not\in\mathfrak D(\Phi_f)$,
and $\mu_2\not\in\mathfrak D(\Phi_f)$. Use Example \ref{e5.1} with 
$f(s)=s^{-1/\alpha}$, $1<\alpha<2$, on $(1,\infty)$.

(iv) If $\mu_2\in ID(\mathbb{R}^d)$ and if $\mu_1$ and $\mu_1*\mu_2$ are in
$\mathfrak D(\Phi_f)$, then $\mu_2\in\mathfrak D(\Phi_f)$. Use Theorem \ref{t4.1}.
\end{rem}

We give some comments on the relation with improper stochastic integrals
studied in (2006a,b,c).

\begin{rem}\label{r5.2}
Let $\mathbf L_{[0,\infty)}(X^{(\mu)})$ be the class of locally 
$X^{(\mu)}$-integrable functions on $[0,\infty)$ in Definition 2.16
of (2006a).  Then the following (a) and (b) are equivalent:

(a) $f\in\mathbf L_{[0,\infty)}(X^{(\mu)})$. 

(b) $f\in\mathbf L_{(0,\infty)}(X^{(\mu)})$ 
and, for some $t$ (equivalently, for any $t$) in
$(0,\infty)$,\linebreak
$\int_{0+}^t f(s)X^{(\mu)}(ds)$ is absolutely definable.\\
In this case,
\begin{equation}\label{5.4}
\int_0^t f(s)X^{(\mu)}(ds)=\int_{0+}^t f(s)X^{(\mu)}(ds),
\end{equation}
the left-hand side being defined in (2006a) since 
$f\in\mathbf L_{[0,\infty)}(X^{(\mu)})$. 

Proof is as follows. 
As Theorem 3.1 of (2006a) says, $f\in\mathbf L_{[0,\infty)}(X^{(\mu)})$
if and only if, for all $t\in(0,\infty)$,
{\allowdisplaybreaks
\begin{gather*}
\int_0^t f(s)^2 \mathrm{tr}\, A ds<\infty,\\
\int_0^t ds \int_{\mathbb{R}^d}(|f(s)x|^2\land 1)\,\nu(dx)<\infty,\\
\int_0^t \left|f(s)\gamma+\int_{\mathbb{R}^d} f(s)x\left(\frac{1}{1+|f(s)x|^2}
-\frac{1}{1+|x|^2}\right)\nu(dx)\right| ds<\infty.
\end{gather*}
Combine this  with Theorem \ref{t3.1} and the one-sided version of 
Theorem \ref{t5.1}. Then we see the equivalence
of (a) and (b). To prove \eqref{5.4}, it is enough to show that}
$\int_{1/n}^t f(s)X^{(\mu)}(ds)$ tends to $\int_0^t f(s)X^{(\mu)}(ds)$
in probability as $n\to\infty$. Let, for large $n$,
\[
F_n=\int_0^t f(s)X^{(\mu)}(ds)-\int_{1/n}^t f(s)X^{(\mu)}(ds)
=\int_0^{1/n} f(s) X^{(\mu)}(ds).
\]
Then the triplet $(A_n,\nu_n,\gamma_n)$ of the $\mathcal L(F_n)$ satisfies
$A_n\to0$, $\int_{R^d}(|x|^2\land1)\,\nu_n(dx)\to0$, and
$\gamma_n\to0$ as $n\to\infty$.  Thus $F_n\to0$ in probability.

In (2006a,b,c) $\int_0^{\infty-} f(s) X^{(\mu)}(ds)$
is said to be definable if $f\in\mathbf L_{[0,\infty)}(X^{(\mu)})$ and if 
$\int_0^t f(s)X^{(\mu)}(ds)$ is convergent in probability as $t\to\infty$.
It follows from the results above that the following (c) and (d) are
equivalent:

(c) $\int_0^{\infty-} f(s) X^{(\mu)}(ds)$ is definable.

(d) $f\in\mathbf L_{(0,\infty)}(X^{(\mu)})$, $\int_1^{\infty-} 
f(s) X^{(\mu)}(ds)$ is definable, and $\int_{0+}^1 f(s)X^{(\mu)}(ds)$ 
is absolutely definable.\\
In this case,
\[
\int_0^{\infty-} f(s)X^{(\mu)}(ds)=\int_{0+}^{\infty-} f(s)X^{(\mu)}(ds).
\]

The domain of those $\mu$ for which $\int_0^{\infty-} f(s) X^{(\mu)}(ds)$ is 
definable is denoted by $\mathfrak D[f,\mathbb{R}^d]$ in (2006a) and by
$\mathfrak D (\Phi_f)$ in (2006b,c).  The domains for
essential and compensated improper integrals for
$\int_0^{\infty-} f(s) X^{(\mu)}(ds)$ are respectively 
denoted by $\mathfrak D_{\mathrm{es}} [f,\mathbb{R}^d]$ and 
$\mathfrak D_{\mathrm{c}} [f,\mathbb{R}^d]$ 
in (2006a) and by $\mathfrak D_{\mathrm{e}} (\Phi_f)$ 
(or $\mathfrak D_{\mathrm{es}} (\Phi_f)$) and $\mathfrak D_{\mathrm{c}} (\Phi_f)$ in 
(2006b,c).
\end{rem}

\section{Duals of infinitely divisible distributions}
We introduce the concept of duals of purely non-Gaussian 
infinitely divisible distributions on $\mathbb{R}^d$. Utilizing this
concept, we relate some improper stochastic integrals on
$(0,b)$ with $0<b<\infty$ to those on $(a,\infty)$ with $a$ finite.

\begin{defn}\label{dd.1}
Let $\mu=\mu_{(0,\nu,\gamma)}\in ID_0(\mathbb{R}^d)$. We call the distribution
$\mu'\in ID_0(\mathbb{R}^d)$ the {\em dual} of $\mu$ if the triplet 
$(0,\nu',\gamma')$ of $\mu'$ satisfies
\begin{gather}
\nu'(B)=\int_{\mathbb{R}^d\setminus\{0\}} 1_B(\iota(x)) |x|^2 \nu(dx),
\quad B\in\mathcal B(\mathbb{R}^d),\label{d.1}\\
\gamma'=-\gamma, \label{d.2}
\end{gather}
where $\iota(x)=|x|^{-2}x$, the inversion of $x$, which maps 
$\mathbb{R}^d\setminus\{0\}$ onto itself.
\end{defn}

The simple form \eqref{d.2} of the relation of location parameters of
$\mu$ and $\mu'$ is due to the fact that we are using the centering
function $x(1+|x|^2)^{-1}$ in the L\'evy--Khintchine representation
in this paper.

The relation \eqref{d.1} implies
\begin{equation}\label{d.3}
\int_{\mathbb{R}^d}h(x)\nu'(dx)=\int_{\mathbb{R}^d\setminus\{0\}} h(\iota(x)) 
|x|^2 \nu(dx)
\end{equation}
for all nonnegative measurable function $h(x)$ and thus
\begin{equation}\label{d.4}
\int_{\mathbb{R}^d}(|x|^2\land1)\nu'(dx)=\int_{\mathbb{R}^d}(|x|^2\land1)\nu(dx).
\end{equation}

\begin{prop}\label{pd.1}
{\rm(i)} The dual of the dual of $\mu$ is $\mu$ itself. That is, 
$\mu''=\mu$ for all $\mu\in ID_0(\mathbb{R}^d)$.

{\rm(ii)}\quad $\int_{|x|\leqslant1} |x|^{2-\alpha}\nu'(dx)=\int_{|x|\geqslant1} 
|x|^{\alpha}\nu(dx)$ for $\alpha\in\mathbb{R}$.

{\rm(iii)} The dual $\mu'$ is of type {\rm A} if and only if $\mu$ has finite
second moment.

{\rm(iv)} The dual $\mu'$ is of type {\rm A} or {\rm B} if and only if $\mu$ 
has finite mean.

{\rm(v)} \;If $\mu'$ is of type {\rm A} or {\rm B}, then
\[
(\text{drift of }\mu')=-(\text{mean of }\mu).
\]

{\rm(vi)} Let $0<\alpha<2$. Then $\mu'$ is $(2-\alpha)$-stable if and only
if $\mu$ is $\alpha$-stable.

{\rm(vii)} Let $0<\alpha<2$. Then $\mu'$ is strictly $(2-\alpha)$-stable 
if and only if $\mu$ is strictly $\alpha$-stable.
\end{prop}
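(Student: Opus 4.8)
The plan is to verify each of the seven assertions by unwinding Definition \ref{dd.1} and using the explicit change-of-variables formula \eqref{d.3}, together with the fact that $\iota$ is an involution on $\mathbb{R}^d\setminus\{0\}$ with $|\iota(x)|=|x|^{-1}$. I would organize the proof so that (i) and (ii) come first, since the remaining items all reduce to them, then treat (iii)--(v) as moment/drift computations, and finish with the stability statements (vi)--(vii), which require recalling the L\'evy-measure characterization of $\alpha$-stability.

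For (i), apply \eqref{d.3} twice: writing $(0,\nu'',\gamma'')$ for the triplet of $\mu''$, we get for nonnegative measurable $h$ that $\int h\,d\nu'' = \int h(\iota(y))|y|^2\nu'(dy) = \int h(\iota(\iota(x)))|\iota(x)|^2|x|^2\nu(dx) = \int h(x)\,\nu(dx)$, using $\iota\circ\iota=\mathrm{id}$ and $|\iota(x)|^2|x|^2=1$; and $\gamma''=-\gamma'=\gamma$, so $\mu''=\mu$. For (ii), apply \eqref{d.3} with $h(x)=|x|^{2-\alpha}1_{\{|x|\leqslant1\}}$: then $h(\iota(x))|x|^2 = |x|^{-(2-\alpha)}|x|^2 1_{\{|x|^{-1}\leqslant 1\}} = |x|^{\alpha}1_{\{|x|\geqslant1\}}$ (with the boundary sphere $|x|=1$ handled either way, since stable-type examples have no atom there and in general one just fixes a convention), giving the identity. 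Statement (iii) is the case $\alpha=0$ of (ii): $\nu'(\mathbb{R}^d)=\nu'(\{|x|\leqslant1\})+\nu'(\{|x|>1\})$, where the first term equals $\int_{|x|\geqslant1}\nu(dx)$, which together with $\int_{|x|<1}|x|^2\nu(dx)<\infty$ shows that $\nu'$ is finite iff $\int_{\mathbb{R}^d}(|x|^2\wedge1)|x|^2\cdot\mathbf{1}+\dots$; more cleanly, $\nu'(\mathbb{R}^d)<\infty$ iff $\int|x|^2\nu(dx)<\infty$ by \eqref{d.3} with $h\equiv1$, and the latter is finiteness of the second moment of $\mu$ by the standard criterion. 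Statement (iv) is the case $\alpha=1$: by (ii), $\int_{|x|\leqslant1}|x|\nu'(dx)<\infty$ iff $\int_{|x|\geqslant1}|x|\nu(dx)<\infty$, which combined with $\nu'$ being a L\'evy measure is exactly the condition that $\mu'\in ID_{\mathrm{AB}}$ on the small-jump side and that $\mu$ has finite mean.

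For (v), assume $\mu'$ is type A or B, so $\mu$ has finite mean; write out the cumulant function of $\mu'$ using the drift representation \eqref{7.1} and match it against the expression obtained by plugging \eqref{d.1}, \eqref{d.2} into \eqref{1.1}. Concretely, the drift of $\mu'$ is $\gamma^{0\prime}=\gamma'-\int x(1+|x|^2)^{-1}\nu'(dx)=-\gamma-\int\iota(x)(1+|\iota(x)|^2)^{-1}|x|^2\nu(dx)$; a short computation using $\iota(x)(1+|\iota(x)|^2)^{-1}|x|^2 = x(1+|x|^2)^{-1}\cdot$(a factor that telescopes) shows this equals $-(\gamma+\int x(|x|^2(1+|x|^2)^{-1}+\text{correction}))$, which rearranges to $-\bigl(\gamma+\int x(1-(1+|x|^2)^{-1})\nu(dx)\bigr)=-(\text{mean of }\mu)$; the mean of $\mu$ is $\gamma+\int x(1-(1+|x|^2)^{-1})\nu(dx)$ by the known formula. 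Finally, for (vi) and (vii): recall that $\mu\in ID_0(\mathbb{R}^d)$ is $\alpha$-stable ($0<\alpha<2$) iff its L\'evy measure has the polar form $\nu(dx)=r^{-\alpha-1}dr\,\lambda(d\xi)$ on $(0,\infty)\times S^{d-1}$ for a finite measure $\lambda$ on the sphere; applying \eqref{d.3}, the pushforward under $\iota$ sends radius $r$ to $1/r$ and keeps the angular part, and the weight $|x|^2=r^2$ converts $r^{-\alpha-1}dr$ into $\rho^{(2-\alpha)-1}d\rho$ after substituting $\rho=1/r$ (since $dr=-\rho^{-2}d\rho$), yielding exactly the $(2-\alpha)$-stable polar form with the same $\lambda$; strict stability is the extra requirement on the location parameter (vanishing of a certain drift-type term for $\alpha\neq1$, or of $\lambda$'s first moment for $\alpha=1$), and one checks using (v) and \eqref{d.2} that this requirement transforms correctly, so $\mu'$ is strictly $(2-\alpha)$-stable iff $\mu$ is strictly $\alpha$-stable. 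The main obstacle is the bookkeeping in (v) and in the location-parameter part of (vii): the centering function $x(1+|x|^2)^{-1}$ does not transform homogeneously under $\iota$, so one must carefully track the correction terms — but the paper's deliberate choice of this centering (remarked after Definition \ref{dd.1}) is precisely what makes $\gamma'=-\gamma$ and keeps these corrections manageable.
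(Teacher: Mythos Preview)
Your approach is correct and essentially identical to the paper's. Two minor corrections: in (vi) the exponent after the substitution $\rho=1/r$ should be $\rho^{-(2-\alpha)-1}$, not $\rho^{(2-\alpha)-1}$; and (v) is cleaner than your sketch suggests---the paper simply computes $\iota(x)(1+|\iota(x)|^2)^{-1}|x|^2 = x(1+|x|^{-2})^{-1} = x|x|^2(1+|x|^2)^{-1}$ in one line, with no ``telescoping'' or correction terms needed.
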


\begin{proof}
(i) Using \eqref{d.3}, we have
\begin{align*}
\nu''(B)&=\int_{\mathbb{R}^d\setminus\{0\}} 1_B(\iota(x)) |x|^2 \nu'(dx)\\
&=\int_{\mathbb{R}^d\setminus\{0\}} 1_B(\iota(\iota(x))) |\iota(x)|^2
|x|^2 \nu(dx)
=\nu(B)
\end{align*}
for all $B$. Further, $\gamma''=-\gamma'=\gamma$.

(ii) Using \eqref{d.3} again, we have
\[
\int_{|x|\leqslant1} |x|^{2-\alpha}\nu'(dx)
=\int_{|\iota(x)|\leqslant1} |\iota(x)|^{2-\alpha}|x|^2\nu(dx)
=\int_{|x|\geqslant1} |x|^{\alpha}\nu(dx).
\]

(iii) and (iv)\quad Use (ii) with $\alpha=2$ and $1$, respectively.

(v) Let $(\gamma^0)'$ and $\gamma^1$ be the drift of $\mu'$ and the mean
of $\mu$, respectively.  Then
\[
(\gamma^0)'=\gamma'-\int_{\mathbb{R}^d}\frac{x\nu'(dx)}{1+|x|^2},\quad
\gamma^1=\gamma+\int_{\mathbb{R}^d}\frac{x|x|^2\nu(dx)}{1+|x|^2}.
\]
Hence
\[
(\gamma^0)'=-\gamma-\int_{\mathbb{R}^d}\frac{\iota(x)}{1+|\iota(x)|^2}
|x|^2\nu(dx)=-\gamma-\int_{\mathbb{R}^d}\frac{x}{1+|x|^{-2}}\nu(dx)=-\gamma^1.
\]

(vi) Let $0<\alpha<2$.  Then a nontrivial distribution $\mu$ is 
$\alpha$-stable if and only if $A=0$ and
\[
\nu(B)=\int_S \lambda(d\xi)\int_0^{\infty} 1_B(r\xi) r^{-\alpha-1}dr,
\]
where $S$ is the unit sphere and $\lambda$ is a nonzero finite measure 
on $S$.  In this case,
\[
\int h(\iota(x))|x|^2 \nu(dx)=\int_S \lambda(d\xi)\int_0^{\infty}
h(r^{-1}\xi) r^2 r^{-\alpha-1}dr
\]
and thus
\[
\nu'(B)=\int_S \lambda(d\xi)\int_0^{\infty} 1_B(r^{-1}\xi) r^{1-\alpha}dr
=\int_S \lambda(d\xi)\int_0^{\infty} 1_B(u\xi) u^{-1-(2-\alpha)}du.
\]

(vii) If $0<\alpha<1$ and $\mu$ is nontrivial, then
 $\mu$ is strictly $\alpha$-stable if and only if
$\mu$ is $\alpha$-stable with additional condition $\gamma^0=0$ 
($0<\alpha<1$), $\gamma^1=0$ ($1<\alpha<2$), or $\int_S \xi\lambda(d\xi)=0$ 
($\alpha=1$).  Moreover, $\delta_{\gamma}$ with $\gamma\neq0$ is
strictly $1$-stable. See Theorem 14.7 of Sato (1999). 
Now use (v) and the fact that $\nu$ and $\nu'$ has the 
identical $\lambda$-measure  as seen in the proof of (vi).
\end{proof}

The following fact is in duality with the fact in Example \ref{e5.1}.

\begin{prop}\label{pd.2}
Let $f(s)$ be a measurable function on $(0,b)$ with $0<b<\infty$
such that
$\int_t^b f(s)^2 ds<\infty$ for all $t\in(0,b)$ and
\[
f(s)\asymp s^{-1/(2-\alpha)}\quad\text{as }s\downarrow0
\]
with $0<\alpha<2$.
Let $\mu=\mu_{(A,\nu,\gamma)}
\in ID(\mathbb{R}^d)$. Let $\gamma^0$ be drift of $\mu$ if it exists.

{\rm(i)} Consider the condition
\begin{equation}\label{d.5}
A=0\text{ and }\int_{|x|<1} |x|^{2-\alpha}\nu(dx)<\infty.
\end{equation}
Then, for $0<\alpha<2$, 
\[
\mu\in\mathfrak D(\Phi_{f,\,\mathrm{es}})\quad\Leftrightarrow\quad
\eqref{d.5}.
\]

{\rm(ii)} If $0<\alpha<1$, then
\begin{equation}\label{d.6a}
\mathfrak D^0(\Phi_f)= \mathfrak D(\Phi_f)= 
\mathfrak D(\Phi_{f,\,\mathrm{c}})= \mathfrak D(\Phi_{f,\,\mathrm{es}}).
\end{equation}

{\rm(iii)} If $1<\alpha<2$, then
\begin{equation}\label{d.6b}
\mathfrak D^0(\Phi_f)= \mathfrak D(\Phi_f)\subsetneqq 
\mathfrak D(\Phi_{f,\,\mathrm{c}})= \mathfrak D(\Phi_{f,\,\mathrm{es}})
\end{equation}
and
\[
\mu\in\mathfrak D(\Phi_f)\quad\Leftrightarrow\quad
\eqref{d.5}\text{ and }\gamma^0=0.
\]

{\rm(iv)} Let $\alpha=1$ and suppose, in addition, that, for some 
$s_0\in(0,b)$ and $c>0$, 
\begin{equation*}
\int_0^{s_0}|f(s)-cs^{-1}|ds<\infty.
\end{equation*}
Consider the conditions
{\allowdisplaybreaks
\begin{gather}
\lim_{t\downarrow0}\int_t^{s_0} s^{-1}ds\int_{|x|<s} x\nu(dx)
\text{ exists in $\mathbb{R}^d$,}\label{d.8}\\
\int_{0}^{s_0} s^{-1}ds\left|\int_{|x|<s} x\nu(dx)\right|
<\infty.\label{d.9}
\end{gather}
Then
\begin{equation}\label{d.6c}
\mathfrak D^0(\Phi_f)\subsetneqq \mathfrak D(\Phi_f)\subsetneqq 
\mathfrak D(\Phi_{f,\,\mathrm{c}})\subsetneqq 
\mathfrak D(\Phi_{f,\,\mathrm{es}})
\end{equation}
and, letting \eqref{d.5} mean \eqref{d.5} with $\alpha=1$,
\begin{gather*}
\mu\in\mathfrak D(\Phi_{f,\,\mathrm{c}})\quad\Leftrightarrow\quad
\text{\eqref{d.5} and \eqref{d.8}},\\
\mu\in\mathfrak D(\Phi_f)\quad\Leftrightarrow\quad
\text{\eqref{d.5}, \eqref{d.8}, and $\gamma^0=0$},\\
\mu\in\mathfrak D^0(\Phi_f)\quad\Leftrightarrow\quad
\text{\eqref{d.5}, \eqref{d.9}, and $\gamma^0=0$}.
\end{gather*}}
\end{prop}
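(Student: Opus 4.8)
\medskip
\noindent\textbf{Proof idea.}
The plan is to reduce the whole statement to Example~\ref{e5.1} by combining the dual transformation $\mu\mapsto\mu'$ of this section with a change of variable that turns the finite interval $(0,b)$ into $(0,\infty)$. First note that $\int_t^bf(s)^2ds<\infty$ for every $t\in(0,b)$, so $f\in\mathbf L_{(0,b)}(X^{(\mu)})$ for all $\mu$ by Remark~\ref{r3.2}; on the other hand $f(s)\asymp s^{-1/(2-\alpha)}$ as $s\downarrow0$ with $2/(2-\alpha)>1$ forces $\int_{0+}f(s)^2ds=\infty$, so by \eqref{4.1} every $\mu$ lying in any of $\mathfrak D^0(\Phi_f)\subset\mathfrak D(\Phi_f)\subset\mathfrak D(\Phi_{f,\mathrm{c}})\subset\mathfrak D(\Phi_{f,\mathrm{es}})$ must have $A=0$. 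Thus it suffices to treat $\mu\in ID_0(\mathbb R^d)$ together with its dual $\mu'$, which also explains the clause $A=0$ in \eqref{d.5}.

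Next I would build the companion function. Fix $s_1\in(0,b)$ so small that $c_1s^{-1/(2-\alpha)}\le f(s)\le c_2s^{-1/(2-\alpha)}$ on $(0,s_1]$; then $f\ge c_1s_1^{-1/(2-\alpha)}>0$ there. Put $\Psi(s)=\int_s^{s_1}f(r)^2dr$, a continuous strictly decreasing bijection of $(0,s_1)$ onto $(0,\infty)$ with $\Psi(s)\asymp s^{-\alpha/(2-\alpha)}$ as $s\downarrow0$, let $\sigma=\Psi^{-1}$, and set $g(t)=1/f(\sigma(t))$ for $t\in(0,\infty)$. Then $g$ is measurable and bounded, hence $\int_0^Tg(t)^2dt<\infty$ for every finite $T$, and $g(t)\asymp t^{-1/\alpha}$ as $t\to\infty$; so $g$ on $(0,\infty)$ falls under the hypotheses of Example~\ref{e5.1} with the same $\alpha\in(0,2)$.

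The core of the argument is the substitution $t=\Psi(s)$ on $(0,s_1)$, under which $ds=g(t)^2dt$ and $f(\sigma(t))=g(t)^{-1}$, combined with $x=\iota(y)$ (so $|x|^2=|y|^{-2}$ and $\int h(x)\nu(dx)=\int h(\iota(y))|y|^2\nu'(dy)$ by \eqref{d.3}). A direct computation shows that $\int_0^{s_1}ds\int(|f(s)x|^2\wedge1)\nu(dx)=\int_0^\infty dt\int(|g(t)y|^2\wedge1)\nu'(dy)$, and that for every $\gamma^\sharp\in\mathbb R^d$ the integrand $f(s)\bigl(\gamma^\sharp+\int x(\tfrac1{1+|f(s)x|^2}-\tfrac1{1+|x|^2})\nu(dx)\bigr)ds$ at $s$ transforms, Jacobian included, into $-g(t)\bigl(-\gamma^\sharp+\int y(\tfrac1{1+|g(t)y|^2}-\tfrac1{1+|y|^2})\nu'(dy)\bigr)dt$ at $t=\Psi(s)$, and likewise for the corresponding absolute integrals. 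Since the analogous integrals of $f,\mu$ over $(s_1,b)$ and of $g,\mu'$ over any $(0,t_1)$ are absolutely convergent for every infinitely divisible distribution (here $b<\infty$, $\int_{s_1}^bf^2<\infty$, and $g$ is bounded near $0$), feeding these identities into Theorems~\ref{t4.1}, \ref{t4.2}, \ref{t4.3} and \ref{t5.1} gives, for $\mu\in ID_0$ with dual $\mu'$,
\begin{gather*}
\mu\in\mathfrak D(\Phi_{f,\mathrm{es}})\Leftrightarrow\mu'\in\mathfrak D(\Phi_{g,\mathrm{es}}),\qquad\mu\in\mathfrak D(\Phi_{f,\mathrm{c}})\Leftrightarrow\mu'\in\mathfrak D(\Phi_{g,\mathrm{c}}),\\
\mu\in\mathfrak D(\Phi_f)\Leftrightarrow\mu'\in\mathfrak D(\Phi_g),\qquad\mu\in\mathfrak D^0(\Phi_f)\Leftrightarrow\mu'\in\mathfrak D^0(\Phi_g),
\end{gather*}
where for the last two one takes $\gamma^\sharp=\gamma$ and uses $\gamma'=-\gamma$. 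As $\mu\mapsto\mu'$ is an involution of $ID_0(\mathbb R^d)$ (Proposition~\ref{pd.1}(i)), this is a bijective correspondence between the four $f$-domains (all contained in $ID_0$) and the four $g$-domains.

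Finally I would apply Example~\ref{e5.1} to $g,\mu'$ and translate back via Proposition~\ref{pd.1}. Condition \eqref{5.5} for $\mu'$, namely $\int_{|x|>1}|x|^\alpha\nu'(dx)<\infty$, equals $\int_{|x|<1}|x|^{2-\alpha}\nu(dx)<\infty$ by Proposition~\ref{pd.1}(ii), which together with $\mu\in ID_0$ is exactly \eqref{d.5}; combined with \eqref{5.9a}, \eqref{5.9b}, \eqref{5.9c} this yields part (i) and the chains \eqref{d.6a}, \eqref{d.6b}, \eqref{d.6c} (the strict inclusions transfer because the correspondence is onto $ID_0$ and the $f$-domains already lie in $ID_0$). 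The condition ``$\gamma^1=0$'' (vanishing mean of $\mu'$) becomes ``$\gamma^0=0$'' (vanishing drift of $\mu$) by Proposition~\ref{pd.1}(iv),(v) applied to $\mu'$, and \eqref{5.7}, \eqref{5.8} for $\mu'$ become \eqref{d.8}, \eqref{d.9} for $\mu$ after writing $\int_{|x|>s}x\,\nu'(dx)=\int_{|x|<1/s}x\,\nu(dx)$ and substituting $u=1/s$. The one delicate point is in part (iv): one must check that the extra hypothesis $\int_0^{s_0}|f(s)-cs^{-1}|ds<\infty$ passes to $\int_{t_0}^\infty|g(t)-c\,t^{-1}|dt<\infty$, the form under which Example~\ref{e5.1}(iv) applies; this comes down to showing that $\Psi(s)-c^2s^{-1}$ stays bounded as $s\downarrow0$, so that $\sigma(t)=c^2t^{-1}+O(t^{-2})$. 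I expect that estimate to be the main obstacle, the remaining manipulations being routine substitutions.
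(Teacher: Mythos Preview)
Your reduction via duality and a change of variable is exactly the strategy of the paper, and your choice $t=\Psi(s)=\int_s^{s_1}f(r)^2\,dr$ (so that $ds=g(t)^2\,dt$ exactly) is in fact a slightly cleaner variant of the paper's power substitution $s=u^{-(2-\alpha)/\alpha}$, which only gives Jacobian $\asymp g(u)^2$. For parts (i)--(iii) your argument is correct and essentially identical to the paper's.

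The gap is in part (iv), precisely at the point you flag. The hypothesis $\int_0^{s_0}|f(s)-cs^{-1}|\,ds<\infty$ does \emph{not} imply that $\Psi(s)-c^2s^{-1}$ stays bounded: writing $\Psi(s)-c^2(s^{-1}-s_1^{-1})=\int_s^{s_1}(f(r)-cr^{-1})(f(r)+cr^{-1})\,dr$, the factor $f(r)+cr^{-1}\asymp r^{-1}$ destroys integrability. For a concrete counterexample take $f(r)=cr^{-1}+r^{-1/2}\sin(r^{-1})$ near $0$: then $\int_0^{s_0}|f-cr^{-1}|\,dr<\infty$ and $f(r)\asymp r^{-1}$, but $f(r)^2-c^2r^{-2}$ contains the term $r^{-1}\sin^2(r^{-1})$, whose integral over $(s,s_0)$ grows like $\tfrac12\log(1/s)$. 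So your transferred function $g$ need not satisfy the hypothesis of Example~\ref{e5.1}(iv), and the reduction breaks down.

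The paper avoids this obstacle by not transferring the extra hypothesis to $g$ at all. Instead it splits the location integral
\[
I(t)=\int_t^{s_0}f(s)\Bigl(\gamma+\int_{\mathbb R^d}x\bigl(\tfrac{1}{1+|f(s)x|^2}-\tfrac{1}{1+|x|^2}\bigr)\nu(dx)\Bigr)ds
\]
as $I_1(t)+I_2(t)+I_3(t)$, where $I_1$ carries the outer factor $f(s)-cs^{-1}$, $I_2$ carries the inner error $\tfrac{1}{1+|f(s)x|^2}-\tfrac{1}{1+|cs^{-1}x|^2}$, and $I_3$ has $cs^{-1}$ in place of $f$ throughout. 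One checks directly that $I_1,I_2$ converge absolutely under \eqref{d.5} alone (the bounds use only $\int_0^{s_0}|f-cs^{-1}|\,ds<\infty$ and $\int_{|x|<1}|x|\,\nu(dx)<\infty$), while $I_3$ is handled by the exact substitution $u=s^{-1}$ and Example~\ref{e5.1}(iv) applied to $h(u)=c^{-1}u^{-1}$. Your substitution $t=\Psi(s)$ is the right tool for (i)--(iii); for (iv) replace it by this decomposition.
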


\begin{proof}
From our assumption there are $c_1,c_2>0$ and $s_0\in(0,b)$
such that
\begin{equation}\label{d.11}
c_1 s^{-1/(2-\alpha)}\leqslant f(s)\leqslant c_2 s^{-1/(2-\alpha)}
\quad\text{for }s\in(0,s_0].
\end{equation}
Let $u_0=s_0^{-\alpha/(2-\alpha)}$ and let $g(u)=1/f(u^{-(2-\alpha)/\alpha})$
for $u\geqslant u_0$. Then $g(u)\asymp u^{-1/\alpha}$ as $u\to\infty$,
since 
\[
\frac{g(u)}{u^{-1/\alpha}}=\frac{u^{1/\alpha}}{f(u^{-(2-\alpha)/\alpha})}
=\frac{s^{-1/(2-\alpha)}}{f(s)},
\]
where $s=u^{-(2-\alpha)/\alpha}$. As this shows that
 $g(u)$ is locally bounded
on $[u_0,\infty)$, we have $\int_{u_0}^v g(u)^2 du<\infty$
for all $v\in(u_0,\infty)$.

(i)  If $\mu\in\mathfrak D(\Phi_{f,\,\mathrm{es}})$, then $A=0$, 
since $\int_0^{s_0} f(s)^2 ds\geqslant c_1\int_0^{s_0}
s^{-2/(2-\alpha)}ds=\infty$. We have
{\allowdisplaybreaks
\begin{align*}
&\mu=\mu_{(A,\nu,\gamma)}\in\mathfrak D(\Phi_{f,\,\mathrm{es}})
\quad\Leftrightarrow\quad A=0\text{ and }
\mu_{(0,\nu,\gamma)}\in\mathfrak D(\Phi_{f,\,\mathrm{es}})\\
&\qquad\overset{(*)}{\Leftrightarrow}\quad A=0\text{ and }
\mu_{(0,\nu',\gamma')}\in\mathfrak D(\Phi_{g,\,\mathrm{es}})\\
&\qquad\overset{(**)}{\Leftrightarrow}\quad A=0\text{ and }
\int_{|x|>1}|x|^{\alpha}\nu'(dx)<\infty
\quad\overset{(***)}{\Leftrightarrow}\quad
\eqref{d.5},
\end{align*}
where $\mu_{(0,\nu',\gamma')}$ is the dual of $\mu_{(0,\nu,\gamma)}$.
Write $\beta=(2-\alpha)/\alpha$. 
The equivalence $(*)$ comes from the equality that
\begin{align*}
&\int_0^{s_0} ds\int_{\mathbb{R}^d}(|f(s)x|^2\land1)\nu(dx)
=\beta\int_{u_0}^{\infty} u^{-2/\alpha}du
\int_{\mathbb{R}^d}\left(\left|\frac{x}{g(u)}
\right|^2\land1\right)\nu(dx)\\
&\qquad =\beta\int_{u_0}^{\infty} u^{-2/\alpha}du
\int_{\mathbb{R}^d}(|g(u)x|^{-2}\land1)|x|^2\nu'(dx)\\
&\qquad 
=\beta\int_{u_0}^{\infty} \frac{u^{-2/\alpha}}{g(u)^2}du
\int_{\mathbb{R}^d}(1\land|g(u)x|^2)\nu'(dx).
\end{align*}
The equivalence} $(**)$ comes from Example \ref{e5.1} and
$(***)$ from Proposition \ref{pd.1}.

(ii) We have
{\allowdisplaybreaks
\begin{align*}
&\int_0^{s_0} f(s)ds\left|\gamma+\int_{\mathbb{R}^d}x\left(
\frac{1}{1+|f(s)x|^2}-\frac{1}{1+|x|^2}\right)\nu(dx)\right|\\
&\qquad=\beta\int_{u_0}^{\infty} \frac{u^{-2/\alpha}}{g(u)}du
\left|\gamma+\int_{\mathbb{R}^d}x\left(
\frac{1}{1+|x/g(u)|^2}-\frac{1}{1+|x|^2}\right)\nu(dx)\right|\\
&\qquad=\beta\int_{u_0}^{\infty} \frac{u^{-2/\alpha}}{g(u)^2}g(u)du
\left|\gamma'+\int_{\mathbb{R}^d}x\left(
\frac{1}{1+|g(u)x|^2}-\frac{1}{1+|x|^2}\right)\nu'(dx)\right|.
\end{align*}
Hence, for $0<\alpha<1$,
\begin{align*}
&\mu\in\mathfrak D^0(\Phi_f)
\quad\Leftrightarrow\quad A=0\text{ and }
\mu_{(0,\nu,\gamma)}\in\mathfrak D^0(\Phi_f)\\
&\qquad\Leftrightarrow\quad A=0\text{ and }
\mu_{(0,\nu',\gamma')}\in\mathfrak D^0(\Phi_g)
\quad\Leftrightarrow\quad A=0\text{ and }
\mu_{(0,\nu',\gamma')}\in\mathfrak D(\Phi_{g,\,\mathrm{es}})\\
&\qquad\Leftrightarrow\quad A=0\text{ and }
\mu_{(0,\nu,\gamma)}\in\mathfrak D(\Phi_{f,\,\mathrm{es}})
\quad\Leftrightarrow\quad \mu\in\mathfrak D(\Phi_{f,\,\mathrm{es}}).
\end{align*}}
\indent
(iii) We have, for $v=t^{-\alpha/(2-\alpha)}$,
{\allowdisplaybreaks
\begin{align*}
&\int_t^{s_0} f(s)ds\left(\gamma+\int_{\mathbb{R}^d}x\left(
\frac{1}{1+|f(s)x|^2}-\frac{1}{1+|x|^2}\right)\nu(dx)\right)\\
&\qquad=\beta\int_{u_0}^v \frac{u^{-2/\alpha}}{g(u)}du
\left(\gamma+\int_{\mathbb{R}^d}x\left(
\frac{1}{1+|x/g(u)|^2}-\frac{1}{1+|x|^2}\right)\nu(dx)\right)\\
&\qquad=-\beta\int_{u_0}^v \frac{u^{-2/\alpha}}{g(u)^2}g(u)du
\left(\gamma'+\int_{\mathbb{R}^d}x\left(
\frac{1}{1+|g(u)x|^2}-\frac{1}{1+|x|^2}\right)\nu'(dx)\right).
\end{align*}
Thus, for $1<\alpha<2$, using Example \ref{e5.1},
\begin{align*}
&\mu\in\mathfrak D(\Phi_f)
\quad\Leftrightarrow\quad A=0\text{ and }
\mu_{(0,\nu,\gamma)}\in\mathfrak D(\Phi_f)\\
&\qquad\Leftrightarrow\quad A=0\text{ and }
\mu_{(0,\nu',\gamma')}\in\mathfrak D(\Phi_g)\\
&\qquad\Leftrightarrow\quad A=0, \int_{|x|>1}|x|^{\alpha}\nu'(dx)<\infty,
\text{ and (mean of $\mu_{(0,\nu',\gamma')})=(\gamma^1)'=0$}\\
&\qquad\Leftrightarrow\quad \text{\eqref{d.5} and (drift 
of $\mu_{(0,\nu,\gamma)})=\gamma^0=0$}.
\end{align*}
and, in a similar way,} $\mu\in\mathfrak D^0(\Phi_f)$ if and only if
\eqref{d.5} and $\gamma^0=0$. Now we see that
$\mu\in\mathfrak D(\Phi_{f,\,\mathrm{c}})$ if and only if \eqref{d.5} holds.
Hence $\mathfrak D(\Phi_f)=\mathfrak D^0(\Phi_f)$ and $\mathfrak D
(\Phi_{f,\,\mathrm{c}})
=\mathfrak D(\Phi_{f,\,\mathrm{es}})$. Since $\mathfrak D(\Phi_g)\cap ID_0
\subsetneqq\mathfrak D(\Phi_{g,\,\mathrm{es}})\cap ID_0$, we have 
$\mathfrak D(\Phi_f)\subsetneqq\mathfrak D(\Phi_{f,\,\mathrm{es}})$.

(iv) ($\alpha=1$) 
As in the proof of (ii),
{\allowdisplaybreaks
\begin{align*}
&\mu\in\mathfrak D^0(\Phi_f)
\quad\Leftrightarrow\quad A=0\text{ and }
\mu_{(0,\nu,\gamma)}\in\mathfrak D^0(\Phi_f)\\
&\qquad\Leftrightarrow\quad A=0\text{ and }
\mu_{(0,\nu',\gamma')}\in\mathfrak D^0(\Phi_g)\\
&\qquad\Leftrightarrow\quad A=0, \int_{|x|>1}|x|^{\alpha}\nu'(dx)<\infty,
\int_{u_0}^{\infty}u^{-1}du\left|\int_{|x|>u}x\nu'(dx)\right|<\infty,
(\gamma^1)'=0\\
&\qquad\Leftrightarrow\quad\text{\eqref{d.5}, \eqref{d.9}, and $\gamma^0=0$}.
\end{align*}
For the last equivalence, note that
\[
\int_{u_0}^{\infty}u^{-1}du\left|\int_{|x|>u}x\nu'(dx)\right|
=\int_0^{s_0} s^{-1}ds\left|\int_{|x|<s}x\nu(dx)\right|.
\]
Let
\[
I(t)=\int_t^{s_0} f(s)ds\left(\gamma+\int_{\mathbb{R}^d}x\left(
\frac{1}{1+|f(s)x|^2}-\frac{1}{1+|x|^2}\right)\nu(dx)\right).
\]
Let us write $c_3,c_4,\ldots$ for positive constants. We have
\begin{align*}
I(t)&=\int_t^{s_0} (f(s)-cs^{-1})ds\left(\gamma+\int_{\mathbb{R}^d}x\left(
\frac{1}{1+|f(s)x|^2}-\frac{1}{1+|x|^2}\right)\nu(dx)\right)\\
&\quad+\int_t^{s_0} cs^{-1}ds\int_{\mathbb{R}^d}x\left(
\frac{1}{1+|f(s)x|^2}-\frac{1}{1+|cs^{-1}x|^2}\right)\nu(dx)\\
&\quad+\int_t^{s_0} cs^{-1}ds\left(\gamma+\int_{\mathbb{R}^d}x\left(
\frac{1}{1+|cs^{-1}x|^2}-\frac{1}{1+|x|^2}\right)\nu(dx)\right)\\
&=I_1(t)+I_2(t)+I_3(t)\qquad\text{(say).}
\end{align*}
Under \eqref{d.5}, $I_1(t)$ and $I_2(t)$ 
are convergent as $t\downarrow0$, since
\begin{align*}
&\int_0^{s_0} |f(s)-cs^{-1}|ds\left|\gamma+\int_{\mathbb{R}^d}x\left(
\frac{1}{1+|f(s)x|^2}-\frac{1}{1+|x|^2}\right)\nu(dx)\right|\\
&\qquad\leqslant \int_0^{s_0} |f(s)-cs^{-1}|ds\left(|\gamma|
+\int_{\mathbb{R}^d}\frac{|x|^3+|x||f(s)x|^2}{(1+|f(s)x|^2)(1+|x|^2)}\nu(dx)
\right)\\
&\qquad\leqslant \int_0^{s_0} |f(s)-cs^{-1}|ds\left(|\gamma|
+2\int_{|x|<1}|x|\nu(dx)+c_3\int_{|x|\geqslant1}\nu(dx)\right)
\end{align*}
(note that $f(s)\geqslant c_4>0$) and
\begin{align*}
&\int_0^{s_0} cs^{-1}ds\left|\int_{\mathbb{R}^d}x\left(
\frac{1}{1+|f(s)x|^2}-\frac{1}{1+|cs^{-1}x|^2}\right)\nu(dx)\right|\\
&\qquad\leqslant \int_0^{s_0} cs^{-1}ds\int_{\mathbb{R}^d}\frac{|x|\,||cs^{-1}x|^2
-|f(s)x|^2|}{(1+|f(s)x|^2)(1+|cs^{-1}x|^2)}\nu(dx)\\
&\qquad\leqslant c_5\int_0^{s_0} |cs^{-1}-f(s)|ds\int_{\mathbb{R}^d}
\frac{|x|^3(cs^{-1})^2}{(1+|f(s)x|^2)(1+|cs^{-1}x|^2)}\nu(dx)\\
&\qquad\leqslant c_5\int_0^{s_0} |cs^{-1}-f(s)|ds\left(\int_{|x|<1}
|x|\nu(dx)+c_6\int_{|x|\geqslant1}\nu(dx)\right).
\end{align*}
Hence, under \eqref{d.5}, $I(t)$ is convergent if and only if 
$I_3(t)$ is convergent. We have, with $v=t^{-1}$,
\begin{align*}
I_3(t)&=\int_{u_0}^v cu^{-1}du\left(\gamma+\int_{\mathbb{R}^d}x\left(
\frac{1}{1+|cux|^2}-\frac{1}{1+|x|^2}\right)\nu(dx)\right)\\
&=-c^2\int_{u_0}^v c^{-1}u^{-1}du\left(\gamma'+\int_{\mathbb{R}^d}x\left(
\frac{1}{1+|c^{-1}u^{-1}x|^2}-\frac{1}{1+|x|^2}\right)\nu'(dx)\right).
\end{align*}
Let $h(u)=c^{-1}u^{-1}$ for $u\geqslant u_0=s_0^{-1}$. Then it follows that
\begin{align*}
&\mu\in\mathfrak D(\Phi_f)\quad\Leftrightarrow\quad A=0\text{ and }
\mu_{(0,\nu,\gamma)}\in\mathfrak D(\Phi_f)
\text{ $\Leftrightarrow$ }A=0\text{ and }
\mu_{(0,\nu',\gamma')}\in\mathfrak D(\Phi_h)\\
&\Leftrightarrow\quad A=0, \int_{|x|>1}|x|\nu'(dx)<\infty,
\lim_{v\to\infty}\int_{u_0}^v u^{-1}du\int_{|x|>u}x\nu'(dx)\text{ exists},
(\gamma^1)'=0\\
&\Leftrightarrow\quad\text{\eqref{d.5}, \eqref{d.8}, and $\gamma^0=0$}.
\end{align*}
It follows from this that} $\mu\in\mathfrak D(\Phi_{f,\,\mathrm{c}})$ 
if and only if \eqref{d.5} and \eqref{d.8} hold.
We get relation \eqref{d.6c}, using \eqref{5.9c} with $h$ in place
of $f$. Thus, in showing that $\mathfrak D^0(\Phi_f)\subsetneqq\mathfrak D(\Phi_f)$,
we make indirect use of the elaborate construction in (2006b) of
a distribution in the difference of the two classes.
\end{proof}

\begin{prop}\label{pd.3}
Let $f(s)$ be a measurable function on $(0,b)$ with $0<b<\infty$
such that
$\int_t^b f(s)^2 ds<\infty$ for all $t\in(0,b)$ and
\[
f(s)\asymp s^{-1/2}\quad\text{as }s\downarrow0.
\]
Then
\[
\mu_{(A,\nu,\gamma)}\in\mathfrak D(\Phi_{f,\,\mathrm{es}})\quad\Leftrightarrow\quad
A=0\text{ and }\int_{|x|<1} |x|^2\log(1/|x|)\nu(dx)<\infty,
\]
and \eqref{d.6a} holds.
\end{prop}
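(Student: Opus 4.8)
The plan is to localise the problem at the singular endpoint $s=0$ and then to transfer it, by the duality of Section~6, to the situation treated in Example~\ref{e5.2}. Pick $s_0\in(0,b)$ and $c_1,c_2>0$ with $c_1s^{-1/2}\leqslant f(s)\leqslant c_2 s^{-1/2}$ for $s\in(0,s_0]$. On $[s_0,b)$ one has $\int_{s_0}^b f(s)^2\,ds<\infty$, so $f$ is locally square-integrable there, conditions~\eqref{4.1} and~\eqref{4.2} restricted to $[s_0,b)$ hold automatically, and the contributions of $[s_0,b)$ to~\eqref{4.3} and to the integral in~\eqref{5.2} are controlled by the same routine estimates as in Examples~\ref{e5.1}--\ref{e5.4}; thus everything is decided by the integrals over $(0,s_0)$. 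Since $\int_0^{s_0}f(s)^2\,ds\geqslant c_1^2\int_0^{s_0}s^{-1}\,ds=\infty$, condition~\eqref{4.1} forces $A=0$ whenever $\mu\in\mathfrak D(\Phi_{f,\,\mathrm{es}})$, so we may assume $\mu=\mu_{(0,\nu,\gamma)}\in ID_0(\mathbb{R}^d)$ and denote by $\mu'=\mu_{(0,\nu',\gamma')}$ its dual. Now substitute $s=e^{-u}$, which carries $(0,s_0)$ onto $(u_0,\infty)$ with $u_0=\log(1/s_0)$, and set $g(u)=1/f(e^{-u})$, so that $g(u)\asymp e^{-u/2}$ as $u\to\infty$; hence $g$ obeys the hypothesis of Example~\ref{e5.2} with exponent $\alpha=1$.

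Next I would run the duality computation of Proposition~\ref{pd.2}. Using the transfer formula~\eqref{d.3} together with $|\iota(y)|=|y|^{-1}$ and $f(e^{-u})=1/g(u)$, one checks
\[
\int_0^{s_0}ds\int_{\mathbb{R}^d}(|f(s)x|^2\land1)\,\nu(dx)=\int_{u_0}^{\infty}\frac{e^{-u}}{g(u)^2}\,du\int_{\mathbb{R}^d}(|g(u)y|^2\land1)\,\nu'(dy),
\]
and an entirely parallel rearrangement turns the integral in~\eqref{5.2} restricted to $(0,s_0)$ into
\[
\int_{u_0}^{\infty}\frac{e^{-u}}{g(u)}\left|\gamma'+\int_{\mathbb{R}^d}y\left(\frac{1}{1+|g(u)y|^2}-\frac{1}{1+|y|^2}\right)\nu'(dy)\right|du .
\]
The crucial observation is that $g(u)^2\asymp e^{-u}$ on $(u_0,\infty)$ (shrinking $s_0$ if necessary), so that the weights satisfy $e^{-u}/g(u)^2\asymp1$ and $e^{-u}/g(u)\asymp g(u)$; hence the left-hand integrals are finite if and only if conditions~\eqref{4.2} and~\eqref{5.2} for $g$ on $(u_0,\infty)$ relative to $(\nu',\gamma')$ hold. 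By Theorems~\ref{t4.2} and~\ref{t5.1} this gives
\[
\mu\in\mathfrak D(\Phi_{f,\,\mathrm{es}})\iff A=0\text{ and }\mu'\in\mathfrak D(\Phi_{g,\,\mathrm{es}}),\qquad \mu\in\mathfrak D^0(\Phi_f)\iff A=0\text{ and }\mu'\in\mathfrak D^0(\Phi_g).
\]

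Then I would apply Example~\ref{e5.2} with $\alpha=1$: it yields $\mu'\in\mathfrak D(\Phi_{g,\,\mathrm{es}})\iff\int_{\mathbb{R}^d}\log^+|x|\,\nu'(dx)<\infty$, and it also gives $\mathfrak D^0(\Phi_g)=\mathfrak D(\Phi_g)=\mathfrak D(\Phi_{g,\,\mathrm{c}})=\mathfrak D(\Phi_{g,\,\mathrm{es}})$. Feeding this into the two displayed equivalences shows $\mathfrak D^0(\Phi_f)=\mathfrak D(\Phi_{f,\,\mathrm{es}})$, and then the general chain~\eqref{5.3} squeezes all four domains together, which is~\eqref{d.6a}. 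Finally, by~\eqref{d.3} applied to $h(x)=\log^+|x|$ and the identity $\log^+|\iota(x)|=\log^+(1/|x|)=1_{\{|x|<1\}}\log(1/|x|)$,
\[
\int_{\mathbb{R}^d}\log^+|x|\,\nu'(dx)=\int_{\mathbb{R}^d\setminus\{0\}}\log^+(1/|x|)\,|x|^2\,\nu(dx)=\int_{|x|<1}|x|^2\log(1/|x|)\,\nu(dx),
\]
which is the asserted description of $\mathfrak D(\Phi_{f,\,\mathrm{es}})$ (and, by~\eqref{d.6a}, of the three other domains).

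I expect the only real work to be the verification of the two displayed integral identities and the bookkeeping that confirms the behaviour of $f$ near $b$ is irrelevant; both proceed routinely once the exponential substitution $s=e^{-u}$ is used in place of the power substitution $s=u^{-(2-\alpha)/\alpha}$ of Proposition~\ref{pd.2}, which degenerates exactly at the exponent $-1/2$ treated here. No genuine obstacle is anticipated. One can also bypass duality: \eqref{4.2} over $(0,s_0)$ splits into $\int_{|f(s)x|\leqslant1}|f(s)x|^2\,\nu(dx)$ and $\int_{|f(s)x|>1}\nu(dx)$, the second of which is always finite, while the first, after Fubini, equals up to bounded factors $\int_{|x|<\varepsilon}|x|^2\log(1/|x|)\,\nu(dx)$ for a suitable $\varepsilon>0$ (the logarithm coming from $\int_{c|x|^2}^{s_0}ds/s$); condition~\eqref{5.2} near $0$ then follows from this by the estimates already used in the proof of Example~\ref{e5.3}.
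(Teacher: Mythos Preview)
Your argument is correct and follows the paper's strategy: pass from $(0,s_0)$ to a half-line by an exponential substitution, set $g=1/f$ so that $g$ decays exponentially, and then transfer to the dual $\mu'$ so that Example~\ref{e5.2} applies. The paper uses $s=e^{-2u}$ (giving $g(u)\asymp e^{-u}$) rather than your $s=e^{-u}$ (giving $g(u)\asymp e^{-u/2}$), but this is cosmetic. The one substantive difference concerns \eqref{d.6a}: you carry the duality through to condition~\eqref{5.2} as well, obtaining $\mu\in\mathfrak D^0(\Phi_f)\Leftrightarrow\mu'\in\mathfrak D^0(\Phi_g)$, and then simply quote the equality $\mathfrak D^0(\Phi_g)=\mathfrak D(\Phi_{g,\mathrm{es}})$ already recorded in Example~\ref{e5.2}. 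The paper instead establishes \eqref{5.2} on $(0,s_0)$ by a direct elementary estimate (splitting the inner integral at $|x|\leqslant1$ versus $|x|>1$ and computing the $s$-integrals explicitly from $f(s)\asymp s^{-1/2}$), without invoking duality or Example~\ref{e5.2} for that step. Your route is more uniform and avoids repeating computations already absorbed into Example~\ref{e5.2}; the paper's direct estimate is self-contained for that part. Your closing remark about bypassing duality is in fact close to how the paper handles~\eqref{5.2}, though the paper does not redo the characterisation of~\eqref{4.2} that way.
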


\begin{proof}
We have \eqref{d.11} with $\alpha=0$. Note that
$\int_0^{s_0} f(s)^2 ds=\infty$ and $\int_0^{s_0} f(s)ds<\infty$.
Let $g(u)=1/f(e^{-2u})$ for $u\geqslant u_0=s_0^{-1}$. Then 
$g(u)\asymp e^{-u}$ as $u\to\infty$,
since 
\[
\frac{g(u)}{e^{-u}}=\frac{e^{u}}{f(e^{-2u})}
=\frac{s^{-1/2}}{f(s)},
\]
where $s=e^{-2u}$. We have
{\allowdisplaybreaks
\begin{align*}
&\int_0^{s_0} ds\int_{\mathbb{R}^d}(|f(s)x|^2\land1)\nu(dx)
=\int_{u_0}^{\infty} 2e^{-2u}du
\int_{\mathbb{R}^d}\left(\left|\frac{x}{g(u)}
\right|^2\land1\right)\nu(dx)\\
&\qquad 
=\int_{u_0}^{\infty} \frac{2e^{-2u}}{g(u)^2}du
\int_{\mathbb{R}^d}(1\land|g(u)x|^2)\nu'(dx).
\end{align*}
Hence, using Example \ref{e5.2}, we have
\begin{align*}
&\mu\in\mathfrak D(\Phi_{f,\,\mathrm{es}})
\quad\Leftrightarrow\quad A=0,\,
\mu_{(0,\nu,\gamma)}\in\mathfrak D(\Phi_{f,\,\mathrm{es}})
\quad\Leftrightarrow\quad A=0,\,
\mu_{(0,\nu',\gamma')}\in\mathfrak D(\Phi_{g,\,\mathrm{es}})\\
&\qquad\Leftrightarrow\quad A=0,\,
\int_{|x|>1}\log |x|\nu'(dx)<\infty
\end{align*}
and the last integral equals $\int_{|x|<1} |x|^2\log(1/|x|)\nu(dx)$.
We have
\begin{align*}
&\int_0^{s_0} f(s)ds\left|\gamma+\int_{\mathbb{R}^d}x\left(
\frac{1}{1+|f(s)x|^2}-\frac{1}{1+|x|^2}\right)\nu(dx)\right|\\
&\quad\leqslant \int_0^{s_0} ds\int_{|x|\leqslant1}
\frac{|f(s)x|^3\nu(dx)}{(1+|f(s)x|^2)(1+|x|^2)}+c_3\\
&\quad\leqslant c_4\int_0^{s_0} ds\int_{|x|\leqslant1}
\frac{|s^{-1/2}x|^3\nu(dx)}{(1+c_5|s^{-1/2}x|^2)(1+|x|^2)}+c_3\\
&\quad\leqslant c_4\int_0^{s_0} ds\int_{|x|\leqslant1\land s^{1/2}}
|s^{-1/2}x|^3\nu(dx)+c_5\int_0^{s_0} ds\int_{s^{1/2}<|x|\leqslant1}
s^{-1/2}|x|\nu(dx)+c_3\\
&\quad= c_4\int_{|x|\leqslant1\land s_0^{1/2}}|x|^3\nu(dx)
\int_{|x|^2}^{s_0} 
s^{-3/2}ds+c_5\int_{|x|\leqslant1}|x|\nu(dx)
\int_0^{s_0\land|x|^2} s^{-1/2}ds+c_3,
\end{align*}
which is finite. 
Hence} $\mathfrak D^0(\Phi_f)=\mathfrak D(\Phi_{f,\,\mathrm{es}})$
and we obtain \eqref{d.6a}.
\end{proof}

\begin{prop}\label{pd.4}
Let $f(s)$ be a measurable function on $(0,b)$ with $0<b<\infty$
such that
$\int_t^b f(s)^2 ds<\infty$ for all $t\in(0,b)$ and
\[
f(s)\asymp s^{-1}(\log(1/s))^{-\beta}\quad\text{as }s\downarrow0
\]
with $\beta\in\mathbb{R}$. Then
\[
\mu_{(A,\nu,\gamma)}\in\mathfrak D(\Phi_{f,\,\mathrm{es}})\quad\Leftrightarrow\quad
A=0\text{ and }\int_{|x|<1/2} |x|(\log(1/|x|))^{-\beta}
\nu(dx)<\infty.
\]
If $0<\beta\leqslant1$, then \eqref{5.5b} holds.
If $1<\beta\leqslant2$, then \eqref{5.5c} holds.
\end{prop}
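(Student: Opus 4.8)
The plan is to follow the scheme of the proofs of Propositions~\ref{pd.2} and~\ref{pd.3}: by a change of variable transform the improper integral on the finite interval $(0,b)$ into one on an infinite interval taken with respect to the L\'evy process carrying the \emph{dual} of $\mu$ at time~$1$, and then quote Example~\ref{e5.4}. First note that $\int_0^{s_0}f(s)^2\,ds=\infty$ for every $s_0\in(0,b)$, since $f(s)\asymp s^{-1}(\log(1/s))^{-\beta}$ and $\int_0 s^{-2}(\log(1/s))^{-2\beta}\,ds=\infty$. By condition~\eqref{4.1} of Theorem~\ref{t4.1} this forces $A=0$ for every $\mu$ in any of the four domains, so all of them lie in $ID_0(\mathbb R^d)$ and the dual is available throughout. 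Because $f$ is locally square-integrable on $(0,b)$, Remark~\ref{r3.2} gives $f\in\mathbf L_{(0,b)}(X^{(\mu)})$ for every $\mu$, so each domain is simply the set of $\mu$ for which the pertinent object of Section~4 is definable.

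Choose $c_1,c_2>0$ and $s_0\in(0,b)$, with $s_0$ small enough that $r\mapsto r^{-1}(\log(1/r))^{-\beta}$ decreases on $(0,s_0]$ and $c_1 r^{-1}(\log(1/r))^{-\beta}\le f(r)\le c_2 r^{-1}(\log(1/r))^{-\beta}$ there. Put $G(s)=\int_s^{s_0}f(r)^2\,dr$; it decreases strictly from $\infty$ to $0$ on $(0,s_0]$ with $G'\le -c<0$ a.e. Set $\phi=G^{-1}\colon[0,\infty)\to(0,s_0]$ and $g(u)=1/f(\phi(u))$. Then $\phi$ is Lipschitz with $\phi'(u)=-g(u)^2$ a.e., $g$ is bounded on $[0,\infty)$ (hence locally square-integrable), and, using $\int_t^{s_0}r^{-2}(\log(1/r))^{-2\beta}\,dr\sim t^{-1}(\log(1/t))^{-2\beta}$ as $t\downarrow 0$, one checks $g(u)\asymp u^{-1}(\log u)^{-\beta}$ as $u\to\infty$, so Example~\ref{e5.4} applies to $g$ on $(0,\infty)$. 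For $\mu=\mu_{(0,\nu,\gamma)}\in ID_0$ with dual $\mu'=\mu_{(0,\nu',\gamma')}$, the substitution $s=\phi(u)$ (so $ds=-g(u)^2\,du$ and $f(s)x=x/g(u)$) together with the inversion formula~\eqref{d.3} gives
\[
\int_0^{s_0}ds\int_{\mathbb R^d}(|f(s)x|^2\land 1)\,\nu(dx)=\int_0^{\infty}du\int_{\mathbb R^d}(1\land|g(u)x|^2)\,\nu'(dx),
\]
and, using in addition that the centering integrand built from $(f,\nu)$ and the one built from $(g,\nu')$ differ only by a sign (the identity $A_1+A_2=0$ of the proof of Proposition~\ref{pd.2}(iii)), that for each $\theta\in\mathbb R^d$ the integral $\int_p^{s_0}f(s)\bigl(\gamma-\theta+\int x((1+|f(s)x|^2)^{-1}-(1+|x|^2)^{-1})\nu(dx)\bigr)\,ds$ equals minus the corresponding one $\int_0^{G(p)}g(u)\bigl(\gamma'+\theta+\int x((1+|g(u)x|^2)^{-1}-(1+|x|^2)^{-1})\nu'(dx)\bigr)\,du$. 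Since $\int_{s_0}^{b}f(s)^2\,ds<\infty$ the part of $(0,b)$ near $b$ imposes no restriction and (as $\int_{s_0}^{b}|f|\,ds<\infty$) leaves $\Phi_{f,\mathrm c}$ reducible to a one-sided integral near $0$ (Remark~\ref{r4.1}). Combining the two identities with Theorems~\ref{t4.1} and~\ref{t4.2} and Definition~\ref{d4.3}, the map $\mu\mapsto\mu'$ is a bijection carrying $\mathfrak D(\Phi_f)$, $\mathfrak D(\Phi_{f,\mathrm c})$, $\mathfrak D(\Phi_{f,\mathrm{es}})$ onto $\mathfrak D(\Phi_g)\cap ID_0$, $\mathfrak D(\Phi_{g,\mathrm c})\cap ID_0$, $\mathfrak D(\Phi_{g,\mathrm{es}})\cap ID_0$, respectively.

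Now read off the conclusions. By Example~\ref{e5.4}(i), $\mu'\in\mathfrak D(\Phi_{g,\mathrm{es}})$ iff $\int_{|x|>2}|x|(\log|x|)^{-\beta}\nu'(dx)<\infty$, and~\eqref{d.3} rewrites this integral as $\int_{|x|<1/2}|x|(\log(1/|x|))^{-\beta}\nu(dx)$; together with $A=0$ (and the trivial remark that for $A\ne0$ both sides of the claimed equivalence fail) this is the asserted characterization of $\mathfrak D(\Phi_{f,\mathrm{es}})$. If $1<\beta\le 2$ then $\int_0^b|f(s)|\,ds<\infty$ (finite near $0$ because $\beta>1$, near $b$ by Cauchy--Schwarz), so $\mathfrak D(\Phi_f)=\mathfrak D(\Phi_{f,\mathrm c})$ by Corollary~\ref{c4.1}, and $\mathfrak D(\Phi_{f,\mathrm c})\subsetneqq\mathfrak D(\Phi_{f,\mathrm{es}})$ follows by transporting through the above bijection the distribution used to witness~\eqref{5.5c} for $g$ in the proof of Example~\ref{e5.4}(iii), which may be taken purely non-Gaussian; this yields~\eqref{5.5c}. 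If $0<\beta\le 1$, the two proper inclusions of~\eqref{5.5b} follow the same way from the two witnesses in the proof of Example~\ref{e5.4}(ii) (one with $\nu'$ symmetric, $\gamma'\ne 0$ and $\nu'$ small enough that $\mu'\in\mathfrak D(\Phi_{g,\mathrm{es}})$; one with $\nu'$ carried by a half-axis), both of which can be chosen in $ID_0$.

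The point demanding the most care is the analysis of $\phi=G^{-1}$ and $g=1/(f\circ\phi)$ — in particular that the slowly varying factor is reproduced correctly, giving $g(u)\asymp u^{-1}(\log u)^{-\beta}$ — and the verification that the two change-of-variables identities, together with the $A_1+A_2=0$ cancellation, hold with precisely the right weights; this is the analogue of the bulk of the work in Proposition~\ref{pd.2}, made somewhat more delicate here by the logarithmic factor. Checking that all four domains lie in $ID_0$ and that the Example~\ref{e5.4} witnesses can be realized in $ID_0$ is routine but indispensable, since the dual correspondence is defined only on $ID_0$.
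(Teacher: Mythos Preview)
Your proposal is correct and follows essentially the same route as the paper: reduce to the dual $\mu'$ via a change of variable that turns the behaviour of $f$ near $0$ into the behaviour of a function $g(u)\asymp u^{-1}(\log u)^{-\beta}$ near $\infty$, then invoke Example~\ref{e5.4}. The only technical differences are: (i) the paper uses the explicit substitution $s=u^{-1}(\log u)^{-2\beta}$, for which the Jacobian factor satisfies $(-ds/du)\,g(u)^{-2}\asymp 1$, whereas your choice $u=G(s)=\int_s^{s_0}f^2$ gives $(-ds/du)\,g(u)^{-2}=1$ exactly---a slightly cleaner bookkeeping; (ii) for the strict inclusions \eqref{5.5b} and \eqref{5.5c} the paper simply repeats the constructions of Example~\ref{e5.4}\,(ii)--(iii) directly on the $f$-side, while you transport the witnesses through the dual bijection, which is equivalent once the centering identity (your $A_1+A_2=0$ step) is verified.
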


\begin{proof}
Choose $s_0$ large enough and let $u_0$ be such that $s_0=u_0^{-1}
(\log u_0)^{-2\beta}$. Let $g(u)=1/f(s)$ for $u\geqslant u_0$, where
$s=u^{-1}(\log u)^{-2\beta}$. Then we can prove that
$g(u)\asymp u^{-1}(\log u)^{-\beta}$ as $u\to\infty$,
\[
\int_0^{s_0} ds\int_{\mathbb{R}^d}(|f(s)x|^2\land1)\nu(dx)
=\int_{u_0}^{\infty} (-ds/du)g(u)^{-2}du
\int_{\mathbb{R}^d}(1\land|g(u)x|^2)\nu'(dx),
\]
and $(-ds/du)g(u)^{-2}\asymp 1$. Using Example \ref{e5.4} (i),
the rest of the proof is similar.

Proof of \eqref{5.5b} or \eqref{5.5c} for $0<\beta\leqslant1$ or
$1<\beta\leqslant2$, respectively, is done by the same method as the
proof of (ii) and (iii) of Example \ref{e5.4}.
\end{proof}

\section{Properties of $f$ and largeness of various domains}

Fix $-\infty\leqslant a<b\leqslant\infty$ and the dimension $d$. 
Let $f$ be an $\mathbb{R}$-valued measurable
function on $(a,b)$.
We use the subclasses $ID_0(\mathbb{R}^d)$ and $ID_{\mathrm{AB}}(\mathbb{R}^d)$
of $ID(\mathbb{R}^d)$ introduced in Section 1.

\begin{thm}\label{t6.1}
The following statements are equivalent.

{\rm(a)} \quad$ID(\mathbb{R}^d)=\mathfrak D^0 (\Phi_f)$.

{\rm(b)} \quad$ID(\mathbb{R}^d)=\mathfrak D (\Phi_{f,\,\mathrm{es}})$.

{\rm(c)} \quad$ID_0(\mathbb{R}^d)\subset\mathfrak D^0 (\Phi_f)$.

{\rm(d)} \quad$ID_0(\mathbb{R}^d)\subset\mathfrak D (\Phi_{f,\,\mathrm{es}})$.

{\rm(e)} \quad$\int_a^b 1_{\{f(s)\neq0\}} ds
<\infty$ and $\int_a^b f(s)^2 ds<\infty$.
\end{thm}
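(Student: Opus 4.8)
The plan is to establish the cycle $\text{(e)}\Rightarrow\text{(a)}\Rightarrow\text{(b)}\Rightarrow\text{(d)}\Rightarrow\text{(e)}$ together with $\text{(a)}\Rightarrow\text{(c)}\Rightarrow\text{(d)}$. Here $\text{(a)}\Rightarrow\text{(c)}$ and $\text{(b)}\Rightarrow\text{(d)}$ are immediate from $ID_0(\mathbb{R}^d)\subset ID(\mathbb{R}^d)$, and $\text{(a)}\Rightarrow\text{(b)}$ and $\text{(c)}\Rightarrow\text{(d)}$ follow at once from the inclusion $\mathfrak D^0(\Phi_f)\subset\mathfrak D(\Phi_{f,\,\mathrm{es}})$ of Theorem \ref{c5.1} (for $\text{(a)}\Rightarrow\text{(b)}$, squeeze $\mathfrak D(\Phi_{f,\,\mathrm{es}})$ between $\mathfrak D^0(\Phi_f)=ID(\mathbb{R}^d)$ and $ID(\mathbb{R}^d)$). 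Thus the real content is in $\text{(e)}\Rightarrow\text{(a)}$ and $\text{(d)}\Rightarrow\text{(e)}$.

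For both directions it will be convenient to introduce the auxiliary function $\psi(r)=\int_a^b(f(s)^2r^2\land1)\,ds$, $r\geqslant0$. It is nondecreasing with $\psi(0)=0$, and by monotone convergence $\psi(r)\uparrow\int_a^b 1_{\{f(s)\ne0\}}\,ds$ as $r\uparrow\infty$, while $\psi(r)/r^2=\int_a^b(f(s)^2\land r^{-2})\,ds\uparrow\int_a^b f(s)^2\,ds$ as $r\downarrow0$. By Tonelli's theorem, condition \eqref{4.2} for $\mu=\mu_{(0,\nu,\gamma)}$ reads exactly $\int_{\mathbb{R}^d}\psi(|x|)\,\nu(dx)<\infty$; since every element of $ID_0(\mathbb{R}^d)$ has $A=0$ and an arbitrary L\'evy measure $\nu$, Theorem \ref{t4.2} shows that statement (d) is equivalent to saying that $\int_{\mathbb{R}^d}\psi(|x|)\,\nu(dx)<\infty$ for every L\'evy measure $\nu$.

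For $\text{(e)}\Rightarrow\text{(a)}$ I would set $M=\int_a^b 1_{\{f(s)\ne0\}}\,ds<\infty$ and $K=\int_a^b f(s)^2\,ds<\infty$, fix an arbitrary $\mu_{(A,\nu,\gamma)}\in ID(\mathbb{R}^d)$, and verify \eqref{4.1}, \eqref{4.2}, \eqref{5.2}, which by Theorem \ref{t5.1} yields $\mu\in\mathfrak D^0(\Phi_f)$. Now \eqref{4.1} is just $K\,\mathrm{tr}\,A<\infty$; \eqref{4.2} follows from $\psi(r)\leqslant\min(Kr^2,M)$, whence $\int\psi(|x|)\nu(dx)\leqslant K\int_{|x|\leqslant1}|x|^2\nu(dx)+M\,\nu(\{|x|>1\})<\infty$; and the $\gamma$-part of \eqref{5.2} is controlled by $\int_a^b|f(s)|\,ds\leqslant(MK)^{1/2}<\infty$ via Cauchy--Schwarz. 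For the $\nu$-part of \eqref{5.2} I would write $\frac{1}{1+|f(s)x|^2}-\frac{1}{1+|x|^2}=\frac{|x|^2(1-f(s)^2)}{(1+|f(s)x|^2)(1+|x|^2)}$, split the $\nu$-integral at $|x|=1$, bound the $|x|>1$ part by $1_{\{f(s)\ne0\}}(\tfrac12+\tfrac12|f(s)|)\,\nu(\{|x|>1\})$, and bound the $|x|\leqslant1$ integrand by $(|f(s)|+f(s)^2)|x|^2$; this last estimate is the one genuinely fiddly point and uses $t^3/(1+t^2)\leqslant t$ together with separating the cases $|f(s)x|\leqslant1$ and $|f(s)x|>1$ when $|f(s)|>1$. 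Then one integrates in $s$ using $M,K<\infty$ and in $x$ using that $\nu$ is a L\'evy measure.

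For $\text{(d)}\Rightarrow\text{(e)}$, assume $\int\psi(|x|)\nu(dx)<\infty$ for all L\'evy measures $\nu$. If $\int_a^b1_{\{f(s)\ne0\}}\,ds=\infty$, then $\psi(r)\to\infty$; choose $r_k\uparrow\infty$ with $r_k\geqslant1$ and $\psi(r_k)\geqslant k2^k$, and take $\nu=\sum_{k\geqslant1}2^{-k}\delta_{r_k e_1}$: this is a L\'evy measure since $\int(|x|^2\land1)\nu(dx)=\sum_k2^{-k}=1$, yet $\int\psi(|x|)\nu(dx)=\sum_k2^{-k}\psi(r_k)\geqslant\sum_k k=\infty$, a contradiction; hence $\int_a^b1_{\{f(s)\ne0\}}\,ds<\infty$. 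If $\int_a^b f(s)^2\,ds=\infty$, then $\psi(r)/r^2\to\infty$ as $r\downarrow0$; choose $r_k\downarrow0$ with $r_k\leqslant1$ and $\psi(r_k)/r_k^2\geqslant k2^k$, and take $\nu=\sum_{k\geqslant1}2^{-k}r_k^{-2}\delta_{r_k e_1}$: again $\int(|x|^2\land1)\nu(dx)=\sum_k2^{-k}=1<\infty$ but $\int\psi(|x|)\nu(dx)=\sum_k2^{-k}r_k^{-2}\psi(r_k)\geqslant\sum_k k=\infty$, a contradiction; hence $\int_a^b f(s)^2\,ds<\infty$, and (e) holds. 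I expect the main obstacle to be the realisation that no single atom of $\nu$ can force either half of (e): one has to superpose countably many atoms and tune both their sizes $r_k$ and their weights so that $\int(|x|^2\land1)\nu(dx)$ stays finite while $\int\psi(|x|)\nu(dx)$ diverges; the only other nontrivial piece is the bound on the $\nu$-part of \eqref{5.2} for large values of $|f|$.
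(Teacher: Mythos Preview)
Your proof is correct and follows the same strategy as the paper's: the same cycle of implications, with (e)$\Rightarrow$(a) via Theorem~\ref{t5.1} and (d)$\Rightarrow$(e) by constructing atomic L\'evy measures that force a contradiction. The only differences are cosmetic---the paper handles the $\nu$-part of \eqref{5.2} with the single inequality $r/(1+r^2)\leqslant\tfrac12$, giving the uniform bound $\tfrac12(1+f(s)^2)\,|x|^2/(1+|x|^2)$ without your case analysis, and for (d)$\Rightarrow$(e) it works with the pair $k(r)=\int f^2 1_{\{|f|\leqslant 1/r\}}\,ds$, $h(r)=\int 1_{\{|f|>1/r\}}\,ds$ and weights $n^{-2}$ where you use the single function $\psi(r)=r^2k(r)+h(r)$ and weights $2^{-k}$.
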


As we have the relation of the various domains in Theorem \ref{c5.1}, 
statement (b) or (d) with $\Phi_{f,\,\mathrm{es}}$ replaced by any one of 
$\Phi_f$, $\Phi_{f,\,\mathrm{c}}$, and $\Phi_{f,\,\mathrm{sym}}$ is also 
equivalent to statements (a)--(e).

\begin{proof}[Proof of Theorem \ref{t6.1}]
Clearly (a) $\Rightarrow$ (b) $\Rightarrow$ (d), and
(a) $\Rightarrow$ (c) $\Rightarrow$ (d). 
Therefore it is enough to show that (e) $\Rightarrow$ (a), 
and (d) $\Rightarrow$ (e).

Assume (e).  Then \eqref{4.1}, \eqref{4.2}, and \eqref{5.2} hold.
Indeed, \eqref{4.1}  is obvious, \eqref{4.2} follows as
\[
\int_a^b ds\int_{\mathbb{R}^d} (|f(s)x|^2\land1)\,\nu(dx)\leqslant \int_a^b
(f(s)^2+1)1_{\{f(s)\neq0\}} ds\int_{\mathbb{R}^d}(|x|^2\land1)\,\nu(dx)
<\infty,
\]
and \eqref{5.2}
follows as in the proof of Theorem \ref{t3.1} (i).
Hence we obtain (a) by virtue of Theorem
\ref{t5.1}.

Assume (d). Then, for every $\mu\in ID_0$, $f$ is in
$\mathbf L_{(a,b)}(X^{(\mu)})$ and $\Phi_{f,\,\mathrm{es}}(\mu)$ is
definable. Hence
\begin{equation}\label{6.1}
\int_a^b ds\int_{\mathbb{R}^d} (|f(s)x|^2\land1)\,\nu(dx)<\infty\quad
\text{for all }\nu\in\mathrm{Lvm}(ID(\mathbb{R}^d))
\end{equation}
by virtue of Theorem
\ref{t4.2}.  Let us show (e) in two steps.

{\it Step 1}.  Suppose that $\int_a^b f(s)^2 ds=\infty$. Let
\[
k(r)=\int_a^b f(s)^2 1_{\{|f(s)|\leqslant 1/r\}} ds\quad\text{for }
r>0.
\]
For every $\nu\in\mathrm{Lvm}(ID(\mathbb{R}^d))$ we have
{\allowdisplaybreaks
\begin{equation}\label{6.2}
\begin{split}
&\int_{|x|\leqslant1} |x|^2 k(|x|)\nu(dx)=\int_a^b ds\int_{|x|\leqslant1}
f(s)^2 1_{\{|f(s)|\leqslant 1/|x|\}} |x|^2 \nu(dx)\\
&\qquad \leqslant\int_a^b ds\int_{|x|\leqslant1}
(|f(s)x|^2\land1) \nu(dx)<\infty
\end{split}
\end{equation}
by \eqref{6.1}. Considering  Lebesgue measure on $\{|x|\leqslant1\}$ as
$\nu$, we see that $k(|x|)<\infty$ for almost every $x$ with 
$|x|\leqslant1$. Hence $k(r)<\infty$ for almost every $r$ in $(0,1]$.
Therefore $k(r)$ is finite for all $r>0$ and increases to $\infty$
as $r\downarrow0$. 
Choose $r_n$, $n=1,2,\ldots$, such that $1>r_n>r_{n+1}>0$ and
$k(r_n)\geqslant n$.  Let
$\rho=\sum_{n=1}^{\infty} n^{-2} \delta_{r_n}$ and let
\[
\nu(B)=\int_S \lambda(d\xi)\int_{(0,1]} 1_B(r\xi)\,r^{-2}\rho(dr)
\quad\text{for }B\in\mathcal B(\mathbb{R}^d),
\]
where $\lambda$ is a finite nonzero measure on the unit sphere
$S=\{\xi\in\mathbb{R}^d\colon |\xi|=1\}$. 
Then $\nu\in\mathrm{Lvm}(ID(\mathbb{R}^d))$, since
\[
\int_{|x|\leqslant1} |x|^2\nu(dx)=\int_S \lambda(d\xi)\int_{(0,1]}\rho(dr)
=\lambda(S)\sum_{n=1}^{\infty} n^{-2}<\infty.
\]
But
\begin{align*}
&\int_{|x|\leqslant1} |x|^2 k(|x|)\nu(dx)=\int_S \lambda(d\xi)\int_{(0,1]}
k(r)\rho(dr)\\
&\qquad=\lambda(S)\sum_{n=1}^{\infty}k(r_n)\rho(\{r_n\})
\geqslant\lambda(S)\sum_{n=1}^{\infty} n^{-1}=\infty,
\end{align*}
which contradicts \eqref{6.2}.  Therefore, $\int_a^b f(s)^2 ds<\infty$.}

{\it Step 2}. Suppose that $\int_a^b 1_{\{f(s)\neq0\}} ds=\infty$. Let
\[
h(r)=\int_a^b  1_{\{|f(s)|> 1/r\}} ds\quad\text{for }
r>0.
\]
We have $h(r)\leqslant r^2\int_a^b f(s)^2 ds<\infty$.
For every $\nu\in\mathrm{Lvm}(ID(\mathbb{R}^d))$ we have
{\allowdisplaybreaks
\begin{equation}\label{6.3}
\begin{split}
&\int_{|x|>1} h(|x|)\nu(dx)=\int_a^b ds \int_{|x|>1}
1_{\{|f(s)|> 1/|x|\}} \nu(dx)\\
&\qquad \leqslant\int_a^b ds \int_{|x|>1}
(|f(s)x|^2\land1) \nu(dx)<\infty
\end{split}
\end{equation}
by \eqref{6.1}. As $r\uparrow\infty$, $h(r)$ increases to $\infty$.
Choose $r_n$, $n=1,2,\ldots$, such that $1<r_n<r_{n+1}$ and
$h(r_n)\geqslant n$.  Let
$\rho=\sum_{n=1}^{\infty} n^{-2} \delta_{r_n}$ and let
\[
\nu(B)=\int_S \lambda(d\xi)\int_{(1,\infty)} 1_B(r\xi)\,\rho(dr)
\quad\text{for }B\in\mathcal B(\mathbb{R}^d),
\]
where $\lambda$ is a finite nonzero measure on $S$.
Then $\nu\in\mathrm{Lvm}(ID(\mathbb{R}^d))$ and
\begin{align*}
&\int_{|x|>1} h(|x|)\nu(dx)=\int_S \lambda(d\xi)\int_{(1,\infty)}
h(r)\rho(dr)\\
&\qquad=\lambda(S)\sum_{n=1}^{\infty}h(r_n)\rho(\{r_n\})
\geqslant\lambda(S)\sum_{n=1}^{\infty} n^{-1}=\infty,
\end{align*}
which contradicts \eqref{6.3}.  Therefore,} $\int_a^b 1_{\{f(s)\neq0\}} ds
<\infty$.

Steps 1 and 2 combined imply (e).
\end{proof}

\begin{thm}\label{t6.2}
The following statements are equivalent.

{\rm(a)}\quad $ID_{\mathrm{AB}}(\mathbb{R}^d)\subset\mathfrak D ^0(\Phi_f)$ 
and $\Phi_f(ID_{\mathrm{AB}}(\mathbb{R}^d))\subset ID_{\mathrm{AB}}(\mathbb{R}^d)$.

{\rm(b)}\quad $ID_{\mathrm{AB}}(\mathbb{R}^d)\subset\mathfrak D (\Phi_{f,\mathrm{es}})$
 and $\Phi_{f,\mathrm{es}}(ID_{\mathrm{AB}}(\mathbb{R}^d))
\subset ID_{\mathrm{AB}}(\mathbb{R}^d)$.

{\rm(c)}\quad\,$\int_a^b 1_{\{f(s)\neq0\}} ds
<\infty$ and $\int_a^b |f(s)|ds<\infty$.
\end{thm}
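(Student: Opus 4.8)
The plan is to establish the cycle (a) $\Rightarrow$ (b) $\Rightarrow$ (c) $\Rightarrow$ (a), closely imitating the proof of Theorem \ref{t6.1} but with the exponent $2$ on $|f(s)x|$ replaced by $1$ everywhere, since membership in $ID_{\mathrm{AB}}$ is governed by integrability of $|x|\wedge1$ rather than of $|x|^2\wedge1$. The implication (a) $\Rightarrow$ (b) is easy: the domain inclusion $\mathfrak D^0(\Phi_f)\subset\mathfrak D(\Phi_{f,\mathrm{es}})$ from Theorem \ref{c5.1} gives the first half of (b), and for the second half, if $\mu\in ID_{\mathrm{AB}}$ and $\Phi_f(\mu)\in ID_{\mathrm{AB}}$, then the L\'evy measure $\nu_a^b$ of $\Phi_f(\mu)$ from \eqref{4.6} satisfies $\int_{\mathbb{R}^d}(|x|\wedge1)\nu_a^b(dx)<\infty$, while by Theorem \ref{t4.6} every element of $\Phi_{f,\mathrm{es}}(\mu)$ has $A$-part $0$ and L\'evy measure $\nu_a^b$, hence lies in $ID_{\mathrm{AB}}$.

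The crux is (b) $\Rightarrow$ (c). From (b), Theorems \ref{t4.2} and \ref{t4.6} give, for every $\mu=\mu_{(0,\nu,\gamma)}\in ID_{\mathrm{AB}}$, that $\nu_a^b$ of \eqref{4.6} is the L\'evy measure of an $ID_{\mathrm{AB}}$ distribution; equivalently,
\[
\int_a^b ds\int_{\mathbb{R}^d}(|f(s)x|\wedge1)\,\nu(dx)<\infty\qquad\text{for all }\nu\in\mathrm{Lvm}(ID_{\mathrm{AB}}(\mathbb{R}^d)).
\]
Now argue in two steps as in the proof of Theorem \ref{t6.1}. First, supposing $\int_a^b|f(s)|\,ds=\infty$, set $k(r)=\int_a^b|f(s)|1_{\{|f(s)|\leqslant1/r\}}\,ds$; testing the displayed bound against Lebesgue measure on $\{|x|\leqslant1\}$ (which lies in $\mathrm{Lvm}(ID_{\mathrm{AB}})$) shows $k(r)<\infty$ for all $r>0$ and $k(r)\uparrow\infty$ as $r\downarrow0$, so one picks $r_n\downarrow0$ with $k(r_n)\geqslant n$ and takes $\nu(B)=\int_S\lambda(d\xi)\int_{(0,1]}1_B(r\xi)\,r^{-1}\rho(dr)$ with $\rho=\sum_n n^{-2}\delta_{r_n}$ and $\lambda$ finite nonzero on $S$: then $\int_{|x|\leqslant1}|x|\nu(dx)=\lambda(S)\sum_n n^{-2}<\infty$, yet $\int_{|x|\leqslant1}|x|k(|x|)\nu(dx)=\lambda(S)\sum_n k(r_n)n^{-2}=\infty$, a contradiction. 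Hence $\int_a^b|f(s)|\,ds<\infty$. Second, supposing $\int_a^b 1_{\{f(s)\neq0\}}\,ds=\infty$, set $h(r)=\int_a^b 1_{\{|f(s)|>1/r\}}\,ds$; then $h(r)\leqslant r\int_a^b|f(s)|\,ds<\infty$ by the first step and $h(r)\uparrow\infty$ as $r\uparrow\infty$, so one picks $r_n\uparrow\infty$ with $h(r_n)\geqslant n$ and takes $\nu(B)=\int_S\lambda(d\xi)\int_{(1,\infty)}1_B(r\xi)\rho(dr)$ with $\rho=\sum_n n^{-2}\delta_{r_n}$: this $\nu$ is finite, so lies in $\mathrm{Lvm}(ID_{\mathrm{AB}})$, but $\int_{|x|>1}h(|x|)\nu(dx)=\lambda(S)\sum_n h(r_n)n^{-2}=\infty$, again a contradiction. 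Thus (c) holds.

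For (c) $\Rightarrow$ (a), let $\mu=\mu_{(0,\nu,\gamma)}\in ID_{\mathrm{AB}}$ with drift $\gamma^0$ and verify \eqref{4.1}, \eqref{4.2}, \eqref{5.2} of Theorem \ref{t5.1}. Condition \eqref{4.1} is trivial ($A=0$). For \eqref{4.2}, and more generally for $\int_a^b ds\int_{\mathbb{R}^d}(|f(s)x|\wedge1)\nu(dx)<\infty$, split the inner integral at $|x|=1$: on $\{|x|\leqslant1\}$ use $|f(s)x|\wedge1\leqslant|f(s)||x|$ (contribution $\leqslant(\int_a^b|f(s)|\,ds)(\int_{|x|\leqslant1}|x|\nu(dx))<\infty$), and on $\{|x|>1\}$ use $|f(s)x|\wedge1\leqslant1_{\{f(s)\neq0\}}$ (contribution $\leqslant\nu(\{|x|>1\})\int_a^b 1_{\{f(s)\neq0\}}\,ds<\infty$). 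For \eqref{5.2}, use $\gamma^0=\gamma-\int x(1+|x|^2)^{-1}\nu(dx)$ to rewrite the bracket in \eqref{5.2} as $\gamma^0+\int_{\mathbb{R}^d}x(1+|f(s)x|^2)^{-1}\nu(dx)$; then $\int_a^b|f(s)|\,|\gamma^0|\,ds<\infty$ by (c), and since $|f(s)||x|(1+|f(s)x|^2)^{-1}\leqslant|f(s)x|\wedge1$, the remaining term is dominated by $\int_a^b ds\int(|f(s)x|\wedge1)\nu(dx)<\infty$. Hence $\mu\in\mathfrak D^0(\Phi_f)$; and then $\Phi_f(\mu)$ has $A$-part $0$ and L\'evy measure $\nu_a^b$ with $\int(|x|\wedge1)\nu_a^b(dx)<\infty$, so $\Phi_f(\mu)\in ID_{\mathrm{AB}}$.

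The step expected to be the main obstacle is (b) $\Rightarrow$ (c): the L\'evy measures used to force the contradictions must be constructed to satisfy the \emph{stronger} condition $\int(|x|\wedge1)\nu(dx)<\infty$ defining $\mathrm{Lvm}(ID_{\mathrm{AB}})$ (not merely the weaker $ID$ condition), which is why the weights $r^{-1}\rho(dr)$ and $\rho(dr)$ are chosen as above; moreover the two steps must be carried out in the stated order, the second relying on the finiteness of $\int_a^b|f(s)|\,ds$ obtained in the first.
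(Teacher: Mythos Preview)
Your proof is correct and follows essentially the same route as the paper's: the paper proves (c) $\Rightarrow$ (a) $\Rightarrow$ (b) $\Rightarrow$ (c), invoking Theorems \ref{p7.2} and \ref{t4.8} for (c) $\Rightarrow$ (a) (where you instead verify \eqref{4.1}, \eqref{4.2}, \eqref{5.2} directly, but with the same underlying estimates), dismisses (a) $\Rightarrow$ (b) as ``clearly'' (where you spell out the use of Theorem \ref{t4.6}), and carries out (b) $\Rightarrow$ (c) via exactly the same two contradiction steps with the same $k(r)$, $h(r)$ and the same test measures $\nu$ built from $\rho=\sum_n n^{-2}\delta_{r_n}$. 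Your observation that Step 2 relies on the finiteness of $\int_a^b|f(s)|\,ds$ from Step 1 also matches the paper's ordering.
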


Statement (b) with $\Phi_{f,\mathrm{es}}$ replaced by any one of
$\Phi_f$, $\Phi_{f,\mathrm{c}}$, and $\Phi_{f,\mathrm{sym}}$ is equivalent to
statements (a)--(c).

\begin{proof}[Proof of Theorem \ref{t6.2}] 
Asssume (c). Let $\mu\in ID_{\mathrm{AB}}$. Then
$f\in\mathbf{L}_{(a,b)}(X^{(\mu)})$ from Theorem \ref{p7.2}. As
 \eqref{7.4} is clear and \eqref{7.3} follows from
\[
\int_a^b ds\int_{\mathbb{R}^d} (|f(s)x|\land1)\,\nu(dx)
\leqslant\int_a^b (|f(s)|+1)1_{\{f(s)\neq0\}}ds\int_{\mathbb{R}^d} (|x|\land1)\,\nu(dx),
\]
Theorem \ref{t4.8} is applicable and (a) of Theorem \ref{t4.8} holds 
for all $\mu\in ID_{\mathrm{AB}}$.
We have \eqref{5.2}, recalling the proof of Theorem \ref{p7.2}.
Thus, using Theorem \ref{t5.1}, we see that
 (a) is true. 

Clearly (a) implies (b).

Assume (b). Then
\[
\int_a^b ds\int_{\mathbb{R}^d} (|f(s)x|\land1)\,\nu(dx)<\infty
\quad\text{for all }\nu\in\mathrm{Lvm}(ID_{\mathrm{AB}}(\mathbb{R}^d)).
\]
Let us show (c) in two steps. The argument is similar to the
corresponding part of the proof of Theorem \ref{t6.1}, but we give a
complete proof.

{\it Step 1}.  Suppose that $\int_a^b |f(s)| ds=\infty$. Let
\[
k(r)=\int_a^b |f(s)| 1_{\{|f(s)|\leqslant 1/r\}} ds\quad\text{for }
r>0.
\]
For every $\nu\in\mathrm{Lvm}(ID_{\mathrm{AB}})$ we have
{\allowdisplaybreaks
\begin{align*}
&\int_{|x|\leqslant1} |x| k(|x|)\nu(dx)=\int_a^b ds\int_{|x|\leqslant1}
|f(s)x| 1_{\{|f(s)|\leqslant 1/|x|\}} \nu(dx)\\
&\qquad \leqslant\int_a^b ds\int_{|x|\leqslant1}
(|f(s)x|\land1) \nu(dx)<\infty.
\end{align*}
It follows that} $k(r)$ is finite 
for all $r>0$ and increases to $\infty$
as $r\downarrow0$. 
Choosing $r_n$, $n=1,2,\ldots$, such that $1>r_n>r_{n+1}>0$ and
$k(r_n)\geqslant n$, let
$\rho=\sum_{n=1}^{\infty} n^{-2} \delta_{r_n}$ and
\[
\nu(B)=\int_S \lambda(d\xi)\int_{(0,1]} 1_B(r\xi)\,r^{-1}\rho(dr),
\]
where $\lambda$ is a finite nonzero measure on the unit sphere $S$.
Then $\nu\in\mathrm{Lvm}(ID_{\mathrm{AB}})$, since
\[
\int_{|x|\leqslant1} |x|\nu(dx)=\int_S \lambda(d\xi)\int_{(0,1]}\rho(dr)
=\lambda(S)\sum_{n=1}^{\infty} n^{-2}<\infty.
\]
But
\[
\int_{|x|\leqslant1} |x| k(|x|)\nu(dx)=
\lambda(S)\sum_{n=1}^{\infty}k(r_n)\rho(\{r_n\})
\geqslant\lambda(S)\sum_{n=1}^{\infty} n^{-1}=\infty,
\]
a contradiction. Hence $\int_a^b |f(s)| ds<\infty$.

{\it Step 2}. Suppose that $\int_a^b 1_{\{f(s)\neq0\}}ds=\infty$. Let
\[
h(r)=\int_a^b  1_{\{|f(s)|> 1/r\}} ds\quad\text{for }
r>0.
\]
Then $h(r)\leqslant r\int_a^b |f(s)|ds<\infty$.
As $r\uparrow\infty$, $h(r)$ increases to $\infty$.
For every $\nu\in\mathrm{Lvm}(ID_{\mathrm{AB}})$ 
{\allowdisplaybreaks
\begin{align*}
&\int_{|x|>1} h(|x|)\nu(dx)=\int_a^b ds \int_{|x|>1}
1_{\{|f(s)|> 1/|x|\}} \nu(dx)\\
&\qquad \leqslant\int_a^b ds \int_{|x|>1}
(|f(s)x|\land1) \nu(dx)<\infty.
\end{align*}
Choosing} $r_n$, $n=1,2,\ldots$, such that $1<r_n<r_{n+1}$ and
$h(r_n)\geqslant n$, let
$\rho=\sum_{n=1}^{\infty} n^{-2} \delta_{r_n}$ and 
\[
\nu(B)=\int_S \lambda(d\xi)\int_{(1,\infty)} 1_B(r\xi)\,\rho(dr),
\]
with a finite nonzero measure $\lambda$ on $S$.
Then $\nu\in\mathrm{Lvm}(ID_{\mathrm{AB}})$ and
\[
\int_{|x|>1} h(|x|)\nu(dx)=\lambda(S) \sum_{n=1}^{\infty}
h(r_n)\rho(\{r_n\})
\geqslant\lambda(S)\sum_{n=1}^{\infty} n^{-1}=\infty,
\]
a contradiction.  Therefore, $\int_a^b 1_{\{f(s)\neq0\}}ds<\infty$.
\end{proof}

\begin{ex}\label{e6.5}
Let $(a,b)=(0,b)$ with $b$ finite.  Suppose that 
$\int_t^b f(s)^2ds<\infty$ for all $t\in(0,b)$ and that
$f(s)\asymp s^{-\beta}$ as $s\downarrow0$.  If $0<\beta<1/2$, then
$f$ satisfies (e) of Theorem \ref{t6.1}. If $0<\beta<1$, then
$f$ satisfies (c) of Theorem \ref{t6.2}.
\end{ex}

In relation to the two theorems above, it is interesting to consider
the condition that $ID_{\mathrm{AB}}\subset\mathfrak D 
(\Phi_{f,\mathrm{es}})$ and 
the condition that $ID_{\mathrm{AB}}\subset\mathfrak D^0 
(\Phi_f)$.  We have the following two theorems.

\begin{thm}\label{t6.3}
The following statements are equivalent.

{\rm(a)}\quad $ID_{\mathrm{AB}}(\mathbb{R}^d)\subset\mathfrak D 
(\Phi_{f,\mathrm{es}})$.

{\rm(b)}\quad\,The function $f(s)$ is locally integrable on $(a,b)$, 
$\int_a^b 1_{\{f(s)\neq0\}}ds<\infty$,
{\allowdisplaybreaks
\begin{align}
\int_a^b f(s)^2 1_{\{|f(s)|\leqslant 1/r\}} ds&=O(1/r)\quad\text{as }r\downarrow0,
\label{6.4}\\
\intertext{and}
\int_a^b  1_{\{|f(s)|> 1/r\}} ds&=O(r)\quad\text{as }r\downarrow0.\label{6.5}
\end{align}}
\end{thm}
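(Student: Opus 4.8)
The plan is to reduce the statement to the essential‑definability criterion of Theorem~\ref{t4.2}, exploiting that $A=0$ for every $\mu\in ID_{\mathrm{AB}}(\mathbb{R}^d)$ (so \eqref{4.1} is automatic) and that, $f$ being scalar‑valued, the inner integral in \eqref{4.2} is a radial function of $x$. Introduce
\[
K(r)=\int_a^b f(s)^2\,1_{\{|f(s)|\leqslant 1/r\}}\,ds,\qquad
H(r)=\int_a^b 1_{\{|f(s)|> 1/r\}}\,ds\qquad(r>0),
\]
so that \eqref{6.4} reads $K(r)=O(1/r)$ and \eqref{6.5} reads $H(r)=O(r)$ as $r\downarrow0$. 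For $\mu=\mu_{(0,\nu,\gamma)}\in ID_{\mathrm{AB}}$, Tonelli's theorem gives
\[
\int_a^b ds\int_{\mathbb{R}^d}(|f(s)x|^2\land1)\,\nu(dx)=\int_{\mathbb{R}^d}\Phi(|x|)\,\nu(dx),\qquad
\Phi(\rho):=\rho^{2}K(\rho)+H(\rho),
\]
and one records that $\Phi(\rho)\leqslant\int_a^b 1_{\{f(s)\neq0\}}\,ds=:M$ for all $\rho$, with $\Phi(\rho)\uparrow M$ as $\rho\uparrow\infty$ by monotone convergence.

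For (b)$\Rightarrow$(a): fix $\mu\in ID_{\mathrm{AB}}$. From $M<\infty$ together with \eqref{6.4}--\eqref{6.5} one gets $\Phi(\rho)\leqslant c\rho$ for small $\rho$ and $\Phi\leqslant M$ everywhere; splitting $\int_{\mathbb{R}^d}\Phi(|x|)\nu(dx)$ over $\{|x|\leqslant\varepsilon\}$ and $\{|x|>\varepsilon\}$ and using $\int_{\mathbb{R}^d}(|x|\land1)\nu(dx)<\infty$ yields \eqref{4.2}, hence in particular \eqref{3.2}. Since
\[
\left|\int_{\mathbb{R}^d}\frac{x\,|f(s)x|^2}{(1+|f(s)x|^2)(1+|x|^2)}\,\nu(dx)\right|\leqslant\int_{|x|\leqslant1}|x|\,\nu(dx)+\tfrac12\,\nu(\{|x|>1\})<\infty
\]
uniformly in $s$, local integrability of $f$ gives \eqref{3.7}, so $f\in\mathbf L_{(a,b)}(X^{(\mu)})$ by Remark~\ref{r3.1}; as $A=0$ makes \eqref{4.1} trivial, Theorem~\ref{t4.2} makes $\Phi_{f,\mathrm{es}}(\mu)$ definable, i.e.\ $\mu\in\mathfrak D(\Phi_{f,\mathrm{es}})$.

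For (a)$\Rightarrow$(b): applying the hypothesis to $\mu=\delta_{\gamma}$ with $\gamma\neq0$ forces $f\in\mathbf L_{(a,b)}(X^{(\delta_{\gamma})})$, which by Theorem~\ref{t3.1}(ii) (case $\nu=0$) is precisely $\int_p^q|f(s)|\,ds<\infty$ for all $a<p<q<b$. Next, every $\nu\in\mathrm{Lvm}(ID_{\mathrm{AB}}(\mathbb{R}^d))$ is the L\'evy measure of $\mu_{(0,\nu,0)}\in ID_{\mathrm{AB}}\subset\mathfrak D(\Phi_{f,\mathrm{es}})$, so Theorem~\ref{t4.2} gives $\int_{\mathbb{R}^d}\Phi(|x|)\nu(dx)<\infty$ for all such $\nu$. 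If $M=\infty$, then $\Phi(\rho)\to\infty$; picking $\rho_n\uparrow\infty$ with $\Phi(\rho_n)\geqslant n$ and $\nu(B)=\int_S\lambda(d\xi)\int_{(1,\infty)}1_B(\rho\xi)\,\sigma(d\rho)$ with $\sigma=\sum_n n^{-2}\delta_{\rho_n}$ and $\lambda$ a finite nonzero measure on the unit sphere $S$ gives $\nu\in\mathrm{Lvm}(ID_{\mathrm{AB}})$ but $\int\Phi(|x|)\nu(dx)=\infty$, a contradiction; hence $M<\infty$ and $\Phi\leqslant M$. If $\Phi(\rho)/\rho$ were unbounded as $\rho\downarrow0$, picking $\rho_n\downarrow0$ with $\Phi(\rho_n)\geqslant n\rho_n$ and $\nu(B)=\int_S\lambda(d\xi)\int_{(0,1]}1_B(\rho\xi)\,\rho^{-1}\sigma(d\rho)$ with $\sigma=\sum_n n^{-2}\delta_{\rho_n}$ gives $\int_{\mathbb{R}^d}(|x|\land1)\nu(dx)=\lambda(S)\sum_n n^{-2}<\infty$ while $\int\Phi(|x|)\nu(dx)\geqslant\lambda(S)\sum_n n^{-2}\Phi(\rho_n)\rho_n^{-1}\geqslant\lambda(S)\sum_n n^{-1}=\infty$, again impossible. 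So $\Phi(\rho)=O(\rho)$ as $\rho\downarrow0$; since $\rho^2K(\rho)$ and $H(\rho)$ are nonnegative with sum $\Phi(\rho)$, this forces $K(\rho)=O(1/\rho)$ and $H(\rho)=O(\rho)$, i.e.\ \eqref{6.4} and \eqref{6.5}, and together with $M<\infty$ and local integrability this is (b).

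The routine parts are the Tonelli identity and the uniform bound on the compensating integral; the one point needing care is the two ``for all $\nu$'' extractions, where the extra factor $\rho^{-1}$ (resp.\ no extra factor) on the radial component is tuned exactly so that $\int(|x|\land1)\nu(dx)$ stays finite --- keeping $\nu$ inside $\mathrm{Lvm}(ID_{\mathrm{AB}})$ --- while $\int\Phi(|x|)\nu(dx)$ diverges. I expect this bookkeeping, rather than any conceptual difficulty, to be the main obstacle.
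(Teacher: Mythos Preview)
Your proof is correct and follows essentially the same approach as the paper: both directions hinge on the integral condition \eqref{4.2}, local integrability is extracted from $\delta_\gamma$, and the growth conditions are forced by testing against atomic L\'evy measures with radial weights $r^{-1}$ on $(0,1]$ (resp.\ no weight on $(1,\infty)$). Your packaging of $K$ and $H$ into the single function $\Phi(\rho)=\rho^2K(\rho)+H(\rho)$ lets you run one contradiction argument where the paper runs two (its Steps~2 and~3), and for $f\in\mathbf L_{(a,b)}(X^{(\mu)})$ you go through Remark~\ref{r3.1} with a uniform bound on the compensating integral where the paper invokes Theorem~\ref{p7.2}, but these are cosmetic variations on the same argument.
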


\begin{proof}
Let
\[
k(r)=\int_a^b f(s)^2 1_{\{|f(s)|\leqslant 1/r\}} ds\quad\text{and}\quad
h(r)=\int_a^b  1_{\{|f(s)|> 1/r\}} ds.
\]

Assume (b). By virtue of Theorem \ref{p7.2}, $f\in\mathbf{L}_{(a,b)}
(X^{(\mu)})$ for any $\mu\in ID_{\mathrm{AB}}$. Thus, it follows from
Theorem \ref{t4.2} that
statement (a) is true if
\begin{equation}\label{6.6}
\int_a^b ds\int_{\mathbb{R}^d} (|f(s)x|^2\land1)\,\nu(dx)<\infty\quad
\text{for all }
\nu\in\mathrm{Lvm}(ID_{\mathrm{AB}}).
\end{equation}
Let $\nu\in\mathrm{Lvm}(ID_{\mathrm{AB}})$. Then
{\allowdisplaybreaks
\begin{align*}
&\int_a^b ds\int_{\mathbb{R}^d} (|f(s)x|^2\land1)\,\nu(dx)\\
&\quad\leqslant
\int_a^b ds\int_{|x|\leqslant1} (|f(s)x|^2\land1)\,\nu(dx)
+\int_a^b1_{\{f(s)\neq0\}}ds \int_{|x|>1} \nu(dx)
=I_1+I_2,
\end{align*}
say. Clearly $I_2$ is finite. Using \eqref{6.4}, \eqref{6.5}, and
$\int_a^b 1_{\{f(s)\neq0\}}ds<\infty$, we have
\begin{align*}
I_1&=\int_{|x|\leqslant1}|x|^2\nu(dx)\int_a^b f(s)^2
1_{\{|f(s)|\leqslant 1/|x|\}} ds
+\int_{|x|\leqslant1}\nu(dx)\int_a^b 1_{\{|f(s)|> 1/|x|\}} ds\\
&=\int_{|x|\leqslant1}|x|^2 k(|x|)\nu(dx)
+\int_{|x|\leqslant1}h(|x|)\nu(dx)
\leqslant c\int_{|x|\leqslant1} |x|\nu(dx)<\infty,
\end{align*}
where} $c$ is a constant. Hence \eqref{6.6} holds.  Hence (a) is true.

Conversely, assume (a).
Then, for every $\mu\in ID_{\mathrm{AB}}$ with triplet 
$(0,\nu,\gamma^0)_0$, $f\in\mathbf{L}_{(a,b)}(X^{(\mu)})$, and hence
\[
\int_p^q |f(s)|\left| \gamma^0+\int_{\mathbb{R}^d}\frac{x\nu(dx)}{1+|f(s)x|^2}
\right|ds<\infty
\]
for all $p,q$ with $a<p<q<b$ (recall Theorem \ref{t3.1}).  
Considering the case $\nu=0$, we see that $\int_p^q|f(s)|ds<\infty$,
that is, $f(s)$ is locally integrable on $(a,b)$.  Further
(a) implies \eqref{6.6}.  Thus,
for every $\nu\in\mathrm{Lvm}(ID_{\mathrm{AB}})$, 
{\allowdisplaybreaks
\begin{align*}
&\int_{|x|\leqslant1} |x|^2 k(|x|)\,\nu(dx)=\int_a^b f(s)^2 ds
\int_{|x|\leqslant1} |x|^2 1_{\{|f(s)|\leqslant 1/|x|\}} \nu(dx)\\
&\qquad \leqslant \int_a^b ds\int_{\mathbb{R}^d} (|f(s)x|^2\land1)\,\nu(dx)<\infty\\
\intertext{and}
&\int_{\mathbb{R}^d} h(|x|)\,\nu(dx)=\int_a^b ds
\int_{\mathbb{R}^d} 1_{\{|f(s)|> 1/|x|\}} \nu(dx)\\
&\qquad \leqslant \int_a^b ds\int_{\mathbb{R}^d} (|f(s)x|^2\land1)\,\nu(dx)<\infty.
\end{align*}
Using an appropriate} $\nu$, we see that $k(r)$ and $h(r)$ are finite
almost everywhere.

{\it Step 1}. Suppose that $\int_a^b 1_{\{f(s)\neq0\}}ds
=\infty$.  Then we have a contradiction
exactly in the same way as in Step 2 in the proof of Theorem \ref{t6.2}.
Hence $\int_a^b 1_{\{f(s)\neq0\}}ds<\infty$.

{\it Step 2}. Suppose that $\limsup_{r\downarrow0} r^{-1}h(r)=\infty$.
Choose a sequence $r_n\leqslant1$ decreasing to $0$ such that ${r_n}^{-1}
h(r_n)\geqslant n$.  Let $\rho=\sum_{n=1}^{\infty} n^{-2}\delta_{r_n}$ and
\[
\nu(B)=\int_S \lambda(d\xi)\int_{(0,1]} 1_B(r\xi) r^{-1}\rho(dr)
\]
 with
a finite nonzero measure $\lambda$ on $S$.  Then 
$\nu\in\mathrm{Lvm}(ID_{\mathrm{AB}})$ but
\[
\int_{|x|\leqslant1} h(|x|)\,\nu(dx)=\lambda(S)\sum_{n=1}^{\infty} h(r_n){r_n}^{-1}
n^{-2}\geqslant \lambda(S)\sum_{n=1}^{\infty} n^{-1}=\infty,
\]
which is a contradiction.  Hence we obtain \eqref{6.5}.

{\it Step 3}. Suppose that $\limsup_{r\downarrow0} r\,k(r)=\infty$.
Choose a sequence $r_n\leqslant1$ decreasing to $0$ such that $r_n
k(r_n)\geqslant n$.  Define $\rho$ and $\nu$ by the same formulas
 as in Step 2.  Then 
$\nu\in\mathrm{Lvm}(ID_{\mathrm{AB}})$ but
\[
\int_{|x|\leqslant1} |x|^2 k(|x|)\,\nu(dx)=\lambda(S)\sum_{n=1}^{\infty}
r_n k(r_n)n^{-2}\geqslant \lambda(S)\sum_{n=1}^{\infty} n^{-1}=\infty,
\]
a contradiction.  Thus \eqref{6.4} is true.
\end{proof}

\begin{thm}\label{t6.4}
The following statements are equivalent.

{\rm(a)}\quad $ID_{\mathrm{AB}}(\mathbb{R}^d)\subset\mathfrak D ^0(\Phi_f)$.

{\rm(b)}\quad\,$\int_a^b 1_{\{f(s)\neq0\}}ds<\infty$ and $\int_a^b |f(s)|ds<\infty$.
\end{thm}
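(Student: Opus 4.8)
The plan is to obtain Theorem \ref{t6.4} by assembling the two preceding theorems together with a test against degenerate distributions. Observe first that statement (b) here coincides word for word with statement (c) of Theorem \ref{t6.2}. Hence the implication (b) $\Rightarrow$ (a) requires no new work: since (b) is statement (c) of Theorem \ref{t6.2}, that theorem yields statement (a) of Theorem \ref{t6.2}, whose first half reads $ID_{\mathrm{AB}}(\mathbb{R}^d)\subset\mathfrak D^0(\Phi_f)$, which is precisely (a).

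For (a) $\Rightarrow$ (b) I would verify the two integrability assertions separately. To get $\int_a^b|f(s)|\,ds<\infty$, apply (a) to the degenerate law $\delta_\theta=\mu_{(0,0,\theta)}$ with $\theta\in\mathbb{R}^d$, $|\theta|=1$: it is of type A, so it lies in $ID_{\mathrm{AB}}(\mathbb{R}^d)$ and hence in $\mathfrak D^0(\Phi_f)$. Theorem \ref{t5.1} applied to $\mu=\delta_\theta$ makes \eqref{4.1} and \eqref{4.2} automatically satisfied (as $A=0$ and $\nu=0$) and collapses \eqref{5.2} to $\int_a^b|f(s)\theta|\,ds<\infty$, i.e.\ $\int_a^b|f(s)|\,ds<\infty$. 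To get $\int_a^b 1_{\{f(s)\neq0\}}\,ds<\infty$, use the inclusions \eqref{5.3} (Theorem \ref{c5.1}): (a) forces $ID_{\mathrm{AB}}(\mathbb{R}^d)\subset\mathfrak D(\Phi_{f,\mathrm{es}})$, which is statement (a) of Theorem \ref{t6.3}; that theorem then delivers $\int_a^b 1_{\{f(s)\neq0\}}\,ds<\infty$ as part of its statement (b). Combining the two gives (b).

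The argument is essentially bookkeeping over earlier results, so there is no genuine obstacle; the only point to check with care is that $\delta_\theta$ really has L\'evy--Khintchine triplet $(0,0,\theta)$ for the centering function of \eqref{1.1}, so that the integrand in \eqref{5.2} is exactly $|f(s)\theta|$. If a self-contained treatment of the indicator condition is preferred over invoking Theorem \ref{t6.3}, one can argue directly instead: once $\int_a^b|f(s)|\,ds<\infty$ is known, $h(r):=\int_a^b 1_{\{|f(s)|>1/r\}}\,ds\leqslant r\int_a^b|f(s)|\,ds<\infty$, and if $\int_a^b 1_{\{f(s)\neq0\}}\,ds=\infty$ then $h(r)\uparrow\infty$ as $r\uparrow\infty$; choosing $r_n\uparrow\infty$ with $h(r_n)\geqslant n$ and setting $\rho=\sum_n n^{-2}\delta_{r_n}$ and $\nu(B)=\int_S\lambda(d\xi)\int_{(1,\infty)}1_B(r\xi)\,\rho(dr)$ with $\lambda$ a finite nonzero measure on $S$ produces a measure in $\mathrm{Lvm}(ID_{\mathrm{AB}}(\mathbb{R}^d))$ that violates \eqref{4.2} (which (a) forces via Theorem \ref{t5.1}), exactly as in Step 2 of the proof of Theorem \ref{t6.2}.
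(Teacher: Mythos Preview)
Your proof is correct and follows essentially the same approach as the paper. The paper also derives $\int_a^b|f(s)|\,ds<\infty$ from Theorem \ref{t5.1} (using two location parameters $\gamma^1,\gamma^2$ and subtracting, whereas you more directly take $\nu=0$ and a single $\delta_\theta$), and obtains the indicator condition by the Step~2 construction of Theorem \ref{t6.1}; your route via Theorem \ref{t6.3} is a clean alternative, and your optional direct argument is the same construction.
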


\begin{proof}
Assume (a). Let $\nu\in\mathrm{Lvm}(ID_{\mathrm{AB}})$. Then, for
any $\gamma^1$ and $\gamma^2$ in $\mathbb{R}^d$, $\mu_{(0,\nu,\gamma^1)}$ and
$\mu_{(0,\nu,\gamma^1)}$ are in $\mathfrak D ^0(\Phi_f)$.
Recall Theorem \ref{t5.1}. We see that \eqref{5.2} is true for
$\gamma=\gamma^1$ and $\gamma=\gamma^2$. Hence $\int_a^b |f(s)(\gamma^1-\gamma^2)|ds
<\infty$.
It follows that $\int_a^b |f(s)|ds<\infty$. 
We can prove that $\int_a^b 1_{\{f(s)\neq0\}}ds
<\infty$ in the same way as Step 2 in the proof of 
Theorem \ref{t6.1}.

Conversely, assume (b).  Then, 
$ID_{\mathrm{AB}}\subset\mathfrak D ^0(\Phi_f)$ and
$\Phi_f(ID_{\mathrm{AB}})\subset ID_{\mathrm{AB}}$ by virtue of
Theorem \ref{t6.2}.  Thus, a fortiori, (a) holds.
\end{proof}

\begin{rem}\label{r6.1}
Consider the following conditions:
{\allowdisplaybreaks
\begin{gather}
ID=\mathfrak D ^0(\Phi_f),\label{6.7a}\\
ID=\mathfrak D (\Phi_{f,\mathrm{es}}),\label{6.7}\\
ID_{\mathrm{AB}}\subset\mathfrak D ^0(\Phi_f)\quad\text{and}\quad
\Phi_f(ID_{\mathrm{AB}})\subset ID_{\mathrm{AB}},\label{6.8a}\\
ID_{\mathrm{AB}}\subset\mathfrak D (\Phi_{f,\mathrm{es}})\quad\text{and}\quad
\Phi_{f,\mathrm{es}}(ID_{\mathrm{AB}})\subset ID_{\mathrm{AB}},\label{6.8}\\
ID_{\mathrm{AB}}\subset \mathfrak D ^0(\Phi_f),\label{6.10}\\
ID_{\mathrm{AB}}\subset \mathfrak D (\Phi_{f,\mathrm{es}}).\label{6.9}
\end{gather}
Then, it follows from the theorems in this section that}
\[
\text{\eqref{6.7a} $\Leftrightarrow$ \eqref{6.7} 
$\Rightarrow$ \eqref{6.8a} $\Leftrightarrow$ \eqref{6.8} 
$\Leftrightarrow$ \eqref{6.10} $\Rightarrow$ \eqref{6.9}.}
\]
Further, we can show that
condition \eqref{6.7} is strictly stronger than condition \eqref{6.8}
and that condition \eqref{6.8} is strictly stronger than condition 
\eqref{6.9}.
They are proved by the use of the analytical expressions of the conditions.
Indeed, it is obvious that \eqref{6.7} is strictly stronger than \eqref{6.8},
since there is $f(s)$ on a finite open interval $(a,b)$
 such that $\int_a^b |f(s)|ds
<\infty$ but $\int_a^b f(s)^2ds=\infty$.  
To show that \eqref{6.8} is strictly stronger than \eqref{6.9}, consider 
the function $f(s)$ in Example \ref{e6.1} or \ref{e6.2} below.
\end{rem}

\begin{ex}\label{e6.1}
Let $f(s)$ be as in Proposition \ref{pd.2} (iv). Thus $(a,b)=(0,b)$ 
with $b$ finite and $f(s)\asymp s^{-1}$ as $s\downarrow0$.  Then, 
$f(s)$ satisfies \eqref{6.4} and \eqref{6.5} and
$\int_0^1 f(s)ds=\infty$. Hence this $f(s)$ satisfies \eqref{6.9} but
does not satisfy \eqref{6.8}.  We have shown 
$ID_{\mathrm{AB}}=\mathfrak D(\Phi_{f,\mathrm{es}})$
in Proposition \ref{pd.2}. We have
{\allowdisplaybreaks
\begin{equation}\label{6.12}
\begin{split}
&\{\mu\in ID_{\mathrm{AB}}\colon \Phi_{f,\mathrm{es}}(\mu)\subset
ID_{\mathrm{AB}}\}\\
&\qquad=\left\{\mu=\mu_{(0,\nu,\gamma)}\in ID_{\mathrm{AB}}\colon
\int_{|x|\leqslant1/2} |x|\log(1/|x|)\,\nu(dx)<\infty\right\}.
\end{split}
\end{equation}
This again shows that} 
$\Phi_{f,\mathrm{es}}(ID_{\mathrm{AB}})\not\subset ID_{\mathrm{AB}}$.

Let us prove \eqref{6.12}. Let $\mu\in ID_{\mathrm{AB}}$. 
Let $\widetilde\nu$ denote the L\'evy measure of $\Phi_{f,\mathrm{es}}(\mu)$.
Then $\widetilde\nu(B)=\int_0^b ds\int_{\mathbb{R}^d} 1_B(f(s)x)\,\nu(dx)$.
Hence
{\allowdisplaybreaks
\begin{align*}
&\int_{|x|\leqslant1} |x|\widetilde\nu(dx)\leqslant c_2\int_0^{s_0} ds
\int_{\mathbb{R}^d}
 |s^{-1}x| 1_{\{c_1|s^{-1}x|\leqslant1\}}\nu(dx)+c_3\\
&\quad =c_2\int_{|x|\leqslant c_1^{-1}s_0} |x|\nu(dx)\int_{c_1|x|}^{s_0}
s^{-1}ds+c_3 \leqslant c_2\int_{|x|\leqslant c_1^{-1}s_0} |x|\log(1/|x|)\,\nu(dx)
+c_4,
\end{align*}
and the converse estimate is similar.}
\end{ex}

\begin{ex}\label{e6.2}
Let $(a,b)=(0,b)$ with $b$ finite and $f(s)\asymp s^{-1}(\log(1/s))^{-1}$
as $s\downarrow0$. We can prove that
{\allowdisplaybreaks
\begin{equation}\label{6.13}
\begin{split}
\mathfrak D(\Phi_{f,\mathrm{es}})=\{\mu&=\mu_{(A,\nu,\gamma)}\in ID
\colon A=0\text{ and}\\
&\int_{|x|\leqslant1/2} |x|(\log(1/|x|))^{-1}\nu(dx)
<\infty\}
\end{split}
\end{equation}
and
\begin{equation}\label{6.14}
\begin{split}
&\{\mu\in ID_{\mathrm{AB}}\colon \Phi_{f,\mathrm{es}}(\mu)\subset
ID_{\mathrm{AB}}\}\\
&\qquad=\left\{\mu=\mu_{(0,\nu,\gamma)}\in ID_{\mathrm{AB}}\colon
\int_{|x|\leqslant1/(2e)} |x|\log\log(1/|x|)\,\nu(dx)<\infty\right\}.
\end{split}
\end{equation}
It follows from} \eqref{6.13} that
\begin{equation}\label{6.15}
ID_{\mathrm{AB}}\subsetneqq \mathfrak D(\Phi_{f,\mathrm{es}}).
\end{equation}
The assertion \eqref{6.14} implies that $\Phi_{f,\mathrm{es}}
(ID_{\mathrm{AB}})\not\subset
ID_{\mathrm{AB}}$, but this fact follows also from
$\int_0^{1/2} f(s) ds=\infty$.
Thus, like Example \ref{e6.1}, this example satisfies 
\eqref{6.9} and
does not satisfy \eqref{6.8}. 
However, property \eqref{6.15} differs from property 
$ID_{\mathrm{AB}}=\mathfrak D(\Phi_{f,\mathrm{es}})$
of Example \ref{e6.1}.

Let us prove \eqref{6.13}. 
Let $\mu=\mu_{(A,\nu,\gamma)}$. Then $\mu\in\mathfrak D(\Phi_{f,\mathrm{es}})$
if and only if $A=0$ and \eqref{4.2} holds.  We have
\[
c_1 s^{-1}(\log(1/s))^{-1}\leqslant f(s)\leqslant c_2 s^{-1}(\log(1/s))^{-1}
\]
for $0<s<s_0$ with $c_1,c_2>0$ and $s_0\in(0,b\land(1/2))$.
Now
{\allowdisplaybreaks
\begin{align*}
&\int_0^{s_0} ds\int_{|f(s)x|>1} \nu(dx)=\int_{|x|>1}\nu(dx)\int_0^{s_0} 
1_{\{|f(s)x|>1\}}ds\\
&\qquad+\int_{|x|\leqslant1}\nu(dx)\int_0^{s_0} 1_{\{|f(s)x|>1\}}ds
=I_1+I_2\quad\text{(say)},\\
&\int_0^{s_0} ds\int_{|f(s)x|\leqslant1} |f(s)x|^2\nu(dx)=\int_{|x|>1}\nu(dx)
\int_0^{s_0} |f(s)x|^2
1_{\{|f(s)x|\leqslant1\}}ds\\
&\qquad+\int_{|x|\leqslant1}|x|^2\nu(dx)\int_0^{s_0} f(s)^2
1_{\{|f(s)x|\leqslant1\}}ds=J_1+J_2\quad\text{(say)}.
\end{align*}
Both $I_1$ and $J_1$ are bounded by $s_0\int_{|x|>1}\nu(dx)$.
We have
\begin{gather*}
I_2\leqslant\int_{|x|\leqslant1}\nu(dx)\int_0^{s_0} 1_{\{s\log (1/s)<c_2|x|\}}ds,\\
J_2\leqslant c_2\int_{|x|\leqslant1}|x|^2\nu(dx)\int_0^{s_0} s^{-2}
(\log (1/s))^{-2}
1_{\{s\log (1/s)\geqslant c_1|x|\}}ds,
\end{gather*}
and similar estimates from below. Then $I_2+J_2<\infty$ if and only if
\[
\int_{|x|\leqslant1/2} |x|(\log(1/|x|))^{-1}\nu(dx)
<\infty,
\]
since, letting $u=s\log(1/s)$, we have $du/ds=\log(1/s)-1\sim
\log(1/u)$ as $s\downarrow0$ (equivalently, as $u\downarrow0$), and since,
as $r\downarrow0$,
\[
\int_0^{s_0} 1_{\{s\log (1/s)<r\}}ds\sim \int_0^r\left(\frac{du}{ds}
\right)^{-1} du\sim \int_0^r (\log(1/u))^{-1} du\sim r(\log(1/r))^{-1}
\]
and
\begin{align*}
&\int_0^{s_0} s^{-2}(\log (1/s))^{-2} 1_{\{s\log (1/s)\geqslant r\}}ds\sim 
\int_r^{\varepsilon} u^{-2}\left(\frac{du}{ds}
\right)^{-1} du\\
&\qquad\sim \int_r^{\varepsilon}u^{-2} (\log(1/u))^{-1} du\sim r^{-1}
(\log(1/r))^{-1}
\end{align*}
with a small number $\varepsilon>0$.}
 
Proof of \eqref{6.14} is as follows.
Let $\mu\in ID_{\mathrm{AB}}$. Then
$\mu\in \mathfrak D(\Phi_{f,\mathrm{es}})$ from \eqref{6.13}.
For the L\'evy measure $\widetilde\nu$ of any distribution in 
$\Phi_{f,\mathrm{es}}(\mu)$, we have
{\allowdisplaybreaks
\begin{align*}
&\int_{|x|\leqslant1} |x|\widetilde\nu(dx)\leqslant \int_{\mathbb{R}^d} c_2|x|
\nu(dx)\int_0^{s_0}
s^{-1}(\log(1/s))^{-1} 1_{\{s\log(1/s)\geqslant c_1|x|\}} ds+c_3\\
&\qquad=I_1+I_2+c_3,
\end{align*}
where $I_1$ and $I_2$ are the repeated integrals with the integration 
over $\mathbb{R}^d$ replaced by that over $\{|x|>1\}$ and $\{|x|\leqslant1\}$,
respectively. Then
\[
I_1\leqslant c_1^{-1}c_2\int_{|x|>1}\nu(dx)\int_0^{s_0} ds <\infty
\]
and, with $u=s\log(1/s)$ and some $\varepsilon>0$,
\begin{align*}
&\int_0^{s_0} s^{-1}(\log(1/s))^{-1} 1_{\{s\log(1/s)\geqslant r\}}ds
\sim \int_r^{\varepsilon} u^{-1}\left(\frac{du}{ds}\right)^{-1} du\\
&\qquad \sim \int_r^{\varepsilon} u^{-1}(\log(1/u))^{-1}du\sim\log\log(1/r)
\end{align*}
as $r\downarrow0$.  Hence,} $I_2<\infty$ if and only if 
\[
\int_{|x|\leqslant1/(2e)} |x|\log\log(1/|x|)\,\nu(dx)<\infty.
\]
The estimate of $\int_{|x|\leqslant1} |x|\widetilde\nu(dx)$ from below is 
similar.
\end{ex}

\begin{ex}\label{e6.3}
Let $g_{\alpha}(u)=\int_u^{\infty} v^{-{\alpha}-1} e^{-v}dv$ for
$u\in(0,\infty)$ with $\alpha\in\mathbb{R}$. Let $b_{\alpha}=g_{\alpha}(0+)$.
Thus $b_{\alpha}=\Gamma(-{\alpha})$ for $\alpha<0$ and $b_{\alpha}=\infty$ for 
$\alpha\geqslant0$. Consider $(0,b_{\alpha})$ as $(a,b)$.
Let $u=f_{\alpha}(s)$, $s\in(0,b_{\alpha})$, be 
the inverse function of $s=g_{\alpha}(u)$, $u\in(0,\infty)$. Then
$f_{\alpha}(s)$ strictly decreases from $\infty$ to $0$ as $s$ goes 
from $0$ to $b_{\alpha}$.  We have
\begin{equation*}
f_{\alpha}(s)\sim \log(1/s)\quad\text{as $s\downarrow0$ for 
$\alpha\in\mathbb{R}$},
\end{equation*}
as Proposition 1.1 of (2006b) says. It follows that, for 
$f=f_{\alpha}$ with $\alpha<0$, \eqref{6.7a}--\eqref{6.9} hold.
For $\alpha\geqslant0$, Proposition 1.1 of (2006b) shows that, as $s\uparrow\infty$,
{\allowdisplaybreaks
\begin{gather*}
f_0(s)\sim c e^{-s}\quad\text{with a constant $c>0$},\\
f_{\alpha}(s)\sim (\alpha s)^{-1/\alpha}\quad\text{for $\alpha>0$},\\
f_1(s)=s^{-1}-s^{-2}\log s+o(s^{-2}\log s).
\end{gather*}
The five domains
in Theorem \ref{c5.1} for} $f=f_{\alpha}$ on $(0,\infty)$ with
$\alpha\geqslant0$ are described in (2006b), Theorems 2.4 and 2.8.
The case $\alpha=1$ is a special case of Example \ref{e5.1}.
We have $f_{-1}(s)=\log(1/s)$ for $s\in(0,b_{-1})=(0,1)$.
This case ($\alpha=-1$) was first introduced 
in Barndorff-Nielsen and Thorbj\o rnsen (2002) with the notation 
$\Upsilon$ for $\Phi_{f_{-1}}$ and explored by Barndorff-Nielsen, Maejima,
and Sato (2006) in connection with the Goldie--Steutel--Bondesson
class $B(\mathbb{R}^d)$ and the Thorin class $T(\mathbb{R}^d)$.
\end{ex}

\begin{ex}\label{e6.4}
Let $g_{\beta,\alpha}(u)=(\Gamma(\alpha-\beta))^{-1}\int_u^1 (1-v)^{\alpha-\beta-1}
 v^{-{\alpha}-1} dv$ for
$u\in(0,1)$ with $-\infty<\beta<\alpha<\infty$. Let $b_{\beta.\alpha}
=g_{\beta,\alpha}(0+)$, which equals $\Gamma(-{\alpha})/\Gamma(-\beta)$ for 
$\alpha<0$ and $\infty$ for 
$\alpha\geqslant0$. Let $u=f_{\beta,\alpha}(s)$, $s\in(0,b_{\beta,\alpha})$, be 
the inverse function of $s=g_{\beta,\alpha}(u)$, $u\in(0,1)$. Then
$f_{\beta,\alpha}(s)$ strictly decreases from $1$ to $0$ as $s$ goes 
from $0$ to $b_{\beta,\alpha}$.  Now Proposition 1.1 of (2006b) says that, 
as $s\uparrow\infty$,
{\allowdisplaybreaks
\begin{gather*}
f_{\beta,0}(s)\sim c_{\beta} e^{-\Gamma(-\beta)s}\quad\text{with a constant 
$c_{\beta}>0$ for $\beta<0$},\\
f_{\beta,\alpha}(s)\sim (\alpha\Gamma(\alpha-\beta) s)^{-1/\alpha}\quad
\text{for $\alpha>0$ and $\beta<\alpha$},\\
f_{\beta,1}(s)=(\Gamma(1-\beta))^{-1}s^{-1}+\beta(\Gamma(1-\beta))^{-2}s^{-2}\log s
+o(s^{-2}\log s)\quad\text{for $\beta<1$}.
\end{gather*}
From these behaviors Theorems 2.4 and 2.8 of (2006b) show that
the five domains in Theorem \ref{c5.1} for} $f=f_{\beta,\alpha}$ on 
$(0,\infty)$ with $\alpha\geqslant0$ do not depend on $\beta$ and
are the same as those for $f=f_{\alpha}$ on $(0,\infty)$.
We have $f_{-1,0}(s)=e^{-s}$ and thus $\Phi_{f_{-1,0}}$ equals $\Phi$
of Example \ref{e5.2}.
The family $\{\Phi_{f_{\beta,\alpha}}\}$ has a close connection with the 
family $\{\Phi_{f_{\alpha}}\}$ in Example \ref{e6.3}. Namely,
Theorem 3.1 of (2006b) proves that
\[
\Phi_{f_{\alpha}}=\Phi_{f_{\beta}}\Phi_{f_{\beta,\alpha}}=\Phi_{f_{\beta,\alpha}}
\Phi_{f_{\beta}}
\quad\text{ for $-\infty<\beta<\alpha<\infty$},
\]
including the equality of the domains of both sides.
A special case of this equality with $\alpha=0$ and $\beta=-1$ is given in
Barndorff-Nielsen, Maejima, and Sato (2006).
\end{ex}

At the end of this section, let us consider the case where 
$\mathfrak D (\Phi_{f,\,\mathrm{es}})$ is very small.

\begin{thm}\label{t6.5}
$\mathfrak D (\Phi_{f,\,\mathrm{es}})$ equals the class
$\{\delta_{\gamma}\colon \gamma\in\mathbb{R}^d\}$ if and only if
\begin{equation}\label{6.16}
\text{$f(s)$ is locally integrable on $(a,b)$
and $\int_a^b (f(s)^2\land1) ds=\infty$.}
\end{equation}
\end{thm}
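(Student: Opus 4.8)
The plan is to read off $\mathfrak D(\Phi_{f,\,\mathrm{es}})$ from the two characterizations already available: Theorem \ref{t4.2}, by which $\Phi_{f,\,\mathrm{es}}(\mu)$ is definable if and only if \eqref{4.1} and \eqref{4.2} hold, and Theorem \ref{t3.1}, by which membership in $\mathbf L_{(a,b)}(X^{(\mu)})$ is governed, when $A=0$, by \eqref{3.2}--\eqref{3.3}. The elementary inequalities I will use are: $\delta^2(u^2\land1)\leqslant(\delta u)^2\land1$ for $u\in\mathbb{R}$ and $0<\delta\leqslant1$; $(ux)^2\land1\leqslant u^2\land1$ whenever $|x|\leqslant1$; and $\mathrm{tr}\,A>0$ for every nonzero symmetric nonnegative-definite matrix $A$ (the last already used in the proof of Theorem \ref{t3.1}(i)).

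Assume first \eqref{6.16}. Since $\delta_{\gamma}$ has triplet $(0,0,\gamma)$, conditions \eqref{4.1} and \eqref{4.2} hold trivially, and \eqref{3.2}--\eqref{3.3} of Theorem \ref{t3.1}(ii) reduce to $\int_p^q|f(s)\gamma|\,ds<\infty$, which holds for all $a<p<q<b$ because $f$ is locally integrable; hence $\{\delta_{\gamma}\colon\gamma\in\mathbb{R}^d\}\subset\mathfrak D(\Phi_{f,\,\mathrm{es}})$. Conversely let $\mu=\mu_{(A,\nu,\gamma)}\in\mathfrak D(\Phi_{f,\,\mathrm{es}})$ and suppose $\mu$ is not of the form $\delta_{\gamma'}$, so $A\neq0$ or $\nu\neq0$. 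If $A\neq0$, then $\mathrm{tr}\,A>0$ and \eqref{4.1} gives $\int_a^b f(s)^2\,ds<\infty$, contradicting $\int_a^b(f(s)^2\land1)\,ds=\infty$. If $\nu\neq0$, pick $\delta\in(0,1]$ with $c_0:=\nu(\{|x|\geqslant\delta\})>0$; since $|f(s)x|^2\land1$ is nondecreasing in $|x|$, the first inequality above yields $\int_{\mathbb{R}^d}(|f(s)x|^2\land1)\,\nu(dx)\geqslant c_0\,\delta^2(f(s)^2\land1)$, so \eqref{4.2} fails. Thus no such $\mu$ exists, and $\mathfrak D(\Phi_{f,\,\mathrm{es}})=\{\delta_{\gamma}\colon\gamma\in\mathbb{R}^d\}$.

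Assume now $\mathfrak D(\Phi_{f,\,\mathrm{es}})=\{\delta_{\gamma}\colon\gamma\in\mathbb{R}^d\}$. Applying the previous paragraph's computation of $\mathbf L_{(a,b)}(X^{(\delta_{\gamma})})$ with $\gamma\neq0$ shows that $\int_p^q|f(s)|\,ds<\infty$ for all $a<p<q<b$, i.e.\ $f$ is locally integrable on $(a,b)$. Suppose, for contradiction, that $\int_a^b(f(s)^2\land1)\,ds<\infty$, and fix a nonzero finite measure $\nu$ supported on $\{x\colon 0<|x|\leqslant1\}$; put $\mu=\mu_{(0,\nu,0)}$. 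By the second inequality above, $\int_a^b ds\int_{\mathbb{R}^d}(|f(s)x|^2\land1)\,\nu(dx)\leqslant\nu(\mathbb{R}^d)\int_a^b(f(s)^2\land1)\,ds<\infty$, so \eqref{4.1} and \eqref{4.2} hold; moreover $f\in\mathbf L_{(a,b)}(X^{(\mu)})$, since \eqref{3.2} follows from the same bound and \eqref{3.3} follows from $\int_p^q|f(s)|\,ds<\infty$ and $\nu(\mathbb{R}^d)<\infty$, estimating $\bigl|(1+|f(s)x|^2)^{-1}-(1+|x|^2)^{-1}\bigr|\leqslant1$ and $|f(s)x|\leqslant|f(s)|$ on the support of $\nu$. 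Hence $\mu\in\mathfrak D(\Phi_{f,\,\mathrm{es}})$ by Theorem \ref{t4.2}; but $\mu$ is not of the form $\delta_{\gamma'}$ because its triplet $(0,\nu,0)$ has $\nu\neq0$, contradicting the hypothesis. Therefore $\int_a^b(f(s)^2\land1)\,ds=\infty$, which together with local integrability of $f$ is exactly \eqref{6.16}.

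The only slightly delicate step is the routine check, via Theorem \ref{t3.1}(ii), that the compound-Poisson distribution $\mu_{(0,\nu,0)}$ constructed in the necessity part lies in $\mathbf L_{(a,b)}(X^{(\mu)})$, and the careful bookkeeping of constants in the two-sided comparison between $\int_{\mathbb{R}^d}(|f(s)x|^2\land1)\,\nu(dx)$ and $f(s)^2\land1$; I do not anticipate any deeper difficulty.
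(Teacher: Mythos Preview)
Your proof is correct and follows essentially the same route as the paper's: for the ``if'' direction you use \eqref{4.1}--\eqref{4.2} together with the lower bound $|f(s)x|^2\land1\geqslant\delta^2(f(s)^2\land1)$ on $\{|x|\geqslant\delta\}$, exactly as the paper does (the paper writes $c$ for your $\delta$); for the ``only if'' direction you deduce local integrability from $\delta_\gamma\in\mathfrak D(\Phi_{f,\mathrm{es}})$ and then construct a compound-Poisson counterexample supported in $\{0<|x|\leqslant1\}$, where the paper simply takes $\nu=\delta_{x_0}$ with $|x_0|=1$. Your explicit verification via Theorem~\ref{t3.1}(ii) that $f\in\mathbf L_{(a,b)}(X^{(\mu)})$ for this $\mu$ is a detail the paper leaves implicit.
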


Notice that $\int_a^b (f(s)^2\land1) ds=\infty$ implies
$\int_a^b f(s)^2ds=\infty$ and $b-a=\infty$, but that the converse is
not true. For example, consider $(a,b)=(0,\infty)$ and
$f(s)=\sum_{n=1}^{\infty} n1_{[n,n+n^{-2})}(s)$.

\begin{proof}[Proof of Theorem \ref{t6.5}]  We use Theorems \ref{t3.1}
and \ref{t4.2}.

The \lq\lq only if" part: The function
 $f(s)$ is locally integrable on $(a,b)$, since 
$f\in\mathbf L_{(a,b)}(X^{(\mu)})$ for any $\mu=\delta_{\gamma}$ with 
$\gamma\in\mathbb{R}^d$.  Further $\int_a^b (f(s)^2\land1) ds=\infty$,
because otherwise $\mu=\mu_{(0,\nu,0)}$ with $\nu=\delta_{x_0}$, 
$|x_0|=1$, belongs to $\mathfrak D (\Phi_{f,\,\mathrm{es}})$.
Hence \eqref{6.16} holds.

The \lq\lq if" part: Let $\mu=\mu_{(A,\nu,\gamma)}\in\mathfrak D 
(\Phi_{f,\,\mathrm{es}})$.  Then $A=0$, since $\int_a^b f(s)^2ds=\infty$.
If $\nu\neq0$, then we have, using $0<c\leqslant1$ such that $\nu(\{|x|>c\})
>0$, 
{\allowdisplaybreaks
\begin{align*}
\infty&>\int_a^b ds\int_{\mathbb{R}^d}(|f(s)x|^2\land1)\,\nu(dx)
\geqslant\int_a^b ds\int_{|x|>c}((f(s)^2c^2)\land1)\,\nu(dx)\\
&\geqslant c^2\int_{|x|>c}\nu(dx)\int_a^b (f(s)^2\land1) ds,
\end{align*}
which contradicts \eqref{6.16}. Hence  $\mu=\delta_{\gamma}$. Conversely, any} 
$\mu=\delta_{\gamma}$ is in $\mathfrak D (\Phi_{f,\,\mathrm{es}})$.
\end{proof}

\begin{rem}\label{r6.2}
(i) $\mathfrak D (\Phi_{f,\,\mathrm{c}})=\{\delta_{\gamma}\colon 
\gamma\in\mathbb{R}^d\}$ if and only
if \eqref{6.16} holds.  

(ii) $\mathfrak D^0 (\Phi_f)=\{\delta_{\gamma}\colon \gamma\in\mathbb{R}^d\}$ 
if and only if 
\eqref{6.16} holds and $\int_a^b |f(s)|ds<\infty$. 

(iii) $\mathfrak D (\Phi_f)=\{\delta_{\gamma}\colon \gamma\in\mathbb{R}^d\}$ 
if and only if
\eqref{6.16} holds and $\int_{a+}^{b-} f(s)ds$ exists in $\mathbb{R}$.

(iv) $\mathfrak D^0(\Phi_f)=\{\delta_0\}$ if and only if 
$\int_a^b (f(s)^2\land1) ds=\infty$ and $\int_a^b |f(s)|ds=\infty$.

(v)  Assume that $f(s)$ is locally integrable on $(a,b)$.  Then,
$\mathfrak D(\Phi_f)=\{\delta_0\}$ if and only if 
$\int_a^b (f(s)^2\land1) ds=\infty$ and $\int_p^q f(s)ds$ is not
convergent in $\mathbb{R}$ as $p\downarrow a$ and $q\uparrow b$.

These facts are proved similarly to Theorem \ref{t6.5}. Use $\nu=
\delta_{x_0}+\delta_{-x_0}$ with $|x_0|=1$ instead of $\nu=\delta_{x_0}$.
\end{rem}

\begin{ex}\label{e6.6}
Let $(a,b)=(a,\infty)$ with $a$ finite.  Suppose that 
$\int_a^t f(s)^2ds<\infty$ for all $t\in(a,\infty)$ and that
$f(s)\asymp s^{-1/\alpha}$ as $s\to\infty$.  If $\alpha\geqslant2$, then
\[
\mathfrak D^0(\Phi_f)= \mathfrak D(\Phi_f)=\{\delta_0\}\subsetneqq 
\mathfrak D(\Phi_{f,\,\mathrm{c}})= \mathfrak D(\Phi_{f,\,\mathrm{es}})
=\{\delta_{\gamma}\colon \gamma\in\mathbb{R}^d\}.
\]
\end{ex}

\section{The $\tau$-measure  of function $f$}

Let us define the $\tau$-measure of $f$.  
Functions $f(s)$ and $f_j(s)$ in the following are $\mathbb{R}$-valued. 
We fix the dimension $d$ in $ID(\mathbb{R}^d)$.

\begin{defn}\label{dt.1}
Let $f(s)$ be a measurable function $f(s)$ on $(a,b)$ with $-\infty\leqslant a
<b\leqslant\infty$. Define a measure $\tau$ on $\mathbb{R}$ as
\begin{equation}\label{8.7}
\tau(B)=\int_a^b 1_{\{f(s)\in B\}}ds,\qquad B\in\mathcal B_{\mathbb{R}}.
\end{equation}
We call $\tau$ the {\it $\tau$-measure} of function $f$.
\end{defn}

It follows from \eqref{8.7} that
\begin{equation}\label{8.7a}
\int_{\mathbb{R}} h(u)\tau(du)=\int_a^b h(f(s))ds
\end{equation}
for all nonnegative measurable functions $h$ on $\mathbb{R}$.

\medskip
We discuss two questions. The first is whether the $\tau$-measure $\tau$ 
of $f$ determines the domain $\mathfrak D(\Phi_f)$
and its variants. The second is under what conditions a given measure
$\tau$ is the $\tau$-measure of some $f$.

\begin{thm}\label{tt.1}
Suppose that $f_1(s)$ and $f_2(s)$ are measurable functions on $(a_1,b_1)$
and $(a_2,b_2)$, respectively, with identical $\tau$-measure.  Then,
{\allowdisplaybreaks
\begin{gather}
\mathfrak D^0(\Phi_{f_1})=\mathfrak D^0(\Phi_{f_2}),\label{t.1}\\
\Phi_{f_1}(\mu)=\Phi_{f_2}(\mu)\quad\text{for all }\mu\in
\mathfrak D^0(\Phi_{f_1})=\mathfrak D^0(\Phi_{f_2}),\label{t.2}\\
\mathfrak D(\Phi_{f_1,\mathrm{es}})=\mathfrak D(\Phi_{f_2,\mathrm{es}}),\label{t.3}\\
\Phi_{f_1,\mathrm{es}}(\mu)=\Phi_{f_2,\mathrm{es}}(\mu)\quad\text{for all }\mu\in
\mathfrak D(\Phi_{f_1,\mathrm{es}})=\mathfrak D(\Phi_{f_2,\mathrm{es}}).\label{t.4}
\end{gather}}
\end{thm}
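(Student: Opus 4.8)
The plan is to express each condition defining one of these domains, and each ingredient of the L\'evy--Khintchine triplet of the corresponding image, as an integral of the form $\int_a^b H(f(s))\,ds$ with $H$ a measurable function on $\mathbb{R}$ that depends only on the triplet $(A,\nu,\gamma)$ of $\mu$ (and, for the images, on a Borel set $B$), and then to invoke \eqref{8.7a}, which turns such an integral into $\int_{\mathbb{R}}H(u)\,\tau(du)$; the right-hand side is the same whether one starts from $f_1$ or from $f_2$. The structural fact that makes this possible is that the function $\varphi(s,u)$ of \eqref{3.4}, as well as every integrand appearing in Theorems \ref{t5.1}, \ref{t4.5} and \ref{t4.6}, is a function of $u$ alone once $(A,\nu,\gamma)$ is fixed.

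I would first dispose of \eqref{t.1} and \eqref{t.2}. Writing $\varphi_\mu(u)$ for the $s$-free function $\varphi(s,u)$ of \eqref{3.4}, Theorem \ref{t5.1} says that $\mu\in\mathfrak D^0(\Phi_f)$ if and only if $\int_a^b\varphi_\mu(f(s))\,ds<\infty$, since \eqref{4.1}, \eqref{4.2} and \eqref{5.2} are exactly the finiteness of $\int_a^b$ of the three summands of $\varphi_\mu(f(s))$. By \eqref{8.7a} this holds for $f_1$ iff it holds for $f_2$, which is \eqref{t.1}. For \eqref{t.2}, on $\mathfrak D^0(\Phi_f)$ the limit in \eqref{4.7} is an absolutely convergent integral over $(a,b)$ by \eqref{5.2}, so Theorem \ref{t4.5} gives the triplet of $\Phi_f(\mu)$ as $(\widetilde A,\widetilde\nu,\widetilde\gamma)$ with $\widetilde A=\bigl(\int_a^b f(s)^2\,ds\bigr)A$, with $\widetilde\nu(B)=\int_a^b\bigl(\int_{\mathbb{R}^d}1_B(f(s)x)\,\nu(dx)\bigr)ds$ for Borel $B$ with $0\notin B$, and with $\widetilde\gamma=\int_a^b\bigl(f(s)\gamma+\int_{\mathbb{R}^d}f(s)x\bigl((1+|f(s)x|^2)^{-1}-(1+|x|^2)^{-1}\bigr)\nu(dx)\bigr)ds$; each of these is $\int_a^b H(f(s))\,ds$ for an $H$ depending only on $\mu$ (and on $B$), so by \eqref{8.7a} the whole triplet, hence $\Phi_f(\mu)$, depends on $f$ only through $\tau$. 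This is \eqref{t.2}.

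For \eqref{t.3} and \eqref{t.4} I would argue the same way using Theorems \ref{t4.2} and \ref{t4.6}: $\Phi_{f,\mathrm{es}}(\mu)$ is definable iff \eqref{4.1} and \eqref{4.2} hold, i.e.\ iff $\int_a^b$ of the first two summands of $\varphi_\mu(f(s))$ are finite, which by \eqref{8.7a} is a condition on $\tau$ alone; and on the common domain Theorem \ref{t4.6} describes $\Phi_{f,\mathrm{es}}(\mu)$ as the family of all $\mu_{(\widetilde A,\widetilde\nu,\widetilde\gamma)}$ with $\widetilde A$ and $\widetilde\nu$ exactly as in the previous paragraph (hence $\tau$-determined) and $\widetilde\gamma\in\mathbb{R}^d$ free, so the families attached to $f_1$ and to $f_2$ coincide. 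Thus \eqref{t.3} and \eqref{t.4} follow.

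The step I expect to be the real obstacle is buried in the last paragraph: the definition of $\mathfrak D(\Phi_{f,\mathrm{es}})$ also requires $f\in\mathbf L_{(a,b)}(X^{(\mu)})$, which a priori is a \emph{local} requirement (condition \eqref{3.1}, respectively \eqref{3.2}--\eqref{3.3}, on every compact subinterval of $(a,b)$), and such local conditions are not obviously read off from $\tau$. When $A\ne0$ this is harmless: \eqref{4.1} forces $\int_a^b f(s)^2\,ds<\infty$, hence \eqref{3.1}, hence $f\in\mathbf L_{(a,b)}(X^{(\mu)})$ by Theorem \ref{t3.1}(i). When $A=0$ one rewrites \eqref{3.3} as \eqref{3.7} via Remark \ref{r3.1} and uses that the vector multiplying $|f(s)|$ there is bounded uniformly in $s$ by $|\gamma|+\frac12\int_{\mathbb{R}^d}|x|(1+|x|^2)^{-1}\nu(dx)<\infty$, so that $\mathbf L_{(a,b)}(X^{(\mu)})$-membership, given \eqref{4.2}, comes down to local integrability of $f$; the careful point — where I would spend most of the effort — is to confirm that for the functions at hand this is already subsumed by \eqref{4.1} and \eqref{4.2}, so that the description of $\mathfrak D(\Phi_{f,\mathrm{es}})$ by the $\tau$-determined conditions \eqref{4.1}--\eqref{4.2} is justified.
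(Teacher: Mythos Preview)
Your overall strategy matches the paper's: rewrite each defining condition and each ingredient of the image as $\int_a^b H(f(s))\,ds$ for a function $H$ depending only on $(A,\nu,\gamma)$, then invoke \eqref{8.7a}. For \eqref{t.1} and \eqref{t.2} the paper is a bit more direct than your route through Theorems~\ref{t5.1} and~\ref{t4.5}: it appeals to Definition~\ref{d5.1} via
\[
\int_{a_j}^{b_j}|C_\mu(f_j(s)z)|\,ds=\int_{\mathbb R}|C_\mu(uz)|\,\tau(du),
\]
and then, on the common domain, identifies the two images by writing the cumulant itself as $C_{\Phi_{f_j}(\mu)}(z)=\int_{a_j}^{b_j}C_\mu(f_j(s)z)\,ds=\int_{\mathbb R}C_\mu(uz)\,\tau(du)$, bypassing the triplet entirely. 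For \eqref{t.3} and \eqref{t.4} your argument and the paper's coincide (the paper just cites Theorems~\ref{t4.2} and~\ref{t4.6} after writing the same $\tau$-identities you do).

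You are right to flag the hidden prerequisite $f\in\mathbf L_{(a,b)}(X^{(\mu)})$ built into the definition of $\mathfrak D(\Phi_{f,\mathrm{es}})$; the paper's proof does not address it either. However, your proposed resolution---reduce via Remark~\ref{r3.1} to local integrability of $f$ and then extract that from \eqref{4.1}--\eqref{4.2}---cannot succeed at the stated level of generality. Take $A=0$ and $\nu=0$: then \eqref{4.1}--\eqref{4.2} are vacuous, while for $\mu=\delta_\gamma$ with $\gamma\neq0$ membership in $\mathbf L_{(a,b)}(X^{(\mu)})$ is exactly local integrability of $f$ on $(a,b)$ by Theorem~\ref{t3.1}(ii). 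And local integrability is \emph{not} $\tau$-determined: put $f_1(s)=s^{-1}1_{(0,1]}(s)$ and $f_2(s)=(s-1)^{-1}1_{(1,2)}(s)$ on $(0,2)$; they share the same $\tau$-measure, $f_1$ is locally integrable on $(0,2)$, but $f_2$ is not. So the point you isolate is not just bookkeeping---it is a genuine obstruction that neither your argument nor the paper's handles.
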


\begin{proof}
 It follows from \eqref{8.7a} that, for any $\mu\in ID(\mathbb{R}^d)$,
\[
\int_{a_j}^{b_j} |C_{\mu}(f_j(s)z)|ds=\int_{\mathbb{R}} |C_{\mu}(uz)|\,\tau(du)\quad
\text{for }j=1,2,\;z\in\mathbb{R}^d.
\]
Hence we obtain \eqref{t.1} from Definition \ref{d5.1}.

Proof of \eqref{t.2}.  Let $\mu\in
\mathfrak D^0(\Phi_{f_1})=\mathfrak D^0(\Phi_{f_2})$. We have $\int_{\mathbb{R}} 
|C_{\mu}(uz)|\,\tau(du)<\infty$. Hence $C_{\mu}(f_j(s)z)$ is integrable
on $(a_j,b_j)$ and it follows from \eqref{8.7a} that
\[
\int_{a_j}^{b_j} C_{\mu}(f_j(s)z)ds=\int_{\mathbb{R}} C_{\mu}(uz)\,\tau(du).
\]
Denote $\widetilde\mu_j=\Phi_{f_j}(\mu)$. Then
\[
C_{\widetilde\mu_j}(z)=\lim_{p\downarrow a_j,q\uparrow b_j}\int_p^q C_{\mu}(f_j(s)z)ds
=\int_{a_j}^{b_j} C_{\mu}(f_j(s)z)ds.
\]
Thus $\widetilde\mu_1=\widetilde\mu_2$.

Proof of \eqref{t.3}. Notice that
\begin{gather*}
\int_{a_j}^{b_j} f_j(s)^2\,\mathrm{tr}\, A\,ds=\int_{\mathbb{R}}
u^2\,\mathrm{tr}\, A\,\tau(du),\\
\int_{a_j}^{b_j} ds\int_{\mathbb{R}^d} (|f_j(s)x|^2\land1)\,\nu(dx)
=\int_{\mathbb{R}}\tau(du)\int_{\mathbb{R}^d} (|ux|^2\land1)\,\nu(dx).
\end{gather*}
Use Theorem \ref{t4.2}.

Proof of \eqref{t.4}. Notice that
\begin{gather*}
\int_{a_j}^{b_j} f_j(s)^2\,A\,ds=\int_{\mathbb{R}}u^2\,A\,\tau(du),\\
\int_{a_j}^{b_j} ds\int_{\mathbb{R}^d} 1_B(f_j(s)x)\,\nu(dx)
=\int_{\mathbb{R}}\tau(du)\int_{\mathbb{R}^d} 1_B(ux)\,\nu(dx)
\end{gather*}
for $B\in\mathcal B(\mathbb{R}^d)$ with $0\not\in B$, and use Theorem \ref{t4.6}.
\end{proof}

\begin{prop}\label{pt.1}
{\rm(i)} There are measurable functions $f_1(s)$ and $f_2(s)$ on $(0,\infty)$
with identical $\tau$-measure, 
such that $\mathfrak D(\Phi_{f_1})\neq  \mathfrak D(\Phi_{f_2})$.

{\rm(ii)} There are measurable functions $f_1(s)$ and $f_2(s)$ on $(0,\infty)$
with identical $\tau$-measure, 
such that $\Phi_{f_1}(\mu)\neq \Phi_{f_2}(\mu)$ for some $\mu\in
\mathfrak D(\Phi_{f_1})\cap \mathfrak D(\Phi_{f_2})$.
\end{prop}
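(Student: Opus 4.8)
The plan is to exploit that the $\tau$-measure records only how much Lebesgue time $f$ spends at each value and is blind to the order in which values are attained, whereas both $\mathfrak D(\Phi_f)$ and $\Phi_f(\mu)$ depend on that order through the convergence, and the limit, of $\int_p^q$ as $p\downarrow a$ and $q\uparrow b$. The convenient test distribution is $\mu=\delta_\gamma$ with $\gamma\ne 0$, whose triplet is $(0,0,\gamma)$: by Theorem \ref{t3.1}(ii), $f\in\mathbf L_{(0,\infty)}(X^{(\delta_\gamma)})$ for every locally integrable $f$, and by Theorems \ref{t4.1} and \ref{t4.5}, $\Phi_f(\delta_\gamma)$ is definable if and only if $\gamma_p^q$ converges as $p\downarrow 0$, $q\uparrow\infty$, where $\gamma_p^q=\gamma\int_p^q f(s)\,ds$ by Proposition \ref{p3.2} since $A=0$ and $\nu=0$; in that case $\Phi_f(\delta_\gamma)=\delta_{\gamma\int_{0+}^{\infty-}f(s)\,ds}$. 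So both statements reduce to a Riemann-type rearrangement of a conditionally convergent integral.

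Concretely, I would partition $(0,\infty)$ into the unit intervals $J_n=(n-1,n]$, $n\ge 1$, and define $f_1$ and $f_2$ to be constant on each $J_n$, assigning to the $J_n$, in some order, exactly the values in the pool $\{1/k:k\ge 1\}\cup\{-1/k:k\ge 1\}$, one value per interval (there are countably many $J_n$ and countably many pool elements). Any such function is bounded, hence locally integrable, and, since the assignment is a bijection onto the pool, has the common $\tau$-measure $\tau=\sum_{k\ge 1}(\delta_{1/k}+\delta_{-1/k})$. For $f_1$ take the alternating order, value $1/k$ on $J_{2k-1}$ and $-1/k$ on $J_{2k}$; then $\int_0^t f_1(s)\,ds\to 0$ as $t\to\infty$, so $\Phi_{f_1}(\delta_\gamma)=\delta_0$ is definable. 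For part (i) let $f_2$ be the rearrangement produced by the standard recipe making the running integral diverge: for $m=1,2,\ldots$ consume the positive values in their natural order until the running integral first exceeds $m$, then insert one negative value, and repeat; since the inserted negative terms tend to $0$ the troughs also tend to $\infty$, so $\int_0^t f_2(s)\,ds\to\infty$ and $\Phi_{f_2}(\delta_\gamma)$ is not definable, while $f_2\in\mathbf L_{(0,\infty)}(X^{(\delta_\gamma)})$. Hence $\delta_\gamma\in\mathfrak D(\Phi_{f_1})\setminus\mathfrak D(\Phi_{f_2})$, which proves (i). For part (ii) let $f_2$ instead be the classical Riemann rearrangement converging to $1$ (add positive values until the running integral first exceeds $1$, then negative values until it is again below $1$, and repeat); the overshoots are bounded by the last term used and tend to $0$, so $\int_0^t f_2(s)\,ds\to 1$. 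Then $\delta_\gamma$ lies in $\mathfrak D(\Phi_{f_1})\cap\mathfrak D(\Phi_{f_2})$, while $\Phi_{f_1}(\delta_\gamma)=\delta_0\ne\delta_\gamma=\Phi_{f_2}(\delta_\gamma)$ because $\gamma\ne 0$, which proves (ii).

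The routine verifications are that the behaviour near $s=0$ is harmless (each $f_i$ is bounded on $(0,1]$, so the two-sided improper integral reduces to the one-sided limit $q\uparrow\infty$) and that each rearrangement is genuinely measure-preserving, i.e. every value $\pm 1/k$ is used exactly once so that $\tau_{f_1}=\tau_{f_2}=\tau$. The only point needing a little care — and the nearest thing to an obstacle — is checking that the two rearrangements exhaust both the positive and the negative halves of the pool and that the running integrals behave as claimed (divergence to $\infty$ in (i), convergence to $1$ in (ii)); this is precisely the mechanism of the Riemann rearrangement theorem, applied with ``terms'' the contributions $\int_{J_n}f_i$, each of absolute value some $1/k$, so it carries over directly.
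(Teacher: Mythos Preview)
Your proof is correct and follows essentially the same strategy as the paper: exploit a Riemann-type rearrangement of a conditionally convergent improper integral, the pieces being carried by disjoint intervals of fixed length so that any reordering preserves the $\tau$-measure, while the test distribution has $\gamma\neq 0$ so that definability and the value of $\Phi_f(\mu)$ reduce to the behaviour of $\int_p^q f(s)\,ds$. The paper realises this with $f_1(s)=s^{-1}\sin s$, block intervals $[n\pi,(n+1)\pi]$, and $\mu=\mu_{(A,\nu,\gamma)}$ with $\nu$ symmetric and $\gamma\neq 0$; your piecewise-constant version on unit intervals with $\mu=\delta_\gamma$ is a slightly more elementary instantiation of the same idea.
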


\begin{proof}
We prove (i) and (ii) together. In the following $f_3$ serves as $f_2$ in (ii).
Let $f_1(s)=s^{-1}\sin s$ for $s\in(0,\infty)$. Let
$c_n=\int_{n\pi}^{(n+1)\pi}s^{-1}\sin s\,ds$. Then
\[
\int_0^{\infty-} f_1(s)ds=\sum_{n=0}^{\infty} c_n=\frac{\pi}{2}.
\]
Let $\sum_{m=0}^{\infty} d_m$ be a rearrangement of $\sum_{n=0}^{\infty}c_n$
(that is for each $m$ there is a unique $n$ such that $d_m=c_n$, and
for each $n$ there is a unique $m$ such that $c_n=d_m$), such that
$\sum_{m=0}^{\infty} d_m=\infty$. 
Let $\sum_{m=0}^{\infty} e_m$ be a rearrangement of $\sum_{n=0}^{\infty}c_n$
such that
$\sum_{m=0}^{\infty} e_m=0$. Those rearrangements exist, since 
$\sum_{n=0}^{\infty}c_n$ is convergent but not absolutely. Define
$f_2(m\pi+s)=f_1(n\pi+s)$ for $0<s\leqslant\pi$ if $d_m=c_n$. Similarly define
$f_3(m\pi+s)=f_1(n\pi+s)$ for $0<s\leqslant\pi$ if $e_m=c_n$. 
Then $f_1$, $f_2$, and $f_3$ have an identical $\tau$-measure $\tau$.  
We have
\[
\int_0^{\infty-} f_2(s)ds=\sum_{m=0}^{\infty} d_m=\infty \quad
\text{and}\quad \int_0^{\infty-} f_3(s)ds=\sum_{m=0}^{\infty} e_m=0.
\]
Notice that $\int_0^{\infty} f_j(s)^2 ds=\int_{\mathbb{R}} u^2\tau(du)<\infty$
for $j=1,2,3$. Now consider 
$\mu=\mu_{(A,\nu,\gamma)}$ with $\nu$ symmetric and $\gamma\neq0$.
Use Theorems \ref{t4.1} and \ref{t4.5}. Then we see that $\mu$ 
belongs to $\mathfrak D(\Phi_{f_1})$ and $\mathfrak D(\Phi_{f_3})$, but not to 
$\mathfrak D(\Phi_{f_2})$; $\Phi_{f_1}(\mu)$ and  $\Phi_{f_3}(\mu)$ have a common
Gaussian part and a common L\'evy measure, but the location parameter of
$\Phi_{f_1}(\mu)$ equals $(\pi/2)\gamma$ and that of $\Phi_{f_3}(\mu)$
equals $0$.
\end{proof}

\begin{prop}\label{pt.2}
Suppose that $f_1(s)$ and $f_2(s)$ are measurable functions on $(a_1,b_1)$
and $(a_2,b_2)$, respectively, with identical $\tau$-measure $\tau$ and that
$\tau(\mathbb{R}\setminus\{0\})<\infty$ and $\int_{\mathbb{R}}u^2\tau(du)<\infty$. 
Then,
\begin{equation}\label{t.5}
\mathfrak D^0(\Phi_{f_1})=\mathfrak D^0(\Phi_{f_2})=ID(\mathbb{R}^d)
\end{equation}
and
\begin{equation}\label{t.6}
\Phi_{f_1}(\mu)=\Phi_{f_2}(\mu)\quad\text{for all }\mu\in ID(\mathbb{R}^d).
\end{equation}
\end{prop}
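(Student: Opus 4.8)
The plan is to recognize that the two hypotheses on $\tau$ are nothing but a restatement, via \eqref{8.7a}, of condition (e) of Theorem \ref{t6.1} for each of $f_1$ and $f_2$, and then to quote Theorems \ref{t6.1} and \ref{tt.1}. Concretely, I would first apply \eqref{8.7a} with $h=1_{\mathbb{R}\setminus\{0\}}$ to get $\int_{a_j}^{b_j}1_{\{f_j(s)\neq0\}}\,ds=\tau(\mathbb{R}\setminus\{0\})<\infty$, and with $h(u)=u^2$ to get $\int_{a_j}^{b_j}f_j(s)^2\,ds=\int_{\mathbb{R}}u^2\,\tau(du)<\infty$, for $j=1,2$. (Note that finiteness of $\int u^2\tau(du)$ subsumes $\int(u^2\wedge1)\tau(du)<\infty$, so no integrability issue arises.) These are exactly statements (e) of Theorem \ref{t6.1} for $f_1$ and for $f_2$.

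The second step is to invoke the equivalence (a) $\Leftrightarrow$ (e) of Theorem \ref{t6.1}: condition (e) for $f_j$ gives $ID(\mathbb{R}^d)=\mathfrak D^0(\Phi_{f_j})$. Using this for both $j=1$ and $j=2$ yields \eqref{t.5}. The third step handles \eqref{t.6}: since $f_1$ and $f_2$ have the same $\tau$-measure, Theorem \ref{tt.1}, in particular formula \eqref{t.2}, gives $\Phi_{f_1}(\mu)=\Phi_{f_2}(\mu)$ for every $\mu$ lying in the common domain $\mathfrak D^0(\Phi_{f_1})=\mathfrak D^0(\Phi_{f_2})$; by \eqref{t.5} this common domain is all of $ID(\mathbb{R}^d)$, so \eqref{t.6} follows.

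There is essentially no obstacle: the argument is a two-line deduction from earlier results, and the only point requiring a moment's care is the bookkeeping in the first step, namely that $\tau(\mathbb{R}\setminus\{0\})$ computes $\int1_{\{f_j\neq0\}}\,ds$ (and not something involving $\tau(\{0\})$) and that $\int u^2\,\tau(du)$ computes $\int f_j(s)^2\,ds$; both are immediate from \eqref{8.7a}. If one preferred not to cite \eqref{t.2}, \eqref{t.6} could alternatively be obtained directly from the cumulant identity $\int_{a_j}^{b_j}C_{\mu}(f_j(s)z)\,ds=\int_{\mathbb{R}}C_{\mu}(uz)\,\tau(du)$ together with Theorem \ref{t4.5}, but quoting Theorem \ref{tt.1} is the cleanest route.
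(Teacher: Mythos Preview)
Your proposal is correct and follows essentially the same approach as the paper: it translates the hypotheses on $\tau$ into condition (e) of Theorem \ref{t6.1} via \eqref{8.7a}, applies Theorem \ref{t6.1} to obtain \eqref{t.5}, and then invokes \eqref{t.2} of Theorem \ref{tt.1} for \eqref{t.6}. The paper's proof is identical in structure, just more tersely stated.
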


\begin{proof}
Since
\[
\int_{a_j}^{b_j} 1_{\{f_j(s)\neq0\}}ds=\tau(\mathbb{R}\setminus\{0\})\quad
\text{and}\quad \int_{a_j}^{b_j} f_j(s)^2 ds=\int_{\mathbb{R}}u^2\tau(du),
\]
we can apply Theorem \ref{t6.1} to show \eqref{t.5}.
We obtain \eqref{t.6} from \eqref{t.2} of Theorem \ref{tt.1}.
\end{proof}

\begin{ex}\label{et.1}
Let $\tau$ be a measure on $\mathbb{R}$ with $\tau(\mathbb{R})=b<\infty$ and let 
$g(u)=\tau((u,\infty))$. Assume that $g(u)$ is continuous and strictly decreasing
from $b$ to $0$ as $u$ moves from $-\infty$ to $\infty$.   Let $u=f(s)$, 
$s\in(0,b)$, be the
inverse function of $s=g(u)$.  Then $f(s)$ is
continuous and strictly decreasing from $\infty$ to $-\infty$ as $s$ moves
from $0$ to $b$. The measure $\tau$ is recovered as the $\tau$-measure of
$f$, since
\[
\tau((u_1,u_2])=g(u_1)-g(u_2)=\int_0^b 1_{[g(u_2),g(u_1))}(s)ds
=\int_0^b 1_{(u_1,u_2]}(f(s))ds
\]
for $u_1<u_2$.

For example, let $\tau$ be standard Gaussian distribution on $\mathbb{R}$. Then
$u=f(s)$, $s\in(0,1)$, is the inverse function of 
$s=g(u)=(2\pi)^{-1/2}\int_u^{\infty} e^{-v^2/2}dv$, $u\in\mathbb{R}$, and 
Proposition \ref{pt.2} applies. These $\tau$ and $f$ are considered by
Aoyama and Maejima (2007).  They show that the range 
$\{ \Phi_f(\mu)\colon \mu\in ID(\mathbb{R}^d)\}$ 
is the class of multivariate type $G$ distributions introduced by
Maejima and Rosi\'nski (2002).
\end{ex}

\begin{ex}\label{et.2}
Let $\tau$ be a measure on $(0,\infty)$ with total mass $b\leqslant\infty$ such that 
$g(u)=\tau((u,\infty))$, $u>0$, is finite, continuous, and strictly decreasing
from $b$ to $0$ as $u$ moves from $0$ to $\infty$.   Let $u=f(s)$, 
$s\in(0,b)$, be the
inverse function of $s=g(u)$.  Then $f(s)$ is
continuous and strictly decreasing from $\infty$ to $0$ as $s$ moves
from $0$ to $b$. The measure $\tau$ is the $\tau$-measure of
$f$. The pairs $g_{\alpha}(u)$ and $f_{\alpha}(s)$ with $\alpha\in\mathbb{R}$ in
Example \ref{e6.3} are special cases; in particular, if $\alpha<0$, then
the $\tau$-measure is $\Gamma$-distribution (with parameters $-\alpha$, $1$) 
multiplied by $\Gamma(-\alpha)$ and Proposition \ref{pt.2} applies. 
For another example, if $\tau$ is Mittag-Leffler distribution with
parameter $\alpha\in(0,1)$ (see Example 24.12 of Sato (1999)), then it satisfies
the condition above and Proposition \ref{pt.2} again applies ($\tau$ has
finite moments of all orders as is shown in p.\,74 of 
Barndorff-Nielsen and Thorbj\o rnsen (2006b)),
the corresponding $\Phi_f$ 
is studied by Barndorff-Nielsen and Thorbj\o rnsen (2006a) 
with notation $\Upsilon^{\alpha}$.
\end{ex}

We introduce some conditions on $f$ and $\tau$.

\begin{defn}\label{dt.2}
We say that a function $f(s)$ on $(a,b)$ 
satisfies {\it Condition} $(A)$ if $f(s)$ is
decreasing, left-continuous, not constant, and
\begin{equation}\label{t.7}
\inf_{t\in(a,b)} f(t)<f(s)<\sup_{t\in(a,b)} f(t)\quad\text{for all }
s\in(a,b).
\end{equation}
\end{defn}

The left-continuity in Condition $(A)$ is inessential in the 
following sense. If $f(s)$ satisfies Condition $(A)$ except the
left-continuity requirement, then the left-continuous modification
$f^-(s)$ defined by $f^-(s)=f(s-)$ satisfies Condition $(A)$ and,
for all $\mu\in ID(\mathbb{R}^d)$,
\[
\int_p^q f(s)X^{(\mu)} (ds)=\int_p^q f^-(s)X^{(\mu)} (ds)
\quad\text{whenever }a<p<q<b,
\]
because $f(s)=f^-(s)$ except for at most countably many $s$.

\begin{defn}\label{d8.3}
We say that a measure $\tau$ on $\mathbb{R}$ satisfies {\it Condition} $(B)$ 
if $\tau$ is not identically zero and if, for $a'=\inf\mathrm{Supp}(\tau)$ 
and $b'=\sup\mathrm{Supp}(\tau)$, $\tau$ has the following properties:
{\allowdisplaybreaks
\begin{gather}
a'<b',\label{t.8}\\
\tau((p,q))<\infty\quad\text{whenever }a'<p<q<b',\label{t.9}\\
\text{either $a'=-\infty$, or $a'>-\infty$ and $\tau(\{a'\})=0$},
\label{t.10}\\
\text{either $b'=\infty$, or $b'<\infty$ and $\tau(\{b'\})=0$},
\label{t.11}
\end{gather}}
\end{defn}

\begin{thm}\label{tt.2}
{\rm(i)} Let $-\infty\leqslant a<b\leqslant\infty$. If $f(s)$ is a 
function on $(a,b)$ satisfying Condition $(A)$, then the 
$\tau$-measure $\tau$ of $f$ satisfies Condition $(B)$.

{\rm(ii)} If $\tau$ is a measure on $\mathbb{R}$ satisfying Condition $(B)$,
then there are an interval $(a,b)$ with $-\infty\leqslant a<b\leqslant\infty$
and a function $f(s)$ on $(a,b)$ satisfying Condition $(A)$ 
such that $\tau$ is the $\tau$-measure of $f$.
\end{thm}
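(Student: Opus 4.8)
The plan is to treat (i) and (ii) separately; (i) is essentially bookkeeping, while the substance of the theorem is the construction of $f$ from $\tau$ in (ii).

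\emph{Proof of (i).} Write $m=\inf_{t\in(a,b)}f(t)$ and $M=\sup_{t\in(a,b)}f(t)$, allowing the values $\pm\infty$. First I would show $a'=m$ and $b'=M$. The inclusion $\mathrm{Supp}(\tau)\subseteq[m,M]$ is immediate since $\tau$ is the image of Lebesgue measure under $f$; for the reverse one uses that $f$ decreases: given $\varepsilon>0$ there is $s_0\in(a,b)$ with $f(s_0)<m+\varepsilon$, and then $f(s)\in(m,m+\varepsilon)$ for every $s\geq s_0$ (using $f(s)>m$ from Condition $(A)$), so $\tau((m,m+\varepsilon))>0$; the argument near $M$ is symmetric. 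Now \eqref{t.8} holds because $f$ is not constant, so $m<M$; \eqref{t.10} and \eqref{t.11} hold because Condition $(A)$ gives $f(s)>a'$ and $f(s)<b'$ for all $s$, whence $\{s:f(s)=a'\}$ and $\{s:f(s)=b'\}$ are empty; and \eqref{t.9} holds because for $a'<p<q<b'$ one may pick $s_1,s_2\in(a,b)$ with $f(s_1)>q$ and $f(s_2)<p$, forcing $\{s:p<f(s)<q\}\subseteq[s_1,s_2]$, a bounded interval.

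\emph{Construction in (ii).} Put $a'=\inf\mathrm{Supp}(\tau)$, $b'=\sup\mathrm{Supp}(\tau)$, fix any $c_0\in(a',b')$, and define a decreasing function $\rho$ on $(a',b')$ by $\rho(u)=\tau((u,c_0])$ for $u\leq c_0$ and $\rho(u)=-\tau((c_0,u])$ for $u>c_0$; note $\rho$ is right-continuous with left jumps exactly at the atoms of $\tau$. Set $a=-\tau((c_0,b'))$ and $b=\tau((a',c_0])$, possibly $\mp\infty$; Condition $(B)$ gives $-\infty\leq a<0<b\leq\infty$ and $b-a=\tau((a',b'))=\tau(\mathbb{R})$. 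Define the generalized inverse
\[
f(s)=\inf\{u\in(a',b'):\rho(u)<s\},\qquad s\in(a,b).
\]
Then $f$ is decreasing, and left-continuous by the usual generalized-inverse argument ($s_n\uparrow s$ implies $\bigcup_n\{\rho<s_n\}=\{\rho<s\}$). Using that $\rho(u)\in(a,b)$ for every $u$ (a consequence of $a',b'$ being the infimum and supremum of $\mathrm{Supp}(\tau)$), that $\rho(u)\to b$ as $u\downarrow a'$ and $\rho(u)\to a$ as $u\uparrow b'$, one checks $a'<f(s)<b'$ for all $s$ with $\inf_sf(s)=a'$ and $\sup_sf(s)=b'$; together with non-constancy this is Condition $(A)$.

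\emph{Pushforward in (ii).} It remains to show $\tau$ is the $\tau$-measure of $f$. The key point is the sandwich $(a,\rho(u))\subseteq\{s\in(a,b):f(s)>u\}\subseteq(a,\rho(u)]$ for $u\in(a',b')$: if $\rho(u)>s$ then right-continuity of $\rho$ gives $\rho(v)>s$ on a right-neighbourhood of $u$, hence $f(s)>u$; conversely $f(s)>u$ forces $\rho(u)\geq s$. Thus $|\{s\in(a,b):f(s)>u\}|=\rho(u)-a$, and for $a'<u_1<u_2<b'$,
\[
|\{s:u_1<f(s)\leq u_2\}|=\rho(u_1)-\rho(u_2)=\tau((u_1,u_2]),
\]
the last equality checked in each of the cases $u_2\leq c_0$, $c_0<u_1$, and $u_1\leq c_0<u_2$. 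Both the $\tau$-measure of $f$ and $\tau$ vanish on $\{a'\}$, on $\{b'\}$, and on $\mathbb{R}\setminus[a',b']$, and they agree on the $\pi$-system $\{(u_1,u_2]:a'<u_1<u_2<b'\}$, which by \eqref{t.9} contains an exhausting sequence of sets of finite measure; a monotone-class argument then yields equality. The main obstacle is precisely this last part: pinning down the one-sided continuity conventions so that $f$ is genuinely left-continuous and strictly between $a'$ and $b'$, and controlling the measure-zero ambiguities at jump points of $\rho$ (equivalently, at atoms of $\tau$) when passing from the sandwich to exact Lebesgue measures; the rest is routine.
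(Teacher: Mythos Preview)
Your argument is correct and follows essentially the same route as the paper: both construct $f$ as the generalized (left-continuous) inverse of a signed ``distribution function'' of $\tau$ centred at a point $c_0\in(a',b')$, and both verify the pushforward by checking it on half-open intervals. The paper packages the inverse/pushforward step into a separate lemma about increasing functions (applied with $G=-\rho$ and then reflected via $f(s)=F(-s)$), whereas you work directly with the decreasing $\rho$; your $f$ coincides with the paper's.

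One small point to clean up: the intermediate identity $|\{s:f(s)>u\}|=\rho(u)-a$ is meaningless when $a=-\infty$ (which happens exactly when $\tau((c_0,b'))=\infty$), so the displayed difference $\rho(u_1)-\rho(u_2)$ should not be obtained by subtracting two such quantities. Instead, derive it directly from your sandwich: since $(a,\rho(u_1))\subseteq\{f>u_1\}\subseteq(a,\rho(u_1)]$ and likewise for $u_2$, one gets $(\rho(u_2),\rho(u_1))\subseteq\{u_1<f\le u_2\}\subseteq[\rho(u_2),\rho(u_1)]$, which gives $|\{u_1<f\le u_2\}|=\rho(u_1)-\rho(u_2)$ with both endpoints finite. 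With that adjustment the proof is complete.
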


Assertion (i) of this theorem is straightforward from the definitions of
Conditions $(A)$ and $(B)$. In order to show (ii), we prepare a lemma,
which is an extension of Lemma 7.1 of Meyer (1962).

\begin{lem}\label{lt.1}
Let $-\infty\leqslant A'<B'\leqslant-\infty$ and let $G(u)$ be an $\mathbb{R}$-valued,
increasing, right-continuous function on $(A',B')$ which is not a
constant function. Let
\begin{equation}\label{t.12}
A=\inf_{u\in(A',B')} G(u)\quad\text{and}\quad 
B=\sup_{u\in(A',B')} G(u).
\end{equation}
Define
\begin{equation}\label{t.13}
F(s)=\inf\{u\in(A',B')\colon G(u)>s\}\quad\text{for }s\in(A,B).
\end{equation}
Then $F(s)$ takes values in $(A',B')$ and is increasing and 
right-continuous and
\begin{equation}\label{t.14}
G(u)=\inf\{s\in(A,B)\colon F(s)>u\}\text{ $($with $\inf\emptyset=B)$\quad for 
$u\in(A',B')$}.
\end{equation}
Moreover, for any nonnegative measurable function $h(u)$ on $(A',B')$,
\begin{equation}\label{t.15}
\int_{(A',B')} h(u)dG(u)=\int_{(A,B)} h(F(s))ds.
\end{equation}
\end{lem}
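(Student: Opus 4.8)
Lemma \ref{lt.1} is essentially the classical statement that a right-continuous increasing function and its (lower) generalized inverse are in involution, together with the change-of-variables formula for the associated Stieltjes measure. My plan is to follow Meyer's argument but keep careful track of the fact that $G$ is only defined on the open interval $(A',B')$ and need not be bounded. (I note in passing that the displayed hypothesis "$-\infty\leqslant A'<B'\leqslant-\infty$" should read "$\leqslant+\infty$"; I will simply treat $(A',B')$ as a nonempty open interval.)

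First I would verify that $F$ is well defined and takes values in $(A',B')$. For $s\in(A,B)$ the set $\{u\in(A',B')\colon G(u)>s\}$ is nonempty because $s<B=\sup G$, so its infimum $F(s)$ exists; and $F(s)>A'$ because $s>A=\inf G$ forces $G(u)\leqslant s$ for some $u$, hence for all small $u$ by monotonicity, so the set whose infimum defines $F(s)$ is bounded below by some point strictly greater than $A'$. That the set is an up-set (if $u$ is in it so is every larger $u'$) gives at once that $F$ is increasing. Right-continuity of $F$ is the standard computation: $F(s+)=\inf_{s'>s}F(s')$, and one checks that $u>F(s)$ implies $G(u)>s'$ for some $s'>s$ (using right-continuity of $G$ at the point realizing the strict inequality), whence $F(s')\leqslant u$; letting $u\downarrow F(s)$ gives $F(s+)\leqslant F(s)$, and the reverse inequality is monotonicity. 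The duality \eqref{t.14} is then the symmetric statement: from the definition of $F$ one has the equivalence $F(s)>u \iff$ (roughly) $G(u)<s$ up to the boundary behaviour at jump points, and right-continuity of both functions upgrades this to the stated formula with the convention $\inf\emptyset=B$. I would phrase this as the pair of implications: for $u\in(A',B')$ and $s\in(A,B)$, $G(u)\geqslant s\Rightarrow F(s)\leqslant u$ and $G(u)<s\Rightarrow F(s)\geqslant u$, then take infima.

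For the change-of-variables identity \eqref{t.15} the clean route is via distribution functions. Both $dG$ (as a measure on $(A',B')$) and Lebesgue measure on $(A,B)$ pushed forward appropriately are determined by their values on intervals, so it suffices to prove \eqref{t.15} for $h=1_{(u_1,u_2]}$ with $A'<u_1<u_2<B'$, i.e. to show $G(u_2)-G(u_1)=\mathrm{Leb}\{s\in(A,B)\colon u_1<F(s)\leqslant u_2\}$. But by \eqref{t.14} (or directly from the defining implications above), $\{s\colon F(s)\leqslant u_2\}=\{s<G(u_2)\}$ up to an endpoint and $\{s\colon F(s)\leqslant u_1\}=\{s<G(u_1)\}$ up to an endpoint, so the set in question is an interval of length $G(u_2)-G(u_1)$; endpoints are Lebesgue-null. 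Then one extends from indicators of intervals to indicators of Borel sets by a Dynkin/monotone-class argument, and to general nonnegative measurable $h$ by linearity and monotone convergence. This yields \eqref{t.15}, and in particular, applied with $G$ a (truncated) distribution function of $\tau$, it is exactly what is needed to realize $\tau$ as the $\tau$-measure of $F$ in Theorem \ref{tt.2}(ii).

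The only genuinely delicate point — the "hard part" — is the bookkeeping at the two ends of $(A',B')$ and at jumps and flat stretches of $G$: one must be sure that $F$ never takes the forbidden values $A'$ or $B'$, that the infimum conventions ($\inf\emptyset=B$ in \eqref{t.14}) match the open-interval setting, and that the exceptional endpoints discarded in the interval computation are indeed single points and hence null. Everything else is routine monotone-function manipulation, and I would compress it accordingly, spelling out only the two defining implications and the interval case of \eqref{t.15} in full.
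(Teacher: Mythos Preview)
Your outline is essentially the paper's own argument: prove $F$ is $(A',B')$-valued, increasing and right-continuous, establish the duality \eqref{t.14} via the Galois-connection implications, and then verify \eqref{t.15} on indicators and extend. The paper checks \eqref{t.15} on $h=1_{(A',v]}$ rather than your $h=1_{(u_1,u_2]}$, and it routes the proof of \eqref{t.14} through an auxiliary extension $\widetilde F$ of $F$ to $[A,B]$ to keep the endpoint bookkeeping clean, but these are stylistic differences only.

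One small correction: the pair of implications you state is not quite right as written. The implication ``$G(u)\geqslant s\Rightarrow F(s)\leqslant u$'' fails when $G(u)=s$ and $G$ is flat immediately to the right of $u$ (then $F(s)>u$); what is true, and what you actually need, is the strict version $G(u)>s\Rightarrow F(s)\leqslant u$. Symmetrically, ``$G(u)<s\Rightarrow F(s)\geqslant u$'' is true but too weak for the upper bound in \eqref{t.14}; you need $G(u)<s\Rightarrow F(s)>u$, and this is exactly where right-continuity of $G$ enters (choose $\varepsilon>0$ with $G(u+\varepsilon)<s$). You already flag that right-continuity ``upgrades'' the rough equivalence, so this is only a matter of stating the implications with the correct strict/non-strict inequalities when you write it out.
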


\begin{proof}
It is clear that $F(s)$ is $(A',B')$-valued and increasing. For
$s\in(A,B)$ we have $F(s)\leqslant F(s+)$. If $F(s)< F(s+)$,
then there is $r$ such that $F(s)<r<F(t)$ for all $t\in(s,B)$ and thus
$G(r)>s$ and $G(r)\leqslant t$ for all $t\in(s,B)$, which is impossible.
Hence $F(s)$ is right-continuous.  Extend $F(s)$ to $\widetilde F(s)$
on $[A,B]$ by
\[
\widetilde F(s)=\inf\{u\in(A',B')\colon G(u)>s\}\text{ $($with $\inf\emptyset=B')$}.
\]
Then $\widetilde F(s)$ is $[A',B']$-valued, increasing, and right-continuous.
For any $u\in(A',B')$, 
\[
\widetilde F(G(u))=\inf\{v\in(A',B')\colon G(v)>G(u)\}\geqslant u
\]
and hence $\widetilde F(G(u+\varepsilon))\geqslant u+\varepsilon$ for all small 
$\varepsilon>0$, from which
it follows that $\inf\{s\in[A,B]\colon \widetilde F(s)>u\}$ is 
$\leqslant G(u+\varepsilon)$, hence $\leqslant G(u)$.  We now have
\begin{equation}\label{t.16}
G(u)=\inf\{s\in[A,B]\colon \widetilde F(s)>u\}\text{\quad for 
$u\in(A',B')$}
\end{equation}
because, if not, there is $r$ such that $G(u)>r>
\inf\{s\in[A,B]\colon \widetilde F(s)>u\}$ and thus $\widetilde F(r)>u$ showing that
$G(u)\leqslant r$, which is absurd.  The right-hand side of
\eqref{t.14} equals the right-hand side of
\eqref{t.16} for all $u\in(A',B')$ because, if not, then for some
$u\in(A',B')$ and $t\in(A,B)$
\[
\inf\{s\in(A,B)\colon F(s)>u\} >t>\inf\{s\in[A,B]\colon \widetilde F(s)>u\},
\]
which implies $F(t)\leqslant u$ and $F(t)>u$, a contradiction.
 Therefore \eqref{t.14} is true.  For any $v\in(A',B')$,
\[
\int_{(A',B')} 1_{(A',v]}(u) dG(u)=\int_{(A',v]}dG(u)=G(v)-G(A'+)
=G(v)-A
\]
and
{\allowdisplaybreaks
\begin{align*}
&\int_{(A,B)} 1_{(A',v]}(F(s))ds=\int_{(A,B)}1_{\{F(s)\leqslant v\}} ds\\
&\qquad=\inf\{s\in(A,B)\colon F(s)>v\}-A=G(v)-A
\end{align*}
(note that if} $F(s)>v$ for all $s\in(A,B)$, then $A=G(v)>-\infty$
by \eqref{t.14}). Hence \eqref{t.15} holds for $h=1_{(A',v]}$ with
$v\in(A',B')$.  Therefore \eqref{t.15} holds for general $h$.
(In this lemma the roles of $G(u)$ and $F(s)$ are not symmetric, as we do
not necessarily have $A'=\inf_{s\in(A,B)} F(s)$ and
$B'=\sup_{s\in(A,B)} F(s)$.)
\end{proof}

\begin{proof}[Proof of Theorem \ref{tt.2} {\rm(ii)}]
Suppose that $\tau$ satisfies Condition $(B)$. Fix a point $c\in(a',b')$
and define $A'=a'$, $B'=b'$, and
\[
G(u)=\begin{cases} \tau((c,u]) & \qquad\text{for }c<u<B',\\
0 & \qquad\text{for }u=c,\\
-\tau((u,c]) & \qquad\text{for }A'<u<c.
\end{cases}
\]
Then $G(u)$ is finite, increasing, right-continuous, and not constant.
Now we apply Lemma \ref{lt.1}. Let $A$, $B$, and $F(s)$ on $s\in(A,B)$
be defined by \eqref{t.12} and \eqref{t.13}. Then $A=-\tau((A',c])$,
$B=\tau((c,B'))$, and $F(s)$ is $(A',B')$-valued, increasing, and
right-continuous.  Let $a=-B$, $b=-A$, and $f(s)=F(-s)$ for 
$s\in(a,b)$.  Then $f(s)$ is $(A',B')$-valued, decreasing, and
left-continuous. We claim that $f(s)$ satisfies Condition $(A)$.
If $f(s)$ is constant, then, for some $u_0\in(A',B')$, $F(s)=u_0$
for all $s\in(A,B)$, and hence (use \eqref{t.13}) 
$G(u_0)\geqslant B=\tau((c,B'))$, which contradicts Condition $(B)$.
Thus $f(s)$ is not constant.  Let us show \eqref{t.7}, that is,
\begin{equation}\label{t.17}
\inf_{t\in(A,B)} F(t)< F(s) < \sup_{t\in(A,B)} F(t)\quad\text{for }
s\in(A,B).
\end{equation}
If $A'<u<B'$, then $A<G(u)<B$ by Condition $(B)$. We have
$G(A'+)=A$ and $G(B'-)=B$.  Hence, using \eqref{t.13}, we obtain
$F(A+)=A'$ and $F(B-)=B'$.  Thus \eqref{t.17} follows, since
$A'<F(s)<B'$ for $s\in(A,B)$. Hence $f(s)$ satisfies Condition 
$(A)$.  For any nonnegative measurable $h$,
{\allowdisplaybreaks
\begin{align*}
&\int_{(a,b)} h(f(s))ds=\int_{(-B,-A)}h(F(-s))ds=\int_{(A,B)}h(F(s))ds\\
&\qquad =\int_{(A',B')}h(u)dG(u)=\int_{(a',b')} h(u)\tau(du)=\int_{\mathbb{R}}
h(u)\tau(du)
\end{align*} 
by virtue of \eqref{t.15} and Condition} $(B)$.  Therefore $\tau$ is
the $\tau$-measure of $f$.
\end{proof}

\begin{ex}\label{et.3}
The pairs of $f$ and $\tau$ in Examples \ref{et.1} and \ref{et.2} satisfy
Conditions $(A)$ and $(B)$. If $\tau$ is the probability measure with distribution
function equal to Cantor function, then $\tau$ satisfies Condition $(B)$ and
the function $f$ associated in Theorem \ref{tt.2} with Condition $(A)$
increases only with jumps.
\end{ex}

\section{Transformations of infinitely divisible distributions\\
on proper cone}

A subset $K$ of $\mathbb{R}^d$ is called a cone if it is a nonempty closed convex set
such that (1) $x\in K$ and $\alpha\geqslant0$ imply $\alpha x\in K$ and 
(2) $K\not=\{0\}$.
If, moreover, $x\in K$ implies $-x\not\in K$, then $K$ is called a proper cone.
  For example, $K$ is a proper cone in $\mathbb{R}$ if and only if $K$ is either
$[0,\infty)$ or $(-\infty,0]$.
In $\mathbb{R}^2$, $K$ is a proper cone which is nondegenerate (that is, not
contained in any one-dimensional linear subspace), if and only if there 
are linearly independent $x^{(1)}$ and $x^{(2)}$ such that $K=\{
\alpha_1x^{(1)}+\alpha_2x^{(2)}\colon \alpha_1\geqslant0\text{ and }\alpha_2
\geqslant0\}$.
In $\mathbb{R}^3$, there are many proper cones such as triangular cones and
circular cones.  If $K$ is the set of $(x_j)_{1\leqslant j\leqslant 3}$ such that
\[
\begin{pmatrix}
x_1 & x_3\\
x_3 & x_2
\end{pmatrix}
\]
is nonnegative-definite, then $K$ is linearly isomorphic to a 
circular cone in $\mathbb{R}^3$; see Pedersen and Sato (2003).

In this section let $K$ be a proper cone in $\mathbb{R}^d$.
Let $ID(K)$ denote the class of infinitely divisible
distributions supported on $K$ (that is, $\mathrm{Supp}(\mu)\subset K$).

\begin{prop}\label{p7.1}
Let $\mu=\mu_{(A,\mu,\gamma)}$ be in $ID(\mathbb{R}^d)$. Then $\mu\in ID(K)$ if
and only if $\mu$ is of type A or B, $\mathrm{Supp}(\nu)\subset K$, 
and the drift $\gamma^0$ is in $K$. 
\end{prop}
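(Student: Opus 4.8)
The plan is to prove the two implications separately; sufficiency is short, and the real work is in necessity. For sufficiency, suppose $\mu$ is of type A or B, $\mathrm{Supp}(\nu)\subset K$, and $\gamma^0\in K$. When $\mu$ is of type A, $\mu$ has triplet $(0,\nu,\gamma^0)_0$ and is a compound Poisson distribution translated by $\gamma^0$; since $K$ is a convex cone, $K+K\subset K$, so $\mathrm{Supp}(\nu^{*n})\subset K$ for every $n$, whence $\mathrm{Supp}(\mu)\subset\gamma^0+K\subset K$. For type B, put $\nu_n=\nu|_{\{|x|>1/n\}}$ and let $\mu_n$ have triplet $(0,\nu_n,\gamma^0)_0$; using $\int_{|x|\leqslant1}|x|\nu(dx)<\infty$ one checks $C_{\mu_n}\to C_{\mu}$ pointwise, so $\mu_n$ converges weakly to $\mu$, and since each $\mathrm{Supp}(\mu_n)\subset K$ with $K$ closed, $\mathrm{Supp}(\mu)\subset K$. (Equivalently, realise $\mu$ as $\mathcal L(X_1)$ for the associated finite-variation L\'evy process and write $X_1=\gamma^0+\sum_k\Delta_k$, an a.s.\ absolutely convergent sum of jumps in $\mathrm{Supp}(\nu)\subset K$ plus $\gamma^0\in K$.)

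For necessity, assume $\mathrm{Supp}(\mu)\subset K$ and write $\mu=\mathcal L(X_1)$. \emph{Step 1: $A=0$.} Otherwise $\mu=\mu_{(A,0,0)}*\mu_{(0,\nu,\gamma)}$, so $\mathrm{Supp}(\mu)$ contains the affine set $\gamma+x_0+\mathrm{Range}(A)$ for any $x_0\in\mathrm{Supp}(\mu_{(0,\nu,\gamma)})$, which has dimension $\geqslant1$ and hence contains a line. But a proper cone contains no line: if $v+tu\in K$ for all $t\in\mathbb R$ with $u\neq0$, then $u=\lim_{t\to\infty}(v+tu)/t\in K$ and likewise $-u\in K$, contradicting $K\cap(-K)=\{0\}$. \emph{Step 2: $\mathrm{Supp}(\nu)\subset K$.} For each $n$, $X_1$ is a sum of $n$ i.i.d.\ copies of $X_{1/n}$, so $x\in\mathrm{Supp}(X_{1/n})$ forces $nx\in\mathrm{Supp}(X_1)\subset K$, hence $x\in K$; thus $\mathrm{Supp}(\mathcal L(X_{1/n}))\subset K$. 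By the short-time asymptotics $n\,P(X_{1/n}\in B)\to\nu(B)$ for bounded $B$ with $0\notin\overline B$ and $\nu(\partial B)=0$ (Sato (1999), Corollary 8.9), we get $\nu(B)=0$ whenever moreover $\overline B\cap K=\emptyset$; covering $\mathbb R^d\setminus(K\cup\{0\})$ by countably many such $B$ yields $\mathrm{Supp}(\nu)\subset K$.

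\emph{Step 3: reduction to dimension one.} Since $K$ is proper it contains no line, so the dual cone $K^*=\{w:\langle w,x\rangle\geqslant0\ \forall x\in K\}$ has nonempty interior, and $w\in\mathrm{int}\,K^*$ iff $\langle w,x\rangle>0$ on $K\setminus\{0\}$, in which case $\langle w,x\rangle\geqslant c(w)|x|$ on $K$ by compactness of $K\cap S^{d-1}$. Fix such a $w$. The image $\rho_w$ of $\mu$ under $x\mapsto\langle w,x\rangle$ is infinitely divisible and supported on $[0,\infty)$, with no Gaussian part and with L\'evy measure $\nu_w$ the image of $\nu$, which is concentrated on $(0,\infty)$ by Step 2. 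The one-dimensional structure theorem for such distributions (Sato (1999), Theorem 24.7) gives $\int_{(0,1]}y\,\nu_w(dy)<\infty$ and a nonnegative drift $\gamma_w^0$. Since $c|x|\leqslant\langle w,x\rangle\leqslant|w||x|$ on $K\supset\mathrm{Supp}(\nu)$, this forces $\int_{|x|\leqslant1}|x|\nu(dx)<\infty$, so $\mu$ is of type A or B and has a drift $\gamma^0$; comparing cumulant functions, $C_{\rho_w}(\zeta)=C_{\mu}(\zeta w)=\int(e^{i\zeta y}-1)\nu_w(dy)+i\zeta\langle w,\gamma^0\rangle$, whence $\gamma_w^0=\langle w,\gamma^0\rangle\geqslant0$. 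As this holds for every $w\in\mathrm{int}\,K^*$, hence for every $w\in K^*=\overline{\mathrm{int}\,K^*}$, we conclude $\gamma^0\in K^{**}=K$, finishing the proof.

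The main obstacle is the necessity direction, specifically producing $\mathrm{Supp}(\nu)\subset K$ in a way that controls the small jumps as well as the large ones — this is what makes the short-time asymptotics the natural device, rather than exponential-moment/Laplace-transform arguments over the tube above $-\mathrm{int}\,K^*$, which only constrain the large jumps. Once that inclusion is in hand, the remaining two conditions follow cleanly by projecting onto rays in $\mathrm{int}\,K^*$ and invoking the well-understood one-dimensional case; the only mild care needed is the quantitative comparison $\langle w,x\rangle\asymp|x|$ on $K$ and the passage from $\mathrm{int}\,K^*$ back to $K^*$ at the end.
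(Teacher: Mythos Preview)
The paper does not actually prove this proposition; it merely cites Skorohod (1986) and Exercise 22.11 of Sato (1999). Your argument is correct and supplies a self-contained proof where the paper gives none. One small slip in Step 1: the support of $\mu_{(A,0,0)}$ is the linear subspace $\mathrm{Range}(A)$ (it is a centered Gaussian), so the affine set contained in $\mathrm{Supp}(\mu)$ is $x_0+\mathrm{Range}(A)$, not $\gamma+x_0+\mathrm{Range}(A)$; this does not affect the conclusion. Otherwise the structure---sufficiency by compound-Poisson approximation, necessity by ruling out a Gaussian part via the no-line property of proper cones, recovering $\mathrm{Supp}(\nu)\subset K$ from short-time behaviour, and then projecting along interior dual vectors to reduce to the one-dimensional subordinator characterization---is sound, and the closing passage $K^{**}=K$ via the bipolar theorem is exactly what is needed.
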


This proposition  is given in Skorohod (1986) and also in E\,22.11 
of Sato (1999).

Let $\mathrm{Lvm}(ID(K))$ denote the class of L\'evy measures of infinitely 
divisible distributions supported on $K$. That is, $\mathrm{Lvm}(ID(K))$ is
the class of measures $\nu$ on $\mathbb{R}^d$
satisfying $\nu(\{0\})=0$, $\mathrm{Supp}(\nu)\subset K$,  and 
$\int_{K}(|x|\land1)\nu(dx)<\infty$. 

Now let $-\infty\leqslant a<b\leqslant\infty$.
Let $f$ be a nonnegative measurable function on $(a,b)$. The
following propositions are the counterparts of Theorem \ref{p7.2}
and Theorems \ref{t4.8}, \ref{t6.2}, \ref{t6.3}, and \ref{t6.4}.

\begin{prop}\label{p7.2a}
Let $\mu\in ID(K)$.
Let $f(s)$ be a nonnegative, $\mathbb{R}$-valued measurable function on $(a,b)$.
Then the following two statements are equivalent.

{\rm(a)} $f\in\mathbf L_{(a,b)}(X^{(\mu)})$ and
\begin{equation}\label{7.2a}
\mathcal L\left(\int_p^q f(s)X^{(\mu)}(ds)\right) \in ID(K)\quad
\text{for all $p,q$ with $a<p<q<b$}.
\end{equation}

{\rm(b)} The L\'evy measure $\nu$ and the drift $\gamma^0$ of $\mu$ satisfy
\eqref{7.1b} and \eqref{7.1c}.
\end{prop}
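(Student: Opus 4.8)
The plan is to reduce everything to Theorem~\ref{p7.2} together with the description of $ID(K)$ in Proposition~\ref{p7.1}, exploiting the hypothesis $f\geqslant0$ and the cone structure of $K$. Note first that $ID(K)\subset ID_{\mathrm{AB}}(\mathbb{R}^d)$ by Proposition~\ref{p7.1}, so that the drift $\gamma^0$ of $\mu$ is well defined, and by the same proposition $\mathrm{Supp}(\nu)\subset K$ and $\gamma^0\in K$.

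First I would prove (a)$\Rightarrow$(b). If (a) holds, then in particular $\mathcal L(\int_p^q f(s)X^{(\mu)}(ds))\in ID(K)\subset ID_{\mathrm{AB}}(\mathbb{R}^d)$ for all $p,q$ with $a<p<q<b$, so statement (a) of Theorem~\ref{p7.2} holds for this $\mu$ and $f$; hence statement (b) of Theorem~\ref{p7.2}, which is exactly \eqref{7.1b}--\eqref{7.1c}, holds. This direction is immediate.

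Next, (b)$\Rightarrow$(a). Assuming \eqref{7.1b} and \eqref{7.1c}, Theorem~\ref{p7.2} gives $f\in\mathbf L_{(a,b)}(X^{(\mu)})$ and $\mathcal L(\int_p^q f(s)X^{(\mu)}(ds))\in ID_{\mathrm{AB}}(\mathbb{R}^d)$ for all $a<p<q<b$; moreover, by \eqref{3.9} and \eqref{7.1d} in the proof of Theorem~\ref{p7.2}, $\int_p^q f(s)X^{(\mu)}(ds)$ has triplet $(0,\nu_p^q,(\gamma^0)_p^q)_0$ with $\nu_p^q$ given by \eqref{3.9} and $(\gamma^0)_p^q=\int_p^q f(s)\gamma^0\,ds$. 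It then remains to verify, via Proposition~\ref{p7.1}, that this distribution lies in $ID(K)$, i.e.\ that $\mathrm{Supp}(\nu_p^q)\subset K$ and $(\gamma^0)_p^q\in K$. Both follow from $f\geqslant0$ and the cone axioms: for $\nu$-a.e.\ $x$ we have $x\in K$, hence $f(s)x\in K$ (as $K$ is closed under multiplication by the nonnegative scalar $f(s)$, with $0\cdot x=0\in K$), so $\nu_p^q$ assigns no mass to any Borel set disjoint from $K$; and since $\gamma^0\in K$ while $\int_p^q f(s)\,ds$ is a finite nonnegative number — finite because \eqref{7.1c} forces $\int_p^q|f(s)|\,ds<\infty$ when $\gamma^0\neq0$, and when $\gamma^0=0$ the drift is simply $0\in K$ — we obtain $(\gamma^0)_p^q=\bigl(\int_p^q f(s)\,ds\bigr)\gamma^0\in K$. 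Hence $\mathcal L(\int_p^q f(s)X^{(\mu)}(ds))\in ID(K)$, which is (a).

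The argument is short, and the only genuine content beyond quoting Theorem~\ref{p7.2} is the observation that nonnegativity of $f$ propagates the inclusions $\mathrm{Supp}(\nu)\subset K$ and $\gamma^0\in K$ to the truncated integrals. I expect the mild bookkeeping point — that the drift of $\int_p^q f(s)X^{(\mu)}(ds)$ equals $\bigl(\int_p^q f(s)\,ds\bigr)\gamma^0$ and lies in $K$ even in the degenerate case $\gamma^0=0$, where $f$ need not be locally integrable — to be the only place requiring any care.
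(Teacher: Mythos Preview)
Your proof is correct and follows essentially the same route as the paper: both directions are reduced to Theorem~\ref{p7.2}, with the converse completed by invoking Proposition~\ref{p7.1} and checking that $f\geqslant0$ forces $\mathrm{Supp}(\nu_p^q)\subset K$ and $(\gamma^0)_p^q\in K$ via \eqref{3.9} and \eqref{7.1d}. Your extra care about the degenerate case $\gamma^0=0$ is a nice touch the paper leaves implicit.
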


\begin{proof} Assume (a). Then we have (b), using Theorem \ref{p7.2}.

Conversely assume (b).  Theorem \ref{p7.2} says that 
$f\in\mathbf L_{(a,b)}(X^{(\mu)})$ and that \eqref{7.1a} is true. Let 
$(A,\nu,\gamma^0)_0$ and 
$(A_p^q,\nu_p^q, (\gamma^0)_p^q)_0$ be the triplets of $\mu$ and 
$\int_p^q f(s)X^{(\mu)}(ds)$, respectively. Then 
$A_p^q=0$ from \eqref{7.1a} and, using Proposition \ref{p7.1},
we obtain $\mathrm{Supp}(\nu)\subset K$ from \eqref{3.9}
and $(\gamma^0)_p^q\in K$ from \eqref{7.1d}
since $f$ is nonnegative and $K$ is a cone.
Hence \eqref{7.2a} is true. 
\end{proof}

\begin{prop}\label{p7.3}
Let $\mu\in ID(K)$.
Let $f\in\mathbf L_{(a,b)}(X^{(\mu)})$ and $f\geqslant0$.  Then
$\Phi_f(\mu)$ is definable and $\Phi_f(\mu)\in ID(K)$ if and only if 
statement {\rm (b)} of Theorem \ref{t4.8} is true.
\end{prop}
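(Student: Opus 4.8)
The plan is to reduce everything to Theorem \ref{t4.8} together with the description of $ID(K)$ given in Proposition \ref{p7.1}. The first observation is that, by Proposition \ref{p7.1}, every distribution in $ID(K)$ is of type A or B, so $ID(K)\subset ID_{\mathrm{AB}}(\mathbb{R}^d)$; in particular $\mu\in ID_{\mathrm{AB}}(\mathbb{R}^d)$, which together with $f\in\mathbf L_{(a,b)}(X^{(\mu)})$ makes Theorem \ref{t4.8} available for this $\mu$ and $f$.

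For the ``only if'' direction, suppose $\Phi_f(\mu)$ is definable and $\Phi_f(\mu)\in ID(K)$. Then $\Phi_f(\mu)\in ID_{\mathrm{AB}}(\mathbb{R}^d)$ by the inclusion just noted, so statement (a) of Theorem \ref{t4.8} holds, whence statement (b) of Theorem \ref{t4.8} holds. This direction is immediate.

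For the ``if'' direction, assume statement (b) of Theorem \ref{t4.8}. That theorem yields that $\Phi_f(\mu)$ is definable, that $\widetilde\mu:=\Phi_f(\mu)$ lies in $ID_{\mathrm{AB}}(\mathbb{R}^d)$, and that $\widetilde\mu$ has triplet $(0,\widetilde\nu,\widetilde\gamma^0)_0$ with $\widetilde\nu$ given by \eqref{4.6} and $\widetilde\gamma^0=\int_{a+}^{b-}f(s)\gamma^0\,ds$. By Proposition \ref{p7.1}, to conclude $\widetilde\mu\in ID(K)$ it suffices to verify $\mathrm{Supp}(\widetilde\nu)\subset K$ and $\widetilde\gamma^0\in K$. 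Since $\mu\in ID(K)$, Proposition \ref{p7.1} gives $\mathrm{Supp}(\nu)\subset K$ and $\gamma^0\in K$. Because $f\geqslant0$ and $K$ is a cone, $f(s)x\in K$ for every $s\in(a,b)$ and $\nu$-almost every $x$, so \eqref{4.6} shows that $\widetilde\nu$ is carried by $K$. For the drift: if $\gamma^0=0$ then $\widetilde\gamma^0=0\in K$; if $\gamma^0\neq0$, then condition \eqref{7.4}, read off in a coordinate in which $\gamma^0$ does not vanish and using that $\int_p^q f(s)\,ds$ is monotone in $p,q$ (as $f\geqslant0$), forces $\int_{a+}^{b-}f(s)\,ds$ to be a finite nonnegative number $c$, whence $\widetilde\gamma^0=c\gamma^0\in K$ since $K$ is a cone. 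Hence $\widetilde\mu\in ID(K)$.

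There is no serious obstacle here: the content is essentially Theorem \ref{t4.8} plus the stability of the cone $K$ under multiplication by nonnegative scalars. The only point needing slight care is confirming that $\widetilde\gamma^0$ is a nonnegative scalar multiple of $\gamma^0$, so that membership of $\gamma^0$ in $K$ passes to $\widetilde\gamma^0$; this is exactly what the monotonicity of $\int_p^q f(s)\,ds$ and condition \eqref{7.4} provide.
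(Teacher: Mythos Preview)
Your proof is correct and follows the same approach as the paper: the ``only if'' direction uses the inclusion $ID(K)\subset ID_{\mathrm{AB}}$ to invoke Theorem \ref{t4.8}, and the ``if'' direction applies Theorem \ref{t4.8} together with Proposition \ref{p7.1} and the nonnegativity of $f$ to verify that the resulting L\'evy measure and drift lie in $K$. Your argument simply spells out in detail what the paper's terse proof leaves implicit, including the observation that $\widetilde\gamma^0$ is a nonnegative scalar multiple of $\gamma^0$.
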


\begin{proof} Use Theorem \ref{t4.8}.
The \lq\lq only if" part is because $ID(K)\subset ID_{\mathrm{AB}}$.
For the \lq\lq if" part, use Proposition \ref{p7.1} and assumption $f\geqslant0$.
\end{proof}

\begin{prop}\label{p7.4}
Let $f(s)$ be a nonnegative, $\mathbb{R}$-valued measurable function on $(a,b)$. 
Then the following statements are equivalent.

{\rm(a)}\quad $ID(K)\subset\mathfrak D ^0(\Phi_f)$ 
and $\Phi_f(ID(K))\subset ID(K)$.

{\rm(b)}\quad $ID(K)\subset\mathfrak D (\Phi_{f,\mathrm{es}})$
 and, for any $\mu\in ID(K)$,  $\Phi_{f,\mathrm{es}}(\mu)\cap ID(K)\neq \emptyset$.

{\rm(c)}\quad\,$\int_a^b 1_{\{f(s)\neq0\}} ds
<\infty$ and $\int_a^b f(s)ds<\infty$.
\end{prop}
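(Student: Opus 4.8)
The plan is to prove the cycle $(c)\Rightarrow(a)\Rightarrow(b)\Rightarrow(c)$. Since $f\geqslant0$, condition $(c)$ above is literally condition $(c)$ of Theorem \ref{t6.2}. The first two implications will be quick deductions from the results already proved for $ID_{\mathrm{AB}}(\mathbb{R}^d)$, together with the chain of inclusions \eqref{5.3}; the real content is $(b)\Rightarrow(c)$, which I would model on the proof of Theorem \ref{t6.2}, the one change being that the test L\'evy measures must be supported on $K$.

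For $(c)\Rightarrow(a)$: assuming $(c)$, Theorem \ref{t6.2} gives $ID_{\mathrm{AB}}(\mathbb{R}^d)\subset\mathfrak D^0(\Phi_f)$ and $\Phi_f(ID_{\mathrm{AB}}(\mathbb{R}^d))\subset ID_{\mathrm{AB}}(\mathbb{R}^d)$; since $ID(K)\subset ID_{\mathrm{AB}}(\mathbb{R}^d)$ by Proposition \ref{p7.1}, the first half of $(a)$ is immediate, and for the second half, given $\mu\in ID(K)$ we have that $\Phi_f(\mu)$ is definable (absolute definability implies definability) and lies in $ID_{\mathrm{AB}}(\mathbb{R}^d)$, that is, statement $(a)$ of Theorem \ref{t4.8} holds, hence so does its statement $(b)$, so Proposition \ref{p7.3} yields $\Phi_f(\mu)\in ID(K)$. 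For $(a)\Rightarrow(b)$: the chain \eqref{5.3} gives $ID(K)\subset\mathfrak D^0(\Phi_f)\subset\mathfrak D(\Phi_{f,\,\mathrm{es}})$, the first half of $(b)$; and for $\mu\in ID(K)$, $\Phi_f(\mu)$ is definable with triplet $(A_a^b,\nu_a^b,\gamma_{a+}^{b-})$ by Theorem \ref{t4.5}, hence lies in $\Phi_{f,\,\mathrm{es}}(\mu)$ by Theorem \ref{t4.6}, and by $(a)$ it lies in $ID(K)$, so $\Phi_{f,\,\mathrm{es}}(\mu)\cap ID(K)\neq\emptyset$.

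For $(b)\Rightarrow(c)$: given $\nu\in\mathrm{Lvm}(ID(K))$, Proposition \ref{p7.1} provides $\mu\in ID(K)$ with L\'evy measure $\nu$ (take drift $0\in K$), and by $(b)$ there is $\rho\in\Phi_{f,\,\mathrm{es}}(\mu)\cap ID(K)$; by Theorem \ref{t4.6} its L\'evy measure is $\nu_a^b$ of \eqref{4.6}, and $\rho\in ID_{\mathrm{AB}}(\mathbb{R}^d)$ forces
\[
\int_a^b ds\int_{\mathbb{R}^d}(|f(s)x|\land1)\,\nu(dx)<\infty\qquad\text{for all }\nu\in\mathrm{Lvm}(ID(K)).
\]
Fixing $\xi_0\in K$ with $|\xi_0|=1$ (possible since $K\neq\{0\}$), I would then mimic Steps 1 and 2 of the proof of Theorem \ref{t6.2}: if $\int_a^b f(s)\,ds=\infty$, pick $r_n\downarrow0$ with $r_n\leqslant1$ and $\int_a^b f(s)1_{\{f(s)\leqslant1/r_n\}}\,ds\geqslant n$ and test the displayed inequality against $\nu=\sum_{n\geqslant1} n^{-2}r_n^{-1}\delta_{r_n\xi_0}\in\mathrm{Lvm}(ID(K))$, obtaining a left side $\geqslant\sum_n n^{-1}=\infty$, a contradiction; likewise, if $\int_a^b 1_{\{f(s)\neq0\}}\,ds=\infty$, pick $r_n\uparrow\infty$ with $r_n\geqslant1$ and $\int_a^b 1_{\{f(s)>1/r_n\}}\,ds\geqslant n$ and test against $\nu=\sum_{n\geqslant1} n^{-2}\delta_{r_n\xi_0}\in\mathrm{Lvm}(ID(K))$. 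Thus $(c)$ holds.

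The only departures from Theorem \ref{t6.2} are that the test L\'evy measures are concentrated angularly at one direction $\xi_0\in K$ rather than spread over the unit sphere, and that $(b)$ supplies only $\Phi_{f,\,\mathrm{es}}(\mu)\cap ID(K)\neq\emptyset$ rather than $\Phi_{f,\,\mathrm{es}}(\mu)\subset ID(K)$; but since all members of $\Phi_{f,\,\mathrm{es}}(\mu)$ share the L\'evy measure $\nu_a^b$ (Theorem \ref{t4.6}), one member in $ID(K)$ already pins down $\nu_a^b$, so this is harmless. I therefore expect no genuine obstacle, the verification of the two contradictions in $(b)\Rightarrow(c)$ being the bulk of the work but entirely routine.
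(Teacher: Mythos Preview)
Your proposal is correct and follows essentially the same route as the paper's proof: the cycle $(c)\Rightarrow(a)\Rightarrow(b)\Rightarrow(c)$, with the first implication drawn from Theorem~\ref{t6.2} and Proposition~\ref{p7.1}, the second from $\mathfrak D^0(\Phi_f)\subset\mathfrak D(\Phi_{f,\mathrm{es}})$, and the third by adapting Steps~1 and~2 of Theorem~\ref{t6.2} with test L\'evy measures supported in $K$. The only cosmetic differences are that the paper appeals directly to \eqref{4.6} and \eqref{7.6} (rather than routing through Proposition~\ref{p7.3}) for $(c)\Rightarrow(a)$, and phrases the cone restriction as ``replace $S$ by $K\cap S$'' rather than fixing a single unit vector $\xi_0\in K$; your careful remark that one element of $\Phi_{f,\mathrm{es}}(\mu)\cap ID(K)$ suffices to pin down $\nu_a^b$ is a point the paper leaves implicit.
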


\begin{proof}
(c) $\Rightarrow$ (a): Using Theorem \ref{t6.2}, we have
$ID(K)\subset ID_{\mathrm{AB}}(\mathbb{R}^d)\subset \mathfrak D ^0(\Phi_f)$ 
and $\Phi_f(ID(K))\subset\Phi_f(ID_{\mathrm{AB}}(\mathbb{R}^d))\subset 
ID_{\mathrm{AB}}(\mathbb{R}^d)$.
For any $\mu\in ID(K)$, $\widetilde\mu=\Phi_f(\mu)$ is not only in $ID_{\mathrm{AB}}$
but also in $ID(K)$ because $f\geqslant0$ (use \eqref{4.6} and \eqref{7.6}).

(a) $\Rightarrow$ (b): Obvious since $\mathfrak D ^0(\Phi_f)\subset \mathfrak D 
(\Phi_{f,\mathrm{es}})$.

(b) $\Rightarrow$ (c): We have, from Theorem \ref{t4.6},
\begin{equation}\label{7.5a}
\int_a^b ds\int_K (|f(s)x|\land1)\,\nu(dx)<\infty\quad\text{for all $\nu
\in\mathrm{Lvm}(ID(K))$}.
\end{equation}
Hence the proof is similar to that of the corresponding part of Theorem 
\ref{t6.2} (replace the unit sphere $S$ by $K\cap S$).
\end{proof}

\begin{prop}\label{p7.5}
Let $f(s)$ be a nonnegative, $\mathbb{R}$-valued measurable function on $(a,b)$. 
Then $ID(K)\subset\mathfrak D (\Phi_{f,\mathrm{es}})$ if and only if statement {\rm(b)}
of Theorem \ref{t6.3} is true.
\end{prop}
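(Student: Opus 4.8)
The plan is to reduce the statement to Theorem~\ref{t6.3} and Proposition~\ref{p7.1}, using the same device indicated in the remark before Proposition~\ref{p7.2a} and in the proof of Proposition~\ref{p7.4}: replace the unit sphere $S$ by $K\cap S$ in the test L\'evy measures. The \textbf{``if''} direction is then immediate: if statement (b) of Theorem~\ref{t6.3} holds, then $ID_{\mathrm{AB}}(\mathbb{R}^d)\subset\mathfrak D(\Phi_{f,\mathrm{es}})$ by Theorem~\ref{t6.3}, and since $ID(K)\subset ID_{\mathrm{AB}}(\mathbb{R}^d)$ by Proposition~\ref{p7.1}, we get $ID(K)\subset\mathfrak D(\Phi_{f,\mathrm{es}})$.

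For the \textbf{``only if''} direction, assume $ID(K)\subset\mathfrak D(\Phi_{f,\mathrm{es}})$. First I would extract local integrability of $f$: since $K\neq\{0\}$, pick $\gamma^0\in K\setminus\{0\}$; then $\delta_{\gamma^0}\in ID(K)\subset\mathfrak D(\Phi_{f,\mathrm{es}})$, so $f\in\mathbf L_{(a,b)}(X^{(\delta_{\gamma^0})})$, and Theorem~\ref{t3.1}(ii) applied with $A=0$, $\nu=0$ gives $\int_p^q|f(s)|\,ds<\infty$ for all $p,q$ with $a<p<q<b$, i.e.\ $f$ is locally integrable on $(a,b)$. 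Next, for any $\nu\in\mathrm{Lvm}(ID(K))$, Proposition~\ref{p7.1} shows $\mu_{(0,\nu,0)_0}\in ID(K)$, hence it lies in $\mathfrak D(\Phi_{f,\mathrm{es}})$; since its Gaussian part vanishes, Theorem~\ref{t4.2} yields
\[
\int_a^b ds\int_{\mathbb{R}^d}(|f(s)x|^2\wedge 1)\,\nu(dx)<\infty\qquad\text{for all }\nu\in\mathrm{Lvm}(ID(K)),
\]
which is exactly condition \eqref{6.6} in the proof of Theorem~\ref{t6.3}, except that the admissible class of L\'evy measures has shrunk from $\mathrm{Lvm}(ID_{\mathrm{AB}})$ to $\mathrm{Lvm}(ID(K))$.

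The remaining work is to run Steps~1--3 of the proof of Theorem~\ref{t6.3} verbatim, checking only that the L\'evy measures built there (of the form $\nu(B)=\int_S\lambda(d\xi)\int_{(0,1]}1_B(r\xi)\,r^{-1}\rho(dr)$ with $\rho=\sum_n n^{-2}\delta_{r_n}$, and the analogous measure on $\{|x|>1\}$) can be chosen so that $\mathrm{Supp}(\nu)\subset K$. This is where the device enters: $K$ is a proper cone, so $K\cap S$ is a nonempty compact subset of $S$ and carries a finite nonzero measure $\lambda$; with $\lambda$ supported on $K\cap S$ one has $\mathrm{Supp}(\nu)\subset K$ and $\int_K(|x|\wedge 1)\,\nu(dx)<\infty$, hence $\nu\in\mathrm{Lvm}(ID(K))$, and the divergence estimates producing the contradictions are untouched. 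This gives $\int_a^b 1_{\{f(s)\neq0\}}\,ds<\infty$, \eqref{6.5}, and \eqref{6.4}, which together with local integrability of $f$ is precisely statement (b) of Theorem~\ref{t6.3}.

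I expect no genuinely new obstacle here: the only point requiring care is that the transfer of the constructions from $S$ to $K\cap S$ is legitimate — that $K\cap S\neq\emptyset$ (guaranteed by $K\neq\{0\}$) and that restricting the angular measure to $K\cap S$ only decreases total mass without spoiling any inequality — and, on the ``only if'' side, that the single degenerate distribution $\delta_{\gamma^0}$ with $\gamma^0\in K\setminus\{0\}$ already forces local integrability. Once these are checked, the proof is essentially a quotation of the proof of Theorem~\ref{t6.3}.
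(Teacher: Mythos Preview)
Your proposal is correct and follows essentially the same approach as the paper's proof: the ``if'' direction is obtained directly from Theorem~\ref{t6.3} via $ID(K)\subset ID_{\mathrm{AB}}$, and the ``only if'' direction derives \eqref{7.6a} and then reruns Steps~1--3 of the proof of Theorem~\ref{t6.3} with the angular measure $\lambda$ supported on $K\cap S$. Your write-up simply spells out the details (extracting local integrability from $\delta_{\gamma^0}$, checking $K\cap S\neq\emptyset$) that the paper leaves implicit in the phrase ``similar to the corresponding part of the proof of Theorem~\ref{t6.3}.''
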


\begin{proof}
The \lq\lq if" part: Obvious from Theorem \ref{t6.3}.

The \lq\lq only if" part:  It follows that
\begin{equation}\label{7.6a}
\int_a^b ds\int_K (|f(s)x|^2\land1)\,\nu(dx)<\infty\quad\text{for all }
\nu\in\mathrm{Lvm}(ID(K)).
\end{equation}
Hence we can make a discussion similar to the corresponding part of the 
proof of Theorem \ref{t6.3}.
\end{proof}

\begin{prop}\label{p7.6}
Let $f(s)$ be a nonnegative, $\mathbb{R}$-valued measurable function on $(a,b)$. 
Then $ID(K)\subset\mathfrak D^0 (\Phi_f)$ if and only if statement {\rm(b)}
of Theorem \ref{t6.4} is true.
\end{prop}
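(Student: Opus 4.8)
The plan is to derive the \lq\lq if" direction at once from Theorem \ref{t6.4} and Proposition \ref{p7.1}, and to prove the \lq\lq only if" direction by the ray construction used in the proof of Theorem \ref{t6.4}, modified so that the test L\'evy measures are carried by $K$. For the \lq\lq if" direction: if statement {\rm(b)} of Theorem \ref{t6.4} holds, then Theorem \ref{t6.4} gives $ID_{\mathrm{AB}}(\mathbb{R}^d)\subset\mathfrak D^0(\Phi_f)$, and Proposition \ref{p7.1} gives $ID(K)\subset ID_{\mathrm{AB}}(\mathbb{R}^d)$, so $ID(K)\subset\mathfrak D^0(\Phi_f)$; nothing new is needed here.

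For the \lq\lq only if" direction, assume $ID(K)\subset\mathfrak D^0(\Phi_f)$ and establish the two integrability conditions separately. First, since $K$ is a cone it contains a nonzero vector $x_0$, and $\delta_{x_0}\in ID(K)$ by Proposition \ref{p7.1}; as $\delta_{x_0}$ has triplet $(0,0,x_0)$, Theorem \ref{t5.1} applied to $\delta_{x_0}\in\mathfrak D^0(\Phi_f)$ reduces \eqref{5.2} to $|x_0|\int_a^b|f(s)|\,ds<\infty$, so $\int_a^b|f(s)|ds<\infty$. Second, suppose for contradiction that $\int_a^b 1_{\{f(s)\neq0\}}ds=\infty$ and put $h(r)=\int_a^b 1_{\{|f(s)|>1/r\}}ds$; from the bound just obtained, $h(r)\leqslant r\int_a^b|f(s)|ds<\infty$ for each $r>0$, while $h(r)\uparrow\int_a^b 1_{\{f(s)\neq0\}}ds=\infty$ as $r\uparrow\infty$. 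Choose $r_n>1$ with $h(r_n)\geqslant n$, fix a unit vector $\xi_0\in K$, and test with $\nu(B)=\sum_{n=1}^{\infty}n^{-2}1_B(r_n\xi_0)$: then $\mathrm{Supp}(\nu)\subset K$ and $\int_K(|x|\land1)\nu(dx)=\sum_n n^{-2}<\infty$, so $\nu\in\mathrm{Lvm}(ID(K))$ and $\mu_{(0,\nu,0)_0}\in ID(K)\subset\mathfrak D^0(\Phi_f)$, whence Theorem \ref{t5.1} (or Theorem \ref{c5.1} together with Theorem \ref{t4.2}) forces $\int_a^b ds\int_K(|f(s)x|^2\land1)\,\nu(dx)<\infty$; but this integral equals $\sum_n n^{-2}\int_a^b(f(s)^2r_n^2\land1)\,ds\geqslant\sum_n n^{-2}h(r_n)\geqslant\sum_n n^{-1}=\infty$, a contradiction. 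Hence $\int_a^b 1_{\{f(s)\neq0\}}ds<\infty$, and together with $\int_a^b|f(s)|ds<\infty$ this is statement {\rm(b)} of Theorem \ref{t6.4}.

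I expect no genuine obstacle: the argument is a transcription of the proof of Theorem \ref{t6.4} (which itself rests on Step 2 of the proof of Theorem \ref{t6.1}), and the one place where the cone hypothesis is essential — and the only real departure from that proof — is that the test L\'evy measure must be supported on $K$, which is handled by placing all of its mass on the ray through a unit vector $\xi_0\in K$ (such a vector existing precisely because $K$ is a cone). The remaining points (that $\nu\in\mathrm{Lvm}(ID(K))$, the finiteness of $h$, and the Tonelli evaluation of the displayed integral) are routine.
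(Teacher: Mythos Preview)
Your proposal is correct and follows essentially the same approach as the paper, which simply says the ``if'' part follows from Theorem \ref{t6.4} and the ``only if'' part is similar to the proof of Theorem \ref{t6.4}. Your argument for $\int_a^b|f(s)|\,ds<\infty$ via the single test distribution $\delta_{x_0}$ (with $\nu=0$) is a minor streamlining of the two-$\gamma$ subtraction in Theorem \ref{t6.4}; otherwise your ray construction on $\xi_0\in K\cap S$ is exactly the adaptation the paper intends (cf.\ the hint ``replace $S$ by $K\cap S$'' in the proof of Proposition \ref{p7.4}).
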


\begin{proof}
The \lq\lq if" part follows from Theorem \ref{t6.4}.
The \lq\lq only if" part is
similar to the corresponding part of the 
proof of Theorem \ref{t6.4}.
\end{proof}

\section{Transformations of L\'evy measures}

Fix $-\infty\leqslant a<b\leqslant\infty$ and the dimension $d$.
Let $f(s)$ be an $\mathbb{R}$-valued measurable 
function on $(a,b)$.

\begin{defn}\label{d8.1}
For a measure $\nu$ on $\mathbb{R}^d$, let $\nu^{\sharp}$ be the measure given by
\begin{equation}\label{8.1}
\nu^{\sharp}(B)=\int_a^b ds\int_{\mathbb{R}^d} 1_B(f(s)x)\nu(dx),\qquad
B\in\mathcal B (\mathbb{R}^d).
\end{equation}
If $\nu\in\mathrm{Lvm}(ID(\mathbb{R}^d))$ and if 
$[\nu^{\sharp}]_{\mathbb{R}^d\setminus\{0\}}$,
the restriction of $\nu^{\sharp}$ to $\mathbb{R}^d\setminus\{0\}$, is 
in \linebreak 
$\mathrm{Lvm}(ID(\mathbb{R}^d))$, then define
\begin{equation}\label{8.2}
\Psi_f(\nu)=[\nu^{\sharp}]_{\mathbb{R}^d\setminus\{0\}}.
\end{equation}
The domain $\mathfrak D(\Psi_f)$ of $\Psi_f$ is defined as
\begin{equation}\label{8.3}
\mathfrak D(\Psi_f)=\{\nu\in\mathrm{Lvm}(ID(\mathbb{R}^d))\colon 
[\nu^{\sharp}]_{\mathbb{R}^d\setminus\{0\}}\in\mathrm{Lvm}(ID(\mathbb{R}^d))\}.
\end{equation}
\end{defn}

Since 
\begin{equation}\label{8.3a}
\int_{\mathbb{R}^d\setminus\{0\}} h(x)\nu^{\sharp}(dx)=\int_a^b ds
\int_{\mathbb{R}^d}h(f(s)x)1_{\mathbb{R}^d\setminus\{0\}}(f(s)x)\,\nu(dx)
\end{equation}
for all nonnegative measurable functions $h$ on $\mathbb{R}^d$,
a measure $\nu$ on $\mathbb{R}^d$ belongs to $\mathfrak D(\Psi_f)$ if and only 
if $\nu(\{0\})=0$, $\int_{\mathbb{R}^d}(|x|^2\land1)\,\nu(dx)<\infty$,
and
\begin{equation}\label{8.4}
\int_a^b ds\int_{\mathbb{R}^d}(|f(s)x|^2\land1)\,\nu(dx)<\infty.
\end{equation}

The following theorem shows a close relationship between the
transformation $\Psi_f$ and the essential improper integrals.

\begin{thm}\label{t8.1}
Assume  that $f$ is locally square-integrable on $(a,b)$. Let
$\nu$ and $\widetilde\nu$ be in $\mathrm{Lvm}(ID(\mathbb{R}^d))$. Then the 
following statements are equivalent.

{\rm(a)}\quad$\nu\in\mathfrak D(\Psi_f)$ and $\widetilde\nu=\Psi_f(\nu)$.

{\rm(b)}\quad If $\mu\in ID_0(\mathbb{R}^d)$ has L\'evy measure
$\nu$, then $\mu$ belongs to $\mathfrak D(\Phi_{f,\mathrm{es}})$ and any 
distribution in
$\Phi_{f,\mathrm{es}}(\mu)$ has L\'evy measure $\widetilde\nu$.
\end{thm}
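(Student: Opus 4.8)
The plan is to read off the equivalence directly from the triplet formulas already established, since both statements (a) and (b) are purely conditions on L\'evy measures. First I would recall that, because $f$ is locally square-integrable on $(a,b)$, Remark~\ref{r3.2} guarantees $f\in\mathbf L_{(a,b)}(X^{(\mu)})$ for \emph{every} $\mu\in ID(\mathbb{R}^d)$, so there is no integrability obstruction to overcome at the local level; in particular for any $\mu\in ID_0(\mathbb{R}^d)$ with L\'evy measure $\nu$ we have $f\in\mathbf L_{(a,b)}(X^{(\mu)})$. Next I would unwind Definition~\ref{d8.1}: using \eqref{8.3a}, the condition $\nu\in\mathfrak D(\Psi_f)$ is equivalent to $\nu(\{0\})=0$, $\int_{\mathbb{R}^d}(|x|^2\land1)\,\nu(dx)<\infty$ (which holds since $\nu\in\mathrm{Lvm}(ID(\mathbb{R}^d))$), and the key condition \eqref{8.4}, namely $\int_a^b ds\int_{\mathbb{R}^d}(|f(s)x|^2\land1)\,\nu(dx)<\infty$. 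But \eqref{8.4} is precisely condition \eqref{4.2} for the triplet $(0,\nu,\gamma)$ of $\mu$, and condition \eqref{4.1} is automatic since $A=0$. Hence, by Theorem~\ref{t4.2}, $\nu\in\mathfrak D(\Psi_f)$ if and only if $\Phi_{f,\mathrm{es}}(\mu)$ is definable, i.e.\ $\mu\in\mathfrak D(\Phi_{f,\mathrm{es}})$.

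Then I would match the images. Assuming the common definability condition, Theorem~\ref{t4.6} tells us that $\Phi_{f,\mathrm{es}}(\mu)$ is exactly the set of all $\mu_{(\widetilde A,\widetilde\nu_0,\widetilde\gamma)}$ with $\widetilde A=A_a^b=\int_a^b f(s)^2 A\,ds=0$ (since $A=0$), with $\widetilde\nu_0=\nu_a^b$ given by \eqref{4.6}, and with $\widetilde\gamma\in\mathbb{R}^d$ arbitrary; in particular every distribution in $\Phi_{f,\mathrm{es}}(\mu)$ has the same L\'evy measure $\nu_a^b$. On the other hand, by \eqref{8.1} and \eqref{8.2}, $\Psi_f(\nu)=[\nu^{\sharp}]_{\mathbb{R}^d\setminus\{0\}}$ where $\nu^{\sharp}(B)=\int_a^b ds\int_{\mathbb{R}^d}1_B(f(s)x)\,\nu(dx)$; restricting to Borel sets $B$ with $0\notin B$, this is literally the formula \eqref{4.6} defining $\nu_a^b$. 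Thus $\nu_a^b$ and $\Psi_f(\nu)$ agree as measures on $\mathbb{R}^d\setminus\{0\}$, and both are measures with no mass at $0$, so $\widetilde\nu=\Psi_f(\nu)$ if and only if $\widetilde\nu=\nu_a^b$, i.e.\ if and only if $\widetilde\nu$ is the common L\'evy measure of the distributions in $\Phi_{f,\mathrm{es}}(\mu)$.

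Putting these two observations together gives the stated equivalence: (a) holds iff ($\mu\in\mathfrak D(\Phi_{f,\mathrm{es}})$ and $\widetilde\nu=\nu_a^b$) iff (b) holds. I do not expect any serious obstacle here; the only point requiring a little care is bookkeeping about the point $0$ — one must check that $\nu_a^b$ as defined by \eqref{4.6} (only for $B$ with $0\notin B$) and $\Psi_f(\nu)=[\nu^{\sharp}]_{\mathbb{R}^d\setminus\{0\}}$ (the restriction of a measure that might a priori charge $0$) really coincide as members of $\mathrm{Lvm}(ID(\mathbb{R}^d))$, which they do because both are defined on $\mathcal B(\mathbb{R}^d\setminus\{0\})$ by the identical integral. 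A second minor point is to note that the hypothesis $\nu,\widetilde\nu\in\mathrm{Lvm}(ID(\mathbb{R}^d))$ is exactly what makes \eqref{8.3} and the requirement "$\mu\in ID_0$ has L\'evy measure $\nu$" meaningful and consistent, so no further nondegeneracy assumptions are needed.
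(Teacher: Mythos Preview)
Your proposal is correct and follows exactly the route indicated by the paper: invoke Remark~\ref{r3.2} to secure $f\in\mathbf L_{(a,b)}(X^{(\mu)})$, use Theorem~\ref{t4.2} to identify \eqref{8.4} with the definability condition for $\Phi_{f,\mathrm{es}}(\mu)$, and use Theorem~\ref{t4.6} to match the L\'evy measures via \eqref{4.6}. Your write-up is simply a careful unpacking of the paper's one-line reference to these three results.
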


\begin{proof}
Use Remark \ref{r3.2} and Theorems \ref{t4.2} and \ref{t4.6}.
The assumption of local square-integrability of $f$ on $(a,b)$
is needed because the theorems in Section 3 presupposes that
$f\in\mathbf L_{(a,b)}(X^{(\mu)})$.
\end{proof}

We give some results similar to those in Sections 6 and 8.

\begin{prop}\label{p8.1}
The following statements are equivalent.

{\rm(a)}\quad$\mathrm{Lvm}(ID(\mathbb{R}^d))=\mathfrak D(\Psi_f)$.

{\rm(b)}\quad$\int_a^b 1_{\{f(s)\neq0\}} ds
<\infty$ and $\int_a^b f(s)^2 ds<\infty$.
\end{prop}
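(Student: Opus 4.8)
The plan is to reduce the statement entirely to the analytic description of $\mathfrak D(\Psi_f)$ recorded just after Definition~\ref{d8.1} and then to re-use the proof of Theorem~\ref{t6.1}. Recall that a measure $\nu$ lies in $\mathfrak D(\Psi_f)$ if and only if $\nu(\{0\})=0$, $\int_{\mathbb{R}^d}(|x|^2\land1)\,\nu(dx)<\infty$, and \eqref{8.4} holds; hence for $\nu\in\mathrm{Lvm}(ID(\mathbb{R}^d))$ the membership $\nu\in\mathfrak D(\Psi_f)$ is equivalent to $\int_a^b ds\int_{\mathbb{R}^d}(|f(s)x|^2\land1)\,\nu(dx)<\infty$. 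Therefore statement~(a) is equivalent to asserting that this integral is finite for \emph{every} $\nu\in\mathrm{Lvm}(ID(\mathbb{R}^d))$, which is literally condition \eqref{6.1} appearing in the proof of Theorem~\ref{t6.1}. So the proposition amounts to the equivalence of \eqref{6.1} with (b), and both implications are already contained in that proof.

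First I would establish (b) $\Rightarrow$ (a): assuming (b), for any $\nu\in\mathrm{Lvm}(ID(\mathbb{R}^d))$ one has
\[
\int_a^b ds\int_{\mathbb{R}^d}(|f(s)x|^2\land1)\,\nu(dx)\leqslant\int_a^b (f(s)^2+1)1_{\{f(s)\neq0\}}\,ds\int_{\mathbb{R}^d}(|x|^2\land1)\,\nu(dx)<\infty,
\]
which is exactly the verification of \eqref{4.2} done in the proof of Theorem~\ref{t6.1}; thus \eqref{6.1}, hence (a), holds. For the converse (a) $\Rightarrow$ (b) I would invoke the two-step argument in the proof of Theorem~\ref{t6.1} that deduces statement~(e) of that theorem (which is precisely (b) here) from \eqref{6.1}. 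In Step~1 one supposes $\int_a^b f(s)^2\,ds=\infty$, sets $k(r)=\int_a^b f(s)^2 1_{\{|f(s)|\leqslant1/r\}}\,ds$, uses \eqref{6.1} (with Lebesgue measure on the unit ball) to see that $k(r)<\infty$ for all $r>0$ and $k(r)\uparrow\infty$ as $r\downarrow0$, and then constructs $\nu(B)=\int_S\lambda(d\xi)\int_{(0,1]}1_B(r\xi)r^{-2}\rho(dr)$ with $\rho=\sum_n n^{-2}\delta_{r_n}$ and $k(r_n)\geqslant n$; this $\nu$ lies in $\mathrm{Lvm}(ID(\mathbb{R}^d))$ yet violates \eqref{6.1}, a contradiction. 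Step~2 handles $\int_a^b 1_{\{f(s)\neq0\}}\,ds=\infty$ in the same way with $h(r)=\int_a^b 1_{\{|f(s)|>1/r\}}\,ds$ and a measure concentrated on $\{|x|>1\}$. Combining the two steps yields (b).

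Since both directions follow at once from the analytic description of $\mathfrak D(\Psi_f)$ together with the proof of Theorem~\ref{t6.1}, there is no genuinely new difficulty here; the only point to be careful about is the bookkeeping that condition \eqref{8.4} ranging over all Lévy measures is exactly \eqref{6.1}, so that the counterexample measures built in that proof apply verbatim and no fresh construction is required. (An alternative, equally short route would be to observe that by Theorem~\ref{t5.1} and Theorem~\ref{t8.1} statement~(a) corresponds to $ID_0(\mathbb{R}^d)\subset\mathfrak D(\Phi_{f,\mathrm{es}})$, which Theorem~\ref{t6.1} identifies with (b); but the direct reduction to \eqref{6.1} above avoids the local square-integrability hypothesis of Theorem~\ref{t8.1} and is cleaner.)
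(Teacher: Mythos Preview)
Your proposal is correct and follows essentially the same route as the paper's own proof: both observe that (a) is equivalent to condition \eqref{6.1}, derive (b) $\Rightarrow$ (a) from the elementary inequality used in the proof of Theorem~\ref{t6.1}, and obtain (a) $\Rightarrow$ (b) by repeating Steps~1 and~2 of that proof verbatim. You also correctly flag the same subtlety the paper notes---that one cannot simply invoke Theorem~\ref{t6.1} through Theorem~\ref{t4.2} or Theorem~\ref{t8.1}, since $f\in\mathbf L_{(a,b)}(X^{(\mu)})$ is not assumed here---and your choice to bypass that route is exactly the paper's.
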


\begin{proof}
Statement (a) is equivalent to saying that any 
$\nu\in\mathrm{Lvm}(ID(\mathbb{R}^d))$
satisfies \eqref{8.4}. If (b) is true, then 
(a) follows from 
Theorem \ref{t6.1}. Conversely, if (a) is true, then 
we obtain (b) exactly in the same way as in Steps 1 and 2 
in the proof of 
Theorem \ref{t6.1} (here we cannot directly apply this theorem,
as we do not assume $f\in\mathbf L_{(a,b)}(X^{(\mu)})$ and we 
cannot use Theorem \ref{t4.2} in order to say that (a) implies
$ID_0\subset\mathfrak D(\Phi_{f,\mathrm{es}})$).
\end{proof}

\begin{prop}\label{p8.2}
The following statements are equivalent.

{\rm(a)}\quad$\mathrm{Lvm}(ID_{\mathrm{AB}}(\mathbb{R}^d))\subset\mathfrak D(\Psi_f)$
and $\Psi_f(\mathrm{Lvm}(ID_{\mathrm{AB}}(\mathbb{R}^d)))\subset
\mathrm{Lvm}(ID_{\mathrm{AB}}(\mathbb{R}^d))$.

{\rm(b)}\quad$\int_a^b 1_{\{f(s)\neq0\}} ds
<\infty$ and $\int_a^b |f(s)| ds<\infty$.
\end{prop}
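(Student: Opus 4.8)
The plan is to reduce statement (a) to a single finiteness condition and then argue exactly as in the proof of Proposition \ref{p8.1} (the $\Psi_f$-analogue of Theorem \ref{t6.1}), but with the weight $|x|^2\land1$ replaced throughout by $|x|\land1$, following the pattern of Theorem \ref{t6.2}. First I would record the translation: by \eqref{8.4} a measure $\nu\in\mathrm{Lvm}(ID(\mathbb{R}^d))$ lies in $\mathfrak D(\Psi_f)$ if and only if $\int_a^b ds\int_{\mathbb{R}^d}(|f(s)x|^2\land1)\,\nu(dx)<\infty$, and by \eqref{8.3a} applied to $h(x)=|x|\land1$, for such $\nu$ one has $\Psi_f(\nu)\in\mathrm{Lvm}(ID_{\mathrm{AB}}(\mathbb{R}^d))$ if and only if $\int_a^b ds\int_{\mathbb{R}^d}(|f(s)x|\land1)\,\nu(dx)<\infty$. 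Since $|f(s)x|^2\land1\leqslant|f(s)x|\land1$, the latter already implies the former, so statement (a) is equivalent to
\[
\int_a^b ds\int_{\mathbb{R}^d}(|f(s)x|\land1)\,\nu(dx)<\infty\qquad\text{for all }\nu\in\mathrm{Lvm}(ID_{\mathrm{AB}}(\mathbb{R}^d)).
\]

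For (b) $\Rightarrow$ (a), I would invoke the elementary estimate used in the $(c)\Rightarrow(a)$ part of Theorem \ref{t6.2}:
\[
\int_a^b ds\int_{\mathbb{R}^d}(|f(s)x|\land1)\,\nu(dx)\leqslant\int_a^b(|f(s)|+1)1_{\{f(s)\neq0\}}\,ds\int_{\mathbb{R}^d}(|x|\land1)\,\nu(dx),
\]
which is finite under (b) for every $\nu\in\mathrm{Lvm}(ID_{\mathrm{AB}}(\mathbb{R}^d))$; this yields the displayed condition and hence (a).

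For (a) $\Rightarrow$ (b), I would reproduce the two-step argument in Steps 1 and 2 of the proof of Theorem \ref{t6.2}, since those steps use nothing beyond the displayed finiteness condition. In Step 1 one assumes $\int_a^b|f(s)|\,ds=\infty$, sets $k(r)=\int_a^b|f(s)|1_{\{|f(s)|\leqslant1/r\}}\,ds$, notes $\int_{|x|\leqslant1}|x|k(|x|)\,\nu(dx)\leqslant\int_a^b ds\int_{|x|\leqslant1}(|f(s)x|\land1)\,\nu(dx)<\infty$, deduces by testing against Lebesgue measure that $k$ is everywhere finite and increases to $\infty$ as $r\downarrow0$, picks $r_n\downarrow0$ with $k(r_n)\geqslant n$, and takes $\nu(B)=\int_S\lambda(d\xi)\int_{(0,1]}1_B(r\xi)\,r^{-1}\rho(dr)$ with $\rho=\sum_n n^{-2}\delta_{r_n}$; then $\nu\in\mathrm{Lvm}(ID_{\mathrm{AB}})$ yet $\int_{|x|\leqslant1}|x|k(|x|)\,\nu(dx)=\infty$, a contradiction, so $\int_a^b|f(s)|\,ds<\infty$. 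In Step 2 one assumes $\int_a^b1_{\{f(s)\neq0\}}\,ds=\infty$, sets $h(r)=\int_a^b1_{\{|f(s)|>1/r\}}\,ds\leqslant r\int_a^b|f(s)|\,ds<\infty$ by Step 1, so $h$ increases to $\infty$, and a $\nu$ of the same shape contradicts $\int_{|x|>1}h(|x|)\,\nu(dx)\leqslant\int_a^b ds\int_{|x|>1}(|f(s)x|\land1)\,\nu(dx)<\infty$, forcing $\int_a^b1_{\{f(s)\neq0\}}\,ds<\infty$. Together these give (b).

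The only point requiring care, and the reason this proposition is stated and proved on its own rather than read off from Theorem \ref{t6.2}, is that $\Psi_f$ is defined at the level of L\'evy measures alone and does not presuppose $f\in\mathbf L_{(a,b)}(X^{(\mu)})$ for the distributions $\mu$ having L\'evy measure $\nu$; one therefore cannot pass through ``$ID_{\mathrm{AB}}\subset\mathfrak D(\Phi_{f,\mathrm{es}})$'' or use Theorems \ref{t4.2} and \ref{t4.8}, and the two discrete constructions of Steps 1 and 2 must be carried out directly, just as in the proof of Proposition \ref{p8.1}. This is routine but is the substantive obstacle.
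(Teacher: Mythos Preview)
Your proof is correct and takes essentially the same approach as the paper: reduce (a) to the single finiteness condition $\int_a^b ds\int_{\mathbb{R}^d}(|f(s)x|\land1)\,\nu(dx)<\infty$ for all $\nu\in\mathrm{Lvm}(ID_{\mathrm{AB}})$, then for (b)$\Rightarrow$(a) invoke the elementary estimate from the proof of Theorem~\ref{t6.2}, and for (a)$\Rightarrow$(b) run Steps~1 and~2 of that proof directly at the level of L\'evy measures. The paper's (b)$\Rightarrow$(a) is phrased as passing through condition~(b) of Theorem~\ref{t6.2}, but this unwinds to the same estimate you wrote, and your explicit remark that one cannot route the argument through $\mathfrak D(\Phi_{f,\mathrm{es}})$ (since $f\in\mathbf L_{(a,b)}(X^{(\mu)})$ is not assumed) is precisely the point the paper makes in the parallel proof of Proposition~\ref{p8.1}.
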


\begin{proof}
Theorem \ref{t6.2} tells that statement (b) is equivalent to condition (b) of
Theorem \ref{t6.2}.  Hence, if statement (b) is true, then
statement (a) follows. Conversely, if statement (a) is true, then 
we obtain statement (b) as Steps 1 and 2 
in the proof of Theorem \ref{t6.2} work.
\end{proof}

\begin{prop}\label{p8.3}
The following statements are equivalent.

{\rm(a)}\quad$\mathrm{Lvm}(ID_{\mathrm{AB}}(\mathbb{R}^d))\subset\mathfrak D(\Psi_f)$.

{\rm(b)}\quad$\int_a^b 1_{\{f(s)\neq0\}} ds
<\infty$ and \eqref{6.4} and \eqref{6.5} hold.
\end{prop}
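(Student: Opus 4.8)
The plan is to imitate, almost verbatim, the proof of Proposition \ref{p8.2}, which in turn refers back to Theorem \ref{t6.3}. Recall first that, by \eqref{8.3a}, membership $\nu\in\mathfrak D(\Psi_f)$ for $\nu\in\mathrm{Lvm}(ID_{\mathrm{AB}}(\mathbb{R}^d))$ is equivalent to the single condition \eqref{8.4}, i.e.
\[
\int_a^b ds\int_{\mathbb{R}^d}(|f(s)x|^2\land1)\,\nu(dx)<\infty .
\]
Hence statement (a) of the proposition is just the assertion that \eqref{8.4} holds for every $\nu\in\mathrm{Lvm}(ID_{\mathrm{AB}}(\mathbb{R}^d))$. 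This is precisely condition \eqref{6.6} that appears in the proof of Theorem \ref{t6.3}, but here it must be treated directly, without invoking Theorem \ref{t4.2}, since we do not assume $f\in\mathbf L_{(a,b)}(X^{(\mu)})$.

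First I would prove (b)$\Rightarrow$(a). Assuming $\int_a^b 1_{\{f(s)\neq0\}}ds<\infty$ together with \eqref{6.4} and \eqref{6.5}, one splits, exactly as in the proof of Theorem \ref{t6.3},
\[
\int_a^b ds\int_{\mathbb{R}^d}(|f(s)x|^2\land1)\,\nu(dx)\leqslant
\int_a^b ds\int_{|x|\leqslant1}(|f(s)x|^2\land1)\,\nu(dx)
+\int_a^b 1_{\{f(s)\neq0\}}ds\int_{|x|>1}\nu(dx),
\]
and the second term is finite because $\nu\in\mathrm{Lvm}(ID_{\mathrm{AB}})$ gives $\nu(\{|x|>1\})<\infty$. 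For the first term one uses Fubini and the functions
\[
k(r)=\int_a^b f(s)^2 1_{\{|f(s)|\leqslant 1/r\}}ds,\qquad
h(r)=\int_a^b 1_{\{|f(s)|>1/r\}}ds,
\]
to rewrite it as $\int_{|x|\leqslant1}|x|^2k(|x|)\,\nu(dx)+\int_{|x|\leqslant1}h(|x|)\,\nu(dx)$; then \eqref{6.4} and \eqref{6.5} bound this by $c\int_{|x|\leqslant1}|x|\,\nu(dx)<\infty$. So \eqref{8.4} holds, giving $\nu\in\mathfrak D(\Psi_f)$.

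Conversely, for (a)$\Rightarrow$(b) I would reproduce Steps 1, 2, 3 of the proof of Theorem \ref{t6.3}, which only use condition \eqref{6.6}; these steps never use $f\in\mathbf L_{(a,b)}(X^{(\mu)})$, so they carry over unchanged to the present setting. Namely: first observe that choosing $\nu$ to be Lebesgue measure on an appropriate set shows $k(r)$ and $h(r)$ are finite a.e.; then, assuming $\int_a^b 1_{\{f(s)\neq0\}}ds=\infty$, build a L\'evy measure of type A or B concentrated on countably many radii $r_n$ (with a finite nonzero $\lambda$ on $S$ and weights $n^{-2}$) violating \eqref{8.4}, exactly as in Step 2 of the proof of Theorem \ref{t6.2}/\ref{t6.3}; similarly, if $\limsup_{r\downarrow0}r^{-1}h(r)=\infty$ or $\limsup_{r\downarrow0}r\,k(r)=\infty$, one produces a contradicting $\nu\in\mathrm{Lvm}(ID_{\mathrm{AB}})$. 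The only mild subtlety — and the single place one must be careful — is that Theorem \ref{t6.3} also derives local integrability of $f$ on $(a,b)$ in its statement (b), whereas here statement (b) does not include that clause; this is consistent, because local integrability of $f$ was extracted there from $f\in\mathbf L_{(a,b)}(X^{(\mu)})$ via Theorem \ref{t3.1}, an input we deliberately do not assume, so it is correctly absent from statement (b) of the present proposition. The proof is therefore a direct transcription; I do not expect any genuine obstacle.

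\begin{proof}
By \eqref{8.3a}, for $\nu\in\mathrm{Lvm}(ID_{\mathrm{AB}}(\mathbb{R}^d))$ we have $\nu\in\mathfrak D(\Psi_f)$ if and only if \eqref{8.4} holds. Hence statement (a) says that \eqref{8.4} is true for every $\nu\in\mathrm{Lvm}(ID_{\mathrm{AB}}(\mathbb{R}^d))$. Let
\[
k(r)=\int_a^b f(s)^2 1_{\{|f(s)|\leqslant 1/r\}}ds,\qquad
h(r)=\int_a^b 1_{\{|f(s)|>1/r\}}ds.
\]

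Assume (b). Let $\nu\in\mathrm{Lvm}(ID_{\mathrm{AB}}(\mathbb{R}^d))$. Then
\[
\int_a^b ds\int_{\mathbb{R}^d}(|f(s)x|^2\land1)\,\nu(dx)\leqslant
\int_a^b ds\int_{|x|\leqslant1}(|f(s)x|^2\land1)\,\nu(dx)
+\int_a^b 1_{\{f(s)\neq0\}}ds\int_{|x|>1}\nu(dx)=I_1+I_2.
\]
Since $\nu(\{|x|>1\})<\infty$ and $\int_a^b 1_{\{f(s)\neq0\}}ds<\infty$, $I_2$ is finite. By Fubini,
\[
I_1=\int_{|x|\leqslant1}|x|^2 k(|x|)\,\nu(dx)+\int_{|x|\leqslant1}h(|x|)\,\nu(dx)
\leqslant c\int_{|x|\leqslant1}|x|\,\nu(dx)<\infty
\]
by \eqref{6.4} and \eqref{6.5}, where $c$ is a constant. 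Hence \eqref{8.4} holds, that is, $\nu\in\mathfrak D(\Psi_f)$. Thus (a) is true.

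Conversely, assume (a). Considering Lebesgue measure on $\{|x|\leqslant1\}$ and on $\{|x|>1\}$ as $\nu$ in \eqref{8.4}, we see that $k(r)$ and $h(r)$ are finite for almost every $r>0$. The rest follows exactly as Steps 1, 2, and 3 in the proof of Theorem \ref{t6.3}. Namely, suppose $\int_a^b 1_{\{f(s)\neq0\}}ds=\infty$; then $h(r)$ increases to $\infty$ as $r\uparrow\infty$, and choosing $r_n$ with $1<r_n<r_{n+1}$ and $h(r_n)\geqslant n$, letting $\rho=\sum_{n=1}^{\infty}n^{-2}\delta_{r_n}$ and
\[
\nu(B)=\int_S\lambda(d\xi)\int_{(1,\infty)}1_B(r\xi)\,\rho(dr)
\]
with a finite nonzero measure $\lambda$ on $S$, we get $\nu\in\mathrm{Lvm}(ID_{\mathrm{AB}})$ but $\int_{|x|>1}h(|x|)\,\nu(dx)=\lambda(S)\sum_{n=1}^{\infty}h(r_n)n^{-2}\geqslant\lambda(S)\sum_{n=1}^{\infty}n^{-1}=\infty$, contradicting \eqref{8.4}. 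Hence $\int_a^b 1_{\{f(s)\neq0\}}ds<\infty$. Next, if $\limsup_{r\downarrow0}r^{-1}h(r)=\infty$, choose $r_n\leqslant1$ decreasing to $0$ with $r_n^{-1}h(r_n)\geqslant n$, let $\rho=\sum_{n=1}^{\infty}n^{-2}\delta_{r_n}$ and
\[
\nu(B)=\int_S\lambda(d\xi)\int_{(0,1]}1_B(r\xi)\,r^{-1}\rho(dr);
\]
then $\nu\in\mathrm{Lvm}(ID_{\mathrm{AB}})$ but $\int_{|x|\leqslant1}h(|x|)\,\nu(dx)=\lambda(S)\sum_{n=1}^{\infty}h(r_n)r_n^{-1}n^{-2}=\infty$, a contradiction; hence \eqref{6.5} holds. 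Finally, if $\limsup_{r\downarrow0}r\,k(r)=\infty$, choose $r_n\leqslant1$ decreasing to $0$ with $r_n k(r_n)\geqslant n$ and define $\rho$ and $\nu$ by the same formulas as in the preceding step; then $\nu\in\mathrm{Lvm}(ID_{\mathrm{AB}})$ but $\int_{|x|\leqslant1}|x|^2 k(|x|)\,\nu(dx)=\lambda(S)\sum_{n=1}^{\infty}r_n k(r_n)n^{-2}=\infty$, a contradiction; hence \eqref{6.4} holds. This proves (b).
\end{proof}
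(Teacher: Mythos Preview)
Your proof is correct and follows essentially the same route as the paper, which simply says that the first half of the proof of Theorem~\ref{t6.3} (establishing \eqref{6.6}) gives (b)$\Rightarrow$(a), and Steps~1--3 of the second half give (a)$\Rightarrow$(b). You have written out those steps explicitly, and your observation that local integrability of $f$ is correctly absent from (b) here is exactly the point.

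One small slip: in the line ``Considering Lebesgue measure on $\{|x|\leqslant1\}$ and on $\{|x|>1\}$ as $\nu$ in \eqref{8.4}'', the second choice is not in $\mathrm{Lvm}(ID_{\mathrm{AB}})$ since $\int_{|x|>1}1\,dx=\infty$. This is harmless: to see that $h(r)$ and $k(r)$ are finite for every $r>0$, simply take $\nu=\delta_{r\xi_0}$ with $|\xi_0|=1$, which lies in $\mathrm{Lvm}(ID_{\mathrm{AB}})$ and gives $h(r)\leqslant\int_a^b(|f(s)r|^2\land1)\,ds<\infty$ and similarly $r^2k(r)<\infty$. (In fact the subsequent Steps~1--3 do not even require this finiteness, since the inequalities $h(r_n)\geqslant n$ and $r_nk(r_n)\geqslant n$ are vacuously satisfied when the left-hand side is infinite.)
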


\begin{proof}
If (b) is true, then (a) follows, as the proof of \eqref{6.6} in the first
half of the proof of Theorem \ref{t6.3} works.
If (a) is true, then (b) is true as in Steps 1, 2, and 3 of the second half
of the proof of Theorem \ref{t6.3}.
\end{proof}

\begin{prop}\label{p8.4}
The following statements are equivalent.

{\rm(a)}\quad$\mathfrak D(\Psi_f)$ consists only of the zero measure.

{\rm(b)}\quad$\int_a^b (f(s)^2\land1) ds=\infty$.
\end{prop}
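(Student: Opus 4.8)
The plan is to argue straight from the characterization of $\mathfrak D(\Psi_f)$ recorded just after Definition \ref{d8.1}: a measure $\nu$ on $\mathbb{R}^d$ lies in $\mathfrak D(\Psi_f)$ if and only if $\nu(\{0\})=0$, $\int_{\mathbb{R}^d}(|x|^2\land1)\,\nu(dx)<\infty$, and \eqref{8.4} holds. Thus the proposition reduces to the assertion that the only $\nu\in\mathrm{Lvm}(ID(\mathbb{R}^d))$ satisfying $\int_a^b ds\int_{\mathbb{R}^d}(|f(s)x|^2\land1)\,\nu(dx)<\infty$ is $\nu=0$ precisely when $\int_a^b(f(s)^2\land1)\,ds=\infty$. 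This is the L\'evy-measure analogue of Theorem \ref{t6.5}, but now without the local integrability hypothesis, because $\Psi_f$ does not presuppose $f\in\mathbf L_{(a,b)}(X^{(\mu)})$.

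For the ``if'' direction I would assume $\int_a^b(f(s)^2\land1)\,ds=\infty$ and take any $\nu\in\mathfrak D(\Psi_f)$ with $\nu\neq0$. Since $\nu(\{0\})=0$, we have $\nu(\mathbb{R}^d\setminus\{0\})>0$, so there is $c\in(0,1]$ with $\nu(\{|x|>c\})>0$. Using the elementary inequality $(c^2 r)\land1\geqslant c^2(r\land1)$, valid for $r\geqslant0$ and $0<c\leqslant1$, together with $|f(s)x|^2>c^2 f(s)^2$ on $\{|x|>c\}$, one gets
\[
\int_a^b ds\int_{\mathbb{R}^d}(|f(s)x|^2\land1)\,\nu(dx)\geqslant c^2\,\nu(\{|x|>c\})\int_a^b(f(s)^2\land1)\,ds=\infty,
\]
contradicting \eqref{8.4}. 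Hence $\nu=0$, i.e.\ $\mathfrak D(\Psi_f)$ consists only of the zero measure.

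For the ``only if'' direction I would assume $\int_a^b(f(s)^2\land1)\,ds<\infty$ and exhibit a nonzero member of $\mathfrak D(\Psi_f)$: take $\nu=\delta_{x_0}$ with $|x_0|=1$. Then $\nu(\{0\})=0$ and $\int_{\mathbb{R}^d}(|x|^2\land1)\,\nu(dx)=1<\infty$, so $\nu\in\mathrm{Lvm}(ID(\mathbb{R}^d))$, while $\int_a^b ds\int_{\mathbb{R}^d}(|f(s)x|^2\land1)\,\nu(dx)=\int_a^b(f(s)^2\land1)\,ds<\infty$ shows $\nu\in\mathfrak D(\Psi_f)$; and $\nu\neq0$. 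Both halves are thus short computations, and I anticipate no genuine obstacle: the only points needing a moment's care are verifying the scaling inequality $(c^2 r)\land1\geqslant c^2(r\land1)$ and invoking the explicit description of $\mathfrak D(\Psi_f)$ in place of the characterizations in Section~3 (which would require $f\in\mathbf L_{(a,b)}(X^{(\mu)})$).
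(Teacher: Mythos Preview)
Your proof is correct and follows essentially the same route as the paper. The paper's proof simply says that the argument of Theorem \ref{t6.5} works, and what you have written is precisely that argument transplanted to the $\Psi_f$ setting: the same choice $\nu=\delta_{x_0}$ with $|x_0|=1$ for the ``only if'' direction, and the same estimate via $c\in(0,1]$ with $\nu(\{|x|>c\})>0$ for the ``if'' direction, together with your correct observation that the local integrability clause of \eqref{6.16} is irrelevant here since $\Psi_f$ is defined directly without passing through $\mathbf L_{(a,b)}(X^{(\mu)})$.
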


\begin{proof}
Almost the same proof as that of Theorem \ref{t6.5} works.
\end{proof}

\begin{prop}\label{p8.5}
Let $K$ be a proper cone in $\mathbb{R}^d$. Assume that $f(s)$ is nonnegative.
Then
\begin{equation}\label{8.4a}
\mathrm{Lvm}(ID(K))\subset\mathfrak D(\Psi_f)\quad{and}\quad
\Psi_f(\mathrm{Lvm}(ID(K)))\subset
\mathrm{Lvm}(ID(K))
\end{equation}
 if and only if statement {\rm(c)} of Proposition
\ref{p7.4} is true.
\end{prop}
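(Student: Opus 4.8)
The plan is to follow the pattern of Proposition \ref{p7.4}: obtain the ``if'' direction from the already-proved Proposition \ref{p8.2}, and obtain the ``only if'' direction by the cone-adapted version of the two-step contradiction argument used in the proof of Theorem \ref{t6.2}. Note first that, since $f\geqslant0$, statement (c) of Proposition \ref{p7.4} (namely $\int_a^b 1_{\{f(s)\neq0\}}\,ds<\infty$ and $\int_a^b f(s)\,ds<\infty$) is literally condition (b) of Proposition \ref{p8.2}. For the ``if'' part, assume this condition holds. Then Proposition \ref{p8.2} gives $\mathrm{Lvm}(ID_{\mathrm{AB}}(\mathbb{R}^d))\subset\mathfrak D(\Psi_f)$ and $\Psi_f(\mathrm{Lvm}(ID_{\mathrm{AB}}(\mathbb{R}^d)))\subset\mathrm{Lvm}(ID_{\mathrm{AB}}(\mathbb{R}^d))$; since $\mathrm{Lvm}(ID(K))\subset\mathrm{Lvm}(ID_{\mathrm{AB}}(\mathbb{R}^d))$, the inclusion $\mathrm{Lvm}(ID(K))\subset\mathfrak D(\Psi_f)$ follows, and for $\nu\in\mathrm{Lvm}(ID(K))$ one has $\int_{\mathbb{R}^d}(|x|\land1)\,\nu^{\sharp}(dx)<\infty$. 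Finally, because $f(s)\geqslant0$ and $K$ is a cone, $f(s)x\in K$ for $\nu$-a.e.\ $x$ and all $s$, so by \eqref{8.1} the measure $\nu^{\sharp}$ is concentrated on $K$; hence $\Psi_f(\nu)=[\nu^{\sharp}]_{\mathbb{R}^d\setminus\{0\}}\in\mathrm{Lvm}(ID(K))$, which proves \eqref{8.4a}.

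For the ``only if'' part, assume \eqref{8.4a}. Using \eqref{8.3a} together with the fact that $\Psi_f(\nu)\in\mathrm{Lvm}(ID(K))$ for every $\nu\in\mathrm{Lvm}(ID(K))$, one gets
\[
\int_a^b ds\int_K (|f(s)x|\land1)\,\nu(dx)<\infty\qquad\text{for all }\nu\in\mathrm{Lvm}(ID(K)),
\]
which is the cone analogue of \eqref{7.5a}. From here I would repeat Steps 1 and 2 of the proof of Theorem \ref{t6.2} verbatim, the only modification being that the finite nonzero spherical measure $\lambda$ is placed on $K\cap S$ instead of $S$: the test measures $\nu(B)=\int_{K\cap S}\lambda(d\xi)\int 1_B(r\xi)\,r^{-1}\rho(dr)$ (Step 1, with $\rho=\sum_n n^{-2}\delta_{r_n}$ and $r_n\downarrow0$) and $\nu(B)=\int_{K\cap S}\lambda(d\xi)\int 1_B(r\xi)\,\rho(dr)$ (Step 2, with $r_n\uparrow\infty$) then belong to $\mathrm{Lvm}(ID(K))$, since $r\xi\in K$ whenever $\xi\in K\cap S$ and $r>0$. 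Applying this with $k(r)=\int_a^b f(s)1_{\{f(s)\leqslant 1/r\}}\,ds$ rules out $\int_a^b f(s)\,ds=\infty$, and then, with $h(r)=\int_a^b 1_{\{f(s)>1/r\}}\,ds$ (finite because $h(r)\leqslant r\int_a^b f(s)\,ds$), rules out $\int_a^b 1_{\{f(s)\neq0\}}\,ds=\infty$. Together these give statement (c) of Proposition \ref{p7.4}.

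The step that genuinely needs checking rather than asserting is that the counterexample L\'evy measures can be supported in $K$ while still concentrating the prescribed mass on the scales $r_n$; this is precisely why one passes from $S$ to $K\cap S$, and since $K$ is a proper cone the set $K\cap S$ is a nonempty compact set, which carries a finite nonzero measure, so no additional obstacle arises. Everything else is a routine transcription of the computations already displayed for Theorem \ref{t6.2} and Propositions \ref{p7.4} and \ref{p8.2}.
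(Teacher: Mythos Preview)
Your proof is correct and follows essentially the same approach as the paper. The paper's ``if'' direction routes through Proposition \ref{p7.4}(b) while you go through Proposition \ref{p8.2}, but since Proposition \ref{p8.2} is the $\Psi_f$-analogue of Proposition \ref{p7.4} (and condition (b) there is literally condition (c) of Proposition \ref{p7.4} when $f\geqslant0$), this is the same argument; for the ``only if'' direction both you and the paper invoke the cone-adapted Steps 1 and 2 of Theorem \ref{t6.2} with $\lambda$ supported on $K\cap S$.
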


\begin{proof}
If (c) of Proposition
\ref{p7.4} is true, then (b) of Proposition
\ref{p7.4} is true and \eqref{8.4a} holds.
Conversely, if \eqref{8.4a} holds, then (c) of Proposition
\ref{p7.4} is true by an argument similar to the corresponding part 
of the proof of Theorem \ref{t6.2}.
\end{proof}

\begin{prop}\label{p8.6}
Let $K$ be a proper cone in $\mathbb{R}^d$. Assume that $f(s)\geqslant0$.
Then
\begin{equation}\label{8.4b}
\mathrm{Lvm}(ID(K))\subset\mathfrak D(\Psi_f)
\end{equation}
if and only if statement {\rm(b)} of Proposition \ref{p8.3} is true.
\end{prop}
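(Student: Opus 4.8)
The plan is to deduce both implications from results already in the paper, chiefly Proposition \ref{p8.3} and the proof of Theorem \ref{t6.3}, and then to verify that the single new ingredient — restricting the spherical part of the test L\'evy measures to the cone $K$ — causes no trouble.

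For the \lq\lq if" direction I would proceed as follows. Recall from \eqref{8.3a}--\eqref{8.4} that a measure $\nu$ lies in $\mathfrak D(\Psi_f)$ exactly when $\nu(\{0\})=0$, $\int_{\mathbb{R}^d}(|x|^2\land1)\,\nu(dx)<\infty$, and $\int_a^b ds\int_{\mathbb{R}^d}(|f(s)x|^2\land1)\,\nu(dx)<\infty$. If $\nu\in\mathrm{Lvm}(ID(K))$, then $\mathrm{Supp}(\nu)\subset K$, so $\int_{\mathbb{R}^d}(|x|\land1)\,\nu(dx)=\int_K(|x|\land1)\,\nu(dx)<\infty$; hence $\mathrm{Lvm}(ID(K))\subset\mathrm{Lvm}(ID_{\mathrm{AB}}(\mathbb{R}^d))$. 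By Proposition \ref{p8.3}, its statement (b) implies $\mathrm{Lvm}(ID_{\mathrm{AB}}(\mathbb{R}^d))\subset\mathfrak D(\Psi_f)$, and combining the two inclusions gives \eqref{8.4b}.

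For the \lq\lq only if" direction I would start from the observation that \eqref{8.4b}, together with the characterization \eqref{8.4} of $\mathfrak D(\Psi_f)$, yields
\[
\int_a^b ds\int_K (|f(s)x|^2\land1)\,\nu(dx)<\infty\qquad\text{for all }\nu\in\mathrm{Lvm}(ID(K)),
\]
which is the exact analogue, with $K$ in place of $\mathbb{R}^d$, of condition \eqref{6.6} used in the proof of Theorem \ref{t6.3}. From here I would replay Steps 1, 2 and 3 of that proof, which produce respectively $\int_a^b 1_{\{f(s)\neq0\}}ds<\infty$, \eqref{6.5}, and \eqref{6.4} — that is, statement (b) of Proposition \ref{p8.3}. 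The only modification needed is that the finite nonzero measure $\lambda$ on the unit sphere $S$ appearing in the test L\'evy measures there must now be carried by $K\cap S$; since $K$ is a cone with $K\neq\{0\}$ we have $K\cap S\neq\emptyset$, and every measure of the form $\int_{K\cap S}\lambda(d\xi)\int 1_B(r\xi)\,(\cdots)\rho(dr)$ built in those steps is then supported on $K$ and still integrates $|x|\land1$, hence belongs to $\mathrm{Lvm}(ID(K))$. I would also note that the auxiliary fact \lq\lq $k(r)=\int_a^b f(s)^2 1_{\{|f(s)|\leqslant 1/r\}}ds$ and $h(r)=\int_a^b 1_{\{|f(s)|>1/r\}}ds$ are finite for a.e.\ $r$" should be obtained from measures living on a single ray $\{r\xi_0:r>0\}$ with $\xi_0\in K\cap S$, namely the images of $1_{(0,1]}(r)\,dr$ and of $1_{(1,\infty)}(r)r^{-2}\,dr$ under $r\mapsto r\xi_0$; both lie in $\mathrm{Lvm}(ID(K))$ even when $K$ is lower-dimensional, and their $|\cdot|$-images are equivalent to Lebesgue measure on $(0,1]$ and on $(1,\infty)$, which forces $k$ and $h$ to be finite everywhere by monotonicity.

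The argument has no genuine difficulty; the one point requiring a moment's care is exactly the last one — that a degenerate $K$ rules out using a full-dimensional test measure, so the a.e.\ finiteness of $k$ and $h$ must be extracted from ray measures rather than from Lebesgue measure on a ball.
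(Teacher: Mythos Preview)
Your proof is correct and follows essentially the same route as the paper, which simply notes that \eqref{8.4b} is equivalent to \eqref{7.6a} and then refers back to the proof of Proposition \ref{p8.3} (hence to Steps 1--3 of Theorem \ref{t6.3}, with $S$ replaced by $K\cap S$). Your explicit handling of the a.e.\ finiteness of $k$ and $h$ via ray measures supported on $K$ is a careful point that the paper leaves implicit.
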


\begin{proof}
The inclusion \eqref{8.4b} is equivalent to \eqref{7.6a}. Hence the
proof is similar to that of Proposition \ref{p8.3}.
\end{proof}

Using the $\tau$-measure $\tau$ of $f$ introduced in Section 7, 
we can express the transformation $\Psi_f$ as in the following 
proposition. If we restrict our attention to measures $\tau$ on
$(0,\infty)$, the transformation $\Psi_f$ in this form is
identical with the transformation discussed by Barndorff-Nielsen
and P\'erez-Abreu (2005) under the name of Upsilon transformation
$\Upsilon_{\tau}$; it is called a generalized Upsilon transformation in
Barndorff-Nielsen and Thorbj\o rnsen (2006b). Their studies are
being made in Barndorff-Nielsen and Maejima (2007) and 
Barndorff-Nielsen, Rosi\'nski, and Thorbj\o rnsen (2007).

\begin{prop}\label{p8.7}
Let  $\tau$ be the $\tau$-measure of  $f$. Then,
$\nu\in\mathfrak D(\Psi_f)$ if and only if $\nu$ is in 
$\mathrm{Lvm}(ID(\mathbb{R}^d))$
and satisfies
\begin{equation}\label{8.8}
\int_{\mathbb{R}}\tau(du)\int_{\mathbb{R}^d} (|ux|^2\land1)\,\nu(dx)<\infty.
\end{equation}
If $\nu\in\mathfrak D(\Psi_f)$, then, for 
$B\in\mathcal B_{\mathbb{R}^d\setminus\{0\}}$,
\begin{equation}\label{8.9}
(\Psi_f(\nu))(B)=\int_{\mathbb{R}} \tau(du)\int_{\mathbb{R}^d} 1_B(ux)\,\nu(dx)
=\int_{\mathbb{R}\setminus\{0\}}\nu\left(\frac{1}{u}B\right)\tau(du).
\end{equation}
\end{prop}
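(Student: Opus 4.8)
The plan is to derive both assertions from the characterization of $\mathfrak{D}(\Psi_f)$ recorded just after Definition \ref{d8.1} — namely that $\nu\in\mathfrak{D}(\Psi_f)$ if and only if $\nu(\{0\})=0$, $\int_{\mathbb{R}^d}(|x|^2\wedge1)\,\nu(dx)<\infty$, and \eqref{8.4} holds — together with the change-of-variables identity \eqref{8.7a} for the $\tau$-measure, which transports any integral of the form $\int_a^b h(f(s))\,ds$ into $\int_{\mathbb{R}}h(u)\,\tau(du)$ for nonnegative measurable $h$.

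First I would treat the domain. Fix $\nu\in\mathrm{Lvm}(ID(\mathbb{R}^d))$ and set $h(u)=\int_{\mathbb{R}^d}(|ux|^2\wedge1)\,\nu(dx)$ for $u\in\mathbb{R}$. Since $(u,x)\mapsto|ux|^2\wedge1$ is jointly Borel measurable and nonnegative, Tonelli's theorem shows that $h$ is a nonnegative Borel function on $\mathbb{R}$, and, applying first Tonelli and then \eqref{8.7a},
\[
\int_a^b ds\int_{\mathbb{R}^d}(|f(s)x|^2\wedge1)\,\nu(dx)
=\int_a^b h(f(s))\,ds
=\int_{\mathbb{R}}h(u)\,\tau(du)
=\int_{\mathbb{R}}\tau(du)\int_{\mathbb{R}^d}(|ux|^2\wedge1)\,\nu(dx).
\]
Hence \eqref{8.4} and \eqref{8.8} are equivalent, and combining this with the displayed characterization of $\mathfrak{D}(\Psi_f)$ (using that $\nu(\{0\})=0$ and $\int_{\mathbb{R}^d}(|x|^2\wedge1)\,\nu(dx)<\infty$ are exactly membership in $\mathrm{Lvm}(ID(\mathbb{R}^d))$) yields the first assertion of the proposition.

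Next I would identify $\Psi_f(\nu)$. Let $\nu\in\mathfrak{D}(\Psi_f)$ and $B\in\mathcal{B}_{\mathbb{R}^d\setminus\{0\}}$, so in particular $0\notin B$. By \eqref{8.2} and \eqref{8.1}, $(\Psi_f(\nu))(B)=\nu^{\sharp}(B)=\int_a^b ds\int_{\mathbb{R}^d}1_B(f(s)x)\,\nu(dx)$. Setting $g(u)=\int_{\mathbb{R}^d}1_B(ux)\,\nu(dx)$ — again a nonnegative Borel function of $u$ by Tonelli — formula \eqref{8.7a} gives
\[
(\Psi_f(\nu))(B)=\int_a^b g(f(s))\,ds=\int_{\mathbb{R}}g(u)\,\tau(du)
=\int_{\mathbb{R}}\tau(du)\int_{\mathbb{R}^d}1_B(ux)\,\nu(dx),
\]
which is the first expression in \eqref{8.9}. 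For the second, observe that for $u\neq0$ the set $\{x\in\mathbb{R}^d\colon ux\in B\}$ equals $\frac{1}{u}B$, so $g(u)=\nu(\frac{1}{u}B)$, whereas for $u=0$ the set $\{x\colon 0\in B\}$ is empty because $0\notin B$, so $g(0)=0$ and the atom of $\tau$ at the origin, if any, contributes nothing; therefore $\int_{\mathbb{R}}g(u)\,\tau(du)=\int_{\mathbb{R}\setminus\{0\}}\nu(\frac{1}{u}B)\,\tau(du)$.

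The whole argument is essentially bookkeeping, and I do not expect a genuine obstacle; the only points needing a word of care are the Borel measurability of $h$ and $g$ (guaranteed by Tonelli, since the integrands are jointly measurable and nonnegative, and $\tau$ may be an infinite measure, so it is Tonelli rather than Fubini that is invoked) and the treatment of the possible atom of $\tau$ at $0$, which is rendered harmless by the hypothesis $0\notin B$.
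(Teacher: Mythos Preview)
Your proof is correct and follows exactly the approach indicated in the paper, which simply says the result is immediate from \eqref{8.7a}, \eqref{8.4}, and Definition~\ref{d8.1}; you have merely spelled out the routine details (Tonelli, the handling of $u=0$) that the paper leaves implicit.
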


\begin{proof}
Immediate from \eqref{8.7a}, \eqref{8.4}, and Definition \ref{d8.1}.
\end{proof}

\bigskip

\end{document}